\setlist[enumerate]{itemsep=0mm}
\theoremstyle{plain}
\declaretheorem[title=Theorem, parent=section]{theorem}
\declaretheorem[title=Lemma,sibling=theorem]{lemma}
\declaretheorem[title=Corollary,sibling=theorem]{corollary}
\theoremstyle{definition}
\declaretheorem[title=Definition,sibling=theorem]{definition}
\declaretheorem[title=Remark,sibling=theorem]{remark}
\declaretheorem[title=Remark, numbered=no]{remark*}
\declaretheorem[title=Example, sibling=theorem]{example}
\declaretheorem[title=Assumption, numbered=no]{assumption*}
\numberwithin{equation}{section}
\newcommand{\N}{\mathds{N}}
\newcommand{\R}{\mathds{R}}
\def\hmath$#1${\texorpdfstring{{\rmfamily\textit{#1}}}{#1}}
\newcommand{\cE}{\mathcal{E}}
\newcommand{\cEs}{\cE^{K_s}}
\newcommand{\cEa}{\cE^{K_a}}
\newcommand{\eps}{\varepsilon}
\newcommand{\U}{\widetilde{u}}
\newcommand{\BIGOP}[1]
{
\mathop{\mathchoice%
{\raise-0.22em\hbox{\huge $#1$}}%
{\raise-0.05em\hbox{\Large $#1$}}{\hbox{\large $#1$}}{#1}}}
\def\Xint#1{\mathchoice
   {\XXint\displaystyle\textstyle{#1}}%
   {\XXint\textstyle\scriptstyle{#1}}%
   {\XXint\scriptstyle\scriptscriptstyle{#1}}%
   {\XXint\scriptscriptstyle\scriptscriptstyle{#1}}%
   \!\int}
\def\XXint#1#2#3{{\setbox0=\hbox{$#1{#2#3}{\int}$}
     \vcenter{\hbox{$#2#3$}}\kern-.5\wd0}}
\def\dashint{\Xint-}
\newcommand{\BIGboxplus}{\mathop{\mathchoice%
{\raise-0.35em\hbox{\huge $\boxplus$}}%
{\raise-0.15em\hbox{\Large $\boxplus$}}{\hbox{\large $\boxplus$}}{\boxplus}}}
\DeclareMathOperator{\supp}{supp}
\DeclareMathOperator{\tail}{Tail}
\DeclareMathOperator{\pv}{p.v.}
\renewcommand{\d}{\textnormal{d}}
\begin{document}
\allowdisplaybreaks
 \title{Nonlocal operators related to nonsymmetric forms II: Harnack inequalities}

\author{Moritz Kassmann}
\author{Marvin Weidner}
\address{Fakult\"{a}t f\"{u}r Mathematik\\Universit\"{a}t Bielefeld\\Postfach 100131\\D-33501 Bielefeld}
\email{moritz.kassmann@uni-bielefeld.de}
\urladdr{https://www.uni-bielefeld.de/math/kassmann}

\address{Fakult\"{a}t f\"{u}r Mathematik\\Universit\"{a}t Bielefeld\\Postfach 100131\\D-33501 Bielefeld}
\email{mweidner@math.uni-bielefeld.de}

\keywords{nonlocal operator, energy form, nonsymmetry, 
regularity, Harnack inequality, De Giorgi}

\thanks{Moritz Kassmann and Marvin Weidner gratefully acknowledge financial support by the German Research Foundation (SFB 1283 - 317210226 resp. GRK 2235 - 282638148).}

\subjclass[2010]{47G20, 35B65, 31B05, 60J75, 35K90}

\allowdisplaybreaks

\begin{abstract}
Local boundedness and Harnack inequalities are studied for solutions to parabolic and elliptic integro-differential equations whose governing nonlocal operators are associated with nonsymmetric forms. We present two independent proofs, one being based on the De Giorgi iteration and the other one on the Moser iteration technique. This article is a continuation of \cite{KaWe22}, where H\"older regularity and a weak Harnack inequality are proved in a similar setup. 
\end{abstract}

\maketitle
\section{Introduction}  
The aim of this work is to prove local boundedness estimates and a Harnack inequality for weak solutions to parabolic equations of type
\begin{equation}
\label{PDE}\tag{$\text{PDE}$}
\partial_t u - L u = f~~ \text{in } I_R(t_0) \times B_{2R} \subset \R^{d+1},
\end{equation}
where $B_{2R} \subset \Omega$ is some ball, $I_{R}(t_0) := (t_0 - R^{\alpha}, t_0 + R^{\alpha}) \subset \R$, and $f \in L^{\infty}(I_R(t_0) \times B_{2R})$. \eqref{PDE} is governed by a linear nonlocal operator of the following form 
\begin{equation}
\label{eq:op}
-L u (x) = 2\pv\int_{\R^d} (u(x)-u(y))K(x,y)\d y.
\end{equation}
Such operators are determined by jumping kernels $K : \R^d \times \R^d \to [0,\infty]$ which are allowed to be nonsymmetric. We also investigate solutions to the equation
\begin{equation}
\label{PDEdual}\tag{$\widehat{\text{PDE}}$}
\partial_t u - \widehat{L} u = f~~ \text{in } I_R(t_0) \times B_{2R} \subset \R^{d+1},
\end{equation}
which is driven by the dual operator $\widehat{L}$ associated with $L$.

In this work we prove local boundedness of weak solutions to \eqref{PDE} and \eqref{PDEdual} via an adaptation of the De Giorgi method to nonlocal operators with nonsymmetric jumping kernels. We also provide an alternative proof of local boundedness via the Moser iteration. Finally, combined with the weak Harnack inequality from \cite{KaWe22}, we obtain a full Harnack inequality.

The novelty of our result consists in the lack of symmetry of the underlying operator.
Let us decompose $K = K_s + K_a$ into its symmetric part $K_s$ and its antisymmetric part $K_a$, where
\begin{equation*}
K_s(x,y) = \frac{K(x,y)+K(y,x)}{2}, ~~ K_a(x,y) = \frac{K(x,y)-K(y,x)}{2}, ~~x,y \in \R^d.
\end{equation*}
Note that nonnegativity of $K$ implies
\begin{equation}
\label{eq:KaKs}
\vert K_a(x,y) \vert \le K_s(x,y).
\end{equation}
We can write for the nonsymmetric bilinear form associated with $L$
\begin{align*}
\cE(u,v) := 2\int_{\R^d} \int_{\R^d} (u(x)-u(y))v(x) K(x,y) \d y \d x =: \cEs(u,v) + \cEa(u,v),
\end{align*}
where
\begin{align*}
&\cEs(u,v) = \int_{\R^d} \int_{\R^d} (u(x)-u(y))(v(x)-v(y)) K_s(x,y) \d y \d x,\\
&\cEa(u,v) = \int_{\R^d} \int_{\R^d} (u(x)-u(y))(v(x)+v(y)) K_a(x,y) \d y \d x.
\end{align*}
In order to treat the antisymmetric part of the bilinear form, a refinement of the existing techniques for symmetric operators is required.

We have in mind the following three prototypes of kernels $K$ for $\alpha \in (0,2)$:
\begin{equation}
\label{eq:kernelclassf}
K_1(x,y) = g(x,y) |x-y|^{-d-\alpha},
\end{equation}
where $g : \R^d \times \R^d \to [\lambda,\Lambda] \subset (0,\infty)$ is a suitable nonsymmetric function,
\begin{equation}
\label{eq:kernelclassV}
K_2(x,y) = \vert x-y \vert^{-d-\alpha} + (V(x)-V(y))\mathbbm{1}_{\{\vert x-y \vert \le L\}}(x,y)\vert x-y \vert^{-d-\alpha},
\end{equation}
where $L \in (0,\infty]$ and $V: \R^d \to \R^d$ is a suitable function, and
\begin{align}
\label{eq:kernelclasscone}
K_3(x,y) = |x-y|^{-d-\alpha} \mathbbm{1}_D(x-y) + |x-y|^{-d-\beta} \mathbbm{1}_C(x-y),
\end{align}
where $C \subset \R^d$ is a cone, $D \subset \R^d$ is a double-cone such that $C \cap D = \emptyset$, and $0 < \beta < \frac{\alpha}{2}$.

\subsection{Main results}

Our first main result is the following Harnack inequality for weak solutions to 
\eqref{PDE}. We state and discuss our assumptions in \autoref{sec:prelim}.

\begin{theorem}
\label{thm:apriorifHI}
Assume \eqref{K2}, \eqref{cutoff}, \eqref{eq:kuppershort}, \eqref{Sob}, and \eqref{Poinc} for some $\alpha \in (0,2)$.
\begin{itemize}
\item[(i)] Assume that \eqref{K1} holds true for some $\theta \in [\frac{d}{\alpha},\infty]$. Then there exist $c > 0$ and $0 < c_1 < c_2 < c_3 < c_4 \le 1$ such that for every $0 < R \le 1$ and every nonnegative, weak solution $u$ to \eqref{PDE} in $I_R(t_0) \times B_{2R}$
\begin{align}
\label{eq:apriorifHI}
\begin{split}
&\sup_{(t_0 - c_2R^{\alpha} , t_0 - c_1 R^{\alpha}) \times B_{R/4}} u  \le c \inf_{(t_0 + c_1 R^{\alpha} , t_0 + c_4 R^{\alpha}) \times B_{R/2}} u \\
&\qquad\qquad\qquad\qquad+ c\sup_{(t_0 - c_3 R^{\alpha} , t_0 - c_1 R^{\alpha})}\tail_{K,\alpha}(u,R)+ c R^{\alpha} \Vert f \Vert_{L^{\infty}},
\end{split}
\end{align}
where $B_{2R} \subset \Omega \subset \R^d$.

\item[(ii)] Assume that \eqref{K1glob} holds true for some $\theta \in (\frac{d}{\alpha},\infty]$. Then there exist $c > 0$ and $0 < c_1 < c_2 < c_3 < c_4 \le 1$ such that for every $0 < R \le 1$ and every nonnegative, weak solution $u$ to \eqref{PDEdual} in $I_R(t_0) \times B_{2R}$
\begin{align}
\label{eq:apriorifHIdual}
\begin{split}
&\sup_{(t_0 - c_2R^{\alpha} , t_0 - c_1 R^{\alpha}) \times B_{R/4}} u  \le c \inf_{(t_0 + c_1 R^{\alpha} , t_0 + c_4 R^{\alpha}) \times B_{R/2}} u \\
&\qquad\qquad\qquad\qquad+ c\sup_{(t_0 - c_3 R^{\alpha} , t_0 - c_1 R^{\alpha})}\widehat{\tail}_{K,\alpha}(u,R)+ c R^{\alpha} \Vert f \Vert_{L^{\infty}},
\end{split}
\end{align}
\end{itemize}
\end{theorem}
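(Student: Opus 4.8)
The plan is to derive the full Harnack inequality by combining two ingredients: an \emph{upper bound} (local boundedness of subsolutions), which is the main new contribution of this paper and is proved via De Giorgi or Moser iteration earlier in the text, and a \emph{weak Harnack inequality} (lower bound in terms of an $L^p$-average of the solution), which is imported from \cite{KaWe22}. I would first apply the local boundedness estimate on a parabolic cylinder of the form $Q^- = (t_0 - c_2 R^\alpha, t_0 - c_1 R^\alpha) \times B_{R/4}$ to bound $\sup_{Q^-} u$ by a tail term, an $f$-term, and the $L^p$-norm of $u$ over a slightly larger past cylinder, say $\widetilde Q^- = (t_0 - c_3 R^\alpha, t_0) \times B_{R/2}$, for some small exponent $p \in (0,1)$. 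Then I would apply the weak Harnack inequality, which controls precisely this $L^p$-average of a nonnegative supersolution over a past cylinder by $\inf_{Q^+} u$ over a future cylinder $Q^+ = (t_0 + c_1 R^\alpha, t_0 + c_4 R^\alpha) \times B_{R/2}$, again up to the tail and $f$-terms. Chaining these two estimates yields \eqref{eq:apriorifHI}.

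The geometry of the cylinders has to be arranged carefully so that the waiting-time structure matches: the weak Harnack inequality requires the past cylinder over which the $L^p$-average is taken to be separated in time from both $t_0$ and from the future infimum cylinder, and the local boundedness estimate requires room between $Q^-$ and $\widetilde Q^-$ both in space (from $B_{R/4}$ to $B_{R/2}$) and in time. I would fix the constants $0 < c_1 < c_2 < c_3 < c_4 \le 1$ in a purely combinatorial way to make all these nestings hold simultaneously, which is routine once the two component theorems are in hand. One must also track that the tail term produced by local boundedness, $\sup_{(t_0 - c_3 R^\alpha, t_0 - c_1 R^\alpha)} \tail_{K,\alpha}(u, R)$, and the one produced by the weak Harnack inequality are of the same form and over compatible time intervals, so they can be absorbed into a single such term on the right-hand side (possibly after enlarging the supremum interval and the constant $c$).

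For part (ii), the argument is the dual one: $u$ now solves the equation driven by $\widehat L$, and I would use that a solution of $\partial_t u - \widehat L u = f$ is, after reversing time, a solution of an equation driven by $L$ with kernel $K(y,x)$ in place of $K(x,y)$ — equivalently, the roles of past and future cylinders get exchanged. Concretely, I would invoke the local boundedness result for $\widehat{\text{PDE}}$ (or, by time reversal, for $\text{PDE}$) together with the weak Harnack inequality for $\widehat{\text{PDE}}$ from \cite{KaWe22}, which accounts for why the stronger hypothesis \eqref{K1glob} with the strict bound $\theta > d/\alpha$ is needed here rather than \eqref{K1} with $\theta \ge d/\alpha$ as in part (i): the dual tail quantity $\widehat{\tail}_{K,\alpha}$ integrates against $K(y,\cdot)$ and controlling it requires the global (not merely local) integrability assumption on the kernel. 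Modulo this change, the cylinder bookkeeping is identical.

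The main obstacle I anticipate is not any single estimate but the \emph{compatibility of the tail terms and the time geometry} between the two black-box results: one has to check that the past cylinder supplied to the weak Harnack inequality is exactly the one over which local boundedness yields an $L^p$-bound, that the exponent $p$ from the weak Harnack inequality is compatible with (or can be bootstrapped to) the one appearing in the local boundedness estimate — a self-improvement of $L^p$ averages that for nonnegative functions and cylinders is standard but still needs to be stated — and that the final time intervals for the tail supremum are within $(t_0 - c_3 R^\alpha, t_0 - c_1 R^\alpha)$. Once the constants are chosen to reconcile these three points, the proof is a short concatenation.
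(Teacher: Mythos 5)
Your plan matches the paper's proof: combine the $L^\infty$--$L^1$ estimate coming from the local boundedness results (\autoref{thm:LB1}/\autoref{thm:LB1dual}, upgraded from $L^2$ to $L^1$ by a covering/iteration argument as in \autoref{thm:LB3}) with the weak Harnack inequality \autoref{thm:wHI} from \cite{KaWe22}, applied on a time-shifted cylinder, and chain the two estimates; the tail intervals and constants $c_1<c_2<c_3<c_4$ are then read off. One small correction for part~(ii): the dual equation $\partial_t u - \widehat L u = f$ is \emph{not} a time-reversal of \eqref{PDE} --- time reversal would flip the sign of $\partial_t$ --- so the argument must use the dual versions of the two component theorems directly, as you also propose; and the reason $\theta > d/\alpha$ is required strictly in (ii) is traceable to the extra lower-order terms in the Caccioppoli estimate for $\widehat{\cE}$ (which forces iterating $L^{2\theta'}$-norms in \autoref{thm:LB1} (ii)), rather than primarily to integrability of $\widehat{\tail}_{K,\alpha}$.
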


The aforementioned Harnack inequality for nonnegative weak solutions $u$ to \eqref{PDE} is a direct consequence of a weak Harnack inequality as it was proved in \cite{KaWe22} (see \autoref{thm:wHI}), and an $L^{\infty} - L^{1}$-estimate of the form (see \autoref{thm:LB1} or \autoref{thm:LB1dual})
\begin{equation}
\label{eq:introLB3}
\sup_{(t_0 - (\frac{R}{8})^{\alpha} , t_0) \times B_{\frac{R}{2}}} \hspace*{-2ex} u \le c \left(\dashint_{(t_0 - (\frac{R}{4})^{\alpha} , t_0) \times B_{R}} u  + \sup_{(t_0 - (\frac{R}{4})^{\alpha} , t_0)}\tail_{K,\alpha}(u,R) + R^{\alpha}\Vert f \Vert_{L^{\infty}}\right).
\end{equation}
Therefore large parts of this paper are dedicated to proving \eqref{eq:introLB3}. 
Given $0 < R \le 1$, the nonlocal tail term is defined as follows:
\begin{equation*}
\tail_{K,\alpha}(v,R,x_0) := R^{\alpha}\int_{B_{2R}(x_0) \setminus B_{\frac{R}{2}}(x_0)} \frac{|v(y)|}{|x_0 - y|^{d+\alpha}} \d y + \sup_{x \in B_{\frac{3R}{2}}(x_0)} \int_{B_{2R}(x_0)^c} |v(y)| K(x,y) \d y.
\end{equation*}
For a detailed discussion of nonlocal tail terms, we refer the reader to \autoref{sec:tails}.

\begin{remark}[time-inhomogeneous kernels]
\label{remark:time-inhom}
It is possible to extend \autoref{thm:apriorifHI} to time-inhomogeneous jumping kernels $k : I \times \R^d \times \R^d \to [0,\infty]$ by following a similar approach as in \cite{KaWe22}. For $k_s$ we may assume pointwise comparability with a time-homogeneous jumping kernel satisfying \eqref{cutoff}, \eqref{elower} and \eqref{eq:kuppershort}. In place of the first estimate in \eqref{K1}, we need
\begin{align*}
\left\Vert \int_{B_{2r}} \frac{|k_a(\cdot;\cdot,y)|^2}{J(\cdot,y)} \d y \right\Vert_{L^{\mu,\theta}_{t,x}(I_r \times B_{2r})} \le C
\end{align*}
for a suitable symmetric jumping kernel $J : \R^d \times \R^d \to [0,\infty]$. The parameters $(\mu,\theta)$ have to satisfy the compatibility condition 
\begin{align}
\label{eq:CP}\tag{CP}
\frac{d}{\alpha\theta} + \frac{1}{\mu} < 1.
\end{align}
Then, if suitable time-inhomogeneous analogs to \eqref{K2} and \eqref{UJS}, or \eqref{UJSdual}, hold true, we can prove a Harnack inequality of the form \eqref{eq:apriorifHI} and \eqref{eq:apriorifHIdual}, for nonnegative, weak solutions to the corresponding parabolic equations \eqref{PDE} and \eqref{PDEdual}, respectively. For solutions to \eqref{PDE} we can also allow for equality in \eqref{eq:CP} if $\theta > \frac{d}{\alpha}$. The range of exponents prescribed by \eqref{eq:CP} is in align with the important results from the local theory (see \cite{ArSe67}, \cite{LSU68}).
\end{remark}

\begin{remark}
We observe that there is a positive distance of size $2(1-2^{-\alpha})R^{\alpha}$ between the two time intervals in the estimates \eqref{eq:apriorifHI} and \eqref{eq:apriorifHIdual}. The existence of such time delay in the parabolic Harnack inequality comes from the method of proof we employ (see \cite{Mos64}). For nonlocal equations, as for example the fractional heat equation, it can be neglected (see \cite{BSV17}, \cite{DKSZ20}).
\end{remark}

The second main result of this article concerns the corresponding stationary problems
\begin{align}
\label{ellPDE}\tag{$\text{ell-PDE}$}
-L u &= f ~~\text{ in } B_{2R},\\
\label{ellPDEdual}\tag{$\text{ell-}\widehat{\text{PDE}}$}
-\widehat{L} u &= f ~~\text{ in } B_{2R},
\end{align}
where $f \in L^{\infty}(B_{2R})$. We obtain an elliptic Harnack inequality for weak solutions:

\begin{theorem}
\label{thm:fHI}
Assume \eqref{K2}, \eqref{cutoff} and \eqref{elower} for some $\alpha \in (0,2)$.
\begin{itemize}
\item[(i)] Assume that \eqref{K1}, \eqref{UJS} hold true for some $\theta \in [\frac{d}{\alpha},\infty]$. Then there exists $c > 0$ such that for every $0 < R \le 1$ and every nonnegative, weak solution $u$ to \eqref{ellPDE} in $B_{2R}$, it holds
\begin{align}
\label{eq:fHI}
\sup_{B_{R/4}} u \le c \left(\inf_{B_{R/2}} u + R^{\alpha}\Vert f \Vert_{L^{\infty}}\right),
\end{align}
where $B_{2R} \subset \Omega \subset \R^d$.
\item[(ii)] Assume that \eqref{K1glob}, \eqref{UJSdual} hold true for some $\theta \in (\frac{d}{\alpha},\infty]$. Then there exists $c > 0$ such that for every $0 < R \le 1$ and every nonnegative, weak solution $u$ to \eqref{ellPDEdual} in $B_{2R}$, estimate \eqref{eq:fHI} holds true.
\end{itemize}
\end{theorem}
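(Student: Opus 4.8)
The plan is to obtain the tail-free estimate \eqref{eq:fHI} in two stages. The first stage is a Harnack inequality that still carries the nonlocal tail $\tail_{K,\alpha}(u,R)$ and requires essentially no new work: a time-independent weak solution $u$ of \eqref{ellPDE} in $B_{2R}$ is, for every $t_0$, also a weak solution of \eqref{PDE} in $I_R(t_0)\times B_{2R}$, and since the hypotheses assumed here imply those of \autoref{thm:apriorifHI}(i) (in particular \eqref{elower} yields the fractional Sobolev and Poincar\'e inequalities) and all time-dependent quantities are constant in $t$, that theorem gives, for a nonnegative weak solution,
\[
\sup_{B_{R/4}}u \;\le\; c\,\inf_{B_{R/2}}u + c\,\tail_{K,\alpha}(u,R) + cR^{\alpha}\Vert f\Vert_{L^{\infty}};
\]
one may equally well combine the elliptic weak Harnack inequality of \cite{KaWe22} with the local boundedness estimate behind \eqref{eq:introLB3} (see \autoref{thm:LB1}). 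The tail of $u_-$ is absent because $u\ge0$ in $\R^d$. Hence the genuinely new content is to show that for such $u$
\[
\tail_{K,\alpha}(u,R,x_0)\;\le\; C\,\inf_{B_{R/2}(x_0)}u + CR^{\alpha}\Vert f\Vert_{L^{\infty}}.
\]

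This second step is where the condition \eqref{UJS} (resp.\ \eqref{UJSdual}), tailored precisely for removing tails, enters. The near part of $\tail_{K,\alpha}(u,R,x_0)$, namely $R^{\alpha}\int_{B_{2R}\setminus B_{R/2}}|x_0-y|^{-d-\alpha}u(y)\,\d y$, is comparable to a weighted average of $u$ over an annulus contained in $B_{2R}$; covering the portion of that annulus which stays at a definite distance from $\partial B_{2R}$ by balls on which the weak Harnack inequality of \cite{KaWe22} applies, and chaining the resulting estimates down to $B_{R/2}(x_0)$, bounds it by $C(\inf_{B_{R/2}}u+R^{\alpha}\Vert f\Vert_{L^{\infty}})$. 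For the far part, $\sup_{x\in B_{3R/2}}\int_{B_{2R}^c}u(y)K(x,y)\,\d y$, the condition \eqref{UJS} allows one to replace the pointwise kernel $K(x,\cdot)$ by its average over a ball $B_{\rho}(x)\subset B_{2R}$ with $\rho\simeq R$; inserting the equation $-Lu=f$ on $B_{\rho}(x)$ then splits this averaged far-field integral into a contribution bounded by $\Vert f\Vert_{L^{\infty}}$, a zero-order term $\simeq R^{-\alpha}\dashint_{B_{\rho}(x)}u$ controlled via \eqref{K2}, and a local double integral of $u$ against $K$ over $B_{\rho}(x)\times B_{2R}$ whose singular diagonal is tamed by symmetrising and invoking \eqref{eq:KaKs} together with the integrability hypothesis \eqref{K1}, the remaining averages of $u$ being handled once more by the weak Harnack inequality. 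Iterating over a dyadic family of radii and summing the resulting geometric series carries out the $\varepsilon$-absorption and closes the loop, which yields \eqref{eq:fHI}. Part (ii) follows by the identical scheme for $\widehat L$, using the dual weak Harnack inequality and \eqref{UJSdual} in place of \eqref{UJS}; the stronger hypotheses \eqref{K1glob} with $\theta>\tfrac{d}{\alpha}$ are what make the dual tail estimates valid on all of $\R^d$.

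The main obstacle is the tail-absorption just sketched. Two difficulties compound: all pointwise manipulations of the equation must be replaced by tested or averaged versions, since $u$ is only a weak solution; and, more seriously, the antisymmetric part $K_a$ of the kernel enters every energy and tail estimate through the form $\cEa$, contributing lower-order but genuinely nonsymmetric terms that have to be controlled uniformly by \eqref{eq:KaKs} and \eqref{K1}. Keeping track of these contributions through the covering, chaining, and $\varepsilon$-absorption arguments, so that the final constant $c$ depends only on the structural data, is the technical heart of the proof and the main point of departure from the symmetric theory.
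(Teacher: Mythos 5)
Your first step is fine: a time-independent weak solution of \eqref{ellPDE} is a parabolic weak solution, so the hypotheses here (which give \eqref{Sob}, \eqref{Poinc} via \eqref{elower} and \eqref{eq:kuppershort} via \eqref{UJS}+\eqref{cutoff}) let you invoke \autoref{thm:apriorifHI} and collapse the time variable to obtain
\[
\sup_{B_{R/4}}u \le c\,\inf_{B_{R/2}}u + c\,\tail_{K,\alpha}(u,R) + cR^{\alpha}\Vert f\Vert_{L^{\infty}}.
\]
But the route you take from here is genuinely different from the paper's, and the part you identify as "the genuinely new content'' --- the claim
\[
\tail_{K,\alpha}(u,R,x_0)\;\le\; C\,\inf_{B_{R/2}(x_0)}u + CR^{\alpha}\Vert f\Vert_{L^{\infty}}
\]
--- is where the proposal has real gaps. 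The paper never proves a tail-by-\(\inf\) bound. Its tail estimate (\autoref{lemma:LTE}/\autoref{cor:elltailest}), obtained by testing with $\tau^2(u-2S)$ where $S\ge\sup_{B_{r+\rho}}u$, gives $\tail_K(u,r,r+\rho)\le C\rho^{-\alpha}\sup_{B_{r+\rho}}u + C\Vert f\Vert_{L^\infty}$. This weaker \(\sup\)-bound is then fed into the De Giorgi/Moser iteration to obtain \autoref{thm:LB2}, $\sup_{B_{R/2}}u\le c(\dashint_{B_R}u^2)^{1/2}+\delta\sup_{B_R}u+cR^\alpha\Vert f\Vert_{L^\infty}$, and the $\delta\sup u$ term is absorbed by the standard iteration lemma of \cite{GiGi82}, yielding the tail-free $L^\infty$--$L^p$ estimate \autoref{thm:LB3}. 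Combining \autoref{thm:LB3} with $p=1$ and the weak Harnack inequality \autoref{thm:wHI} then gives \eqref{eq:fHI}. Crucially, the tail term $\tail_K(u,r,r+\rho)=\sup_{x\in B_r}\int_{B_{r+\rho}^c}u\,K(x,\cdot)$ that appears in the iteration only ever sees the region $B_{r+\rho}^c$ for radii well inside $B_{2R}$, so the near-boundary part of the annulus $B_{2R}\setminus B_{R/2}$ never needs to be compared with $\inf_{B_{R/2}}u$.

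Your sketch, in contrast, does require exactly that comparison, and it does not close. For the near part $R^{\alpha}\int_{B_{2R}\setminus B_{R/2}}|x_0-y|^{-d-\alpha}u(y)\,\d y\asymp R^{-d}\int_{B_{2R}\setminus B_{R/2}}u$, you explicitly restrict the covering to "the portion of the annulus which stays at a definite distance from $\partial B_{2R}$''; the remaining boundary collar of $B_{2R}\setminus B_{R/2}$ is not estimated, and a naive Harnack chain from there to $B_{R/2}(x_0)$ has a constant that blows up as the distance to $\partial B_{2R}$ shrinks (for a chain of length $n\sim\log(R/\delta)$ the constant is $C^n\sim(R/\delta)^{\log C}$, and $\log C$ need not be less than $1$, so the resulting dyadic sum over collars need not converge). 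For the far part, "inserting the equation on $B_\rho(x)$'' produces, after testing with a cutoff in $B_\rho(x)$ and splitting, a genuinely singular local double integral of the form $\cE_{B_\rho(x)}(u,\tau)$; controlling it requires an a priori energy bound for $u$ on $B_\rho(x)$ (a Caccioppoli estimate), which reintroduces $\sup u$ or $\Vert u\Vert_{L^2}$ and, together with the unspecified "$\varepsilon$-absorption'' over a dyadic family, leaves the argument circular as written. In short, you have replaced the paper's elementary absorption of a $\sup u$-tail inside the De Giorgi/Moser scheme by a substantially harder tail-by-$\inf$ lemma, and the two steps that would establish that lemma --- the boundary collar of the near part and the local energy term in the far part --- are precisely where the sketch breaks down.
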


As in \eqref{eq:fHI}, for elliptic equations we are able to estimate the supremum of $u$ by local quantities, only. To this end, we prove a suitable estimate of the nonlocal tail term (see \autoref{cor:elltailest}).\\
In the parabolic case, the situation is more complicated since we require the tail estimate to be uniform in $t$. The same difficulty occurs in the symmetric case. We comment on possible corresponding extensions of \autoref{thm:apriorifHI} in \autoref{sec:parabolicFHI}.

\begin{remark}
All constants in \autoref{thm:apriorifHI} and \autoref{thm:fHI} depend only on $d,\alpha,\theta$ and the constants in \eqref{K1}, \eqref{K2}, \eqref{cutoff} \eqref{Poinc}, \eqref{Sob}, \eqref{UJS}, \eqref{eq:kuppershort}, \eqref{elower}.
\end{remark}

The contributions of this work can be summarized as follows:
\begin{itemize}
\item[(i)] The main accomplishment is the extension of elliptic and parabolic regularity results -- including full Harnack inequalities -- for nonlocal problems to  operators with \emph{nonsymmetric} jumping kernels. In light of example \eqref{eq:kernelclassV}, the operators under consideration include nonlocal counterparts of second order differential operators in divergence form with a drift term
\begin{align*}
-\mathcal{L}u = -\partial_i (a_{i,j} \partial_j u) + b_i \partial_i u, \qquad \text{resp. } -\widehat{\mathcal{L}}u = - \partial_i (a_{i,j} \partial_j u + b_i u).
\end{align*}
Our results are in align with the corresponding theory for local operators (see \cite{Sta65}, \cite{ArSe67}, \cite{LSU68}, \cite{GiTr01}).

\item[(ii)] As nonsymmetric kernels require a careful treatment, several parts of the energy methods for nonlocal operators are refined in this work. For instance, we give a new proof of local boundedness using the Moser iteration for positive exponents (see \autoref{sec:CaccMoser}).\\
Moreover, as illustrated in example \eqref{eq:kernelclasscone}, nonsymmetric jumping kernels might naturally involve terms of lower order, causing a difference between the growth behavior at zero and infinity. We introduce tail terms which take into account this phenomenon (see \autoref{sec:tails}).

\item[(iii)] Technical issues of minor importance in other works are clarified, e.g., the treatment of Steklov averages (see \autoref{sec:app}).

\end{itemize}


\subsection{Related literature}

The study of Harnack inequalities for symmetric nonlocal operators has become an active field of research in the past 20 years. It has been observed that a classical elliptic Harnack inequality of the form 
\begin{equation}
\label{eq:introSHI}
\sup_{B_r} u \le c \inf_{B_r} u
\end{equation}
fails even for harmonic functions $u$ with respect to the fractional Laplacian $(-\Delta)^{\alpha/2}$ in $B_{2r}$ if one merely assumes $u$ to be nonnegative in the solution domain $B_{2r}$ (see \cite{Kas07}). Indeed, due to the nonlocality it is necessary either to assume $u$ to be globally nonnegative -- as in \cite{Rie38} and in this article -- or to add the nonlocal tail of $u_-$ to the right hand side of \eqref{eq:introSHI}. Such estimate was proposed in \cite{Kas11}. We refer to both estimates as a \emph{Harnack inequality} in the context of this article.

A lot of research activity has centered around the challenge to establish a Harnack inequality for a larger class of nonlocal operators.
First, we comment on corresponding elliptic regularity results for symmetric nonlocal operators related to energy forms. A Harnack inequality and H\"older estimates were proved in \cite{DKP14}, \cite{DKP16} for operators with a jumping kernel that is pointwise comparable to the kernel of the fractional $p$-Laplacian by a nonlocal De Giorgi type iteration. This method was refined in \cite{Coz17} to allow for more general nonlinearities. \cite{Sch20} considers a class of linear integro-differential operators governed by jumping kernels satisfying an averaged integral bound instead of a pointwise lower bound.

However, it is well known that for the deduction of interior H\"older regularity estimates a weak Harnack inequality (see \autoref{thm:wHI}) is sufficient. Such inequalities hold true for a much larger class of operators. In fact, only comparability of the energy forms to the $H^{\alpha/2}$-seminorm on small scales and a suitable upper bound for the probability of large jumps are required (see \cite{DyKa20}). That is why operators with singular jumping measures that may be anisotropic (see \cite{ChKa20}, \cite{CKW19}) also satisfy H\"older regularity estimates. However, the Harnack inequality may fail for singular operators as was  already observed in \cite{BoSz05}. Hence it is an exciting (and still open) question to find equivalent conditions on the jumping kernel for a (weak) elliptic Harnack inequality to hold true. For $\alpha$-stable translation invariant operators conditions on the jumping kernel are established in \cite{BoSz05} that are equivalent to a Harnack inequality.

Second, we comment on parabolic Harnack inequalities of the form
\begin{align}
\label{eq:phi-localterms}
\sup_{I^{\ominus}_r \times B_r} u \le c \inf_{I^{\oplus}_r \times B_r} u
\end{align}
for globally nonnegative solutions $u$ to \eqref{PDE}. Note that such results imply corresponding estimates for weak solutions to the stationary equation \eqref{ellPDE}. 
So far, parabolic Harnack inequalities have not been obtained via purely analytic methods, not even in the symmetric case. A major challenge in the parabolic case seems to be the correct treatment of the time-dependence in the nonlocal tail terms. For a discussion of this issue we refer the reader to the discussion in \autoref{sec:prelim} and \autoref{sec:parabolicFHI}.\\
Parabolic H\"older estimates and local boundedness have been obtained via an adaptation of the nonlocal De Giorgi method in \cite{Str19a}, \cite{Kim19}, \cite{Kim20}, \cite{DZZ21}. A proof of H\"older estimates based on Moser's technique can be found in \cite{FeKa13}.\\
Using the corresponding Hunt process and its heat kernel, parabolic Harnack inequalities of the form \eqref{eq:phi-localterms} were first proved for symmetric Dirichlet forms with jumping measures pointwise comparable to the $\alpha$-stable kernel in \cite{BaLe02}, \cite{ChKu03}. The authors also obtain two-sided heat kernel bounds. Numerous articles have analyzed the exact relationship between parabolic and elliptic Harnack inequalities, heat kernel bounds, and H\"older regularity estimates for nonlocal operators in connection to the geometry of the underlying metric measure space. Such program was carried out in a series of papers (\cite{CKW19b}, \cite{CKW20}, \cite{GHKS14}, \cite{GHKS15}, \cite{GHH18}). On $\R^d$ it turns out that \eqref{eq:phi-localterms} is equivalent to a Poincar\'e inequality (see \eqref{Poinc}), a pointwise upper bound of the jumping kernel, and \eqref{UJS}.\\

In contrast to the symmetric case, for nonlocal operators associated with nonsymmetric forms, pointwise estimates  have not yet been studied systematically. Some results have been obtained making use of a sector-type-condition. Well-posedness of the Dirichlet problem is proved in \cite{FKV15}. In the present article and in  \cite{KaWe22} we provide Harnack inequalities and interior H\"older regularity estimates for nonlocal operators that contain a nonlocal drift term of lower order. These results can be regarded as nonlocal counterparts of the famous regularity results for local equations by Aronson-Serrin (see \cite{ArSe67}) and Ladyzhenskaya-Solonnikov-Ural'tceva (see \cite{LSU68}) in the linear case.
H\"older estimates for kinetic integro-differential equations including certain nonlocal operators with nonsymmetric jumping kernels are established in \cite{ImSi20} using an adaptation of the De Giorgi iteration. The class of nonsymmetric kernels in their work does not contain the class of kernels in our work, and vice versa.\\
Note that, as an application of the regularity estimates in \cite{KaWe22}, it is possible to establish Markov chain approximation results for diffusion processes with drift terms, but also for certain nonsymmetric jump processes (see \cite{Wei22}).
In light of \cite{CKW20} and \cite{GHH18}, we consider it an interesting problem  to establish heat kernel estimates for nonlocal operators associated with nonsymmetric forms, and to investigate their stability on general doubling metric measure spaces, as well as their connection to Harnack inequalities.

\subsection{Outline}
This article is structured as follows: In \autoref{sec:prelim} we state and discuss our assumptions and the notion of a weak solution to \eqref{PDE} and \eqref{PDEdual}. A Caccioppoli-type estimate for nonsymmetric forms and an a priori $L^{\infty}-L^2$-estimate involving the nonlocal tail is proved in \autoref{sec:Cacc} using a nonsymmetric version of the De Giorgi iteration. An analogous result is established in \autoref{sec:CaccMoser} using a nonlocal adaptation of the Moser iteration technique for large positive exponents. Note that \autoref{sec:Cacc} and \autoref{sec:CaccMoser} are fully independent of one another.
In \autoref{sec:LTE} we establish an upper bound for the nonlocal tails of supersolution to \eqref{PDE} and \eqref{PDEdual}. Our two main results \autoref{thm:apriorifHI} and \autoref{thm:fHI} are proved in \autoref{sec:FHI}.

\section{Preliminaries}
\label{sec:prelim}

In this section we state and discuss the assumptions in our main results (see \autoref{sec:mainass}). Moreover, we provide the notion of a (super/sub)solution to \eqref{PDE} and \eqref{PDEdual}, as well as the corresponding stationary equations \eqref{ellPDE} and \eqref{ellPDEdual} (see \autoref{sec:weaksol}). Another goal of this section is to introduce nonlocal tail terms which suit the class of nonsymmetric operators under consideration and are designed in such a way that they are compatible with the iteration techniques carried out in the remainder of this article (see \autoref{sec:tails}).

We introduce the following notation: Given a set $M \subset \R^d \times \R^d$, we write
\begin{equation*}
\cE_M(u,v) := \int \int_{M} (u(x)-u(y))v(x) K(x,y) \d x \d y.
\end{equation*}
Analogously, we define $\cEs_M, \cEa_M$. If $M := B_r \times B_r$ for a ball $B_r \subset \R^d$, we write $\cE_{B_r} = \cE_{B_r \times B_r}$.

\subsection{Discussion of main assumptions}
\label{sec:mainass}

In this section, we list and discuss the assumptions which are imposed on the jumping kernels $K$ in the course of this article. Except for \eqref{UJS}, all other assumptions have already been discussed in detail in \cite{KaWe22}.

First, we assume throughout this article that $K_s$ satisfies the L\'evy-integrability condition
\begin{equation}
\label{eq:Levy}
\left(x \mapsto \int_{\R^d} \left(\vert x-y \vert^2 \wedge 1 \right) K_s(x,y) \d y  \right) \in L^1_{loc}(\R^d).
\end{equation}

In the following, let $\Omega \subset \R^d$ be an open set. Let us now fix $\alpha \in (0,2)$ and $\theta \in [\frac{d}{\alpha},\infty]$. 
The first two assumptions were introduced and discussed in \cite{KaWe22}.

\begin{assumption*}[\textbf{K1}]
\label{ass:K1}
Let $J : \R^d \times \R^d \to [0,\infty]$ be a symmetric jumping kernel satisfying \eqref{cutoff} and let $\theta \in [\frac{d}{\alpha},\infty]$.
\begin{itemize}
\item $K$ satisfies \eqref{K1} if there is $C > 0$ such that for every ball $B_{2r} \subset \Omega$ with $r \le 1$:
\begin{equation}
\label{K1}\tag{$\text{K1}_{loc}$}
\left\Vert \int_{B_{2r}} \frac{\vert K_a(\cdot,y)\vert^2}{J(\cdot,y)} \d y  \right\Vert_{L^{\theta}(B_{2r})} \le C, \qquad \cE_{B_{2r}}^{J}(v,v) \le C \cE_{B_{2r}}^{K_s}(v,v), ~~ \forall v \in L^2(B_{2r}).
\end{equation}
\item $K$ satisfies \eqref{K1glob} if there is $C > 0$ such that for every ball $B_{2r} \subset \Omega$ with $r \le 1$:
\begin{equation}
\label{K1glob}\tag{$\text{K1}_{glob}$}
\left\Vert \int_{\R^d} \frac{\vert K_a(\cdot,y)\vert^2}{J(\cdot,y)} \d y  \right\Vert_{L^{\theta}(\R^d)} \le C, \qquad \cE_{B_{2r}}^{J}(v,v) \le C \cE_{B_{2r}}^{K_s}(v,v), ~~ \forall v \in L^2(B_{2r}).
\end{equation}
\end{itemize}
\end{assumption*}

\begin{assumption*}[\textbf{K2}]
\label{ass:K2}
There exist $C > 0$, $D < 1$ and a symmetric jumping kernel $j$ such that for every ball $B_{2r} \subset \Omega$ with $r \le 1$ and every $v \in L^2(B_{2r})$ with $\cE_{B_{2r}}^{K_s}(v,v) < \infty$:
\begin{align*}
\label{K2}\tag{$\text{K2}$}
K(x,y) \ge (1-D)j(x,y), ~~ \forall x,y \in B_{2r}, \qquad \cE_{B_{2r}}^{K_s}(v,v) &\le C \cE_{B_{2r}}^{j}(v,v).
\end{align*}
\end{assumption*}

\begin{remark}
\begin{itemize}
\item[(i)] \eqref{K1} ensures that the quantities in \eqref{eq:supercal} and \eqref{PDE} are well-defined (see \autoref{lemma:welldef}) and simultaneously determines $\cEa$ to be a term of lower order. It gives rise to a nonlocal drift, analogous to $(b, \nabla u)$, where $b \in L^{2\theta}(\R^d)$ with $\theta \in [\frac{d}{2},\infty]$.
\item[(ii)] \eqref{K2} is only needed in the proof of the weak Harnack inequality (see \autoref{thm:wHI}). It ensures that the symmetric kernel $K_s - |K_a|$ is locally coercive with respect to $\cE^{K_s}$. 
\item[(iii)] For a detailed discussion of \eqref{K1} and \eqref{K2} including their redundancy, we refer the reader to \cite{KaWe22}. 
\eqref{K1} and \eqref{K2} are verified for the examples $K_1$, $K_2$ and $K_3$ from above in Section 8 in \cite{KaWe22}.
\item[(iv)] In the simplest case, \eqref{K1} (and \eqref{K1glob}) and \eqref{K2} hold true with $J = j = K_s$. However, allowing for general symmetric kernels $J,j$ significantly increases the class of admissible operators.
\end{itemize}
\end{remark}

The following two assumptions on $K$ only depend on the symmetric part. They are standard in the regularity for nonlocal operators associated with symmetric forms.

\begin{assumption*}[\textbf{Cutoff}]
\label{ass:Cutoff}
There is $c > 0$ such that for every $0 < \rho \le r \le 1$,  $z \in \Omega$ such that $B_{r+\rho}(z) \subset \Omega$ there is a radially decreasing function $\tau = \tau_{z,r,\rho}$ centered at $z \in \R^d$ with $\supp(\tau) \subset \overline{B_{r+\rho}(z)}$, $0 \le \tau \le 1$, $\tau \equiv 1$ on $B_r(z)$ and $\vert \nabla \tau \vert \le \frac{3}{2}\rho^{-1}$
\begin{equation}
\label{cutoff}\tag{$\text{Cutoff}$}
\sup_{x \in B_{r+\rho}(z)} \Gamma^{K_s}(\tau,\tau)(x) \le c \rho^{-\alpha},
\end{equation}
where $\Gamma^{K_s}(\tau,\tau)(x) := \int_{\R^d} (\tau(x)-\tau(y))^2 K_s(x,y) \d y$ is the carr\'e du champ associated with $\cEs$.
\end{assumption*}

\begin{assumption*}[$\mathbf{\cE_{\ge}}$]
\label{ass:elower}
There exists $c > 0$ such that for every ball $B_{2r} \subset \Omega$ and every $v \in L^2(B_{2r})$:
\begin{align}
\label{elower}\tag{$\cE_{\ge}$}
\cE^{K_s}_{B_{2r}}(v,v) \ge c [u]^2_{H^{\alpha/2}(B_{2r})}.
\end{align}
\end{assumption*}

\begin{remark}
\begin{itemize}
\item[(i)] A sufficient condition for \eqref{cutoff} to hold true for every $\tau_{z,r,\rho}$ is (see \cite{KaWe22}):
There is $c > 0$ such that for every $0 < \zeta \le \rho \le r \le 1$, $z\in \R^d$ with $B_{r+\rho}(z) \subset \Omega$:
\begin{equation}
\label{eq:suffcutoff}
\sup_{x \in B_{r+\rho}(z)} \left( \int_{\R^d \setminus B_{\zeta}(x)} K_s(x,y) \d y \right) \le c \zeta^{-\alpha}.
\end{equation}

\item[(ii)] \eqref{elower} is a classical coercivity condition on $K_s$. It is significantly weaker than a pointwise lower bound of the form $K_s(x,y) \ge c |x-y|^{-d-\alpha}$ since it allows for non fully supported kernels such as $K_3$ (see \eqref{eq:kernelclasscone}).
\item[(iii)] Under \eqref{elower}, we have the following Poincar\'e -- and Sobolev inequality:\\
There is $c > 0$ such that for every ball $B_{r+\rho} \subset \Omega$ with $0 < \rho \le r \le 1$ and $v \in L^2(B_{r+\rho})$
\begin{align}
\label{Sob}\tag{$\text{Sob}$}
\Vert v^2 \Vert_{L^{\frac{d}{d-\alpha}}(B_r)} &\le c\cEs_{B_{r+\rho}}(v,v) + c \rho^{-\alpha}\Vert v^2\Vert_{L^{1}(B_{r+\rho})},\\
\label{Poinc}\tag{$\text{Poinc}$}
\int_{B_r} \left(v(x) - [v]_{B_r}\right)^2 \d x &\le c r^{\alpha} \cEs_{B_r}(v,v),
\end{align}
where $[v]_{B_r} = \dashint_{B_r} v(x) \d x$.
\eqref{Poinc} is not explicitly needed in any of the proofs of this article. Nevertheless it is required for \autoref{thm:wHI} to hold and therefore appears in the assumptions of our main result \autoref{thm:apriorifHI}.
\end{itemize}
\end{remark}

The following assumption did not appear in \cite{KaWe22} and is designed to estimate  nonlocal tails of supersolutions to \eqref{PDE} from above. It is required for the proof of the Harnack inequality.

\begin{assumption*}[\textbf{UJS}]
\label{ass:UJS}
\begin{itemize}
\item $K$ satisfies \eqref{UJS} if there exists $c > 0$ such that for every $x,y \in \R^d$ and every $r \le \left(\frac{1}{4} \wedge \frac{\vert x-y \vert}{4}\right)$ with $B_r(x) \subset \Omega$ it holds
\begin{align}
\label{UJS}\tag{$\text{UJS}$}
K(x,y) &\le c \dashint_{B_r(x)} K(z,y) \d z.
\end{align}
\item $K$ satisfies \eqref{UJSdual} if there exists $c > 0$ such that for every $x,y \in \R^d$ and every $r \le \left(\frac{1}{4} \wedge \frac{\vert x-y \vert}{4}\right)$ with $B_r(x) \subset \Omega$ it holds
\begin{align}
\label{UJSdual}\tag{$\widehat{\text{UJS}}$}
K(y,x) &\le c \dashint_{B_r(x)} K(y,z) \d z.
\end{align}
\end{itemize}
\end{assumption*}

\begin{remark}
\begin{itemize}  
\item[(i)] If $K$ satisfies both conditions, \eqref{UJSdual}  and \eqref{UJS}, then $K_s$ satisfies  \eqref{UJS}.
\item[(ii)] Also for symmetric kernels the conditions \eqref{cutoff}, \eqref{Poinc}, \eqref{Sob} are known to be insufficient for a Harnack inequality to hold (see \cite{BoSz05}). 
\item[(iii)] Analogs to \eqref{UJS} for symmetric jumping kernels appeared in \cite{Sch20} and \cite{CKW20}. A pointwise version of \eqref{UJS} was considered in \cite{BaKa05}.
\end{itemize}
\end{remark}

\begin{remark}
\begin{itemize}
\item[(i)] \eqref{UJS} clearly holds if $K(x,y)$ is pointwise comparable to $|x-y|^{-d-\alpha}$ for every $x,y \in \R^d$. However, \eqref{UJS} neither implies nor is implied by \eqref{elower}.
\item[(ii)] Assume a global version of \eqref{K2}, namely
\begin{equation}
\label{eq:K2glob}
\vert K_a(x,y)\vert \le D K_s(x,y),~~\forall x \in \Omega, y \in \R^d.
\end{equation}
Then, $(1-D)K_s \le K \le 2K_s$ and therefore \eqref{UJS} is equivalent to
\begin{equation*}
K_s(x,y) \le c \dashint_{B_r(x)} K_s(z,y) \d z
\end{equation*}
for $x,y \in \R^d$ with $r \le \left(\frac{1}{4} \wedge \frac{\vert x-y \vert}{4}\right)$, $B_r(x) \subset \Omega$, i.e., it remains to verify \eqref{UJS} for $K_s$.
\item[(iii)] In \cite{Sch20} it was proved that kernels of the form
\begin{equation*}
K_s(x,y) = \mathbbm{1}_S(x-y)\vert x-y \vert^{-d-\alpha}
\end{equation*} 
satisfy \eqref{UJS} if $S = -S$ and there exists $c > 0$ such that for every $x \in S$, $r \le \left(\frac{\vert x \vert}{4} \wedge \frac{1}{4} \right)$ it holds that $\vert B_r(x)\vert \le c \vert B_r(x) \cap S\vert$.
\end{itemize}
\end{remark}

We provide sufficient conditions for \eqref{UJS} to hold true for the examples $K_1$, $K_2$, $K_3$ in \eqref{eq:kernelclassf}, \eqref{eq:kernelclassV} and \eqref{eq:kernelclasscone}. 

\begin{example}
\begin{itemize}
\item[(i)] Let $K_1(x,y) = g(x,y)|x-y|^{-d-\alpha}$ be as in \eqref{eq:kernelclassf}. It was shown in \cite{KaWe22} that \eqref{eq:K2glob} holds true for $K$ with $D = \frac{\Lambda - \lambda}{\Lambda + \lambda} < 1$. As
\begin{equation*}
2K_s(x,y) = (g(x,y)+g(y,x)) |x-y|^{-d-\alpha},
\end{equation*}
it follows that \eqref{UJS} holds true for $K$.

\item[(ii)] Let $K_2$ be as in \eqref{eq:kernelclassV}. Then, the antisymmetric part of $K_2$ is given by
\begin{equation*}
K_a(x,y) = (V(x)-V(y))\mathbbm{1}_{\{\vert x-y \vert \le L\}}(x,y)\vert x-y \vert^{-d-\alpha} \le K_s(x,y) = \vert x-y \vert^{-d-\alpha} .
\end{equation*}
Therefore, \eqref{UJS} holds true if there exists $c > 0$ such that for every $x,y \in \R^d$ and $r \le \left(\frac{\vert x-y  \vert}{4} \wedge \frac{1}{4} \right)$ with $B_r(x) \subset \Omega$:
\begin{equation}
\label{eq:sufftailV}
1  + (V(x)-V(y))\mathbbm{1}_{\{\vert x-y \vert \le L\}} \le c \dashint_{B_r(x)} 1 + (V(z)-V(y))\mathbbm{1}_{\{\vert z-y \vert \le L\}} \d z.
\end{equation}

\item[(iii)] We claim that \eqref{UJS} holds true for $K_3$. Let us prove the following more general statement: Let $S \subset \R^d$ with $0 \in S$ and $c > 0$ such that for every $x \in S$ and $r < \frac{1}{4}$ it holds $\frac{|S \cap B_r(x)|}{r^d} \ge c$. Then, $K(x,y) = \mathbbm{1}_S (x-y) |x-y|^{-d-\alpha}$ satisfies \eqref{UJS} and \eqref{UJSdual}.\\
In fact it suffices to prove that
\begin{align}
\label{eq:coneUJShelp1}
\mathbbm{1}_S(x-y) \le c \dashint_{B_r(x)} \mathbbm{1}_S(z-y) \d z
\end{align}
in order to deduce \eqref{UJS}. Note that \eqref{UJSdual} follows by consideration of $-S$. We compute
\begin{align*}
\mathbbm{1}_S(x-y) \le c \frac{|B_r(x-y) \cap S|}{r^d} = c \frac{|B_r(x) \cap (y+S)|}{r^d} = c\dashint_{B_r(x)} \mathbbm{1}_{S}(z-y) \d z.
\end{align*}
\end{itemize}
\end{example}

Finally, we introduce the assumption of an upper bound of the jumping kernel which will be used only to prove an $L^{\infty}-L^2+\tail$-estimate (see \autoref{thm:LB1}) and is not required for the proof of the main theorems. However it follows from \eqref{UJS} and \eqref{cutoff}.

\begin{assumption*}[$K_{loc}^{\le}$]
There exists $c > 0$ such that for every ball $B_{2r} \subset \Omega$ with $r \le 1$ and every  $x,y \in B_{2r}$:
\begin{align}
\label{eq:kuppershort}\tag{$K^{\le}_{loc}$}
K(x,y) \le c |x-y|^{-d-\alpha}.
\end{align}
\end{assumption*}

\begin{remark}
Note that \eqref{eq:kuppershort} follows from \eqref{UJS} and \eqref{cutoff}. Indeed, for any $x,y \in \R^d$ with $|x-y| \le 4$ and $r = \frac{|x-y|}{16} \le \left(\frac{1}{4} \wedge \frac{|x-y|}{4}\right)$ it holds $B_r(x) \subset B_r(y)^c$ and therefore
\begin{align*}
K(x,y) \le c_1 \dashint_{B_r(x)} K(z,y) \d z \le c_2 r^{-d} \int_{B_r(y)^c} K(z,y) \d z \le c_3 r^{-d-\alpha} \le c_4 |x-y|^{-d-\alpha}
\end{align*}
for some constants $c_1, c_2, c_3, c_4 > 0$.
\end{remark}

\subsection{Weak solution concept}
\label{sec:weaksol}

We introduce the following function spaces for $\Omega \subset \R^d$
\begin{align*}
V(\Omega|\R^d) &= \left\lbrace v : \R^d \to \R : v \mid_{\Omega} \in L^2(\Omega) : (v(x)-v(y))K_s^{1/2}(x,y) \in L^2(\Omega \times \R^d)\right\rbrace,\\
H_{\Omega}(\R^d) &= \left\lbrace v \in V(\R^d|\R^d) : v = 0 \text{ on } \R^d \setminus \Omega \right\rbrace
\end{align*}
equipped with
\begin{align*}
\Vert v \Vert_{V(\Omega|\R^d)}^2 &= \Vert v \Vert_{L^2(\Omega)}^2 + \int_{\Omega} \int_{\R^d}(v(x)-v(y))^2K_s(x,y) \d y \d x,\\
\Vert v \Vert_{H_{\Omega}(\R^d)}^2 &= \Vert v \Vert_{L^2(\R^d)}^2 + \cEs(v,v).
\end{align*}
We emphasize that both spaces are completely determined by the symmetric part of the jumping kernel $K_s$.
Moreover, for $\alpha \in (0,2)$, we define $V^{\alpha}(\Omega|\R^d)$ and $H_{\Omega}^{\alpha}(\R^d)$ as the corresponding function spaces associated with $K_s(x,y) = |x-y|^{-d-\alpha}$.

We are ready to define the notion of a weak solution to \eqref{PDE} and \eqref{PDEdual}.

\pagebreak[3]
\begin{definition}
\label{def:parsol}
Let $\Omega \subset \R^d$ be a bounded domain, $I \subset \R$ a finite interval and $f \in L^{\infty}(I \times \Omega)$. 
\begin{itemize}
\item[(i)] We say that $u \in L^2_{loc}(I;V(\Omega|\R^d))$ is a weak supersolution $u$ to \eqref{PDE} in $I \times \Omega$ if the weak $L^2(\Omega)$-derivative $\partial_t u$ exists, $\partial_t u \in L^1_{loc}(I;L^2(\Omega))$, 
\begin{equation}
\label{eq:supercal}
(\partial_t u(t),\phi) + \cE(u(t),\phi) \le (f(t),\phi), \qquad \forall t \in I,~ \forall \phi \in H_{\Omega}(\R^d) \text{ with } \phi \le 0.
\end{equation}
$u$ is called a weak subsolution if \eqref{eq:supercal} holds true for every $\phi \ge 0$. $u$ is called a weak solution, if it is a supersolution and a subsolution.

\item[(ii)]  We say that $u \in L^2_{loc}(I;V(\Omega|\R^d)\cap L^{2\theta'}(\R^d))$ is a weak supersolution $u$ to \eqref{PDEdual} in $I \times \Omega$ if the weak $L^2(\Omega)$-derivative $\partial_t u$ satisfies the same properties as before and
\begin{equation*}
(\partial_t u(t),\phi) + \widehat{\cE}(u(t),\phi) \le (f(t),\phi), \qquad \forall t \in I,~ \forall \phi \in H_{\Omega}(\R^d) \text{ with } \phi \le 0.
\end{equation*}
Weak (sub)-solutions to \eqref{PDEdual} are defined in analogy with (i).
\end{itemize}
\end{definition}

Next, we introduce the solution concept for stationary equations.
\begin{definition}
\label{def:ellsol}
Let $\Omega \subset \R^d$ be a bounded domain and $f \in L^{\infty}(\Omega)$
\begin{itemize}
\item[(i)] We say that $u \in V(\Omega|\R^d)$ is a weak supersolution $u$ to \eqref{ellPDE} in $\Omega$ if
\begin{equation}
\label{eq:superharm}
\cE(u,\phi) \le (f,\phi), \qquad \forall \phi \in H_{\Omega}(\R^d) \text{ with }  \phi \le 0
\end{equation}
We call $u$ a weak subsolution if \eqref{eq:superharm} holds true for every $\phi \ge 0$. $u$ is called a weak solution if $u$ is a supersolution and a subsolution.
\item[(ii)] We say that $u \in V(\Omega|\R^d) \cap L^{2\theta'}(\R^d)$ is a weak supersolution $u$ to \eqref{ellPDEdual} in $\Omega$ if
\begin{equation*}
\widehat{\cE}(u,\phi) \le (f,\phi), \qquad \forall \phi \in H_{\Omega}(\R^d) \text{ with } \phi \le 0.
\end{equation*}
(Sub)solutions to \eqref{ellPDEdual} are defined in analogy with (i).
\end{itemize}
\end{definition}

Let us point out that the solution concept also makes sense under much weaker assumptions on $u$ without any change in the proofs being needed (see \cite{FeKa13}). In particular, one can drop the condition that the weak time derivative $\partial_t u$ exists.

We will only consider solutions on special time-space cylinders $I_R(t_0) \times B_{2R}$, where $B_{2R} \subset \Omega$ is a ball, $I_R(t_0) = (t_0 - R^{\alpha},t_0 + R^{\alpha})$, $0 < R \le 1$, $t_0 \in \R$. Moreover:
\begin{equation*}
I^{\ominus}_R(t_0) := (t_0 - R^{\alpha}, t_0), ~~ I^{\oplus}_R(t_0) := (t_0, t_0 + R^{\alpha}).
\end{equation*}

Recall the following lemma, which was proved in \cite{KaWe22}. It ensures that the expressions in \autoref{def:parsol} and \autoref{def:ellsol} are well-defined. 

\begin{lemma}[see Lemma 2.2 in \cite{KaWe22}]
\label{lemma:welldef}
Let $0 < \rho \le r \le 1$, $B_{2r} \subset \Omega$.
\vspace{-0.2cm}
\begin{itemize}
\item[(i)] Assume that one of the following is true:
\begin{itemize}
\item \eqref{K1} holds true with $\theta = \infty$,
\item \eqref{K1} holds with $\theta \in [\frac{d}{\alpha}, \infty)$ and \eqref{Sob}   holds true.
\end{itemize} 
Then $\cE(u,\phi)$ is well-defined for $u \in V(B_{r+\rho}|\R^d)$, $\phi \in H_{B_{r+\rho}}(\R^d)$.
\item[(ii)] Assume that \eqref{K1glob} holds true with $\theta \in [\frac{d}{\alpha}, \infty]$.\\
Then $\widehat{\cE}(u,\phi)$ is well-defined for $u \in V(B_{r+\frac{\rho}{2}}|\R^d) \cap L^{2\theta'}(\R^d)$ and $\phi \in H_{B_{r+\frac{\rho}{2}}}(\R^d)$.
\end{itemize}
\end{lemma}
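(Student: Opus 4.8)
The plan is to bound each of the two pieces $\cEs$ and $\cEa$ separately, since $\cE = \cEs + \cEa$. The symmetric part $\cEs(u,\phi)$ is harmless: by Cauchy--Schwarz in $L^2(\R^d\times\R^d, K_s\,\d x\,\d y)$ we have $|\cEs(u,\phi)| \le \cEs(u,u)^{1/2}\cEs(\phi,\phi)^{1/2}$, and both factors are finite because $u\in V(B_{r+\rho}|\R^d)$ controls the $K_s$-energy of $u$ over $B_{r+\rho}\times\R^d$ while $\phi\in H_{B_{r+\rho}}(\R^d)$ is globally in the $K_s$-energy space and supported in $B_{r+\rho}$ (so the full double integral reduces to integration over $B_{r+\rho}\times\R^d$ after symmetrization). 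The real work is the antisymmetric part $\cEa(u,\phi) = \iint (u(x)-u(y))(\phi(x)+\phi(y))K_a(x,y)\,\d y\,\d x$. Since $\phi$ vanishes outside $B_{r+\rho}$, the integrand is supported on $(B_{r+\rho}\times\R^d)\cup(\R^d\times B_{r+\rho})$; using antisymmetry of $K_a$ and the symmetry of the expression one reduces to estimating, up to constants, $\iint_{B_{r+\rho}\times\R^d} |u(x)-u(y)|\,|\phi(x)|\,|K_a(x,y)|\,\d y\,\d x$ (the term with $\phi(y)$ being handled the same way after relabelling).

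The key trick is to insert $J^{1/2}J^{-1/2}$: write $|u(x)-u(y)|\,|K_a(x,y)| = |u(x)-u(y)|J(x,y)^{1/2}\cdot \dfrac{|K_a(x,y)|}{J(x,y)^{1/2}}$ and apply Cauchy--Schwarz in $y$ for fixed $x$. This gives
\[
\int_{\R^d}|u(x)-u(y)|\,|\phi(x)|\,|K_a(x,y)|\,\d y \le |\phi(x)|\Big(\int_{\R^d}(u(x)-u(y))^2 J(x,y)\,\d y\Big)^{1/2}\Big(\int_{\R^d}\frac{|K_a(x,y)|^2}{J(x,y)}\,\d y\Big)^{1/2}.
\]
Now integrate in $x$ over $B_{r+\rho}$ and apply Hölder with exponents tied to $\theta$. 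The second factor is exactly what \eqref{K1} (resp. \eqref{K1glob}) controls: its $L^\theta(B_{2r})$-norm (resp. $L^\theta(\R^d)$-norm) is $\le C$. The first factor, $\big(\int (u(x)-u(y))^2 J(x,y)\,\d y\big)^{1/2}$, after squaring and integrating over $B_{r+\rho}$, is $\cE^J_{B_{r+\rho}\times\R^d}$-type energy; one uses the coercivity inequality $\cE^J_{B_{2r}}(v,v)\le C\,\cE^{K_s}_{B_{2r}}(v,v)$ from \eqref{K1}/\eqref{K1glob} to dominate it by $\cEs(u,u) + \|u\|_{L^2}^2$-type quantities, which are finite for $u\in V$. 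So one is left with a product of the form $\|\phi\|_{L^{q}}\cdot(\text{finite energy})^{1/2}\cdot C$, where $q$ is the Hölder conjugate arising from splitting off the $L^\theta$ factor; when $\theta=\infty$ we need $\phi\in L^2$, which holds, and when $\theta\in[\tfrac{d}{\alpha},\infty)$ the missing integrability of $\phi$ (namely $\phi\in L^{2\theta'}$ or the corresponding mixed norm) is supplied by the Sobolev inequality \eqref{Sob} applied to $\phi\in H_{B_{r+\rho}}(\R^d)$, which embeds the $K_s$-energy space into $L^{2d/(d-\alpha)}$; one checks $2d/(d-\alpha)\ge 2\theta'$ precisely when $\theta\ge d/\alpha$. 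This is where the hypothesis on $\theta$ and the need for \eqref{Sob} in the case $\theta<\infty$ enter.

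For part (ii), the dual form $\widehat\cE(u,\phi) = \cE(\phi,u)$ (in the sense that testing against the dual operator transposes the roles), so we need the analogous bound with $u$ and $\phi$ swapped in the "bad" slot; this is why $u$ must now lie in $L^{2\theta'}(\R^d)$ — that integrability plays the role that \eqref{Sob} played for $\phi$ in part (i) — and why we need the \emph{global} bound \eqref{K1glob} rather than the local \eqref{K1}, since the tail integral $\int_{\R^d}\frac{|K_a(x,y)|^2}{J(x,y)}\,\d y$ must be controlled for $x$ ranging over all of $\R^d$ once $u$ (no longer compactly supported) sits in that slot. The slightly smaller domain $B_{r+\rho/2}$ is just bookkeeping room for the cutoff/coercivity constants. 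I expect the main obstacle to be the careful tracking of which function needs which integrability and verifying the arithmetic of the Hölder exponents against the Sobolev exponent $\tfrac{d}{d-\alpha}$ and the condition $\theta\ge\tfrac{d}{\alpha}$; the measure-theoretic justification that all integrals are absolutely convergent (so that the principal value and the bilinear form genuinely make sense) is then immediate from these bounds. Since this lemma is quoted from \cite{KaWe22}, I would in practice simply cite it, but the above is the self-contained argument.
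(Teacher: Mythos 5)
The paper cites this lemma from \cite{KaWe22} and reproduces no proof, so there is nothing in this document to compare against; I assess your argument on its own. Your overall strategy --- isolate $\cEa$, insert $J^{1/2}J^{-1/2}$, Cauchy--Schwarz in $y$, H\"older in $x$, with \eqref{K1} supplying one factor and \eqref{Sob} the other --- is the right mechanism (it is the same computation that underlies \autoref{lemma:K1-lemma}), the exponent arithmetic ($\theta\ge d/\alpha\iff 2\theta'\le 2d/(d-\alpha)$) is correct, and the symmetric part is handled properly.

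There is, however, a genuine gap in the antisymmetric estimate. Your Cauchy--Schwarz in $y$ leaves you with the two $x$-dependent factors $\int_{\R^d}(u(x)-u(y))^2 J(x,y)\,\d y$ and $\int_{\R^d}|K_a(x,y)|^2/J(x,y)\,\d y$, and you assert that \eqref{K1} controls the $L^\theta(B_{2r})$-norm of the latter. But the bound in \eqref{K1} is $\big\|\int_{B_{2r}}|K_a(\cdot,y)|^2/J(\cdot,y)\,\d y\big\|_{L^\theta(B_{2r})}\le C$: the inner $y$-integral runs over $B_{2r}$, not $\R^d$. Likewise, the coercivity $\cE^J_{B_{2r}}(v,v)\le C\,\cE^{K_s}_{B_{2r}}(v,v)$ compares energies over $B_{2r}\times B_{2r}$ only; nothing in the hypotheses bounds $\iint_{B_{r+\rho}\times B_{2r}^c}(u(x)-u(y))^2 J(x,y)\,\d y\,\d x$, and $J$ off $B_{2r}\times B_{2r}$ is not constrained in any quantitative way. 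So both factors produced by your Cauchy--Schwarz involve far-field contributions that the assumptions do not touch, and the estimate as written does not close. The repair is to split the $y$-integration: for $y\in B_{2r}$ your argument runs verbatim; for $y\in B_{2r}^c$ drop $J$ altogether, use the pointwise bound $|K_a|\le K_s$ from \eqref{eq:KaKs}, and apply Cauchy--Schwarz with weight $K_s$, which bounds the far contribution by
\[
\Big(\iint_{B_{r+\rho}\times B_{2r}^c}(u(x)-u(y))^2 K_s(x,y)\,\d y\,\d x\Big)^{\frac12}\Big(\iint_{B_{r+\rho}\times B_{2r}^c}\phi(x)^2 K_s(x,y)\,\d y\,\d x\Big)^{\frac12};
\]
the first factor is finite since $u\in V(B_{r+\rho}|\R^d)$, and the second is $\le\cEs(\phi,\phi)<\infty$ because $\phi$ vanishes on $B_{r+\rho}^c\supset B_{2r}^c$. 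In part (ii), \eqref{K1glob} does control the full $y$-integral of $|K_a|^2/J$, but the $J$-coercivity remains local, so the same split is still required on the $J$-weighted factor.
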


The following lemma is of central importance in the proofs of the Caccioppoli estimates for nonsymmetric nonlocal operators. Note that the proof in the special case $\theta = \infty$ is trivial. 

\begin{lemma}[see Lemma 2.4 in \cite{KaWe22}]
\label{lemma:K1-lemma}
\begin{itemize}
\item[(i)] Assume that \eqref{K1} holds true for some $\theta \in [\frac{d}{\alpha},\infty]$. Moreover, assume \eqref{Sob} if $\theta < \infty$. Then, there exists $c_1 > 0$ such that for every $\delta > 0$ there is $C(\delta) > 0$ such that for every $v \in L^2(B_{r+\rho})$ with $\supp(v) \subset B_{r + \frac{\rho}{2}}$, and every ball $B_{2r} \subset \Omega$ with $0 < \rho \le r \le 1$ it holds
\begin{align}
\label{eq:K1consequence}
\int_{B_{r+\rho}} v^2(x) \left(\int_{B_{r+\rho}} \frac{|K_a(x,y)|^2}{J(x,y)} \d y\right) \d x \le \delta \cE^{K_s}_{B_{r+\rho}}(v,v) + c_1(C(\delta) + \delta \rho^{-\alpha}) \Vert v^2 \Vert_{L^1(B_{r+\rho})}.
\end{align}
Moreover, if $\theta \in (\frac{d}{\alpha},\infty]$, the constant $C(\delta)$ has the following form:
\begin{align}
\label{eq:quantifiedK1consequence}
C(\delta) = \begin{cases}
\Vert W \Vert_{L^{\infty}(B_{r+\rho})}&, ~~ \theta = \infty,\\
\delta^{\frac{d}{d-\theta \alpha}} \Vert W \Vert_{L^{\theta}(B_{r+\rho})}^{\frac{\theta \alpha}{\theta\alpha - d}}&, ~~ \theta \in (\frac{d}{\alpha},\infty),
\end{cases} \quad
\text{ where } W(x) := \int_{B_{r+\rho}} \frac{|K_a(x,y)|^2}{J(x,y)} \d y.
\end{align}

\item[(ii)] Assume that \eqref{K1glob} holds true for some $\theta \in [\frac{d}{\alpha},\infty]$. Moreover, assume \eqref{Sob} if $\theta < \infty$.
then \eqref{eq:K1consequence} and \eqref{eq:quantifiedK1consequence} hold true with $\left(\int_{\R^d} \frac{|K_a(x,y)|^2}{J(x,y)} \d y\right)$ instead of $\left(\int_{B_{r+\rho}} \frac{|K_a(x,y)|^2}{J(x,y)} \d y\right)$.
\end{itemize}
\end{lemma}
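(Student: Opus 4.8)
The plan is to absorb the weight $W(x) = \int_{B_{r+\rho}} |K_a(x,y)|^2/J(x,y)\,\d y$ against the symmetric energy using the Sobolev inequality \eqref{Sob}, together with Hölder's inequality to handle the integrability exponent $\theta$. The case $\theta = \infty$ is immediate: $W \in L^\infty(B_{r+\rho})$ by \eqref{K1}, so the left-hand side is bounded by $\Vert W \Vert_{L^\infty} \Vert v^2 \Vert_{L^1(B_{r+\rho})}$, and one takes $C(\delta) = \Vert W \Vert_{L^\infty}$ with the $\delta\cE^{K_s}$-term not even needed. So assume $\theta \in [\tfrac{d}{\alpha},\infty)$.

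First I would apply Hölder's inequality on $B_{r+\rho}$ with exponents $\theta$ and $\theta' = \tfrac{\theta}{\theta-1}$ to get
\begin{align*}
\int_{B_{r+\rho}} v^2 W \,\d x \le \Vert W \Vert_{L^{\theta}(B_{r+\rho})} \Vert v^2 \Vert_{L^{\theta'}(B_{r+\rho})}.
\end{align*}
Since $\theta \ge \tfrac{d}{\alpha}$ we have $\theta' \le \tfrac{d}{d-\alpha}$, so by interpolation (or Hölder on the finite-measure ball) $\Vert v^2 \Vert_{L^{\theta'}} \le \Vert v^2 \Vert_{L^{d/(d-\alpha)}}^{\gamma} \Vert v^2 \Vert_{L^1}^{1-\gamma}$ for a suitable $\gamma \in [0,1]$ determined by $d,\alpha,\theta$; note $\gamma < 1$ precisely when $\theta > \tfrac{d}{\alpha}$, and $\gamma = 1$ in the borderline case $\theta = \tfrac{d}{\alpha}$. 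Then I would invoke \eqref{Sob} to bound $\Vert v^2 \Vert_{L^{d/(d-\alpha)}(B_{r+\rho})} \le c\,\cE^{K_s}_{B_{r+\rho}}(v,v) + c\rho^{-\alpha}\Vert v^2 \Vert_{L^1(B_{r+\rho})}$ — here the support condition $\supp(v) \subset B_{r+\rho/2}$ lets one apply \eqref{Sob} on the pair $(B_{r+\rho/2}, B_{r+\rho})$ cleanly. Combining these and using Young's inequality to split off the $\cE^{K_s}$-part with a small coefficient $\delta$ produces \eqref{eq:K1consequence}; tracking the exponents in Young's inequality gives the explicit form $C(\delta) = \delta^{d/(d-\theta\alpha)}\Vert W \Vert_{L^\theta}^{\theta\alpha/(\theta\alpha - d)}$ claimed in \eqref{eq:quantifiedK1consequence} when $\theta \in (\tfrac{d}{\alpha},\infty)$. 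For (ii) the argument is verbatim with $W(x) = \int_{\R^d} |K_a(x,y)|^2/J(x,y)\,\d y$ and the global bound \eqref{K1glob} in place of \eqref{K1}.

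The main obstacle is bookkeeping rather than conceptual: one must carefully chase the interpolation exponent $\gamma$ and the Young-inequality split so that the power of $\delta$ and the power of $\Vert W \Vert_{L^\theta}$ come out exactly as stated, and one must make sure the $\rho^{-\alpha}\Vert v^2 \Vert_{L^1}$ term generated by \eqref{Sob} lands in the $\delta\rho^{-\alpha}\Vert v^2\Vert_{L^1}$ slot of \eqref{eq:K1consequence} (this is why that term carries a factor $\delta$). The borderline case $\theta = \tfrac{d}{\alpha}$ needs a slightly different endgame since there $\gamma = 1$ and no $\Vert v^2\Vert_{L^1}$ power is gained from interpolation — instead one uses \eqref{Sob} directly on $\Vert v^2 \Vert_{L^{d/(d-\alpha)}}$, absorbs the energy term, and the constant $C(\delta)$ is then allowed to depend on $\delta$ in an unspecified way, which is consistent with the statement only asking for the explicit form when $\theta > \tfrac{d}{\alpha}$.
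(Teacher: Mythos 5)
The paper does not prove this lemma; it cites Lemma 2.4 of \cite{KaWe22} and uses it as a black box, so there is no in-paper proof to compare against. Judged on its own merits, your outline for $\theta\in(\frac{d}{\alpha},\infty)$ is correct and is the natural argument: H\"older with exponents $(\theta,\theta')$, interpolation of $L^{\theta'}$ between $L^1$ and $L^{d/(d-\alpha)}$ with exponent $\gamma=\frac{d}{\theta\alpha}$, \eqref{Sob} applied on the shifted pair $(r+\frac{\rho}{2},\frac{\rho}{2})$ thanks to $\supp(v)\subset B_{r+\frac{\rho}{2}}$, and two Young splits with exponents $(\frac1\gamma,\frac1{1-\gamma})$, one for the $\cE^{K_s}$ piece and one for the $\rho^{-\alpha}\Vert v^2\Vert_{L^1}$ piece so that each lands with a $\delta$ in front. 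Chasing the exponents gives $\delta^{-\gamma/(1-\gamma)}=\delta^{d/(d-\theta\alpha)}$ and $\Vert W\Vert_{L^\theta}^{1/(1-\gamma)}=\Vert W\Vert_{L^\theta}^{\theta\alpha/(\theta\alpha-d)}$, matching \eqref{eq:quantifiedK1consequence}. The $\theta=\infty$ case is trivial as you say.

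The borderline case $\theta=\frac{d}{\alpha}$, however, is not handled correctly. There $\gamma=1$, and H\"older plus \eqref{Sob} only give
\begin{align*}
\int_{B_{r+\rho}} v^2 W\,\d x \le \Vert W\Vert_{L^{d/\alpha}(B_{r+\rho})}\Bigl(c\,\cE^{K_s}_{B_{r+\rho}}(v,v)+c\rho^{-\alpha}\Vert v^2\Vert_{L^1(B_{r+\rho})}\Bigr),
\end{align*}
and there is nothing to ``absorb'': the coefficient $c\Vert W\Vert_{L^{d/\alpha}}$ multiplying $\cE^{K_s}$ is a fixed constant dictated by \eqref{K1} and the Sobolev constant, and cannot be forced below an arbitrary $\delta$. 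The usual repair is a truncation $W=W\ind_{\{W\le k\}}+W\ind_{\{W>k\}}$: the first piece contributes $k\Vert v^2\Vert_{L^1}$ directly (so $C(\delta)=k(\delta)$), while for the second piece one needs $\Vert W\ind_{\{W>k\}}\Vert_{L^{d/\alpha}}<\delta/c$ for $k$ large, uniformly over the balls in the statement — a genuinely extra point (requiring, e.g., domination of the family $\{W_{B_{r+\rho}}\}$ by a single $L^{d/\alpha}$ function) that interpolation never touches and that your sketch does not address. This is exactly why \eqref{eq:quantifiedK1consequence} only prescribes the form of $C(\delta)$ for $\theta>\frac{d}{\alpha}$.
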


\subsection{Nonlocal tail terms}
\label{sec:tails}

Due to the nonlocality of the problems under consideration, certain nonlocal tail terms naturally enter the picture. For references concerning the treatment of tail terms in the study of symmetric nonlocal operators, we refer the reader to \cite{DKP14}, \cite{DKP16}, and \cite{CKW20}. It is crucial for our analysis to make sure that the respective tail terms are finite for any weak solution under reasonable assumptions on $K$ and that the tail terms are compatible with the iteration techniques carried out in the remainder of this article.

Given any ball $B_{2r}(x_0) \subset \Omega$, a function $v \in V(B_{2r}(x_0)|\R^d)$ and $0 < r_1 < r_2 \le 2r$ we define
\begin{align*}
\tail_K(v,r_1,r_2,x_0) &:= \sup_{x \in B_{r_1}(x_0)} \int_{B_{r_2}(x_0)^{c}} \vert v(y) \vert K(x,y) \d y,\\
\widehat{\tail}_K(v,r_1,r_2,x_0) &:= \sup_{x \in B_{r_1}(x_0)} \int_{B_{r_2}(x_0)^{c}} \vert v(y) \vert K(y,x) \d y.
\end{align*}

\begin{remark}
\begin{itemize}
\item[(i)] For $0 < \rho_1 \le r_1$ and $0 < \rho_2 \le r_2$: $\tail_K(v,\rho_1,r_2) \le \tail_K(v,r_1,\rho_2)$.
\item[(ii)] Note that $\tail_K$ has been introduced in \cite{Sch20} for symmetric kernels.
\end{itemize}
\end{remark}

We would like to point out that $\tail_K$ will naturally appear in the proofs of the Caccioppoli estimates in \autoref{sec:Cacc} and \autoref{sec:CaccMoser}. However, it is not suitable for De Giorgi-type and Moser-type iteration arguments. Therefore, we introduce another nonlocal tail term defined as follows:
\begin{align*}
\tail_{K,\alpha}(u,R,x_0) := R^{\alpha}\int_{B_{2R}(x_0) \setminus B_{\frac{R}{2}}(x_0)} \frac{|u(y)|}{|x_0 - y|^{d+\alpha}} \d y + \sup_{x \in B_{\frac{3R}{2}}(x_0)} \int_{B_{2R}(x_0)^c} |u(y)| K(x,y) \d y,\\
\widehat{\tail}_{K,\alpha}(u,R,x_0) := R^{\alpha}\int_{B_{2R}(x_0) \setminus B_{\frac{R}{2}}(x_0)} \frac{|u(y)|}{|x_0 - y|^{d+\alpha}} \d y + \sup_{x \in B_{\frac{3R}{2}}(x_0)} \int_{B_{2R}(x_0)^c} |u(y)| K(y,x) \d y.
\end{align*}

$\tail_{K,\alpha}$ can be regarded as a hybrid between a tail term for general kernels introduced in \cite{Sch20} and a tail term for rotationally symmetric kernels as in \cite{DKP16} and \cite{CKW20}.

The advantage of $\tail_{K,\alpha}$ is that it fits the iteration schemes since for short connections, the weight is a radial function. Moreover, it still takes into account the correct decay of the jumping kernel $K$ for long jumps, which might be of lower order due to the presence of a nonlocal drift term (see $K_3$ in \eqref{eq:kernelclasscone}). Since we do not want to impose any pointwise upper bound on $K$ for long jumps, the second summand contains the supremum in $x$.

We have the following connection between $\tail_K$ and $\tail_{K,\alpha}$:

\begin{lemma}
Assume \eqref{eq:kuppershort}. Let $0 < \rho \le r \le r + \rho \le R \le 1$, $x_0 \in \R^d$ and $v \in V(B_{R}(x_0)|\R^d)$. Then we have
\begin{align}
\label{eq:tailtotail}
\tail_K(v,r,r+\rho,x_0) &\le c \rho^{-\alpha} \left( \frac{r+\rho}{\rho} \right)^d \tail_{K,\alpha}(u,R,x_0),\\
\label{eq:tailtotaildual}
\widehat{\tail}_K(v,r,r+\rho,x_0) &\le c \rho^{-\alpha} \left( \frac{r+\rho}{\rho} \right)^d \widehat{\tail}_{K,\alpha}(u,R,x_0).
\end{align}
\end{lemma}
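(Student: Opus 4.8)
The plan is to reduce the two estimates to a single elementary pointwise bound combined with \eqref{eq:kuppershort}; throughout I write $v$ for the function in question (the same one appears on both sides of the asserted inequalities). The first step is to observe that for $x\in B_r(x_0)$ and every $y\notin B_{r+\rho}(x_0)$ one has $|x-y|\ge |y-x_0|-r\ge\rho$, and hence
\[
\frac{|y-x_0|}{|x-y|}\;\le\;\frac{|y-x_0|}{|y-x_0|-r}\;=\;1+\frac{r}{|y-x_0|-r}\;\le\;1+\frac{r}{\rho}\;=\;\frac{r+\rho}{\rho},
\]
so that $|x-y|^{-d-\alpha}\le\bigl(\tfrac{r+\rho}{\rho}\bigr)^{d+\alpha}|y-x_0|^{-d-\alpha}$. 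This is precisely what the ``gap'' between the radii $r$ and $r+\rho$ in the definition of $\tail_K$ is there for, and it is where the factor $\bigl(\tfrac{r+\rho}{\rho}\bigr)^{d}$ in the claim originates; the extra $\bigl(\tfrac{r+\rho}{\rho}\bigr)^{\alpha}$ it produces will be traded for $\rho^{-\alpha}R^{\alpha}$ at the end via $(r+\rho)^{\alpha}\le R^{\alpha}$.

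Next I would fix $x\in B_r(x_0)$ and split $\int_{B_{r+\rho}(x_0)^c}|v(y)|K(x,y)\,\d y$ into the contribution of the annulus $B_{2R}(x_0)\setminus B_{r+\rho}(x_0)$ and that of the exterior $B_{2R}(x_0)^c$. For the exterior piece, $r\le R$ forces $x\in B_{3R/2}(x_0)$, so it is at most $\sup_{z\in B_{3R/2}(x_0)}\int_{B_{2R}(x_0)^c}|v(y)|K(z,y)\,\d y$, which is one of the two summands of $\tail_{K,\alpha}(v,R,x_0)$; since $\rho\le 1$ and $\tfrac{r+\rho}{\rho}\ge 1$ the prefactor $\rho^{-\alpha}\bigl(\tfrac{r+\rho}{\rho}\bigr)^{d}$ is $\ge 1$, so this is already bounded by the right-hand side of \eqref{eq:tailtotail}. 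For the annular piece, $x$ and $y$ both lie in $B_{2R}(x_0)$, so \eqref{eq:kuppershort} gives $K(x,y)\le c|x-y|^{-d-\alpha}$; inserting the pointwise estimate from the first step and using $B_{2R}(x_0)\setminus B_{r+\rho}(x_0)\subseteq B_{2R}(x_0)\setminus B_{R/2}(x_0)$, this contribution is at most $c\bigl(\tfrac{r+\rho}{\rho}\bigr)^{d+\alpha}\int_{B_{2R}(x_0)\setminus B_{R/2}(x_0)}|y-x_0|^{-d-\alpha}|v(y)|\,\d y$, and then $\bigl(\tfrac{r+\rho}{\rho}\bigr)^{d+\alpha}=\bigl(\tfrac{r+\rho}{\rho}\bigr)^{d}(r+\rho)^{\alpha}\rho^{-\alpha}\le\bigl(\tfrac{r+\rho}{\rho}\bigr)^{d}R^{\alpha}\rho^{-\alpha}$ turns this into $c\,\rho^{-\alpha}\bigl(\tfrac{r+\rho}{\rho}\bigr)^{d}\tail_{K,\alpha}(v,R,x_0)$. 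Adding the two pieces and taking the supremum over $x\in B_r(x_0)$ gives \eqref{eq:tailtotail}.

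The dual estimate \eqref{eq:tailtotaildual} follows by the same argument: \eqref{eq:kuppershort} is symmetric in its two arguments, so it also yields $K(y,x)\le c|x-y|^{-d-\alpha}$ on the annulus, while in the exterior region one uses the summand $\sup_{z\in B_{3R/2}(x_0)}\int_{B_{2R}(x_0)^c}|v(y)|K(y,z)\,\d y$ of $\widehat{\tail}_{K,\alpha}(v,R,x_0)$. The computation is short, and the nearest thing to an obstacle is the geometry of the annular term: one needs $B_{2R}(x_0)\setminus B_{r+\rho}(x_0)$ to lie inside $B_{2R}(x_0)\setminus B_{R/2}(x_0)$, the annulus that carries the radial part of $\tail_{K,\alpha}$, i.e.\ one needs $r+\rho\ge\tfrac{R}{2}$ (otherwise a contribution from $B_{R/2}(x_0)\setminus B_{r+\rho}(x_0)$ would survive, and it is not controlled by $\tail_{K,\alpha}$) --- this holds in every application of the lemma. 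Apart from that, the only care needed is the exponent bookkeeping, i.e.\ verifying that the powers of $r+\rho$, $\rho$ and $R$ recombine to exactly $\rho^{-\alpha}(\tfrac{r+\rho}{\rho})^{d}$, together with the tacit use of $B_{2R}(x_0)\subset\Omega$ so that \eqref{eq:kuppershort} applies there.
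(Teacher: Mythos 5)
Your proof is correct and essentially identical to the paper's: the paper also splits $\int_{B_{r+\rho}(x_0)^c}$ at the sphere of radius $2R$, uses the pointwise comparison $|x_0-y|\le\tfrac{r+\rho}{\rho}\,|x-y|$ together with \eqref{eq:kuppershort} on the annulus, and feeds the exterior piece directly into the second summand of $\tail_{K,\alpha}$. You also correctly observe that both proofs tacitly need $r+\rho\ge R/2$ so that $B_{2R}(x_0)\setminus B_{r+\rho}(x_0)\subset B_{2R}(x_0)\setminus B_{R/2}(x_0)$; this is not implied by the stated hypotheses but holds in every application of the lemma, where $r\ge R/2$.
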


\begin{proof}
We use that for $x \in B_{r}(x_0)$ and $y \in B_{r+\rho}(x_0)^c \cap B_{2R}(x_0)$, $z \in B_{r+\rho}(x_0)^c \cap B_{2R}(x_0)^c = B_{2R}(x_0)^c$
\begin{align*}
|y - x_0| \le \frac{r+\rho}{\rho}|y-x|, ~~ |z - x_0| \le 2|z-x|,
\end{align*}
which implies upon \eqref{eq:kuppershort} that for every $x \in B_{r}(x_0)$
\begin{align*}
\int_{B_{r+\rho}(x_0)^{c}}v(y) K(x,y) \d y &\le \int_{B_{r+\rho}(x_0)^{c} \cap B_{2R}(x_0)}v(y) K(x,y) \d y + \int_{B_{2R}(x_0)^c} v(z) K(x,z) \d z\\
&\le c_1 \left( \frac{r+\rho}{\rho} \right)^{d+\alpha} \int_{B_{r+\rho}(x_0)^{c} \cap B_{2R}(x_0)}v(y) |x_0 - y|^{-d-\alpha} \d y\\
&\quad+ c_1 \int_{B_{2R}(x_0)^{c}}v(z) K(x,z) \d z\\
&\le c_{2} \rho^{-\alpha} \left(\frac{r+\rho}{\rho}\right)^d \tail_{K,\alpha}(v,R),
\end{align*}
where $c_1,c_2 > 0$. This proves \eqref{eq:tailtotail}, as desired. The proof of \eqref{eq:tailtotaildual} works in the same way.
\end{proof}

Moreover, $\tail_{K,\alpha}(u,R,x_0)$ and $\widehat{\tail}_{K,\alpha}(u,R,x_0)$ are finite for any $u \in V(B_{2R}(x_0)|\R^d)$ under natural and nonrestrictive assumptions on $K$. This property is of some importance to us since it allows us to work with the natural function space $V(B_{2R}(x_0)|\R^d)$ associated with $K$.

\begin{lemma}
\label{lemma:tailfinite}
Assume \eqref{cutoff} and \eqref{elower}.
\begin{itemize}
\item[(i)] If \eqref{UJS} holds true, then $\tail_{K,\alpha}(u,R,x_0) < \infty$ for every $u \in V(B_{2R}(x_0) | \R^d)$,
\item[(ii)] If \eqref{UJSdual} holds true, then $\widehat{\tail}_{K,\alpha}(u,R,x_0) < \infty$ for every $u \in V(B_{2R}(x_0) | \R^d)$.
\end{itemize}
\end{lemma}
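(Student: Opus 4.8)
The plan is to estimate the two summands of $\tail_{K,\alpha}(u,R,x_0)$ separately. For the first one this is elementary: on the annulus $B_{2R}(x_0)\setminus B_{R/2}(x_0)$ the weight satisfies $|x_0-y|^{-d-\alpha}\le (R/2)^{-d-\alpha}$, and $u|_{B_{2R}(x_0)}\in L^2(B_{2R}(x_0))\subset L^1(B_{2R}(x_0))$ by definition of $V(B_{2R}(x_0)|\R^d)$, so that
\[
R^\alpha\int_{B_{2R}(x_0)\setminus B_{R/2}(x_0)}\frac{|u(y)|}{|x_0-y|^{d+\alpha}}\,\d y\le 2^{d+\alpha}R^{-d}\|u\|_{L^1(B_{2R}(x_0))}<\infty.
\]

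All the work goes into the second summand $S:=\sup_{x\in B_{3R/2}(x_0)}\int_{B_{2R}(x_0)^c}|u(y)|K(x,y)\,\d y$. First I would record the standard consequence of \eqref{cutoff} that $\int_{B_\zeta(z)^c}K_s(z,y)\,\d y\le c\,\zeta^{-\alpha}$ whenever $B_\zeta(z)\subset\Omega$ and $\zeta\le 2$: apply \eqref{cutoff} with center $z$ and $r=\rho=\zeta/2$, evaluate $\Gamma^{K_s}(\tau,\tau)$ at $z$, and use $\tau(z)=1$, $\tau\equiv 0$ outside $B_\zeta(z)$. Next, for $x\in B_{3R/2}(x_0)$ and $y\in B_{2R}(x_0)^c$ one has $|x-y|\ge R/2$, so \eqref{UJS} applies with radius $r=R/8$, since then $r\le \tfrac14\wedge\tfrac{|x-y|}{4}$ (recall $R\le 1$) and $B_{R/8}(x)\subset B_{13R/8}(x_0)\subset B_{2R}(x_0)\subset\Omega$. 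Combining this with $K\le 2K_s$ (which follows from \eqref{eq:KaKs}), Tonelli's theorem, and $\dashint_{B_{R/8}(x)}g\le |B_{R/8}|^{-1}\int_{B_{13R/8}(x_0)}g$ for $g\ge 0$, I obtain
\[
S\le \frac{c}{|B_{R/8}|}\int_{B_{13R/8}(x_0)}\int_{B_{2R}(x_0)^c}|u(y)|K_s(z,y)\,\d y\,\d z.
\]

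To finish I would split $|u(y)|\le |u(y)-u(z)|+|u(z)|$. Since $B_{2R}(x_0)^c\subset B_{3R/8}(z)^c$ for $z\in B_{13R/8}(x_0)$, the \eqref{cutoff}-consequence gives $\int_{B_{2R}(x_0)^c}K_s(z,y)\,\d y\le c R^{-\alpha}$ uniformly in such $z$; hence the $|u(z)|$-part of the double integral is at most $cR^{-\alpha}\|u\|_{L^1(B_{2R}(x_0))}<\infty$, and the analogous bound shows $\int_{B_{13R/8}(x_0)}\int_{B_{2R}(x_0)^c}K_s(z,y)\,\d y\,\d z<\infty$. For the $|u(y)-u(z)|$-part, Cauchy--Schwarz bounds it by the product of this last finite quantity (to the power $\tfrac12$) and $\big(\int_{B_{2R}(x_0)}\int_{\R^d}(u(y)-u(z))^2K_s(z,y)\,\d y\,\d z\big)^{1/2}\le\|u\|_{V(B_{2R}(x_0)|\R^d)}$. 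Thus $S<\infty$, which proves (i). Part (ii) is the mirror image: in the second summand of $\widehat{\tail}_{K,\alpha}$ one replaces $K(y,x)$ via \eqref{UJSdual} by $c\dashint_{B_{R/8}(x)}K(y,z)\,\d z$, then uses $K(y,z)\le 2K_s(y,z)=2K_s(z,y)$, and runs the identical argument.

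The step I expect to require the most care is the bookkeeping around \eqref{UJS}: the averaging radius must be small enough to meet the constraint $r\le\tfrac14\wedge\tfrac{|x-y|}{4}$ (note $|x-y|$ can be as small as order $R$), while the averaging ball $B_r(x)$ must remain inside $B_{2R}(x_0)$, so that after replacing the supremum in $x$ by an integral in $z$ over a slightly larger ball still contained in $B_{2R}(x_0)$, the resulting double integral against $K_s$ is controlled by the norm of $V(B_{2R}(x_0)|\R^d)$ via Cauchy--Schwarz. Once the radii are chosen compatibly, the remaining estimates are routine consequences of \eqref{cutoff} and the pointwise bound $K\le 2K_s$.
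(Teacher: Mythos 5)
Your proof is correct, and for the second (genuinely nonlocal) summand of $\tail_{K,\alpha}$ it follows the same basic strategy as the paper: use \eqref{UJS} to replace the pointwise value $K(x,y)$ by an average over a small ball $B_r(x)\subset B_{2R}(x_0)$, pass to an integral in $z$ over a slightly larger ball still inside $B_{2R}(x_0)$, then split $u(y)$ against $u(z)$, controlling the difference term by the $V(B_{2R}(x_0)|\R^d)$-seminorm and the remainder by $\Vert u\Vert_{L^1(B_{2R}(x_0))}$ together with the cutoff-derived bound $\int_{B_\zeta(z)^c}K_s(z,y)\,\d y\le c\zeta^{-\alpha}$. (You work with $|u|$ directly and use Cauchy--Schwarz on the difference term, whereas the paper first passes to a squared tail and then splits $|u|^2$; the two are interchangeable.) Where you genuinely diverge from the paper is the first summand: the paper averages over $B_{R/4}(x_0)$ and reduces to the $H^{\alpha/2}$-seminorm, which is precisely where the hypothesis \eqref{elower} enters, whereas you simply bound the weight $|x_0-y|^{-d-\alpha}$ by $(R/2)^{-d-\alpha}$ on the annulus and use $u\in L^1(B_{2R}(x_0))$. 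This is shorter and shows, as a byproduct, that \eqref{elower} is not actually needed for this lemma --- it is an unused hypothesis under your argument. One more small point in your favor: for $x\in B_{3R/2}(x_0)$ and $y\in B_{2R}(x_0)^c$ one only has $|x-y|\ge R/2$, so the \eqref{UJS} constraint $r\le\frac{1}{4}\wedge\frac{|x-y|}{4}$ forces $r\le R/8$; your choice $r=R/8$ is correct, while the paper's proof uses radius $R/4$, which is slightly too large in the borderline case.
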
 

\begin{proof}
We restrict ourselves to proving (i). The proof of (ii) follows via analogous arguments. By \eqref{cutoff}, it clearly suffices to prove that
\begin{align}
\label{eq:finitetailhelp1}
\int_{B_{2R}(x_0) \setminus B_{\frac{R}{2}}(x_0)} |u(y)|^2 |x_0 - y|^{-d-\alpha} \d y + \sup_{x \in B_{\frac{3R}{2}}(x_0)} \int_{B_{2R}(x_0)^c} |u(y)|^2 K(x,y) \d y < \infty.
\end{align}
We start by proving finiteness of the first summand. This can be achieved by the same argument as in the proof of Proposition 12 in \cite{DyKa19}.
We compute using that $|x-y| \le 3|x_0-y|$ for every $x \in B_{\frac{R}{4}}(x_0)$:
\begin{align*}
\int_{B_{2R}(x_0) \setminus B_{\frac{R}{2}}(x_0)}& |u(y)|^2 |x_0 - y|^{-d-\alpha} \d y \le 3 \int_{B_{2R}(x_0) \setminus B_{\frac{R}{2}}(x_0)} \dashint_{B_{\frac{R}{4}}(x_0)} |u(y)|^2 |x-y|^{-d-\alpha} \d x \d y\\
&\le c\int_{B_{2R}(x_0)} \dashint_{B_{\frac{R}{4}}(x_0)} |u(y) - u(x)|^2 |x-y|^{-d-\alpha} \d x \d y\\
&+ c\dashint_{B_{\frac{R}{4}}(x_0)} |u(x)|^2  \left(\int_{B_{2R}(x_0) \setminus B_{\frac{R}{2}}(x_0)} |x-y|^{-d-\alpha}  \d y \right) \d x\\
&\le c R^{-d} \cE^{\alpha}_{B_{2R}(x_0)}(u,u) + c \dashint_{B_{\frac{R}{4}}(x_0)} |u(x)|^2  \left(\int_{B_{\frac{R}{4}}(x)^c} |x-y|^{-d-\alpha} \d y \right) \d x\\
&\le c R^{-d} \cE^{\alpha}_{B_{2R}(x_0)}(u,u) + c R^{-d-\alpha} \Vert u \Vert_{L^2(B_{\frac{R}{4}}(x_0))}^2 < \infty.
\end{align*}
Finiteness of the quantity on the right follows from \eqref{elower} and since $u \in V(B_{2R}(x_0) | \R^d)$.\\
For the second summand in \eqref{eq:finitetailhelp1}, we estimate using \eqref{UJS} and \eqref{cutoff} that for every $x \in B_{\frac{3R}{2}}(x_0)$:
\begin{align*}
&\int_{B_{2R}(x_0)^c} |u(y)|^2 K(x,y) \d y \le \int_{B_{2R}(x_0)^c} \dashint_{B_{\frac{R}{4}}(x)} |u(y)|^2 K(z,y) \d z \d y\\
&\quad\le cR^{-d}\int_{B_{2R}(x_0)^c} \dashint_{B_{2R}(x_0)} \hspace{-0.2cm}|u(y) - u(z)|^2 K_s(z,y) \d z \d y + 2\dashint_{B_{\frac{R}{4}}(x)}\hspace{-0.2cm} |u(z)|^2  \left(\int_{B_{2R}(x_0)^c} \hspace{-0.3cm} K_s(z,y) \d y \right) \d z\\
&\quad\le c R^{-d} [u]^2_{V(B_{2R}(x_0)| \R^d)} + c \dashint_{B_{\frac{R}{4}}(x)} |u(z)|^2  \left(\int_{B_{\frac{R}{4}}(z)^c}  K_s(z,y) \d y \right) \d z\\
&\quad\le c R^{-d} [u]^2_{V(B_{2R}(x_0)| \R^d)} + c R^{-d-\alpha} \Vert u \Vert_{L^2(B_{2R}(x_0))}^2 < \infty.
\end{align*}
Here we used that $B_{\frac{R}{4}}(x) \subset B_{2R}(x_0)$ for every $x \in B_{\frac{3R}{2}}(x_0)$.
\end{proof}

\begin{remark}
Note that \eqref{UJS} and \eqref{UJSdual} are not necessary for $\tail_{K,\alpha}(u,R,x_0)$ and $\widehat{\tail}_{K,\alpha}(u,R,x_0)$ to be finite, respectively. Consider for example a jumping kernel $K$ whose symmetric part satisfies global versions of \eqref{elower}, \eqref{eq:kuppershort}, namely
\begin{align*}
\cE^{K_s}(u,u) \ge c [u]_{H^{\alpha/2}(B_r)}^2, ~~ \forall v \in L^2(B_r),~ r > 0, \qquad
K(x,y) \le c |x-y|^{-d-\alpha}, ~~ \forall x,y \in \R^d,
\end{align*}
then we have that $V(B_{2R}|\R^d) = V^{\alpha}(B_{2R}|\R^d)$. Therefore,
\begin{align*}
\tail_{K,\alpha}(u,R,x_0) \le c\tail_{\alpha}(u,R,x_0) = R^{\alpha} \int_{B_{\frac{R}{2}}(x_0)} |u(y)||x_0-y|^{-d-\alpha} \d y < \infty, ~~ \forall u \in V(B_{2R}| \R^d).
\end{align*}
\end{remark}

\begin{remark}
\begin{itemize}
\item[(i)] Later, we will require finiteness of $\tail_{K,\alpha}(u,R,x_0)$ and $\widehat{\tail}_{K,\alpha}(u,R,x_0)$ in order to deduce local boundedness of weak solutions to \eqref{ellPDE} and \eqref{ellPDEdual} from \autoref{thm:LB1} and \autoref{thm:LB1dual}, respectively. The above lemma shows that under the natural assumptions \eqref{cutoff}, \eqref{elower} and \eqref{UJS}, or \eqref{UJSdual}, finiteness of the tail terms for weak solutions follows already from the solution concept.

\item[(ii)] For parabolic equations, the aforementioned assumptions merely imply finiteness of $\tail_{K,\alpha}(u(t),R,x_0)$ and $\widehat{\tail}_{K,\alpha}(u(t),R,x_0)$ for a.e. $t$, but do not yield a uniform upper bound in $t$. 

\item[(iii)] Since parabolic tails of the form $\sup_{t \in I}\tail_{K}(u(t),r,r+\rho,x_0)$ and $\sup_{t \in I}\widehat{\tail}_{K}(u(t),r,r+\rho,x_0)$ naturally appear in the analysis of solutions to \eqref{PDE} and \eqref{PDEdual}, respectively, it is an important research question to investigate these quantities and to derive suitable estimates. First results have been obtained in \cite{Str19a}, where an estimate for $\sup_{t \in I}\tail_{K}(u(t),r,r+\rho,x_0)$ is derived for global solutions $u$ to \eqref{PDE} in the symmetric case under pointwise bounds for $K$. Another attempt has been made in \cite{Kim19} for solutions to a parabolic boundary value problem with given continuous, bounded data. However, the proof of Lemma 5.3 in \cite{Kim19} is not complete.
\end{itemize}
\end{remark}

\section{Local boundedness via De Giorgi iteration}
\label{sec:Cacc}

The goal of this section is to prove that the supremum of a weak subsolution $u$ to \eqref{PDE}, or to \eqref{PDEdual}, can locally be estimated from above by the $L^2$-norm of $u$ and a nonlocal tail term (see \autoref{thm:LB1}). Under the assumption that the tail term is finite, this result is the key to proving the Harnack inequality. The strategy of proof is based on the De Giorgi iteration for nonlocal operators, as it was adopted in \cite{DKP14}, \cite{DKP16} and \cite{Coz17}.

\subsection{Caccioppoli estimates}
In this section nonlocal Caccioppoli estimates are established. They are derived by testing the weak formulation of \eqref{PDE}, or of \eqref{PDEdual}, with a test function of the form $\tau^2 (u - k)_+$. The lack of symmetry of the jumping kernel $K$ calls for a refinement of the existing proofs for symmetric operators. The main technical ingredient is \autoref{lemma:K1-lemma}.
Such estimates will be used in \autoref{sec:LBDG} to set up a De Giorgi-type iteration scheme which allows us to prove \autoref{thm:LB1}.

The following lemma can be regarded as a generalization of Proposition 8.5 in \cite{Coz17} to nonsymmetric jumping kernels.

\begin{lemma}
\label{lemma:Cacc}
Assume that \eqref{K1}, and \eqref{cutoff} hold true for some $\theta \in [\frac{d}{\alpha},\infty]$. Moreover, assume \eqref{Sob} if $\theta < \infty$. Then there exist $c_1,c_2 > 1$ such that for every $0 < \rho \le r \le 1$, every $l \in \R$, and every function $u \in V(B_{r+\rho}|\R^d)$ it holds
\begin{equation}
\label{eq:Cacc}
\begin{split}
\cEs_{B_{r+\rho}}(\tau w_+,\tau w_+) -& \cE_{B_{r+\rho}}(w_-,\tau w_+) \le c_1\cE(u,\tau^2 w_+) + c_2 \rho^{-\alpha} \Vert w^2_+\Vert_{L^{1}(B_{r+\rho})}\\
&\qquad+ c_2 \Vert w_+ \Vert_{L^1(B_{r+\rho})} \tail_K(w_+,r+\frac{\rho}{2},r+\rho),
\end{split}
\end{equation}
where $B_{2r} \subset \Omega$, $w = u-l$, $\tau = \tau_{r,\frac{\rho}{2}}$.
\end{lemma}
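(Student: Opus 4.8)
The plan is to test the weak formulation of \eqref{PDE} (or \eqref{PDEdual}) with $\phi = -\tau^2 w_+$, which is a legitimate nonpositive test function in $H_{B_{r+\rho}}(\R^d)$ since $\supp(\tau) \subset \overline{B_{r+\rho/2}}$ and $u \in V(B_{r+\rho}|\R^d)$; the resulting identity is $\cE(u,\tau^2 w_+) \le$ (terms involving $f$ and $\partial_t u$, which for the purely elliptic-style Caccioppoli statement are absorbed or absent). So the real content is an algebraic/analytic manipulation of $\cE_{B_{r+\rho}}(w, \tau^2 w_+)$, splitting it into the symmetric part $\cEs$ and the antisymmetric part $\cEa$, and bounding each piece. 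First I would reduce $\cE(u,\tau^2 w_+)$ to an integral over $B_{r+\rho} \times B_{r+\rho}$ plus a tail contribution: writing $w = u - l$ and decomposing $\R^d \times \R^d = (B_{r+\rho} \times B_{r+\rho}) \cup (B_{r+\rho} \times B_{r+\rho}^c) \cup (B_{r+\rho}^c \times B_{r+\rho})$, the far pieces only see $x \in B_{r+\rho/2}$ (where $\tau \neq 0$) paired with $y \in B_{r+\rho}^c$, and since $w_+(x)\tau^2(x)(w(x)-w(y))K(x,y) \ge -w_+(x)\tau^2(x) w_+(y) K(x,y) + (\text{nonneg. part})$, these are controlled by $\Vert w_+ \Vert_{L^1(B_{r+\rho})} \tail_K(w_+, r+\rho/2, r+\rho)$, giving the last term on the right of \eqref{eq:Cacc}.

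For the local double integral over $B_{r+\rho}\times B_{r+\rho}$, I would use the standard pointwise algebraic inequalities that are the workhorse of nonlocal Caccioppoli estimates. Splitting $w = w_+ - w_-$, one isolates the $(w_+)(w_+)$ interaction, the $(w_+)(w_-)$ interaction, and handles the $\tau^2$ versus $\tau\otimes\tau$ discrepancy. The key symmetric-part inequality is of the shape
\[
(w_+(x)-w_+(y))^2 \tau^2(x) \le C\big[(w(x)-w(y))(\tau^2(x) w_+(x) - \tau^2(y) w_+(y)) + (w_+(x)+w_+(y))^2(\tau(x)-\tau(y))^2\big],
\]
summed against $K_s$; the first bracket term reproduces (a constant times) $\cEs_{B_{r+\rho}}$-tested-against-$u$ (hence $\cE^{K_s}_{B_{r+\rho}}(u, \tau^2 w_+)$, which is part of $\cE(u,\tau^2 w_+)$), while the carré-du-champ term $(w_+(x)+w_+(y))^2(\tau(x)-\tau(y))^2$ integrated against $K_s$ is bounded by $\Gamma^{K_s}(\tau,\tau)$-type quantities and \eqref{cutoff}, yielding $\rho^{-\alpha}\Vert w_+^2\Vert_{L^1}$. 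The $-\cE_{B_{r+\rho}}(w_-,\tau w_+)$ term on the left is exactly the mixed-sign cross term that naturally falls out of this bookkeeping and is kept on the left rather than estimated, since $w_+ w_- \equiv 0$ makes it a pure jump contribution with a favorable sign structure only after this rearrangement.

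The genuinely new difficulty — and where \autoref{lemma:K1-lemma} is indispensable — is the antisymmetric part $\cEa_{B_{r+\rho}}(u,\tau^2 w_+) = \iint (u(x)-u(y))(\tau^2 w_+(x)+\tau^2 w_+(y)) K_a(x,y)$. Here one cannot simply bound by $K_s$ because that throws away the smallness; instead I would use Cauchy–Schwarz in the form $|K_a| = \frac{|K_a|}{J^{1/2}} \cdot J^{1/2}$, pairing $J^{1/2}(u(x)-u(y))$ (which after the Cauchy–Schwarz and using $\cE^J_{B_{r+\rho}} \le C\cE^{K_s}_{B_{r+\rho}}$ from \eqref{K1} contributes a $\delta \cEs_{B_{r+\rho}}(\tau w_+,\tau w_+)$ term, absorbable into the left side for $\delta$ small) with $\frac{|K_a|}{J^{1/2}} (\tau^2 w_+(x) + \tau^2 w_+(y))$, whose square integrates to $\int (\tau^2 w_+)^2(x) \big(\int_{B_{r+\rho}} \frac{|K_a(x,y)|^2}{J(x,y)}\d y\big)\d x$ up to symmetrization — precisely the quantity \autoref{lemma:K1-lemma}(i) controls by $\delta \cE^{K_s}_{B_{r+\rho}}(\tau w_+,\tau w_+) + c(C(\delta)+\delta\rho^{-\alpha})\Vert (\tau w_+)^2\Vert_{L^1}$. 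Choosing $\delta$ small absorbs all the $\cEs_{B_{r+\rho}}(\tau w_+,\tau w_+)$ terms into the left-hand side, leaving $\rho^{-\alpha}\Vert w_+^2\Vert_{L^1}$-type remainders with constants depending on the \eqref{K1} constant $C$, $d$, $\alpha$, $\theta$ (and, when $\theta<\infty$, on the \eqref{Sob} constant through $C(\delta)$). The main obstacle is getting the absorption bookkeeping exactly right: one must be careful that the quantity fed into \autoref{lemma:K1-lemma} is $\tau w_+$ (supported in $B_{r+\rho/2}$, as the lemma requires) and that every $\cEs(\tau w_+, \tau w_+)$ appearing — from the symmetric carré-du-champ split, from the antisymmetric Cauchy–Schwarz, and from the lemma itself — carries a coefficient that can be made small simultaneously, so that the final constants $c_1, c_2$ are finite. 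The dual case \eqref{PDEdual} is handled identically after replacing \eqref{K1} by \eqref{K1glob} and integrating $\frac{|K_a|^2}{J}$ over $\R^d$, using \autoref{lemma:K1-lemma}(ii).
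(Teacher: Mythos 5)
Your global architecture is right and matches the paper's: decompose $\cE(u,\tau^2 w_+)$ into a local symmetric part, a local antisymmetric part, and a far-field (tail) part; control the symmetric part by the algebraic identity and \eqref{cutoff}; control the antisymmetric part by Cauchy--Schwarz with $|K_a|=\tfrac{|K_a|}{J^{1/2}}J^{1/2}$ together with \autoref{lemma:K1-lemma}; then absorb every $\delta\,\cEs_{B_{r+\rho}}(\tau w_+,\tau w_+)$ into the left-hand side. One remark before the main issue: the lemma is a purely functional-analytic inequality for \emph{arbitrary} $u \in V(B_{r+\rho}|\R^d)$, so nothing in its proof tests the weak formulation of \eqref{PDE}; the quantity $\cE(u,\tau^2 w_+)$ sits on the right and is handed to the equation only later, when the Caccioppoli estimate is applied to a subsolution. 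You catch yourself one sentence in, but the opening framing should be deleted.

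The substantive gap is in your antisymmetric step. As written you pair $J^{1/2}\big(u(x)-u(y)\big)$ against $\tfrac{|K_a|}{J^{1/2}}\big(\tau^2 w_+(x)+\tau^2 w_+(y)\big)$, and after Young's inequality the first factor produces $\delta\,\cE^J_{B_{r+\rho}}(u,u) \le c\delta\,\cE^{K_s}_{B_{r+\rho}}(u,u)$. This is \emph{not} $\cEs_{B_{r+\rho}}(\tau w_+,\tau w_+)$ and it does not appear anywhere in the estimate \eqref{eq:Cacc}, so it cannot be absorbed: on $\{u<l\}$ the function $u$ can oscillate arbitrarily, so $\cE^{K_s}_{B_{r+\rho}}(u,u)$ is in general not controlled by $\cEs_{B_{r+\rho}}(\tau w_+,\tau w_+)+\rho^{-\alpha}\Vert w_+^2\Vert_{L^1}$. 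What is missing is an algebraic identity \emph{before} Cauchy--Schwarz, analogous to the one you write for the symmetric part: after separating the $w_-$ contribution (which yields the $-\cE^{K_a}_{B_{r+\rho}}(w_-,\cdot)$ piece that is kept on the left), one uses
\begin{equation*}
(w_1 - w_2)(\tau_1^2 w_1 + \tau_2^2 w_2) = \big(\tau_1^2 w_1^2 - \tau_2^2 w_2^2\big) + w_1 w_2\big(\tau_2^2 - \tau_1^2\big)
\end{equation*}
with $w_i=w_+(\cdot)$, $\tau_i=\tau(\cdot)$, so that the remaining antisymmetric integral splits into $\iint \big(\tau^2 w_+^2(x)-\tau^2 w_+^2(y)\big)K_a$ plus $\iint w_+(x)w_+(y)\big(\tau^2(y)-\tau^2(x)\big)K_a$. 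In the first the difference of squares factors as $(\tau w_+(x)-\tau w_+(y))(\tau w_+(x)+\tau w_+(y))$, and only \emph{now} does the Cauchy--Schwarz factor $J^{1/2}(\tau w_+(x)-\tau w_+(y))$ produce $\cE^J_{B_{r+\rho}}(\tau w_+,\tau w_+)\le c\,\cEs_{B_{r+\rho}}(\tau w_+,\tau w_+)$ by \eqref{K1}, which is absorbable; the companion factor is exactly the quantity \autoref{lemma:K1-lemma} is built to handle. The second piece is split via \eqref{eq:taualgebra} into a carr\'e-du-champ term (bounded by \eqref{cutoff}) and another $|K_a|$-term, again fed into the $K_1$-lemma. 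Without this intermediate identity, your argument leaves an uncontrollable $\cE^{K_s}_{B_{r+\rho}}(u,u)$ on the right and the absorption does not close.
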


\begin{proof}
Step 1: We claim that there exists a constant $c > 0$ such that
\begin{align}
\label{eq:Caccstep1}
\cEs_{B_{r+\rho}}(\tau w_+,\tau w_+) - \cEs_{B_{r+\rho}}(w_-,\tau w_+) &\le \cE^{K_s}_{B_{r+\rho}}(u,\tau^2 w_+) + c \rho^{-\alpha} \Vert w^2_+\Vert_{L^{1}(B_{r+\rho})}.
\end{align}
Observe that by the following algebraic identities,
\begin{align*}
a-b &= ((a-l)_+ - (b-l)_+) - ((a-l)_- - (b-l)_-),\\
(w_1 - w_2)(\tau_1 w_1 - \tau_2 w_2)& = (\tau_1 w_1 - \tau_2 w_2)^2 - w_1 w_2 (\tau_1 - \tau_2)^2,
\end{align*}
we have that 
\begin{align*}
\cEs_{B_{r+\rho}}(\tau w_+,\tau w_+) &- \cEs_{B_{r+\rho}}(w_-,\tau w_+) \\
&=\cE^{K_s}_{B_{r+\rho}}(u,\tau^2 w_+) +  \int_{B_{r+\rho}}\int_{B_{r+\rho}} w_+(x)w_+(y) (\tau(x) - \tau(y))^2 K_s(x,y) \d y \d x.
\end{align*}
Thus, \eqref{eq:Caccstep1} follows immediately from \eqref{cutoff}.

Step 2: For every $\delta > 0$ there exists $c > 0$ such that
\begin{align}
\label{eq:Caccstep2}
\cEa_{B_{r+\rho}}(u,\tau^2 w_+) \ge -\cE^{K_a}_{B_{r+\rho}}(w_-,\tau^2 w_+) - \delta \cEs_{B_{r+\rho}}(\tau w_+,\tau w_+) - c \rho^{-\alpha}\Vert w_+^2 \Vert_{L^1(B_{r+\rho})}.
\end{align}
For the proof, we first observe the following algebraic identity:
\begin{align*}
(w_1 - w_2)(\tau_1 w_1 + \tau_2 w_2)& = (\tau_1^2 w_1^2 - \tau_2^2 w_2^2) + w_1 w_2 (\tau_2^2 - \tau_1^2).
\end{align*}
Thus, we obtain
\begin{align*}
\cEa_{B_{r+\rho}}(u,\tau^2 w_+) &= -\cE^{K_a}_{B_{r+\rho}}(w_-,\tau^2 w_+) + \int_{B_{r+\rho}}\int_{B_{r+\rho}} (\tau^2 w^2_+(x) - \tau^2 w^2_+(y)) K_a(x,y)\d y \d x\\
&+ \int_{B_{r+\rho}}\int_{B_{r+\rho}} w_+(x) w_+(y) (\tau^2(y) - \tau^2(x)) K_a(x,y) \d y \d x =: I_1 + I_2 + I_3.
\end{align*}
For $I_2$, we estimate using \eqref{K1} and \eqref{eq:K1consequence}
\begin{align*}
I_2 &\ge - \frac{\delta}{2}\cE^{K_s}_{B_{r+\rho}}(\tau w_+ ,\tau w_+) - c \int_{B_{r+\rho}} \tau^2(x) w_+^2(x) \left(\int_{B_{r+\rho}} \frac{|K_a(x,y)|^2}{J(x,y)} \right) \d x\\
&\ge -\delta \cE^{K_s}_{B_{r+\rho}}(\tau w_+ ,\tau w_+) - c \rho^{-\alpha} \Vert w_+^2 \Vert_{L^1(B_{r+\rho})}.
\end{align*}
For $I_3$, we proceed as follows, using that by the following standard estimate
\begin{align}
\label{eq:taualgebra}
(\tau^2(x) - \tau^2(y)) \le  2(\tau(x) - \tau(y))^2 + 2(\tau(x) - \tau(y))(\tau(x) \wedge \tau(y)),
\end{align}
and \eqref{eq:KaKs}, \eqref{cutoff} and \eqref{K1}:
\begin{align*}
I_3 &\ge - 2\int_{B_{r+\rho}}\int_{B_{r+\rho}} (w_+^2(x) \vee w_+^2(y)) (\tau(x) - \tau(y))^2 K_s(x,y) \d y \d x\\
&- 2\int_{B_{r+\rho}}\int_{B_{r+\rho}} (w_+^2(x) \vee w_+^2(y)) (\tau(x) \wedge \tau(y))|\tau(x) - \tau(y)| |K_a(x,y)| \d y \d x\\
&\ge - c \rho^{-\alpha} \Vert w_+^2 \Vert_{L^1(B_{r+\rho})} - \int_{B_{r+\rho}} \tau^2(x)  w_+^2(x) \left( \int_{B_{r+\rho}}\frac{|K_a(x,y)|^2}{J(x,y)} \d y \right) \d x\\
&\ge - c \rho^{-\alpha} \Vert w_+^2 \Vert_{L^1(B_{r+\rho})} - \delta \cE^{K_s}_{B_{r+\rho}}(\tau w_+ ,\tau w_+).
\end{align*}
This proves \eqref{eq:Caccstep2}.

Step 3: Next, let us demonstrate how to prove
\begin{equation}
\label{eq:Caccstep3}
-\cE_{(B_{r+\rho} \times B_{r+\rho})^{c}}(u,\tau^2w_+) \le 2 \left(\int_{B_{r+\rho}} w_+(x) \d x\right)\tail_K(w_+,r+\frac{\rho}{2},r+\rho).
\end{equation}
We estimate
\begin{align*}
-\cE_{(B_{r+\rho} \times B_{r+\rho})^{c}}(u,\tau^2w_+) &= 2\int_{B_{r+\frac{\rho}{2}}}\int_{B_{r+\rho}^{c}}(u(y)-u(x))\tau^2 w_+(x) K(x,y) \d y \d x\\
&\le 2\int_{B_{r+\frac{\rho}{2}}}\int_{B_{r+\rho}^{c}}(u(y)-u(x))_+ \tau^2 w_+(x) K(x,y) \d y \d x\\
&\le 2\int_{B_{r+\frac{\rho}{2}}}\int_{B_{r+\rho}^{c}}(u(y)-l)_+ \tau^2 w_+(x) K(x,y) \d y \d x\\
&\le 2\int_{B_{r+\frac{\rho}{2}}}w_+(x) \sup_{z \in B_{r+\frac{\rho}{2}}}\left(\int_{B_{r+\rho}^{c}}w_+(y) K(z,y) \d y\right) \d x,
\end{align*}
where we used that $K$ is nonnegative and that $\tau \equiv 0$ in $B_{r+\frac{\rho}{2}}^{c}$.

Step 4: We will now combine \eqref{eq:Caccstep1}, \eqref{eq:Caccstep2} and \eqref{eq:Caccstep3}. 
Observe:
\begin{equation*}
\cEs_{B_{r+\rho}}(u,\tau^2w_+) = \cE(u,\tau^2w_+) - \cEa_{B_{r+\rho}}(u,\tau^2w_+) - \cE_{(B_{r+\rho} \times B_{r+\rho})^{c}}(u,\tau^2w_+).
\end{equation*}

Altogether, we immediately obtain the desired result by choosing $\delta > 0$ from Step 2 small enough.
\end{proof}

Note that $-\cE_{B_{r+\rho}}(w_-,\tau^2 w_+) \ge 0$ since $K \ge 0$. Thus, we have the following corollary of \autoref{lemma:Cacc}:

\begin{corollary}
\label{cor:Cacc}
Assume that \eqref{K1} and \eqref{cutoff} hold true for some $\theta \in [\frac{d}{\alpha},\infty]$. Moreover, assume \eqref{Sob} if $\theta < \infty$. Then there exist $c_1,c_2 > 0$ such that for every $0 < \rho \le r \le 1$, every $l \in \R$, and every function $u \in V(B_{r+\rho}|\R^d)$ it holds
\begin{equation}
\label{eq:Cacc2}
\begin{split}
\cEs_{B_{r+\rho}}(\tau w_+,\tau w_+) &\le c_1 \cE(u,\tau^2 w_+) + c_2 \rho^{-\alpha}\Vert w^2_+\Vert_{L^{1}(B_{r+\rho})}\\
&+ c_2 \Vert w_+ \Vert_{L^1(B_{r+\rho})} \tail_K(w_+,r+\frac{\rho}{2},r+\rho),
\end{split}
\end{equation}
where $B_{2r} \subset \Omega$, $w = u-l$ and $\tau = \tau_{r,\frac{\rho}{2}}$.
\end{corollary}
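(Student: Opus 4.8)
The plan is to obtain \eqref{eq:Cacc2} directly from \autoref{lemma:Cacc} by discarding a term of favorable sign on the left-hand side. Indeed, \autoref{lemma:Cacc} — whose hypotheses coincide with those of the corollary — provides constants $c_1, c_2 > 1$ with
\[
\cEs_{B_{r+\rho}}(\tau w_+,\tau w_+) - \cE_{B_{r+\rho}}(w_-,\tau w_+) \le c_1\cE(u,\tau^2 w_+) + c_2 \rho^{-\alpha}\Vert w^2_+\Vert_{L^{1}(B_{r+\rho})} + c_2 \Vert w_+ \Vert_{L^1(B_{r+\rho})}\tail_K(w_+,r+\tfrac{\rho}{2},r+\rho),
\]
so it is enough to verify that the subtracted term is nonnegative, i.e.\ $\cE_{B_{r+\rho}}(w_-,\tau w_+) \le 0$. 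Granting this, adding $-\cE_{B_{r+\rho}}(w_-,\tau w_+) \ge 0$ back to the left-hand side only weakens the estimate, and \eqref{eq:Cacc2} follows with the same constants.

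For the sign claim, I would expand
\[
\cE_{B_{r+\rho}}(w_-,\tau w_+) = \int_{B_{r+\rho}}\int_{B_{r+\rho}} \big(w_-(x)-w_-(y)\big)\,\tau(x) w_+(x)\, K(x,y)\,\d y\,\d x
\]
and use that $w_+$ and $w_-$ have disjoint supports: whenever $\tau(x)w_+(x)\neq 0$ we have $w(x) > 0$, hence $w_-(x) = 0$ and therefore $w_-(x)-w_-(y) = -w_-(y) \le 0$, while $\tau(x)w_+(x) \ge 0$ and $K(x,y) \ge 0$ by assumption. Thus the integrand is pointwise $\le 0$, so the integral is nonpositive, as claimed. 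The same argument handles the variant $-\cE_{B_{r+\rho}}(w_-,\tau^2 w_+) \ge 0$ noted before the corollary, since $\tau^2 w_+ \ge 0$ as well.

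There is no genuine obstacle here: all the substantive work — the algebraic identities, the application of \eqref{cutoff}, and the absorption of the antisymmetric contributions via \autoref{lemma:K1-lemma} — has already been carried out in the proof of \autoref{lemma:Cacc}, and the corollary is merely that lemma restated after throwing away a manifestly nonnegative term.
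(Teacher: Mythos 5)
Your proposal is correct and coincides with the paper's own one-line argument: the corollary is exactly \autoref{lemma:Cacc} after dropping the nonnegative term $-\cE_{B_{r+\rho}}(w_-,\tau w_+)$ from the left-hand side, and your justification (disjoint supports of $w_+$ and $w_-$ together with $K\ge 0$ make the integrand pointwise nonpositive) is precisely what the paper invokes in the sentence preceding the corollary. (The paper's remark writes $\tau^2 w_+$ where the lemma has $\tau w_+$; your version matches the lemma and the same sign argument covers either.)
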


\begin{remark}
Let us point out that both Caccioppoli-type inequalities \eqref{eq:Cacc} and \eqref{eq:Cacc2} appear in the literature for symmetric jumping kernels. \eqref{eq:Cacc} was introduced in \cite{Coz17} (see also \cite{CCV11}, \cite{Coz19}) and is used to prove H\"older estimates for small $\alpha$. For our purposes, \eqref{eq:Cacc2} is sufficient.
\end{remark}

Next, we present a Caccioppoli inequality that is tailored to subsolutions to \eqref{PDEdual}. Note that due to the different shape of the bilinear form, we obtain an additional summand on the right hand side of the estimate.

\begin{lemma}
\label{lemma:Caccdual}
Assume that \eqref{K1}, and \eqref{cutoff} hold true for some $\theta \in [\frac{d}{\alpha},\infty]$. Moreover, assume \eqref{Sob} if $\theta < \infty$. Then there exist $c_1,c_2 > 1$ such that for every $0 < \rho \le r \le 1$, every $l \in \R$, and every function $u \in V(B_{r+\rho}|\R^d)$ it holds
\begin{equation}
\label{eq:Caccdual}
\begin{split}
&\cEs_{B_{r+\rho}}(\tau w_+,\tau w_+) - \cE_{B_{r+\rho}}(w_-,\tau w_+) \le c_1\widehat{\cE}(u,\tau^2 w_+) + c \rho^{-\alpha} \Vert w^2_+\Vert_{L^{1}(B_{r+\rho})}\\
&+ c_2 l^2 \rho^{-\alpha} \left[|A(l,r+\rho)|+ |B_{r+\rho}|\left(\frac{|A(l,r+\rho)|}{|B_{r+\rho}|}\right)^{1/\theta'}\right] + c_2 \Vert w_+ \Vert_{L^1(B_{r+\rho})} \widehat{\tail}_K(u,r+\frac{\rho}{2},r+\rho),
\end{split}
\end{equation}
where $B_{2r} \subset \Omega$, $w = u-l$, $\tau = \tau_{r,\frac{\rho}{2}}$ and $A(l,r+\rho) = \{x \in B_{r+\rho} : w_+ > 0 \}$.
\end{lemma}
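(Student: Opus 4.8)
The plan is to mimic the proof of \autoref{lemma:Cacc} step by step, tracking carefully where the dual bilinear form $\widehat{\cE}$ differs from $\cE$. Recall that $\widehat{\cE}(u,v) = \cE(v,u)$ in the sense that the roles of the two arguments in the antisymmetric part are swapped; concretely $\widehat{\cE}(u,v) = \cEs(u,v) - \cEa(v,u)$ (the symmetric part is unchanged, the antisymmetric part picks up a sign and an argument-swap). So I would begin by writing
\[
\cEs_{B_{r+\rho}}(u,\tau^2 w_+) = \widehat{\cE}(u,\tau^2 w_+) - \widehat{\cE}_{(B_{r+\rho}\times B_{r+\rho})^c}(u,\tau^2 w_+) + \cEa_{B_{r+\rho}}(\tau^2 w_+, u),
\]
so that the plan reduces to estimating the last two terms from below (Steps analogous to 2 and 3 of \autoref{lemma:Cacc}), while Step~1 — the identity
$\cEs_{B_{r+\rho}}(\tau w_+,\tau w_+) - \cEs_{B_{r+\rho}}(w_-,\tau w_+) = \cEs_{B_{r+\rho}}(u,\tau^2 w_+) + \int\int w_+(x)w_+(y)(\tau(x)-\tau(y))^2 K_s + c\rho^{-\alpha}\|w_+^2\|_{L^1}$ — is literally unchanged since it only involves $K_s$ and \eqref{cutoff}.

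For the long-range term, Step~3 goes through verbatim but with $K(x,y)$ replaced by $K(y,x)$ inside the tail, producing the $\widehat{\tail}_K(u,r+\frac\rho2,r+\rho)$ summand; here I should be slightly careful that what appears is $\widehat{\tail}_K(u,\cdot,\cdot)$ and not $\widehat{\tail}_K(w_+,\cdot,\cdot)$ because the dual tail is defined with $|u(y)|K(y,x)$ and one bounds $(u(y)-l)_+ \le u(y)_- + (u(y))_+$ — actually one just bounds $(u(y)-l)_+$ by $|u(y)| + |l|$, and the $|l|$ contribution is absorbed. This is where the new term on the right-hand side must come from: the antisymmetric bulk term $\cEa_{B_{r+\rho}}(\tau^2 w_+, u)$, expanded via the algebraic identity $(w_1-w_2)(\tau_1^2 w_1 + \tau_2^2 w_2)$ applied with the arguments in the swapped order, will generate — besides the analogues of $I_1, I_2, I_3$ from \autoref{lemma:Cacc} that are controlled exactly as before using \autoref{lemma:K1-lemma} and \eqref{cutoff} — a genuinely new contribution coming from the fact that $u = w + l$ and $w_+ = (u-l)_+$, so that on the set where $w_+ = 0$ one still has $u$ possibly nonzero (equal to $l + w_-$). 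Splitting $u = l + w_+ - w_-$ inside the antisymmetric term and using $|K_a| \le K_s$, Cauchy–Schwarz, \eqref{K1} and \eqref{Sob}, the $l$-dependent pieces get estimated by $l^2 \rho^{-\alpha}$ times $|A(l,r+\rho)|$ together with a term of the form $|B_{r+\rho}| (|A(l,r+\rho)|/|B_{r+\rho}|)^{1/\theta'}$ — the latter arising from Hölder's inequality against $\|W\|_{L^\theta(B_{r+\rho})}$ where $W(x) = \int |K_a(x,y)|^2/J(x,y)\,dy$, since $1/\theta + 1/\theta' = 1$ and $\mathbbm1_A$ is estimated in $L^{\theta'}$.

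I would then close exactly as in Step~4 of \autoref{lemma:Cacc}: combine the unchanged Step~1 identity, the modified bulk estimate, and the modified long-range estimate, choosing the parameter $\delta>0$ small enough to absorb the $\delta\,\cEs_{B_{r+\rho}}(\tau w_+,\tau w_+)$ terms into the left-hand side, and noting $-\cE_{B_{r+\rho}}(w_-,\tau w_+) \ge 0$ is retained on the left since it also appears in the claimed inequality. The main obstacle I anticipate is the bookkeeping in the antisymmetric bulk term: one must carefully separate the part of $u$ that behaves like $w_+$ (handled by \autoref{lemma:K1-lemma} as in the non-dual case) from the "constant plus negative part" remainder, and verify that the remainder is supported, up to controllable terms, on $A(l,r+\rho)$ so that its measure enters — getting the precise exponent $1/\theta'$ and the precise form $|A|+|B_{r+\rho}|(|A|/|B_{r+\rho}|)^{1/\theta'}$ right is the delicate point, and it is precisely the place where the asymmetry between $\cE$ and $\widehat{\cE}$ forces the extra term that is absent in \autoref{lemma:Cacc}.
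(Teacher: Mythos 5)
Your proposal is correct and takes essentially the same route as the paper: Step~1 carries over verbatim, Step~3 produces $\widehat{\tail}_K(u,\cdot,\cdot)$ because in the dual form the first argument $u$ multiplies the off-diagonal factor directly, and the decisive new Step~2 writes $\widehat{\cE}^{K_a}(u,\tau^2 w_+) = \cE^{K_a}(\tau^2 w_+, u)$, splits $u = l + w_+ - w_-$, and controls the $l$-piece via \autoref{lemma:K1-lemma} together with H\"older's inequality against $\Vert \int_{B_{r+\rho}} |K_a(\cdot,y)|^2/J(\cdot,y)\,\d y \Vert_{L^{\theta}}$, yielding precisely the term $l^2\rho^{-\alpha}\bigl[|A(l,r+\rho)| + |B_{r+\rho}|(|A(l,r+\rho)|/|B_{r+\rho}|)^{1/\theta'}\bigr]$. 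Two small inaccuracies, neither affecting the argument: the relation is $\widehat{\cE}(u,v) = \cEs(u,v) + \cEa(v,u)$ (plus, not minus), and in Step~3 no bound on $(u(y)-l)_+$ is needed since $u$ appears as is in the off-diagonal term of the dual form.
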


\begin{proof}
The proof follows the structure from the proof of \autoref{lemma:Cacc}.

Step 1: As before, there exists a constant $c > 0$ such that
\begin{align}
\label{eq:Caccstep1dual}
\cEs_{B_{r+\rho}}(\tau w_+,\tau w_+) - \cE^{K_s}_{B_{r+\rho}}(w_-,\tau w_+) &\le \cE^{K_s}_{B_{r+\rho}}(u,\tau^2 w_+) + c \rho^{-\alpha} \Vert w^2_+\Vert_{L^{1}(B_{r+\rho})}.
\end{align}

Step 2: We claim that for every $\delta > 0$ there exists $c > 0$ such that
\begin{align}
\label{eq:Caccstep2dual}
\begin{split}
\widehat{\cE}^{K_a}_{B_{r+\rho}}(u&,\tau^2 w_+) \ge -\widehat{\cE}^{K_a}_{B_{r+\rho}}(w_-,\tau^2 w_+) - \delta \cEs_{B_{r+\rho}}(\tau w_+,\tau w_+)\\
&- c \rho^{-\alpha}\Vert w_+^2 \Vert_{L^1(B_{r+\rho})} - c l^2 \rho^{-\alpha} \left[|A(l,r+\rho)|+ |B_{r+\rho}|\left(\frac{|A(l,r+\rho)|}{|B_{r+\rho}|}\right)^{1/\theta'}\right].
\end{split}
\end{align}
This is the main part of the proof and it differs from Step 2 in \autoref{lemma:Cacc}. 
First, we observe:
\begin{align*}
\widehat{\cE}^{K_a}_{B_{r+\rho}}(u,\tau^2 w_+) = \cE^{K_a}_{B_{r+\rho}}(\tau^2 w_+ , u) = - \cE^{K_a}_{B_{r+\rho}}(\tau^2 w_+ , w_-) + \cE^{K_a}_{B_{r+\rho}}(\tau^2 w_+ , w_+) + \cE^{K_a}_{B_{r+\rho}}(\tau^2 w_+ , l).
\end{align*}
To estimate the second term, observe
\begin{align*}
(\tau_1^2 w_1 - \tau_2^2 w_2)(w_1 + w_2) = (\tau_1^2 w_1^2 - \tau_2^2 w_2^2) + w_1 w_2 (\tau_1^2 - \tau_2^2).
\end{align*}
Thus, we note that for every $\delta > 0$ there exists $c > 0$ such that:
\begin{align*}
\cE^{K_a}_{B_{r+\rho}}(\tau^2 w_+ , w_+) &= \int_{B_{r+\rho}}\int_{B_{r+\rho}} (\tau^2 w^2_+(x) - \tau^2 w^2_+(y)) K_a(x,y)\d y \d x\\
&+ \int_{B_{r+\rho}}\int_{B_{r+\rho}} w_+(x) w_+(y) (\tau^2(x) - \tau^2(y)) K_a(x,y) \d y \d x\\
&\ge - \delta \cE^{K_s}_{B_{r+\rho}}(\tau w_+ ,\tau w_+) - c \rho^{-\alpha} \Vert w_+^2 \Vert_{L^1(B_{r+\rho})}.
\end{align*}
The estimate in the last step works exactly like in the estimation of the quantities $I_2, I_3$ in the proof of \autoref{lemma:Cacc}.\\
The estimate of the remaining term $\cE^{K_a}_{B_{r+\rho}}(\tau^2 w_+ , l)$ goes as follows:
\begin{align*}
\cE^{K_a}_{B_{r+\rho}}(\tau^2 w_+ , l) &= 2 l \int_{B_{r+\rho}} \int_{B_{r+\rho}} (\tau^2 w_+(x) - \tau^2 w_+(y)) K_a(x,y) \d y \d x\\
&= 2 l \int_{B_{r+\rho}} \int_{B_{r+\rho}} (\tau(x) - \tau(y)) (\tau w_+(x) + \tau w_+(y)) K_a(x,y) \d y \d x\\
&+ 2 l \int_{B_{r+\rho}} \int_{B_{r+\rho}} (\tau(x) + \tau(y)) (\tau w_+(x) - \tau w_+(y)) K_a(x,y) \d y \d x\\
&=: J_1 + J_2.
\end{align*}
To estimate $J_1$, we apply \eqref{cutoff} and \eqref{eq:K1consequence}:
\begin{align*}
J_1 &\ge - 4l \int_{A(l,r+\rho)} \int_{B_{r+\rho}} |\tau(x) - \tau(y)| \tau w_+(x)  |K_a(x,y)| \d y \d x \\
&\ge - c l^2 \int_{A(l,r+\rho)}  \Gamma^{J}(\tau,\tau)(x) \d x - c \int_{A(l,r+\rho)} \int_{B_{r+\rho}} \tau^2 w_+^2(x) \frac{|K_a(x,y)|^2}{J(x,y)} \d y \d x\\
&\ge -c \rho^{-\alpha} l^2 |A(l,r+\rho)| - \delta \cE^{K_s}_{B_{r+\rho}}(\tau w_+ ,\tau w_+) - c \rho^{-\alpha} \Vert w_+^2 \Vert_{L^1(B_{r+\rho})}.
\end{align*}
$J_2$ can also be estimated with the help of \eqref{cutoff} and \eqref{K1}:
\begin{align*}
J_2 &\ge -4 l \int_{A(l,r+\rho)} \int_{B_{r+\rho}} (\tau(x) + \tau(y)) |\tau w_+(x) - \tau w_+(y)| |K_a(x,y)| \d y \d x\\
&\ge -8 l \int_{A(l,r+\rho)} \int_{B_{r+\rho}} |\tau(x) - \tau(y)| |\tau w_+(x) - \tau w_+(y)| |K_s(x,y)| \d y \d x \\
&-8 l \int_{A(l,r+\rho)} \int_{B_{r+\rho}} (\tau(x) \wedge \tau(y)) |\tau w_+(x) - \tau w_+(y)| |K_a(x,y)| \d y \d x \\
&\ge - c l^2 \int_{A(l,r+\rho)}  \Gamma^{K_s}(\tau,\tau)(x) \d x - \delta \cE^{K_s}_{B_{r+\rho}}(\tau w_+ , \tau w_+)\\
&- \delta \cE^{J}_{B_{r+\rho}}(\tau w_+ , \tau w_+) - c l^2 \int_{A(l,r+\rho)} \int_{B_{r+\rho}} (\tau^2(x) \wedge \tau^2(y)) \frac{|K_a(x,y)|^2}{J(x,y)} \d y \d x\\
&\ge -c \delta \cE^{K_s}_{B_{r+\rho}}(\tau w_+ , \tau w_+) - c l^2 \rho^{-\alpha} |A(l,r+\rho)| - c l^2 \rho^{-\alpha} |B_{r+\rho}| \left(\frac{|A(l,r+\rho)|}{|B_{r+\rho}|}\right)^{1/\theta'}.
\end{align*}
Here, we used that by \eqref{K1} and H\"older's inequality,
\begin{align*}
l^2 \int_{A(l,r+\rho)} \int_{B_{r+\rho}} (\tau^2(x) \wedge \tau^2(y)) \frac{|K_a(x,y)|^2}{J(x,y)} \d y \d x &\le l^2 \int_{A(l,r+\rho)}\hspace{-0.2cm} \tau^2(x) \left(\int_{B_{r+\rho}} \frac{|K_a(x,y)|^2}{J(x,y)} \d y \right) \d x\\
&\le c l^2 \Vert \tau^2 \Vert_{L^{\theta'}(A(l,r+\rho))}\\
&\le c l^2 \rho^{-\alpha} |B_{r+\rho}| \left(\frac{|A(l,r+\rho)|}{|B_{r+\rho}|}\right)^{1/\theta'},
\end{align*}
since $1 \le c |B_{r+\rho}|^{-\frac{\alpha}{d} + 1-\frac{1}{\theta'}} \le c \rho^{-\alpha} |B_{r+\rho}|^{1-\frac{1}{\theta'}}$ for some constant $c > 0$ since $\theta \ge \frac{d}{\alpha}$, which implies that $-\frac{\alpha}{d} + 1 - \frac{1}{\theta'} \in [-\frac{\alpha}{d},0)$ and $\rho \le r \le 1$.

Step 3: Next, let us demonstrate how to prove
\begin{equation}
\label{eq:Caccstep3dual}
-\widehat{\cE}_{(B_{r+\rho} \times B_{r+\rho})^{c}}(u,\tau^2w_+) \le 2 \left(\int_{B_{r+\rho}} w_+(x) \d x\right)\widehat{\tail}_K(u,r+\frac{\rho}{2},r+\rho).
\end{equation}
We estimate
\begin{align*}
-\widehat{\cE}_{(B_{r+\rho} \times B_{r+\rho})^{c}}(u,\tau^2w_+) &= 2 \int_{B_{r+\rho}^{c}} \int_{B_{r+\frac{\rho}{2}}} \tau^2 w_+(y) u(x) K(x,y) \d y \d x\\
&- 2\int_{B_{r+\frac{\rho}{2}}}\int_{B_{r+\rho}^{c}} \tau^2 w_+(x)u(x) K(x,y) \d y \d x\\
&\le 2\int_{B_{r+\frac{\rho}{2}}} w_+(y) \left(\int_{\R^d \setminus B_{r+\rho}} u(x) K(x,y) \d x \right) \d y,
\end{align*}
where we used that $K$ is nonnegative and that $\tau \equiv 0$ in $B_{r+\frac{\rho}{2}}^{c}$. Note that the second summand in the first step is negative since $w_+(x)u(x) \ge 0$, and can therefore be neglected.

Step 4: We will now combine \eqref{eq:Caccstep1dual}, \eqref{eq:Caccstep2dual} and \eqref{eq:Caccstep3dual}. 
Observe that:
\begin{equation*}
\cEs_{B_{r+\rho}}(u,\tau^2w_+) = \widehat{\cE}(u,\tau^2w_+) - \widehat{\cE}^{K_a}_{B_{r+\rho}}(u,\tau^2w_+) - \widehat{\cE}_{(B_{r+\rho} \times B_{r+\rho})^{c}}(u,\tau^2w_+).
\end{equation*}

Altogether, we immediately obtain the desired result by choosing $\delta > 0$ from Step 2 small enough.
\end{proof}

\begin{corollary}
\label{cor:Caccdual}
Assume that \eqref{K1glob} and \eqref{cutoff} hold true for some $\theta \in [\frac{d}{\alpha},\infty]$. Moreover, assume \eqref{Sob} if $\theta < \infty$. Then there exist $c_1,c_2 > 0$ such that for every $0 < \rho \le r \le 1$, every $l \in \R$, and every function $u \in V(B_{r+\rho}|\R^d)$ it holds
\begin{equation}
\label{eq:Cacc2dual}
\begin{split}
&\cEs_{B_{r+\rho}}(\tau w_+,\tau w_+) \le c_1 \widehat{\cE}(u,\tau^2 w_+) + c_2 \rho^{-\alpha}\Vert w^2_+\Vert_{L^{1}(B_{r+\rho})}\\
&+ c_2 l^2 \rho^{-\alpha} \left[|A(l,r+\rho)|+ |B_{r+\rho}|\left(\frac{|A(l,r+\rho)|}{|B_{r+\rho}|}\right)^{1/\theta'}\right] + c_2 \Vert w_+ \Vert_{L^1(B_{r+\rho})} \widehat{\tail}_K(w_+,r+\frac{\rho}{2},r+\rho),
\end{split}
\end{equation}
where $B_{2r} \subset \Omega$, $w = u-l$ and $\tau = \tau_{r,\frac{\rho}{2}}$.
\end{corollary}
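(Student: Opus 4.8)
The plan is to deduce \autoref{cor:Caccdual} directly from \autoref{lemma:Caccdual}, in complete analogy with the way \autoref{cor:Cacc} was obtained from \autoref{lemma:Cacc}. First I would observe that \eqref{K1glob} implies \eqref{K1}: the left-hand side of the first estimate in \eqref{K1} is dominated by that of \eqref{K1glob} because $B_{2r}\subset\R^d$ and the $L^{\theta}(B_{2r})$-norm is controlled by the $L^{\theta}(\R^d)$-norm, while the second condition is identical in both assumptions. Hence \autoref{lemma:Caccdual} applies under the hypotheses of \autoref{cor:Caccdual}, and only two elementary modifications of \eqref{eq:Caccdual} remain: discard the nonnegative term $-\cE_{B_{r+\rho}}(w_-,\tau w_+)$ on the left-hand side, and replace $\widehat{\tail}_K(u,r+\tfrac{\rho}{2},r+\rho)$ by $\widehat{\tail}_K(w_+,r+\tfrac{\rho}{2},r+\rho)$.

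For the first modification, I would write
\[
-\cE_{B_{r+\rho}}(w_-,\tau w_+) = -\int_{B_{r+\rho}}\int_{B_{r+\rho}} (w_-(x)-w_-(y))\,\tau(x) w_+(x) K(x,y)\,\d y\,\d x,
\]
and note that whenever $w_+(x)>0$ we have $w(x)>0$, hence $w_-(x)=0$, so the integrand equals $w_-(y)\tau(x)w_+(x)K(x,y)\ge 0$; it vanishes when $w_+(x)=0$. Therefore $-\cE_{B_{r+\rho}}(w_-,\tau w_+)\ge 0$, so that $\cEs_{B_{r+\rho}}(\tau w_+,\tau w_+)$ is bounded by the full left-hand side of \eqref{eq:Caccdual}, hence by its right-hand side.

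For the second modification, fix $y\in B_{r+\frac{\rho}{2}}$ and $x\in B_{r+\rho}^c$; then $|x-y|\ge\rho/2$, so $B_{r+\rho}^c\subset B_{\rho/2}(y)^c$. Applying \eqref{cutoff} to a cutoff function of type $\tau_{y,\rho/4,\rho/4}$ — whose support lies in $\overline{B_{\rho/2}(y)}\subset\Omega$, since $B_{r+\rho}\subset B_{2r}\subset\Omega$ — together with $K\le 2K_s$ from \eqref{eq:KaKs}, I obtain $\int_{B_{r+\rho}^c}K(x,y)\,\d x\le c\rho^{-\alpha}$. Since $u=w+l\le w_+ + l$ pointwise, this gives, for every $y\in B_{r+\frac{\rho}{2}}$,
\[
\int_{B_{r+\rho}^c} u(x) K(x,y)\,\d x \le \widehat{\tail}_K(w_+,r+\tfrac{\rho}{2},r+\rho) + c|l|\rho^{-\alpha}.
\]
Feeding this into Step 3 of the proof of \autoref{lemma:Caccdual} trades $\widehat{\tail}_K(u,\cdot)$ for $\widehat{\tail}_K(w_+,\cdot)$ at the cost of one extra summand $c|l|\rho^{-\alpha}\Vert w_+\Vert_{L^1(B_{r+\rho})}$, and Young's inequality $|l|\Vert w_+\Vert_{L^1(B_{r+\rho})}=|l|\int_{A(l,r+\rho)}w_+\le\tfrac12 l^2|A(l,r+\rho)| + \tfrac12\Vert w_+^2\Vert_{L^1(B_{r+\rho})}$ absorbs it into the summands $c_2\rho^{-\alpha}\Vert w_+^2\Vert_{L^1(B_{r+\rho})}$ and $c_2 l^2\rho^{-\alpha}|A(l,r+\rho)|$ already present in \eqref{eq:Cacc2dual}.

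Since the substantive work — especially the control of the antisymmetric boundary contributions via \autoref{lemma:K1-lemma} — has already been done in \autoref{lemma:Caccdual}, I do not expect a serious obstacle here. The only point demanding a little care is verifying that the two harmless extra contributions produced by the tail conversion (the factor $|l|\rho^{-\alpha}$ and the subsequent Young splitting) really do fit the precise form of the right-hand side of \eqref{eq:Cacc2dual}.
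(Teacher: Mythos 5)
Your proposal is correct, and more complete than what the paper actually provides. The first step --- discarding $-\cE_{B_{r+\rho}}(w_-,\tau w_+)$ after verifying it is nonnegative --- matches the paper's implicit argument, the same one used to pass from \autoref{lemma:Cacc} to \autoref{cor:Cacc}; your pointwise check (on $\{w_+(x)>0\}$ one has $w_-(x)=0$, so the integrand of $-\cE_{B_{r+\rho}}(w_-,\tau w_+)$ reduces to $w_-(y)\tau(x)w_+(x)K(x,y)\ge 0$) is the correct expansion of the paper's terse remark ``since $K\ge 0$''. The second step --- trading $\widehat{\tail}_K(u,\cdot)$ for $\widehat{\tail}_K(w_+,\cdot)$ --- is an addition: the paper never justifies this replacement, and in the display that cites \autoref{cor:Caccdual} in the proof of \autoref{thm:LB1}(ii) it in fact keeps the tail as $\widehat{\tail}_K(u(t),\cdot)$, exactly as \autoref{lemma:Caccdual} delivers it, which suggests the $w_+$ in \eqref{eq:Cacc2dual} is an inadvertent carry-over from \eqref{eq:Cacc2}. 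Your conversion argument --- bounding $\int_{B_{r+\rho}^c}K(x,y)\,\d x\le c\rho^{-\alpha}$ for $y\in B_{r+\frac{\rho}{2}}$ via \eqref{cutoff} applied to $\tau_{y,\rho/4,\rho/4}$ together with $|K_a|\le K_s$ and the symmetry of $K_s$, and then absorbing $|l|\rho^{-\alpha}\Vert w_+\Vert_{L^1(B_{r+\rho})}$ by Young's inequality into the $l^2|A(l,r+\rho)|$ and $\Vert w_+^2\Vert_{L^1}$ terms already present --- is sound, so you prove the corollary exactly as written, which is a slightly stronger statement. One small point worth spelling out: from $u=w+l\le w_++l$ you want $u\le w_++|l|$ in order to produce the $c|l|\rho^{-\alpha}$ term; this does hold for every $l\in\R$ (for $l<0$ one already has $u\le w_+$), but since the statement is asserted for all $l\in\R$ this deserves one explicit line.
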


\subsection{Local boundedness}
\label{sec:LBDG}

The following theorem is the main result of this section. It yields a priori local boundedness of subsolutions to \eqref{PDE}, or to \eqref{PDEdual} if the nonlocal tail is finite.

\pagebreak[3]
\begin{theorem}
\label{thm:LB1}
Assume that \eqref{eq:kuppershort}, \eqref{cutoff} and \eqref{Sob} hold true.
\begin{itemize}
\item[(i)] Assume that \eqref{K1} holds true for some $\theta \in [\frac{d}{\alpha},\infty]$. Then there exists $c > 0$ such that for every $0 < R \le 1$, every $\delta \in (0,1]$ and every nonnegative, weak subsolution $u$ to \eqref{PDE} in $I_{R}^{\ominus}(t_0) \times B_{2R}$
\begin{align*}
\sup_{I_{R/8}^{\ominus} \times B_{R/2}} u \le c \delta^{-\frac{d+\alpha}{2\alpha}} \left(\dashint_{I_{R/4}^{\ominus}} \dashint_{B_R} u^{2}(t,x)\d x \d t\right)^{1/2} + \delta \sup_{t \in I_{R/4}^{\ominus}}\tail_{K,\alpha}(u(t),R) +\delta R^{\alpha}\Vert f\Vert_{L^{\infty}},
\end{align*}
where $B_{2R} \subset \Omega$.

\item[(ii)] Assume that \eqref{K1glob} holds true for some $\theta \in (\frac{d}{\alpha},\infty]$. Then there exists $c > 0$ such that for every $0 < R \le 1$, every $\delta \in (0,1]$ and every nonnegative, weak subsolution $u$ to \eqref{PDEdual} in $I_{R}^{\ominus}(t_0) \times B_{2R}$
\begin{align*}
\sup_{I_{R/8}^{\ominus} \times B_{R/2}} u \le c \delta^{-\frac{\widetilde{\kappa}'}{2}} \left(\dashint_{I_{R/4}^{\ominus}} \left(\dashint_{B_R} u^{2\theta'}(t,x) \d x\right)^{\frac{1}{\theta'}} \hspace{-0.2cm} \d t\right)^{1/2} \hspace{-0.3cm} + \delta \sup_{t \in I_{R/4}^{\ominus}}\widehat{\tail}_{K,\alpha}(u(t),R) +\delta R^{\alpha}\Vert f\Vert_{L^{\infty}},
\end{align*}
where $B_{2R} \subset \Omega$ and $\widetilde{\kappa} = 1 + \frac{\alpha}{d} - \frac{1}{\theta} > 1$.
\end{itemize}

\end{theorem}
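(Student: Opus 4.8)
The plan is to combine the Caccioppoli-type estimates from \autoref{cor:Cacc} (resp.\ \autoref{cor:Caccdual}) with the Sobolev inequality \eqref{Sob} and a De~Giorgi-type iteration over a decreasing sequence of truncation levels and shrinking space-time cylinders, converting the nonlocal tail term $\tail_K$ appearing in the Caccioppoli estimate into $\tail_{K,\alpha}$ via \eqref{eq:tailtotail} (resp.\ \eqref{eq:tailtotaildual}). Concretely, I would fix $\sigma \in [R/2, R]$ interpreted as $r$ and work with balls $B_{r_j}$ where $r_j \downarrow R/2$ geometrically, say $r_j = \frac{R}{4}(1 + 2^{-j})$ or similar, with the analogous shrinking of the time intervals $I^{\ominus}_{r_j}$. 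The truncation levels are $l_j = (1 - 2^{-j})L$ for a parameter $L > 0$ to be chosen, and one sets $w_j = (u - l_j)_+$. Testing the weak formulation with $\tau^2 w_j$ and applying \autoref{cor:Cacc} yields control of $\cEs_{B_{r_j}}(\tau w_j, \tau w_j)$ by $c\rho_j^{-\alpha}\|w_j^2\|_{L^1}$ plus $c\|w_j\|_{L^1}\tail_K(w_j, \cdot, \cdot)$ plus a term involving $R^\alpha\|f\|_{L^\infty}$ coming from the right-hand side $(f,\tau^2 w_j)$; here $\rho_j \sim 2^{-j}R$.

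Next I would run the standard energy $\to$ measure-decrease mechanism. Using \eqref{Sob} to upgrade the energy bound to an $L^{2(1+\alpha/d)}$-bound of $\tau w_j$ (in space), then integrating in time and using the parabolic embedding (interpolating $L^\infty_t L^2_x$ with $L^2_t L^{2d/(d-\alpha)}_x$, exactly as in the symmetric nonlocal De~Giorgi scheme of \cite{DKP14}, \cite{DKP16}, \cite{Str19a}), one obtains a gain of integrability on a smaller cylinder. Setting $A_j = \dashint\dashint_{I^{\ominus}_{r_j}\times B_{r_j}} w_j^2$, the Chebyshev-type estimate $\{w_{j+1} > 0\} \subset \{w_j > 2^{-(j+1)}L\}$ combined with the integrability gain produces a recursion of the form $A_{j+1} \le C\, b^{j} L^{-2\kappa_0}\! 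A_j^{1+\kappa_0}$ for some $b > 1$ and $\kappa_0 > 0$ depending on $d, \alpha$ (and on $\theta$ in case (ii), where \eqref{Sob} must be combined with H\"older's inequality to absorb the $L^{2\theta'}$-type terms and the extra summand $l^2\rho^{-\alpha}[\,|A(l,\cdot)| + \cdots]$ from \autoref{cor:Caccdual}, which is exactly why one needs $\theta > d/\alpha$ strictly and why $\widetilde\kappa = 1 + \alpha/d - 1/\theta > 1$ appears). The tail contribution $\tail_K(w_j,\cdot,\cdot)$ is handled by first observing $w_j \le u$ and then applying \eqref{eq:tailtotail} to bound it by $c\,\rho_j^{-\alpha}(\text{geometric factor})\,\tail_{K,\alpha}(u,R)$; choosing the free parameter $L$ of the form $L = c\delta^{-(d+\alpha)/(2\alpha)}(\dashint\dashint u^2)^{1/2} + \delta\sup_t\tail_{K,\alpha}(u(t),R) + \delta R^\alpha\|f\|_{L^\infty}$ makes $A_0 \le$ (appropriate power of the relevant quantities) small enough that the fast-geometric-convergence lemma (e.g.\ the classical lemma: if $A_{j+1}\le C b^j A_j^{1+\kappa_0}$ and $A_0 \le C^{-1/\kappa_0}b^{-1/\kappa_0^2}$ then $A_j \to 0$) forces $A_\infty = 0$, i.e.\ $u \le L$ a.e.\ on $I^{\ominus}_{R/8}\times B_{R/2}$.

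The main obstacle I anticipate is the bookkeeping in case (ii): the dual Caccioppoli estimate \eqref{eq:Cacc2dual} carries the extra term $c_2 l^2\rho^{-\alpha}[\,|A(l,r+\rho)| + |B_{r+\rho}|(|A(l,r+\rho)|/|B_{r+\rho}|)^{1/\theta'}\,]$, which must be folded into the iteration so that it still produces a power $1 + \kappa_0 > 1$ on the right-hand side of the recursion; the term $|A(l,\cdot)|^{1/\theta'}$ is sublinear in the measure of the superlevel set, so one needs the strict inequality $\theta > d/\alpha$ to have enough room after combining with the Sobolev gain, and the precise exponent $\widetilde\kappa = 1 + \alpha/d - 1/\theta$ tracks exactly this competition. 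A secondary technical point is the correct treatment of the time derivative: one integrates \eqref{eq:supercal} against $\tau^2 w_j$ over a subinterval, uses $\int \partial_t u \cdot w_j \geq \frac12\frac{d}{dt}\int w_j^2$ (modulo the usual Steklov-averaging justification deferred to \autoref{sec:app}), and takes a supremum over the choice of the lower endpoint of the time interval to obtain simultaneously the $\sup_t L^2_x$ and the energy bounds on the shrunken cylinder — this is the standard parabolic De~Giorgi step but must be carried out uniformly in $j$. Once these are in place, the rest is the routine iteration, and the powers of $\delta$ in the statement come out of tracking how $A_0$ scales with $L$ through the Sobolev/interpolation constants.
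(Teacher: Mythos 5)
Your proposal follows the same De Giorgi route the paper takes, and all the essential ingredients are correctly identified: testing with $\tau^2 w_j$ via Steklov averaging (\autoref{lemma:steklovDG}), applying \autoref{cor:Cacc} (resp.\ \autoref{cor:Caccdual}), upgrading via \eqref{Sob} and parabolic $L^\infty_t L^2_x$--$L^2_t L^{2d/(d-\alpha)}_x$ interpolation with $\kappa = 1+\alpha/d$, converting $\tail_K$ to $\tail_{K,\alpha}$ through \eqref{eq:tailtotail}, choosing the truncation parameter to split into the $\delta$-weighted tail/$f$ terms and a $\delta^{-\kappa'/2}$-weighted $L^2$-mean, and closing via the fast-convergence lemma (Lemma 7.1 in \cite{Giu03}). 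One detail worth being explicit about in part (ii): the paper does not iterate the plain space-time $L^1$-norm of $w_{l_i}^2$ there, but rather $A_i = \int_{I^{\ominus}_{r_i/4}} \Vert w^2_{l_i}(t)\Vert_{L^{\theta'}(B_{r_i})}\,\d t$. Iterating $\dashint\dashint w_j^2$ as you wrote for case (i) would only close the recursion when $\frac{1}{\kappa'} - \frac{1}{\theta} > 0$, i.e.\ $\theta > \frac{d+\alpha}{\alpha}$; passing to the $L^{1,\theta'}_{t,x}$-norm of $w^2$ (which amounts to applying \eqref{eq:reasonforthetanorms}) is exactly what recovers the full range $\theta > \frac{d}{\alpha}$ and produces the exponent $\widetilde\kappa = 1 + \frac{\alpha}{d} - \frac{1}{\theta}$ and the $L^{2,2\theta'}_{t,x}$-norm of $u$ in the statement. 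You hint at this when tracking $\widetilde\kappa$, but the change in the iterated quantity should be spelled out, as it is the point where the lower-order term from \autoref{cor:Caccdual} genuinely forces a deviation from the symmetric case.
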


\begin{proof}
We first explain how to prove (i). Let $l > 0$ and define $w_l := (u-l)_+$. Let $r,\rho > 0$ such that $R/2 \le r \le R$ and $\rho \le r \le r+\rho \le R$. 
Let $\tau = \tau_{r,\frac{\rho}{2}}$. Moreover, we define $\chi \in C^1(\R)$ to be a function satisfying $0 \le \chi \le 1$, $\Vert \chi' \Vert_{\infty} \le 16 ((r+\rho)^{\alpha} - r^{\alpha})^{-1}$, $\chi(t_0 - ((r+\rho)/4)^{\alpha}) = 0$, $\chi \equiv 1$ in $I_{r/4}^{\ominus}(t_0)$. Since $u$ is a weak subsolution to \eqref{PDE}, \autoref{lemma:steklovDG} yields that for any $t \in I_{r/4}^{\ominus}(t_0)$
\begin{align*}
\int_{B_{r+\rho}}&\chi^2(t) \tau^2(x) w_l^2(t,x) \d x + \int^t_{t_0 - ((r+\rho)/4)^{\alpha}}\chi^2(s) \cE(u(t),\tau^2 w_l(t))\\
&\le \int^t_{t_0 - ((r+\rho)/4)^{\alpha}}\hspace{-0.2cm}\chi^2(s) (f(s),\tau^2 w_l(s)) \d s + 2 \int^t_{t_0 - ((r+\rho)/4)^{\alpha}}\hspace{-0.2cm}\chi(s) \vert \chi'(s)\vert \int_{B_{r+\rho}}  \tau^2(x) w^2_l(s,x) \d x \d s\\
&\le \Vert f \Vert_{L^{\infty}} \int_{I_{(r+\rho)/4}^{\ominus}} \Vert w_l(s) \Vert_{L^1(B_{r+\rho})} \d s + c_1 ((r+\rho)^{\alpha}-r^{\alpha})^{-1} \int_{I_{(r+\rho)/4}^{\ominus}}\Vert w_l^2(s)\Vert_{L^{1}(B_{r+\rho})} \d s
\end{align*}
for some constant $c_1 > 0$. By application of \autoref{cor:Cacc}, we obtain
\begin{align}
\label{eq:LB1prep15}
\begin{split}
\sup_{t \in I_{r/4}^{\ominus}} &\int_{B_r}w_l^2(t,x) \d x + \int_{I_{r/4}^{\ominus}} \cEs_{B_{r+\rho}}(\tau w_l(s),\tau w_l(s)) \d s\\
&\le c_2 \left(\rho^{-\alpha} \vee ((r+\rho)^{\alpha}-r^{\alpha})^{-1}\right) \int_{I_{(r+\rho)/4}^{\ominus}}\Vert w_l^2(s)\Vert_{L^{1}(B_{r+\rho})} \d s\\
&+ c_2 \Vert w_l\Vert_{L^1(I_{(r+\rho)/4}^{\ominus} \times B_{r+\rho})} \left(\sup_{t \in I_{(r+\rho)/4}^{\ominus}}\tail_K(u(t),r+\frac{\rho}{2},r+\rho)  + \Vert f\Vert_{L^{\infty}} \right)
\end{split}
\end{align}
for some $c_2 > 0$. Recall $\kappa = 1 + \frac{\alpha}{d} > 1$. H\"older interpolation and Sobolev inequality \eqref{Sob} yield
\begin{align}
\label{eq:LB1prep2}
\begin{split}
&\Vert w^2_l \Vert_{L^{\kappa}(I_{r/4}^{\ominus} \times B_r)} \le \left(\sup_{t \in I_{r/4}^{\ominus}}\Vert w^2_l(t) \Vert_{L^{1}(B_r)}^{\kappa-1} \int_{I_{r/4}^{\ominus}}\Vert w^2_l(s) \Vert_{L^{\frac{d}{d-\alpha}}(B_r)} \d s\right)^{1/\kappa}\\
&\qquad\le c_3 \sigma(r,\rho) \Vert w_l^2\Vert_{L^{1}(I_{(r+\rho)/4}^{\ominus} \times B_{r+\rho})}\\
&\qquad+ c_3 \Vert w_l\Vert_{L^1(I_{(r+\rho)/4}^{\ominus} \times B_{r+\rho})} \left(\sup_{t \in I_{(r+\rho)/4}^{\ominus}}\tail_K(u(t),r+\frac{\rho}{2},r+\rho) + \Vert f\Vert_{L^{\infty}}\right),
\end{split}
\end{align}
where $c_3 > 0$ and we used that there is $c > 0$ such that $(\rho^{-\alpha} \vee ((r+\rho)^{\alpha}-r^{\alpha})^{-1}) \le c\rho^{-(\alpha \vee 1)}(r+\rho)^{(\alpha \vee 1) - \alpha} =: \sigma(r,\rho)$. Furthermore, set $|A(l,r)| :=  \int_{I_{r/4}^{\ominus}}\vert\lbrace x \in B_r : u(s,x) > l\rbrace\vert \d s$. Then by application of H\"older's inequality with $\kappa, \frac{\kappa}{\kappa-1}$, both in time and in space, and \eqref{eq:LB1prep2}
\begin{align}
\label{eq:LB1help1}
\begin{split}
&\Vert w^2_l\Vert_{L^{1}(I_{r/4}^{\ominus} \times B_r)} \le \vert A(l,r) \vert^{\frac{1}{\kappa'}} \Vert w^2_l \Vert_{L^{\kappa}(I_{r/4}^{\ominus} \times B_r)}\\
&\qquad\le c_4\vert A(l,r) \vert^{\frac{1}{\kappa'}} \Bigg[\sigma(r,\rho) \Vert w_l^2\Vert_{L^{1}(I_{(r+\rho)/4}^{\ominus} \times B_{r+\rho})}\\
&\qquad+ \Vert w_l\Vert_{L^1(I_{(r+\rho)/4}^{\ominus} \times B_{r+\rho})} \left(\sup_{t \in I_{(r+\rho)/4}^{\ominus}}\tail_K(u(t),r+\frac{\rho}{2},r+\rho)  + \Vert f\Vert_{L^{\infty}} \right) \Bigg],
\end{split}
\end{align}
where $c_4 > 0$ is a constant. 
Let now $0< k < l$ be arbitrary. Then the following holds true:
\begin{align}
\label{eq:LB1help2}
\begin{split}
\Vert w^2_l \Vert_{L^{1}(I_{(r+\rho)/4}^{\ominus} \times B_{r+\rho})} &\le \Vert w^2_k \Vert_{L^{1}(I_{(r+\rho)/4}^{\ominus} \times B_{r+\rho})},\\
\Vert w_l \Vert_{L^1(I_{(r+\rho)/4}^{\ominus} \times B_{r+\rho})} &\le \frac{\Vert w^2_k \Vert_{L^{1}(I_{(r+\rho)/4}^{\ominus} \times B_{r+\rho})}}{l-k},\\
\vert A(l,r)\vert &\le \frac{\Vert w^2_k \Vert_{L^{1}(I_{(r+\rho)/4}^{\ominus} \times B_{r+\rho})}}{(l-k)^2}.
\end{split}
\end{align}
By combining \eqref{eq:LB1help1} and \eqref{eq:LB1help2} we obtain
\begin{align*}
&\Vert w^2_l\Vert_{L^{1}(I_{r/4}^{\ominus} \times B_r)} \\
&\le c_5 \vert A(l,r)\vert^{\frac{1}{\kappa'}} \left( \sigma(r,\rho) + \frac{\sup_{t \in I_{(r+\rho)/4}^{\ominus}}\hspace{-0.2cm}\tail_K(u(t),r+\frac{\rho}{2},r+\rho) + \Vert f\Vert_{L^{\infty}}}{l-k} \right) \Vert w^2_k \Vert_{L^{1}(I_{(r+\rho)/4}^{\ominus} \times B_{r+\rho})} \\
&\le c_6 (l-k)^{-\frac{2}{\kappa'}}\left(\sigma(r,\rho) + \frac{\sup_{t \in I_{(r+\rho)/4}^{\ominus}}\hspace{-0.2cm}\tail_K(u(t),r+\frac{\rho}{2},r+\rho) + \Vert f\Vert_{L^{\infty}} }{l-k}\right) \Vert w^2_k \Vert_{L^{1}(I_{(r+\rho)/4}^{\ominus} \times B_{r+\rho})} ^{1 + \frac{1}{\kappa'}}
\end{align*}
for some $c_5, c_6 > 0$. 
The plan for the remainder of the proof is to iterate the above estimate. Recall \eqref{eq:tailtotail}, which we will apply in the sequel. Let us now set up the iteration scheme.
For this purpose, we define two sequences $l_i = M(1-2^{-i})$ and $\rho_i = 2^{-i-1}R$, $i \in \N$, where $M > 0$ is to be determined later. We also set $r_0 = R$, $r_{i+1} = r_{i} - \rho_{i+1} = \frac{R}{2}(1 + \left(\frac{1}{2}\right)^{i+1})$ and $l_0 = 0$. Then $r_i \searrow R/2$ and $l_i \nearrow M$ as $i \to \infty$.\\
Note that, $\sigma(r_{i},\rho_{i}) \le c_{7} R^{-\alpha}2^{2i}$ for some $c_{7} > 0$. Define $A_i = \Vert w^2_{l_i}\Vert_{L^{1}(I_{r_i/4}^{\ominus} \times B_{r_i})}$. It holds
\begin{align}
\label{eq:LB1help3}
\begin{split}
A_{i} &\le c_{8} \frac{1}{(l_{i} - l_{i-1})^{\frac{2}{\kappa'}}}\left( \sigma(r_i,\rho_i) +  \frac{ \sup_{t \in I_{r_i/4}^{\ominus}}\tail_{K}(u(t),r_i+\frac{\rho_i}{2},r_{i}+\rho_i) + \Vert f\Vert_{L^{\infty}}}{l_{i}-l_{i-1}} \right)A_{i-1}^{1+\frac{1}{\kappa'}}\\
&\le c_{9}\frac{1}{(l_{i} - l_{i-1})^{\frac{2}{\kappa'}}}\left( \sigma(r_i,\rho_i) + \rho_i^{-\alpha}\left(\frac{r_i}{\rho_i}\right)^d \frac{ \sup_{t \in I_{R/4}^{\ominus}}\tail_{K,\alpha}(u(t),R) + R^{\alpha}\Vert f\Vert_{L^{\infty}}}{l_{i}-l_{i-1}} \right)A_{i-1}^{1+\frac{1}{\kappa'}}\\
&\le c_{10}\frac{2^{\frac{2i}{\kappa'}}}{M^{\frac{2}{\kappa'}}}\left(\frac{2^{2i}}{R^{\alpha}}  + \frac{2^{(1+\alpha+d)i}}{R^{\alpha}} \frac{\sup_{t \in I_{R/4}^{\ominus}}\tail_{K,\alpha}(u(t),R) + R^{\alpha}\Vert f\Vert_{L^{\infty}}}{M} \right)A_{i-1}^{1+\frac{1}{\kappa'}}\\
&\le \frac{c_{11}}{R^{\alpha} M^{\frac{2}{\kappa'}}} 2^{\gamma i} \left(1 + \frac{ \sup_{t \in I_{R/4}^{\ominus}}\tail_{K,\alpha}(u(t),R) + R^{\alpha}\Vert f\Vert_{L^{\infty}}}{M}\right)A_{i-1}^{1+\frac{1}{\kappa'}}
\end{split}
\end{align}
for $c_{8}, c_{9}, c_{10}, c_{11} > 0$, $\gamma > 1$. Note that here we also applied \eqref{eq:tailtotail}. If, given $\delta \in (0,1]$, we choose $M \ge \delta \left(\sup_{t \in I_{R/4}^{\ominus}}\tail_{K,\alpha}(u(t),R)+ R^{\alpha}\Vert f\Vert_{L^{\infty}}\right)$ then,
\begin{equation*}
A_i \le \frac{c_{12}}{\delta R^{\alpha}M^{\frac{2}{\kappa'}}}C^i A_{i-1}^{1+\frac{1}{\kappa'}},
\end{equation*}
where $C := 2^{\frac{2}{\kappa'} + 2} > 1$ and $c_{12} > 0$.
We choose
\begin{equation*}
M := \delta \left(\sup_{t \in I_{R/4}^{\ominus}}\tail_{K,\alpha}(u(t),R) + R^{\alpha}\Vert f\Vert_{L^{\infty}}\right) + C^{\frac{\kappa'^2}{2}} c_{12}^{\frac{\kappa'}{2}}\delta^{-\frac{\kappa'}{2}}R^{-\frac{\alpha\kappa'}{2}} A_0^{1/2}.
\end{equation*}
It follows that
\begin{equation*}
A_0 \le c_{12}^{-\kappa'} \delta^{\kappa'} R^{\alpha\kappa'} M^2 C^{-\kappa'^2} = \left(\frac{c_{12}}{\delta R^{\alpha} M^{\frac{2}{\kappa'}}}\right)^{-\kappa'} C^{-\kappa'^2},
\end{equation*}
and therefore we know from Lemma 7.1 in \cite{Giu03} that $A_i \searrow 0$ as $i \to \infty$, i.e.,
\begin{align*}
\sup_{I_{R/8}^{\ominus} \times B_{R/2}} u &\le M = \delta \left(\sup_{t \in I_{R/4}^{\ominus}}\tail_{K,\alpha}(u(t),R) + R^{\alpha}\Vert f\Vert_{L^{\infty}}\right) + C^{\frac{\kappa'^2}{2}} c_{12}^{\frac{\kappa'}{2}}\delta^{-\frac{\kappa'}{2}}R^{-\frac{\alpha\kappa'}{2}} A_0^{1/2}\\
&=\delta \left(\sup_{t \in I_{R/4}^{\ominus}}\tail_{K,\alpha}(u(t),R)+ R^{\alpha}\Vert f\Vert_{L^{\infty}}\right) + c_{13}\delta^{-\frac{\kappa'}{2}}\left(R^{-\alpha\kappa'} \int_{I_{R/4}^{\ominus}} \int_{B_R}\hspace{-0.2cm} u^2(t,x) \d x \d t\right)^{1/2}
\end{align*}
for $c_{13} > 0$. Note that by definition of $\kappa$ it holds that $\alpha\kappa' = \alpha + d$. Therefore, 
\begin{align*}
\sup_{I_{R/8}^{\ominus} \times B_{R/2}} u &\le \delta \sup_{t \in I_{R/4}^{\ominus}}\tail_{K,\alpha}(u(t),R) + \delta R^{\alpha}\Vert f\Vert_{L^{\infty}} + c_{14}\delta^{-\frac{\kappa'}{2}}\left(\dashint_{I_{R/4}^{\ominus}}\dashint_{B_R} u^{2}(t,x)\d x \d t\right)^{1/2}
\end{align*}
for some $c_{14} > 0$. This proves (i).

To prove (ii), observe that instead of \eqref{eq:LB1prep15}, the following estimate follows by applying \autoref{cor:Caccdual} to a weak subsolution $u$ to \eqref{PDEdual}
\begin{align*}
\sup_{t \in I_{r/4}^{\ominus}}\int_{B_{r}} w_l^2(t,x) \d x &+ \int_{I_{r/4}^{\ominus}}\cEs_{B_{r+\rho}}(\tau w_l(t),\tau w_l(t)) \le c_1 \sigma(r,\rho) \Vert w^2_l(t)\Vert_{L^{1}(I_{(r+\rho)/4}^{\ominus} \times B_{r+\rho})}\\
& + c_1 l^2 \rho^{-\alpha} \left[|A(l,r+\rho)| + |B_{r+\rho}|^{\frac{1}{\theta}} \int_{I_{(r+\rho)/4}^{\ominus}} |B_{r+\rho} \cap \{u(t,x) > l\}|^{\frac{1}{\theta'}} \d t \right]\\
&+ c_1 \Vert w_l(t) \Vert_{L^1(I_{(r+\rho)/4}^{\ominus} \times B_{r+\rho})} \left(\sup_{t \in I_{(r+\rho)/4}^{\ominus} }\widehat{\tail}_K(u(t),r+\frac{\rho}{2},r+\rho) + \Vert f\Vert_{L^{\infty}}\right)
\end{align*} 
for some $c_1 > 0$.
By proceeding as in the proof of (i), we derive the following estimate as a replacement of \eqref{eq:LB1help1}, where $\widetilde{\kappa} := \kappa -\frac{1}{\theta} > 1$,
\begin{align*}
&\int_{I_{r/4}^{\ominus}} \Vert w^2_l(t)\Vert_{L^{\theta'}(B_r)} \d t \le \vert A(l,r) \vert^{\frac{1}{\widetilde{\kappa}'}} \left( \int_{I_{r/4}^{\ominus}} \Vert w^2_l(t) \Vert^{\widetilde{\kappa}}_{L^{\widetilde{\kappa} \theta'}(B_r)} \d t \right)^{1/\widetilde{\kappa}}\\
&\le \vert A(l,r) \vert^{\frac{1}{\widetilde{\kappa}'}} \left(\sup_{t \in I_{r/4}^{\ominus}}\Vert w^2_l(t) \Vert_{L^{1}(B_r)}^{\widetilde{\kappa}-1} \int_{I_{r/4}^{\ominus}}\Vert w^2_l(s) \Vert_{L^{\frac{d}{d-\alpha}}(B_r)} \d s\right)^{1/\widetilde{\kappa}}\\
&\le c_2\vert A(l,r+\rho) \vert^{\frac{1}{\widetilde{\kappa}'}} \Bigg[\sigma(r,\rho) \Vert w_l^2\Vert_{L^{1}(I_{(r+\rho)/4}^{\ominus} \times B_{r+\rho})}\\
&\qquad\qquad\qquad\qquad\qquad+ l^2 \rho^{-\alpha} \left[|A(l,r+\rho)| + |B_{r+\rho}|^{\frac{1}{\theta}} \int_{I_{(r+\rho)/4}^{\ominus}} |B_{r+\rho} \cap \{u(t,x) > l\}|^{\frac{1}{\theta'}} \d t \right]  \\
&\qquad\qquad\qquad\qquad\qquad+ \Vert w_l\Vert_{L^1(I_{(r+\rho)/4}^{\ominus} \times B_{r+\rho})} \left(\sup_{t \in I_{(r+\rho)/4}^{\ominus}}\tail_K(u(t),r+\frac{\rho}{2},r+\rho)  + \Vert f\Vert_{L^{\infty}} \right) \Bigg]\\
&\le c_3 |B_{r+\rho}|^{\frac{1}{\theta}} |A(l,r+\rho)|^{\frac{1}{\widetilde{\kappa}'}} \Bigg[ \sigma(r,\rho) \left(1 +  \left(\frac{l}{l-k}\right)^2 \right)\\
&\qquad + \frac{\sup_{t \in I_{(r+\rho)/4}^{\ominus}}\tail_K(u(t),r+\frac{\rho}{2},r+\rho) + \Vert f\Vert_{L^{\infty}}}{l-k} \Bigg] \int_{I_{(r+\rho)/4}^{\ominus}}\Vert w^2_k(t) \Vert_{L^{\theta'}(B_{r+\rho})} \d t \\
&\le c_4 \frac{|B_{r+\rho}|^{\frac{1}{\theta}}}{(l-k)^{\frac{2}{\widetilde{\kappa}'}}} \left[ \sigma(r,\rho) \left(1 + \left(\frac{l}{l-k}\right)^2 \right) + \frac{\sup_{t \in I_{(r+\rho)/4}^{\ominus}}\tail_K(u(t),r+\frac{\rho}{2},r+\rho) + \Vert f\Vert_{L^{\infty}}}{l-k} \right]\\ 
&\qquad\left(\int_{I_{(r+\rho)/4}^{\ominus}}\Vert w^2_k(t) \Vert_{L^{\theta'}(B_{r+\rho})} \d t\right)^{1+\frac{1}{\widetilde{\kappa'}}}
\end{align*}
for some $c_2,c_3,c_4 > 0$ and we used
\begin{align}
\label{eq:reasonforthetanorms}
\int_{I_{(r+\rho)/4}^{\ominus}} |B_{r+\rho} \cap \{u(t,x) > l\}|^{\frac{1}{\theta'}} \d t \le (l-k)^{-2} \int_{I_{(r+\rho)/4}^{\ominus}}\Vert w^2_k(t) \Vert_{L^{\theta'}(B_{r+\rho})} \d t,
\end{align}
and applied \eqref{eq:LB1help2}. From here, the proof basically proceeds as before. We define sequences $(l_i),(\rho_i),(r_i)$ as before, denote $A_i = \int_{I_{r_i/4}^{\ominus}} \Vert w_{l_i}(t) \Vert_{L^{\theta'}(B_{r_i})} \d t$ and deduce that for any $\delta \in (0,1]$, by choosing $M \ge \delta \left(\sup_{t \in I_{R/4}^{\ominus}}\tail_{K,\alpha}(u(t),R)+ R^{\alpha}\Vert f\Vert_{L^{\infty}}\right)$
\begin{equation*}
A_i \le \frac{c_{5}}{\delta R^{\alpha - \frac{d}{\theta}}M^{\frac{2}{\widetilde{\kappa}'}}}C^i A_{i-1}^{1+\frac{1}{\widetilde{\kappa}'}},
\end{equation*}
where $C > 1$ and $c_5 > 0$ are constants.
We choose
\begin{equation*}
M := \delta \left(\sup_{t \in I_{R/4}^{\ominus}}\tail_{K,\alpha}(u(t),R) + R^{\alpha}\Vert f\Vert_{L^{\infty}}\right) + C^{\frac{\widetilde{\kappa}'^2}{2}} c_{5}^{\frac{\widetilde{\kappa}'}{2}}\delta^{-\frac{\widetilde{\kappa}'}{2}} R^{-\frac{\left(\alpha - \frac{d}{\theta}\right)\widetilde{\kappa}'}{2}} A_0^{1/2}.
\end{equation*}
It follows
\begin{equation*}
A_0 \le c_{5}^{-\widetilde{\kappa}'} \delta^{\widetilde{\kappa}'} R^{\left(\alpha -\frac{d}{\theta}\right)\widetilde{\kappa}'} M^2 C^{-\widetilde{\kappa}'^2} = \left(\frac{c_{5}}{\delta R^{\alpha-\frac{d}{\theta}} M^{\frac{d}{\widetilde{\kappa}'}}}\right)^{-\widetilde{\kappa}'} C^{-\widetilde{\kappa}'^2},
\end{equation*}
and therefore we know from Lemma 7.1 in \cite{Giu03} that $A_i \searrow 0$ as $i \to \infty$, i.e.,
\begin{align*}
\sup_{I_{R/8}^{\ominus} \times B_{R/2}} u &\le M = \delta \left(\sup_{t \in I_{R/4}^{\ominus}}\tail_{K,\alpha}(u(t),R) + R^{\alpha}\Vert f\Vert_{L^{\infty}}\right) + C^{\frac{\widetilde{\kappa}'^2}{2}} c_{5}^{\frac{\widetilde{\kappa}'}{2}}\delta^{-\frac{\widetilde{\kappa}'}{2}} R^{-\frac{\left(\alpha - \frac{d}{\theta}\right)\widetilde{\kappa}'}{2}} A_0^{1/2}\\
&=\delta \left(\sup_{t \in I_{R/4}^{\ominus}}\tail_{K,\alpha}(u(t),R)+ R^{\alpha}\Vert f\Vert_{L^{\infty}}\right) + c_{6} \delta^{-\frac{\widetilde{\kappa}'}{2}} \left(\dashint_{I_{R/4}^{\ominus}} \hspace{-0.2cm}\left(\dashint_{B_R} u^{2\theta'}(t,x) \d x\right)^{\frac{1}{\theta'}} \hspace{-0.2cm}\d t\right)^{\frac{1}{2}}
\end{align*}
for $c_{6} > 0$, where we used $\left(\alpha - \frac{d}{\theta}\right)\widetilde{\kappa}' = \alpha + \frac{d}{\theta'}$.
\end{proof} 

\begin{remark}
Let us comment on the appearance of the quantity $L^{2,2\theta'}_{t,x}$-norm of $u$ in the estimate (ii) for subsolutions to \eqref{PDEdual}. In fact, this term appears since we iterate the quantities $L^{2,2\theta'}_{t,x}$-norms of $w_{l_i}$ in the proof of (ii). In fact, upon estimating 
\begin{align*}
|B_{r+\rho}|^{\frac{1}{\theta}} \int_{I_{(r+\rho)/4}^{\ominus}} |B_{r+\rho} \cap \{u(t,x) > l\}|^{\frac{1}{\theta'}} \d t \le c |I_{(r+\rho)/4}^{\ominus} \times B_{r+\rho}|^{\frac{1}{\theta}} |A(l,r+\rho)|^{\frac{1}{\theta'}},
\end{align*}
instead of \eqref{eq:reasonforthetanorms}, we could iterate the $L^{2,2}$-norms of $w_{l_i}$, as in the proof of (i), however, only as long as $\mu := \frac{1}{\kappa'} - \frac{1}{\theta} = \frac{\alpha}{d+\alpha} - \frac{1}{\theta} > 0$. This means that we would have to restrict ourselves to the suboptimal range $\theta \in (\frac{d+\alpha}{\alpha},\infty]$. In the local case, an analogous phenomenon appears in Chapter VI.13 in \cite{Lie96}.\\
Note that for subsolutions \eqref{ellPDEdual}, the analogous condition reads $\mu := \frac{\alpha}{d} - \frac{1}{\theta} > 0$, which allows us to estimate the supremum of $u$ by the $L^2$-norm, as expected for the full range $\theta \in (\frac{d}{\alpha},\infty]$.
\end{remark}

We now state the analog to \autoref{thm:LB1} for stationary solutions.

\begin{theorem}
\label{thm:ellLB1}
Assume that \eqref{eq:kuppershort}, \eqref{cutoff} and \eqref{Sob} hold true.
\begin{itemize}
\item[(i)] Assume that \eqref{K1} holds true for some $\theta \in [\frac{d}{\alpha},\infty]$. Then there exists $c > 0$ such that for every $0 < R \le 1$, every $\delta \in (0,1]$ and every nonnegative, weak subsolution $u$ to \eqref{ellPDE} in $B_{2R} \subset \Omega$
\begin{align}
\label{eq:ellLB1}
\sup_{B_{R/2}} u \le c \delta^{-\frac{d}{2\alpha}} \left( \dashint_{B_R} u^{2}(x) \d x \right)^{1/2} + \delta \tail_{K,\alpha}(u,R) + R^{\alpha}\Vert f\Vert_{L^{\infty}}.
\end{align}

\item[(ii)] Assume that \eqref{K1glob} holds true for some $\theta \in (\frac{d}{\alpha},\infty]$. Then there exists $c > 0$ such that for every $0 < R \le 1$, every $\delta \in (0,1]$ and every nonnegative, weak subsolution $u$ to \eqref{ellPDEdual} in $B_{2R} \subset \Omega$
\begin{align*}
\sup_{B_{R/2}} u \le c \delta^{-\frac{1}{2\mu}} \left( \dashint_{B_R} u^{2}(x) \d x \right)^{1/2} + \delta \widehat{\tail}_{K,\alpha}(u,R) + R^{\alpha}\Vert f\Vert_{L^{\infty}},
\end{align*}
where $\mu := \frac{\alpha}{d} - \frac{1}{\theta} \in (0,\frac{\alpha}{d}]$.
\end{itemize}
\end{theorem}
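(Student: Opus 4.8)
The proof is the stationary counterpart of the one of \autoref{thm:LB1} and is, in fact, somewhat simpler: since there is no time variable, we need neither Steklov averages, nor a time cutoff, nor an interpolation in time, and the Sobolev inequality \eqref{Sob} replaces the H\"older interpolation between $L^\infty_tL^1_x$ and $L^1_tL^{\frac{d}{d-\alpha}}_x$. We first prove (i). Fix $l>0$, set $w_l:=(u-l)_+$, and choose radii $R/2\le r\le r+\rho\le R$ with $\rho\le r$ and the cutoff $\tau=\tau_{r,\rho/2}$ from \eqref{cutoff}. Testing \eqref{eq:superharm} with the admissible nonnegative test function $\tau^2 w_l\in H_{B_{r+\rho}}(\R^d)$ gives $\cE(u,\tau^2 w_l)\le (f,\tau^2 w_l)\le \Vert f\Vert_{L^\infty}\Vert w_l\Vert_{L^1(B_{r+\rho})}$, and inserting this into \autoref{cor:Cacc} yields
\[
\cEs_{B_{r+\rho}}(\tau w_l,\tau w_l)\le c\rho^{-\alpha}\Vert w_l^2\Vert_{L^1(B_{r+\rho})}+c\Vert w_l\Vert_{L^1(B_{r+\rho})}\big(\tail_K(u,r+\tfrac{\rho}{2},r+\rho)+\Vert f\Vert_{L^\infty}\big),
\]
where we used $0\le w_l\le u$ to pass from $\tail_K(w_l,\cdot)$ to $\tail_K(u,\cdot)$. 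Applying \eqref{Sob} to $v=\tau w_l$ (and using $\tau\le 1$, $\tau\equiv 1$ on $B_r$) upgrades the left-hand side to $\Vert w_l^2\Vert_{L^\kappa(B_r)}$ with $\kappa=\tfrac{d}{d-\alpha}>1$ and $\kappa'=\tfrac{d}{\alpha}$. H\"older's inequality in space, $\Vert w_l^2\Vert_{L^1(B_r)}\le |A(l,r)|^{1/\kappa'}\Vert w_l^2\Vert_{L^\kappa(B_r)}$ with $A(l,r)=\{x\in B_r:u(x)>l\}$, together with the elementary estimates $|A(l,r)|\le(l-k)^{-2}\Vert w_k^2\Vert_{L^1(B_{r+\rho})}$, $\Vert w_l\Vert_{L^1}\le(l-k)^{-1}\Vert w_k^2\Vert_{L^1}$ and $\Vert w_l^2\Vert_{L^1}\le\Vert w_k^2\Vert_{L^1}$ for $0<k<l$, then produces the recursion
\[
\Vert w_l^2\Vert_{L^1(B_r)}\le c(l-k)^{-\frac{2}{\kappa'}}\Big(\rho^{-\alpha}+\tfrac{\tail_K(u,r+\frac{\rho}{2},r+\rho)+\Vert f\Vert_{L^\infty}}{l-k}\Big)\Vert w_k^2\Vert_{L^1(B_{r+\rho})}^{1+\frac{1}{\kappa'}}.
\]

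Now set $l_i:=M(1-2^{-i})$, $r_0:=R$, $\rho_i:=2^{-i-1}R$, $r_{i+1}:=r_i-\rho_{i+1}$ (so $r_i\searrow R/2$ and $l_0=0$), and $A_i:=\Vert w_{l_i}^2\Vert_{L^1(B_{r_i})}$, where $M>0$ is to be chosen. Using \eqref{eq:tailtotail} to bound the local tail by $\tail_{K,\alpha}(u,R)$ and imposing that $M$ be at least a fixed multiple of $\delta\big(\tail_{K,\alpha}(u,R)+R^\alpha\Vert f\Vert_{L^\infty}\big)$, the recursion becomes $A_i\le c\,\delta^{-1}R^{-\alpha}M^{-2/\kappa'}C^iA_{i-1}^{1+1/\kappa'}$ for some $C>1$. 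Adding to the above lower bound a suitable multiple of $\delta^{-\kappa'/2}R^{-\alpha\kappa'/2}A_0^{1/2}$ in the choice of $M$, Lemma 7.1 in \cite{Giu03} forces $A_i\to 0$, hence $\sup_{B_{R/2}}u\le M$. Since $\alpha\kappa'=d$ and $A_0=\Vert u^2\Vert_{L^1(B_R)}$, dividing by $|B_R|$ converts $A_0^{1/2}$ into a dash-integral and $\delta^{-\kappa'/2}$ into $\delta^{-d/(2\alpha)}$, which is exactly \eqref{eq:ellLB1}.

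For (ii) we replace $\cE$, $\tail_K$ and \autoref{cor:Cacc} by $\widehat{\cE}$, $\widehat{\tail}_K$ and \autoref{cor:Caccdual} throughout. The only genuinely new ingredient is the additional summand $c\,l^2\rho^{-\alpha}\big[|A(l,r+\rho)|+|B_{r+\rho}|\big(|A(l,r+\rho)|/|B_{r+\rho}|\big)^{1/\theta'}\big]$ on the right-hand side of the Caccioppoli estimate. After multiplying by the H\"older factor $|A(l,r)|^{1/\kappa'}$ and using $l\le M$, $l-k=M2^{-i}$, its first part contributes with the same gain exponent $1+\tfrac1{\kappa'}$ as in (i), while its second part contributes, via $|A(l,r)|^{1/\kappa'}|A(l,r+\rho)|^{1/\theta'}\le|A(l,r+\rho)|^{1/\kappa'+1/\theta'}$, with gain exponent $\tfrac1{\kappa'}+\tfrac1{\theta'}=1+\mu$, where $\mu=\tfrac1{\kappa'}-\tfrac1\theta=\tfrac\alpha d-\tfrac1\theta$; this is $>1$ precisely because $\theta>\tfrac d\alpha$. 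As $\mu<\tfrac1{\kappa'}$, after reducing by scaling $u$ and $f$ to the case $A_0\le 1$ we may bound $A_{i-1}^{1+1/\kappa'}\le A_{i-1}^{1+\mu}$ and run a single De Giorgi iteration with gain $1+\mu$; tracking the admissibility constant in Lemma 7.1 of \cite{Giu03} then produces the stated power $\delta^{-1/(2\mu)}$ in front of the $L^2$-norm. The main obstacle is exactly this bookkeeping for (ii) --- checking that the extra measure term coming from the antisymmetric part of the form combines with the H\"older factor into a genuine gain $1+\mu>1$, which both dictates the restriction $\theta\in(\tfrac d\alpha,\infty]$ and fixes the exponent of $\delta$. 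Everything else is a routine adaptation of the symmetric De Giorgi scheme, the nonsymmetry having already been absorbed into \autoref{cor:Cacc} and \autoref{cor:Caccdual}.
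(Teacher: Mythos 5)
Your proof is correct and follows the paper's intended route, which the paper itself only indicates by pointing to \autoref{thm:LB1} and the preceding remark; in particular you correctly trace the stationary exponent $\kappa'=d/\alpha$ and identify $\mu=\alpha/d-1/\theta$ as the governing gain exponent in (ii), which explains both the restriction $\theta>d/\alpha$ and the power $\delta^{-1/(2\mu)}$. The one step you leave slightly implicit --- that the normalization $A_0\le 1$ suffices to bound the competing gain via $A_{i-1}^{1+1/\kappa'}\le A_{i-1}^{1+\mu}$ at every stage of the iteration --- relies on the monotonicity $A_i\le A_{i-1}$ (which holds because $l_i\nearrow M$ and $r_i\searrow R/2$ both decrease $\Vert w_{l_i}^2\Vert_{L^1(B_{r_i})}$), a detail the paper's sketch also leaves unspoken.
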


The first estimate can be read off from \autoref{thm:LB1} (i). The proof of (ii) works similar to the proof of \autoref{thm:LB1} (ii) up to small modifications in the sense of the aforementioned remark. The factor $\delta^{-\frac{d}{2\alpha}}$ in \eqref{eq:ellLB1} stems from defining $\kappa = \frac{d}{d-\alpha}$, $\kappa' = \frac{d}{\alpha}$ in the stationary case.

\section{Local boundedness via Moser iteration}
\label{sec:CaccMoser}

The goal of this section is to give another proof of \autoref{thm:LB1} via the Moser iteration for positive exponents (see \autoref{thm:LB1dual}). For our main result there is no need of a second proof. However, we consider this independent approach interesting due to the wide range of applicability of the Moser iteration. While local boundedness for symmetric nonlocal operators has been established in numerous works by the De Giorgi iteration technique, the following proof of local boundedness (see \autoref{thm:LB1dual}) using a Moser iteration scheme seems to be new. 

The Moser iteration for positive exponents is arguably more complicated than for negative exponents due to the following two reasons. Roughly speaking, one would like to test the equation with test-functions of the form $\phi = \tau^2 u^{2q-1}$ for $q > 1$. Unfortunately, $\phi$ a priori does not belong to the correct function space unless $u$ is bounded. Since boundedness of $u$ is one of the main goals of this section, such an assumption is illegal. Instead, we truncate the monomial $u^{2q-1}$ in an adequate way, similar to \cite{ArSe67}. 
The second reason concerns the appearance of nonlocal tail terms (see \autoref{sec:Cacc}) due to the nonlocality of the equation. These quantities require special treatment in order to make the iteration work. 

Note that \autoref{sec:Cacc} and \autoref{sec:CaccMoser} are fully independent of each other.

\subsection{Algebraic estimates}

The first step is to establish suitable algebraic estimates, which can be seen and will be used as nonlocal analogs to the chain rule. Note that an estimate similar to \eqref{eq:posaux1} was established in \cite{BrPa16}. We also refer to \cite{KaWe22}, where the Moser iteration schemes were established for negative and small positive exponents for the same class of nonsymmetric nonlocal operators.

\begin{lemma}
\label{lemma:posaux}
Let $g : [0,\infty) \to [0,\infty)$ be continuously differentiable. Assume that $g$ is increasing and that $g(0) = 0$. Set $G(t) := \int_0^t g'(\tau)^{1/2} \d \tau$. Then for every $s,t \ge 0$:
\begin{align}
\label{eq:posaux1}
(t-s)(g(t)-g(s)) &\ge (G(t) - G(s))^2,\\
\label{eq:posaux2}
\frac{(g(t) \wedge g(s))|t-s|}{|G(t) - G(s)|} &\le G(t) \wedge G(s),\\
\label{eq:posaux3}
\frac{|g(t) - g(s)|}{|G(t) - G(s)|} &\le g'(t \vee s)^{1/2}.
\end{align}
\end{lemma}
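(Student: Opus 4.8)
The plan is to reduce all three inequalities to the case $0\le s\le t$: for $s=t$ they are trivial, and each of \eqref{eq:posaux1}, \eqref{eq:posaux2}, \eqref{eq:posaux3} is symmetric under interchanging $s$ and $t$; moreover, when $0\le s\le t$ one has $g(s)\wedge g(t)=g(s)$, $G(s)\wedge G(t)=G(s)$ and $t\vee s=t$, since $g$ and $G$ are nondecreasing. I will use throughout the representations $g(t)-g(s)=\int_s^t g'(\tau)\,\d\tau$, $G(t)-G(s)=\int_s^t g'(\tau)^{1/2}\,\d\tau$, together with $g(s)=\int_0^s g'(\tau)\,\d\tau$, $G(s)=\int_0^s g'(\tau)^{1/2}\,\d\tau$; the quotients in \eqref{eq:posaux2}--\eqref{eq:posaux3} are understood on $\{G(t)\neq G(s)\}$, i.e.\ where $G$ is genuinely increasing between $s$ and $t$. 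Inequality \eqref{eq:posaux1} is then immediate from the Cauchy--Schwarz inequality,
\[
(G(t)-G(s))^2=\Big(\int_s^t g'(\tau)^{1/2}\,\d\tau\Big)^2\le (t-s)\int_s^t g'(\tau)\,\d\tau=(t-s)(g(t)-g(s)),
\]
and uses nothing beyond $g'\ge 0$.

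For \eqref{eq:posaux3} the observation is that
\[
\frac{g(t)-g(s)}{G(t)-G(s)}=\frac{\int_s^t g'(\tau)^{1/2}\cdot g'(\tau)^{1/2}\,\d\tau}{\int_s^t g'(\tau)^{1/2}\,\d\tau}
\]
is a weighted average of $g'(\tau)^{1/2}$ over $\tau\in[s,t]$ with weight $g'(\tau)^{1/2}\,\d\tau$, hence bounded above by $\max_{\tau\in[s,t]}g'(\tau)^{1/2}$. Since $g$ is convex (equivalently $g'$ nondecreasing), this maximum is attained at $\tau=t$, which gives $g'(t)^{1/2}=g'(t\vee s)^{1/2}$. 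Concretely one integrates the pointwise bound $g'(\tau)\le g'(t)$ on $[s,t]$: $g(t)-g(s)=\int_s^t g'(\tau)^{1/2}g'(\tau)^{1/2}\,\d\tau\le g'(t)^{1/2}\big(G(t)-G(s)\big)$.

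For \eqref{eq:posaux2} I would split the estimate at $s$ and use monotonicity of $g'$ on each piece. On $[0,s]$ one gets $g(s)=\int_0^s g'(\tau)\,\d\tau\le g'(s)^{1/2}\int_0^s g'(\tau)^{1/2}\,\d\tau=g'(s)^{1/2}\,G(s)$, while on $[s,t]$ one gets $g'(s)^{1/2}(t-s)\le\int_s^t g'(\tau)^{1/2}\,\d\tau=G(t)-G(s)$. Multiplying the first bound by $t-s$ and inserting the second,
\[
g(s)\,(t-s)\le g'(s)^{1/2}(t-s)\,G(s)\le \big(G(t)-G(s)\big)\,G(s),
\]
which, upon dividing by $G(t)-G(s)$ and recalling $g(s)=g(s)\wedge g(t)$, $G(s)=G(s)\wedge G(t)$, is precisely \eqref{eq:posaux2}.

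The computations above are each one line, so there is no serious obstacle; the only point requiring care — and the step I would treat most explicitly in a complete write-up — is pinning down the standing hypotheses: \eqref{eq:posaux2} and \eqref{eq:posaux3} genuinely use that $g'$ is nondecreasing, and the quotient formulations presuppose $G$ to be strictly increasing on the relevant range (otherwise one should phrase them as the equivalent denominator-free inequalities). Both conditions hold automatically for the truncated monomials $g$ used in the Moser iteration of \autoref{sec:CaccMoser}, so I would record these assumptions before running the argument.
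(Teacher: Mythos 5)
Your proof is correct and follows essentially the same route as the paper for \eqref{eq:posaux1} and \eqref{eq:posaux2}: Cauchy--Schwarz (the paper phrases it as Jensen) for the first, and the split $g(s)\le g'(s)^{1/2}G(s)$ on $[0,s]$ combined with $g'(s)^{1/2}(t-s)\le G(t)-G(s)$ on $[s,t]$ for the second. For \eqref{eq:posaux3} you take a slightly more elementary path: rather than substituting $\tau = G^{-1}(\sigma)$ and averaging $g'(G^{-1}(\sigma))^{1/2}$ as the paper does, you directly bound $g'(\tau)\le g'(t)$ under the integral defining $g(t)-g(s)$. That sidesteps any implicit invertibility of $G$ and is, if anything, cleaner. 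You are also right to flag that \eqref{eq:posaux2} and \eqref{eq:posaux3} genuinely use that $g'$ is nondecreasing: the lemma as stated omits the convexity hypothesis, and the paper only records ``note that $g$ is convex'' after introducing the concrete truncated monomial, even though both the paper's proof and yours invoke monotonicity of $g'$. That observation is worth keeping explicit, as you did.
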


\begin{proof}
Note that by assumption, $t \mapsto G'(t) = g'(t)^{1/2}$ is nonnegative. Let us assume without loss of generality that $s \le t$.
First, we compute with the help of Jensen's inequality
\begin{align*}
(t-s)(g(t)-g(s)) &= (t-s)\int_s^t g'(\tau) \d \tau = (t-s)\int_s^t G'(\tau)^2 \d \tau\\
&\ge \left(\int_s^t G'(\tau) \d \tau \right)^2 = (G(t) - G(s))^2,
\end{align*}
which proves \eqref{eq:posaux1}. Next,
\begin{align*}
\frac{|G(t) - G(s)|}{|t-s|} = \dashint_s^t G'(\tau) \d \tau \ge G'(s).
\end{align*}
Moreover, we compute
\begin{align*}
g(s) = \int_0^s g'(\tau) \d \tau \le g'(s)^{1/2} \int_0^s g'(\tau)^{1/2} \d \tau = G'(s)G(s).
\end{align*}
This implies,
\begin{align*}
\frac{|G(t) - G(s)|}{|t-s|} \ge \frac{g(s)}{G(s)},
\end{align*}
which proves \eqref{eq:posaux2}. For \eqref{eq:posaux3}, we compute using chain rule and again the fact $G'(t) = g'(t)^{1/2}$ is non-decreasing:
\begin{align*}
\frac{|g(t) - g(s)|}{|G(t) - G(s)|} = \left| \dashint_{G(s)}^{G(t)} \left[g \circ G^{-1} \right]'(\tau) \d \tau \right| = \dashint_{G(s)}^{G(t)} g'(G^{-1}(\tau))^{1/2} \d \tau \le g'(t)^{1/2}.
\end{align*}
\end{proof}

The following lemma has already been established and applied in \cite{KaWe22} (see Lemma 3.2).

\begin{lemma}
Let $G : [0,\infty) \to \R$. Then for any $\tau_1,\tau_2 \ge 0$ and $t,s > 0$:
\begin{align}
\label{eq:posaux4}
(\tau_1^2 \wedge \tau_2^2)|G(t) - G(s)|^2 &\ge \frac{1}{2} |\tau_1 G(t) - \tau_2 G(s)|^2 - (\tau_1 - \tau_2)^2 (G^2(t) \vee G^2(s)),\\
\label{eq:posaux5}
(\tau_1^2 \vee \tau_2^2)|G(t) - G(s)|^2 &\le 2 |\tau_1 G(t) - \tau_2 G(s)|^2 +2 (\tau_1 - \tau_2)^2 (G^2(t) \vee G^2(s)).
\end{align}
\end{lemma}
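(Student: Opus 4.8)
The plan is to observe that, although the statement is phrased in terms of a function $G$, it is purely algebraic: only the real numbers $a := G(t)$ and $b := G(s)$ together with the weights $\tau_1,\tau_2 \ge 0$ occur, so it suffices to prove two inequalities for four real numbers. First I would exploit the symmetry built into the statement: interchanging $(\tau_1,a)$ with $(\tau_2,b)$ leaves both \eqref{eq:posaux4} and \eqref{eq:posaux5} unchanged, since every relevant quantity appears squared or in the form $\tau_1^2 \wedge \tau_2^2$, $\tau_1^2 \vee \tau_2^2$, $(\tau_1-\tau_2)^2$. Hence I may assume without loss of generality that $\tau_1 \ge \tau_2$, so that $\tau_1^2 \wedge \tau_2^2 = \tau_2^2$ and $\tau_1^2 \vee \tau_2^2 = \tau_1^2$, and both inequalities then follow from Young's inequality $2|XY| \le X^2 + Y^2$ and its variant $(X \pm Y)^2 \le 2X^2 + 2Y^2$.

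For \eqref{eq:posaux4} I would start from the splitting $\tau_1 a - \tau_2 b = \tau_2(a-b) + (\tau_1 - \tau_2)a$. Squaring and bounding the cross term via Young's inequality with $X = \tau_2(a-b)$ and $Y = (\tau_1-\tau_2)a$ gives $(\tau_1 a - \tau_2 b)^2 \le 2\tau_2^2(a-b)^2 + 2(\tau_1-\tau_2)^2 a^2 \le 2\tau_2^2(a-b)^2 + 2(\tau_1-\tau_2)^2(a^2 \vee b^2)$; dividing by $2$ and rearranging is precisely \eqref{eq:posaux4}. For \eqref{eq:posaux5} I would use the complementary splitting $\tau_1(a-b) = (\tau_1 a - \tau_2 b) - (\tau_1 - \tau_2)b$, to which $(X-Y)^2 \le 2X^2 + 2Y^2$ applies directly, yielding $\tau_1^2(a-b)^2 \le 2(\tau_1 a - \tau_2 b)^2 + 2(\tau_1-\tau_2)^2 b^2 \le 2(\tau_1 a - \tau_2 b)^2 + 2(\tau_1-\tau_2)^2(a^2 \vee b^2)$, which is \eqref{eq:posaux5}.

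There is no genuine obstacle here; the only point requiring a moment's care is the choice of splitting in each case. The mechanism is that $(\tau_1 a - \tau_2 b)$ already contains the "heavy" term $\tau_1 a$, so forming the difference with $\tau_2(a-b)$ leaves a remainder in $a$ (good for the lower bound, where $\tau_2^2(a-b)^2$ should sit on the small side), whereas forming $\tau_1(a-b) - (\tau_1 a - \tau_2 b)$ leaves a remainder in $b$ (good for the upper bound, where $\tau_1^2(a-b)^2$ should sit on the large side). Since this lemma already appears as Lemma~3.2 in \cite{KaWe22}, one may alternatively simply cite it; the short argument above is recorded for completeness.
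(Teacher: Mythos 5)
Your proof is correct. The paper itself does not prove this lemma but simply cites Lemma~3.2 of \cite{KaWe22}; your reduction to the purely algebraic statement in $a=G(t)$, $b=G(s)$, the symmetry reduction to $\tau_1\ge\tau_2$, and the two complementary splittings $\tau_1 a-\tau_2 b=\tau_2(a-b)+(\tau_1-\tau_2)a$ and $\tau_1(a-b)=(\tau_1 a-\tau_2 b)-(\tau_1-\tau_2)b$, each closed off with $(X\pm Y)^2\le 2X^2+2Y^2$ and the trivial bound $a^2, b^2\le a^2\vee b^2$, give exactly the two stated inequalities with no gaps.
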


From now on, let us define for $M > 0$ and $q \ge 1$ the function $g : [0,\infty) \to [0,\infty)$ and $G(t) = \int_0^t g'(s)^{1/2} \d s$ via 
\begin{align*}
g(t) &= \begin{cases}
t^{2q-1} &, ~~  t \le M,\\
M^{2q-1} + (2q-1)M^{2q-2}(t-M) &, ~~ t > M,
\end{cases}\\
G(t) &= \begin{cases}
\frac{\sqrt{2q-1}}{q} t^q &, ~~ t \le M,\\
\frac{\sqrt{2q-1}}{q} M^q + \sqrt{2q-1}(t-M)M^{q-1} &, ~~ t > M.
\end{cases}
\end{align*}
One easily checks that $g$ is continuously differentiable, increasing and satisfies $g(0) = 0$.  Therefore $g$ satisfies the assumptions of \autoref{lemma:posaux}. Moreover, note that $g$ is convex.

The following lemma is a direct consequence of the definition of $g$:

\begin{lemma}
For every $t \ge 0$:
\begin{align}
\label{eq:posaux33}
G'(t) = g'(t)^{1/2}  &\le  q \frac{G(t)}{t},\\
\label{eq:posaux6}
g(t)t &\le \frac{q^2}{2q-1}G^2(t).
\end{align}
\end{lemma}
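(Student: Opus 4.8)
The plan is to read off both inequalities directly from the explicit piecewise formulas for $g$ and $G$, distinguishing the regions $\{t\le M\}$ and $\{t>M\}$. On $\{t\le M\}$ one has $g(t)=t^{2q-1}$ and $G(t)=\tfrac{\sqrt{2q-1}}{q}t^{q}$, hence $g'(t)^{1/2}=\sqrt{2q-1}\,t^{q-1}$. Plugging in, $qG(t)/t=\sqrt{2q-1}\,t^{q-1}=g'(t)^{1/2}$ and $g(t)t=t^{2q}=\tfrac{q^{2}}{2q-1}G^{2}(t)$, so \eqref{eq:posaux33} and \eqref{eq:posaux6} hold with equality there. (At $t=0$ this also covers \eqref{eq:posaux6}, which reads $0\le0$; \eqref{eq:posaux33} is understood for $t>0$, the value at $0$ following by continuity.)

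On $\{t>M\}$ one has $g'(t)^{1/2}=\sqrt{2q-1}\,M^{q-1}$ and, writing $s:=t-M\ge0$, $G(t)=\sqrt{2q-1}\,M^{q-1}\bigl(\tfrac{M}{q}+s\bigr)$ and $g(t)=M^{2q-2}\bigl(M+(2q-1)s\bigr)$. For \eqref{eq:posaux33} I would divide both sides by $\sqrt{2q-1}\,M^{q-1}$, so that the claim becomes $t\le M+qs$, i.e. $(q-1)s\ge0$, which holds since $q\ge1$. For \eqref{eq:posaux6}, after cancelling $M^{2q-2}$ and using $q\bigl(\tfrac{M}{q}+s\bigr)=M+qs$, the inequality reduces to $\bigl(M+(2q-1)s\bigr)(M+s)\le (M+qs)^{2}$; expanding both sides, the $M^{2}$- and $Ms$-terms agree and what remains is $(2q-1)s^{2}\le q^{2}s^{2}$, i.e. $(q-1)^{2}s^{2}\ge0$, which is clear.

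This is essentially a bookkeeping exercise rather than a proof with a genuine obstacle; the only thing to watch is consistency at the junction $t=M$, where the two one-sided computations coincide (reflecting $g\in C^{1}$), and the value at $t=0$. A slightly more conceptual phrasing would note that \eqref{eq:posaux33} is exactly the statement that $t\mapsto G(t)/t^{q}$ is non-increasing on $(0,\infty)$ — it is constant on $(0,M]$ and on $(M,\infty)$ one has $tG'(t)-qG(t)=\sqrt{2q-1}\,M^{q-1}(1-q)(t-M)\le0$ — while \eqref{eq:posaux6} similarly says $g(t)t/G^{2}(t)$ never exceeds $q^{2}/(2q-1)$; but the region-by-region check above is the quickest route and is the one I would write up.
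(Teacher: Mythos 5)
Your proof is correct, and for \eqref{eq:posaux33} it matches the paper's argument in substance: on $\{t\le M\}$ both sides coincide, and on $\{t>M\}$ everything reduces, after clearing the common factor $\sqrt{2q-1}\,M^{q-1}$, to $(q-1)(t-M)\ge0$.

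For \eqref{eq:posaux6}, however, you have silently repaired an error in the paper's own proof, and that is worth pointing out. In the case $t>M$ the paper opens with the equality $g(t)t=t^{2}M^{2q-2}$ and then shows $t^{2}M^{2q-2}\le\tfrac{q^{2}}{2q-1}G^{2}(t)$. But from $g(t)=M^{2q-1}+(2q-1)M^{2q-2}(t-M)$ one has
\begin{equation*}
g(t)\,t - t^{2}M^{2q-2} \;=\; (2q-2)\,M^{2q-2}\,t\,(t-M)\;\ge 0,
\end{equation*}
with strict inequality when $q>1$ and $t>M$, so $g(t)t$ is in fact the \emph{larger} of the two quantities and the paper's chain of relations does not yield the desired bound on $g(t)t$. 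Your expansion $(M+(2q-1)s)(M+s)\le(M+qs)^{2}$, which with $s=t-M$ reduces to $(q-1)^{2}s^{2}\ge0$, is the correct computation and actually proves \eqref{eq:posaux6}. Same case split, same elementary flavour, but your version is the one that goes through.
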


\begin{proof}
Let us start by proving the first estimate.
In case $t \le M$, a direct computation shows:
\begin{align*}
g'(t)^{1/2} = \sqrt{2q-1}t^{q-1} = q \frac{\sqrt{2q-1}}{q} t^{q-1} = q \frac{G(t)}{t}.
\end{align*}
For $t > M$, we obtain using that $\sqrt{2q-1} \le q$:
\begin{align*}
g'(t)^{1/2} &= \sqrt{2q-1} M^{q-1} = q \frac{\frac{\sqrt{2q-1}}{q} (M^q + (t-M)M^{q-1})}{t}\\
&\le  q \frac{\frac{\sqrt{2q-1}}{q}M^q + \sqrt{2q-1}(t-M)M^{q-1}}{t} = q \frac{G(t)}{t}.
\end{align*}
This proves \eqref{eq:posaux33}. For \eqref{eq:posaux6}, we compute in case $t \le M$:
\begin{align*}
g(t)t = t^{2q} = \frac{q^2}{2q-1}G^2(t).
\end{align*}
In case $t > M$, we use that $\sqrt{2q-1} \le q$ to compute:
\begin{align*}
g(t)t = t^2 M^{2q-2} = \frac{q^2}{2q-1}\left(\frac{\sqrt{2q-1}}{q}(M^q + (t-M)M^{q-1}) \right)^2 \le \frac{q^2}{2q-1} G^2(t).
\end{align*}
\end{proof}

\begin{remark}
Note that \eqref{eq:posaux33} already implies a slightly weaker version of the estimate in \eqref{eq:posaux6}. Indeed, by \eqref{eq:posaux33}
\begin{align*}
q^2 G^2(t) \ge (G'(t)t)^2 = g'(t) t^2 \ge g(t) t,
\end{align*}
where we used convexity and $g(0) = 0$ in the last estimate.
\end{remark}

\begin{lemma}
\label{lemma:Gincreasing}
Let $q \ge 1$. Then, for every $s,t \ge 0$ it holds
\begin{equation*}
(G(t) - G(s))^2 \nearrow \frac{2q-1}{q^2}(t^q - s^q)^2, ~~ \text{ as } M \nearrow \infty.
\end{equation*}
\end{lemma}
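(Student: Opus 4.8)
\textbf{Proof plan for \autoref{lemma:Gincreasing}.}

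The plan is to compute $(G(t)-G(s))^2$ directly as a function of the truncation level $M$ and to show it is monotone nondecreasing in $M$ with the claimed limit. Without loss of generality assume $s \le t$. First I would split into two regimes according to whether the truncation is active. If $M \ge t$, then both $s$ and $t$ lie in the region where $G$ has not been truncated, so $G(t) - G(s) = \frac{\sqrt{2q-1}}{q}(t^q - s^q)$ exactly, and the quantity already equals the limiting value $\frac{2q-1}{q^2}(t^q-s^q)^2$; in particular it is constant (hence trivially nondecreasing) for all $M \ge t$. So the content of the lemma is entirely in the regime $M < t$, where I would further distinguish $s \le M < t$ and $M < s \le t$.

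For the case $M < s \le t$, both points lie in the linear part, so $G(t) - G(s) = \sqrt{2q-1}\,(t-s)M^{q-1}$, and $(G(t)-G(s))^2 = (2q-1)(t-s)^2 M^{2q-2}$, which is manifestly increasing in $M$ on $(0,s)$ since $q \ge 1$. For the mixed case $s \le M < t$ I would write
\begin{align*}
G(t) - G(s) = \tfrac{\sqrt{2q-1}}{q}\bigl(M^q - s^q\bigr) + \sqrt{2q-1}\,(t-M)M^{q-1},
\end{align*}
set $h(M) := \frac{1}{\sqrt{2q-1}}(G(t)-G(s)) = \frac{1}{q}(M^q - s^q) + (t-M)M^{q-1}$ for $M \in [s,t]$, and show $h$ is nonnegative and nondecreasing there; then $(G(t)-G(s))^2 = (2q-1)h(M)^2$ is nondecreasing. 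Differentiating, $h'(M) = M^{q-1} + (t-M)(q-1)M^{q-2} - M^{q-1} = (q-1)(t-M)M^{q-2} \ge 0$ on $[s,t]$, so $h$ is nondecreasing; and $h(s) = (t-s)s^{q-1} \ge 0$ gives nonnegativity. Matching the pieces at $M = s$ and $M = t$ shows the three regimes glue into a single nondecreasing function of $M \in (0,\infty)$, and evaluating at (equivalently, taking $M \to \infty$ through) $M \ge t$ identifies the supremum as $\frac{2q-1}{q^2}(t^q - s^q)^2$. Finally, since a bounded nondecreasing function converges to its supremum, $(G(t)-G(s))^2 \nearrow \frac{2q-1}{q^2}(t^q-s^q)^2$ as $M \nearrow \infty$, and the cases $s=0$ or $t=0$ are immediate.

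The only mildly delicate point — and the closest thing to an obstacle — is bookkeeping: one must remember that $G$ itself depends on $M$ through \emph{both} branches (the prefactor of the linear piece and the value at the breakpoint), so the naive guess that only the $t > M$ branch changes with $M$ is wrong; the monotonicity has to be checked on the combined expression $h(M)$ rather than branch by branch. Once $h'(M) = (q-1)(t-M)M^{q-2}$ is in hand, everything is routine. There is no real analytic difficulty here; the statement is essentially a one-variable calculus fact, and the proof is a short case analysis plus one derivative computation.
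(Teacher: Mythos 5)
Your proof is correct and follows essentially the same route as the paper: split on whether $M$ lies below $s$, between $s$ and $t$, or above $t$, and verify monotonicity in each regime by the derivative computation $\frac{d}{dM}\bigl[\tfrac{1}{q}M^q + (t-M)M^{q-1}\bigr] = (q-1)(t-M)M^{q-2} \ge 0$. You are in fact a bit more careful than the paper — you explicitly check that the quantity being squared is nonnegative (via $h(s) = (t-s)s^{q-1} \ge 0$) so that squaring preserves monotonicity, a point the paper leaves implicit — but the core argument is the same.
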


\begin{proof}
Clearly, $(G(t) - G(s))^2 \to \frac{2q-1}{q^2}(t^q - s^q)^2$, as $M \to \infty$, since for $t,s < M$ it already holds:
\begin{align*}
(G(t) - G(s))^2 = \frac{2q-1}{q^2}(t^q - s^q)^2.
\end{align*}
It remains to prove that the convergence is monotone. Let us fix $t> s > 0$. First, we observe that $M \mapsto (G(t)-G(s))^2$ is continuous. Now, clearly, for $M < t < s$, we have
\begin{align*}
(G(t) - G(s))^2 = (2q-1)M^{2q-2}(t-s)^2,
\end{align*}
which is increasing in $M$. In case $s < M < t$:
\begin{align*}
(G(t) - G(s))^2 = \left( \frac{\sqrt{2q-1}}{q} M^q + (t-M)\sqrt{2q-1}M^{q-1} - \frac{\sqrt{2q-1}}{q}s^q \right)^2.
\end{align*}
This expression is clearly monotone in $M$, as long as $t > M$ since
\begin{align*}
\frac{d}{dM}\frac{\sqrt{2q-1}}{q} M^q + (t-M)\sqrt{2q-1}M^{q-1} = (q-1)\sqrt{2q-1}(t-M)M^{q-2} \ge 0.
\end{align*}
This proves the desired result.
\end{proof}

\subsection{Caccioppoli estimates}

Now, we are in the position to prove the following Caccioppoli-type estimate. We emphasize that $\tau^2 g(\U) \in H_{B_{r+\rho}}(\R^d)$ in the lemma below, whenever $u \in V(B_{r+\rho}|\R^d)$. This is a direct consequence of the definition of $g$.

\begin{lemma}
\label{lemma:MSpos}
Assume that \eqref{K1} and \eqref{cutoff} hold true for some $\theta \in [\frac{d}{\alpha},\infty]$. Moreover, assume \eqref{Sob} if $\theta < \infty$. Then there exist $c_1,c_2 > 0$ such that for every $0 < \rho \le r \le 1$ and every nonnegative function $u \in V(B_{r+\rho}|\R^d)$, and every $q \ge 1$, it holds
\begin{align*}
\cE^{K_s}_{B_{r+\rho}}(\tau G(\U),\tau G(\U)) &\le c_1 \cE(u,\tau^{2}g(\U)) + c_2 \rho^{-\alpha} \Vert G(\U)^2 \Vert_{L^{1}(B_{r+\rho})}\\
&+ c_2 \Vert g(\U) \Vert_{L^1(B_{r+\rho})} \tail_K(u,r+\frac{\rho}{2},r+\rho),
\end{align*}
where $B_{2r} \subset \Omega$, $\tau = \tau_{r,\frac{\rho}{2}}$, and $\U = u + R^{\alpha}\Vert f \Vert_{L^{\infty}}$.
\end{lemma}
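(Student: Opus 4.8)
The plan is to adapt the four-step scheme of the proof of \autoref{lemma:Cacc}, replacing the exact algebraic identities available there for truncations $w_+$ by the one-sided chain-rule estimates \eqref{eq:posaux1}--\eqref{eq:posaux6} linking $g$, $g'$ and $G$. First observe that $\cE(\cdot,\cdot)$ annihilates additive constants in its first argument, so $\cE(u,\tau^2 g(\U))=\cE(\U,\tau^2 g(\U))$ and one may work throughout with the nonnegative function $\U$; moreover $\tau^2 g(\U)\in H_{B_{r+\rho}}(\R^d)$ by the remark preceding the lemma, so all quantities are well defined via \autoref{lemma:welldef}. Write $B:=B_{r+\rho}$ and decompose
\begin{equation*}
\cE(\U,\tau^2 g(\U))=\cEs_{B}(\U,\tau^2 g(\U))+\cEa_{B}(\U,\tau^2 g(\U))+\cE_{(B\times B)^c}(\U,\tau^2 g(\U)).
\end{equation*}

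\textbf{Step 1 (symmetric local part).} Working pointwise, using symmetry of $K_s$ and assuming $\U(x)\ge\U(y)$ (abbreviating $\tau_1=\tau(x)$, $G_1=G(\U(x))$, $g_1=g(\U(x))$, and analogously for $y$), combine \eqref{eq:posaux4} and \eqref{eq:posaux1} to obtain $(\tau_1 G_1-\tau_2 G_2)^2\le 2(\tau_1^2\wedge\tau_2^2)(\U_1-\U_2)(g_1-g_2)+2(\tau_1-\tau_2)^2(G_1^2\vee G_2^2)$; a direct manipulation of $\tau_1^2 g_1-\tau_2^2 g_2$ then gives $(\tau_1^2\wedge\tau_2^2)(\U_1-\U_2)(g_1-g_2)\le(\U_1-\U_2)(\tau_1^2 g_1-\tau_2^2 g_2)+|\tau_1^2-\tau_2^2|\,g(\U_1\wedge\U_2)|\U_1-\U_2|$. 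The ``bad'' last term is controlled via \eqref{eq:posaux2}, i.e.\ $g(\U_1\wedge\U_2)|\U_1-\U_2|\le|G_1-G_2|(G_1\wedge G_2)$, followed by Young's inequality and \eqref{eq:posaux5}, so that it is dominated by $\eps(\tau_1 G_1-\tau_2 G_2)^2+c_\eps(\tau_1-\tau_2)^2(G_1^2\vee G_2^2)$ with $c_\eps$ independent of $q$. Choosing $\eps$ small, absorbing, and integrating against $K_s$ over $B\times B$ (using \eqref{cutoff} and $G_1^2\vee G_2^2\le G_1^2+G_2^2$) yields $\cEs_B(\tau G(\U),\tau G(\U))\le c\,\cEs_B(\U,\tau^2 g(\U))+c\rho^{-\alpha}\Vert G(\U)^2\Vert_{L^1(B)}$.

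\textbf{Step 2 (antisymmetric local part).} One shows that for every $\delta>0$ there is $c(\delta)>0$, independent of $q$, with $\cEa_B(\U,\tau^2 g(\U))\ge-\delta\,\cEs_B(\tau G(\U),\tau G(\U))-c(\delta)\rho^{-\alpha}\Vert G(\U)^2\Vert_{L^1(B)}$. Following Step 2 of \autoref{lemma:Cacc}, one splits $\tau_1^2 g_1+\tau_2^2 g_2$ into pieces playing the roles of $I_2$ and $I_3$ there, estimates the resulting double integrals by Cauchy--Schwarz using $|K_a|\le K_s$ and the coercivity $\cE^J_B\le C\cEs_B$ from \eqref{K1}, and applies \eqref{eq:K1consequence} with $v=\tau G(\U)$, which produces exactly a term $\delta\cEs_B(\tau G(\U),\tau G(\U))+c(\delta)\rho^{-\alpha}\Vert\tau^2 G(\U)^2\Vert_{L^1(B)}$; the estimates \eqref{eq:posaux1}, \eqref{eq:posaux2}, \eqref{eq:posaux3} and the relation between $g(t)t$ and $G(t)^2$ in \eqref{eq:posaux6} are used to convert every factor involving $g$ or $g'$ into the single quantity $G(\U)$. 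This is the main obstacle: since $g(\U)$ and $G(\U)$ are genuinely different functions, $\cEa_B(\U,\tau^2 g(\U))$ cannot be reduced to a quadratic form in a single function by an exact identity as in the symmetric case, so one must carefully track which of $g$, $g'$, $G$, $G^2$ occurs in each sub-term, invoke the matching one-sided estimate, and check that the constants stay independent of $q$.

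\textbf{Steps 3 and 4 (tail and combination).} Since $\tau^2 g(\U)$ is supported in $B_{r+\rho/2}$, since $u\ge0$, and since $\U(y)-\U(x)=u(y)-u(x)\le u(y)$ on the set where this difference is positive, the argument of Step 3 of \autoref{lemma:Cacc} gives $-\cE_{(B\times B)^c}(\U,\tau^2 g(\U))\le 2\Vert g(\U)\Vert_{L^1(B)}\,\tail_K(u,r+\frac{\rho}{2},r+\rho)$. Finally, writing $\cEs_B(\U,\tau^2 g(\U))=\cE(u,\tau^2 g(\U))-\cEa_B(\U,\tau^2 g(\U))-\cE_{(B\times B)^c}(\U,\tau^2 g(\U))$, one substitutes the bounds of Steps 2 and 3, inserts the result into Step 1, and chooses $\delta$ small enough to absorb the resulting multiple of $\cEs_B(\tau G(\U),\tau G(\U))$ on the left. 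This fixes $c(\delta)$ and hence the constants $c_1,c_2$, which depend only on $d$, $\alpha$, $\theta$ and the constants in \eqref{K1}, \eqref{cutoff} and \eqref{Sob}, and gives the claimed estimate.
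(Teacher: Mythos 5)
There is a genuine gap in your Step 2. After the splitting you propose, $\cEa_B(\U,\tau^2 g(\U))$ contains the term
\begin{equation*}
I_a := 2\iint_{M} (\U(x)-\U(y))\bigl(g(\U(x))-g(\U(y))\bigr)\tau^2(x)\,K_a(x,y)\,\d y\,\d x,
\end{equation*}
where $M=\{\U(x)>\U(y)\}$, and this term cannot be bounded below by $-\delta\,\cEs_B(\tau G(\U),\tau G(\U))-c(\delta)\rho^{-\alpha}\Vert G(\U)^2\Vert_{L^1}$ with $c(\delta)$ independent of $q$. Any route from $(\U_1-\U_2)(g_1-g_2)$ down to $(G_1-G_2)^2$-type quantities must pass through \eqref{eq:posaux3} and \eqref{eq:posaux33}, which introduce a factor $q$: already for $g(t)=t^{2q-1}$ one has $(\U_1-\U_2)(g_1-g_2)\approx \frac{q^2}{2q-1}(G_1-G_2)^2$ when $\U_2\ll\U_1$, so \eqref{eq:posaux1} has no $q$-free reverse. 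Consequently your Cauchy--Schwarz argument yields at best $|I_a|\le\delta\cEs_B(\tau G,\tau G)+c(\delta)\,q^2\rho^{-\alpha}\Vert G^2\Vert_{L^1}$. That is precisely the situation of \autoref{lemma:MSposdual}, where such an isolated antisymmetric term is unavoidable, the bound acquires the factor $q^{\gamma}$, and $\theta>\frac{d}{\alpha}$ is needed (via \eqref{eq:quantifiedK1consequence}) to absorb it; \autoref{lemma:MSpos} as stated has $q$-independent constants and holds at the endpoint $\theta=\frac{d}{\alpha}$, so your claim overshoots what the decomposition $\cEs\oplus\cEa$ can deliver.

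The paper avoids this by never estimating $I_a$ alone: both $\cEs_B(\U,\tau^2 g(\U))=I_s+J_s$ and $\cEa_B(\U,\tau^2 g(\U))=I_a+J_a$ are split, and the two ``diagonal'' pieces are recombined into $I_s+I_a=2\iint_M(\U_1-\U_2)(g_1-g_2)\tau^2(x)\,K(x,y)$, where $K=K_s+K_a\ge 0$. Since the integrand is now nonnegative, \eqref{eq:posaux1} applies directly with no $q$, and \eqref{eq:posaux4} turns it into $\tfrac12\cEs_B(\tau G,\tau G)$ minus lower-order corrections. The remaining $J_a$ involves only $(\U_1-\U_2)g(\U_2)$, which \eqref{eq:posaux2} controls $q$-independently. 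This pairing of $I_s$ with $I_a$ so as to exploit the pointwise nonnegativity of $K$ is the key structural idea missing from your proposal; your Steps 1, 3 and 4 are in order and match the paper, but Step 1 in isolation is then superfluous once $I_s+I_a$ is treated jointly.
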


\begin{proof}

We define $M := \{(x,y) \in B_{r+\rho} \times B_{r+\rho} : u(x) > u(y) \}$. Note that for $(x,y) \in M$ it holds $g(u(x)) \ge g(u(y))$ and $G(u(x)) \ge G(u(y))$. The proof is divided into several steps.

Step 1: First, we claim that for some $c_1, c_2 > 0$:
\begin{align}
\label{eq:MSstep1}
\cE_{B_{r+\rho}}(u,\tau^2 g(\U)) &\ge c_1 \cE_{B_{r+\rho}}^{K_s}(\tau G(\U),\tau G(\U)) -c_2 \rho^{-\alpha} \Vert G(\U)^2\Vert_{L^1(B_{r+\rho})}.
\end{align}

For the symmetric part, we compute using the symmetry of $K_s$ (see also Lemma 2.3 in \cite{KaWe22}):
\begin{align*}
\cE^{K_s}_{B_{r+\rho}}(u,\tau^{2}g(\U)) &= 2 \iint_{M} (\U(x) - \U(y))(\tau^2(x)g(\U(x)) - \tau^2(y)g(\U(y))) K_s(x,y) \d y \d x\\
&=2 \iint_{M} (\U(x) - \U(y))(g(\U(x)) - g(\U(y)))\tau^2(x) K_s(x,y) \d y \d x\\\
&+ 2 \iint_{M} (\U(x) - \U(y))g(\U(y))(\tau^2(x) - \tau^2(y)) K_s(x,y) \d y \d x\\
&= I_s + J_s.
\end{align*}
For the nonsymmetric part, we compute using the antisymmetry of $K_a$ and with the help of Lemma 2.3 in \cite{KaWe22}:
\begin{align*}
\cE^{K_a}_{B_{r+\rho}}(u,\tau^{2}g(\U)) &= 2 \iint_{M} (\U(x) - \U(y))(\tau^2(x)g(\U(x)) + \tau^2(y)g(\U(y))) K_a(x,y) \d y \d x\\
&=2 \iint_{M} (\U(x) - \U(y))(g(\U(x)) - g(\U(y)))\tau^2(x) K_a(x,y) \d y \d x\\\
&+ 2 \iint_{M} (\U(x) - \U(y))g(\U(y))(\tau^2(x) + \tau^2(y)) K_a(x,y) \d y \d x\\
&= I_a + J_a.
\end{align*}
By adding up $I_s + I_a$ and using \eqref{eq:posaux1}, \eqref{eq:posaux4}, as well as \eqref{cutoff}, we obtain
\begin{align*}
I_s + I_a &= 2 \iint_{M} (\U(x) - \U(y))(g(\U(x)) - g(\U(y)))\tau^2(x) K(x,y) \d y \d x\\
&\ge \iint_{M} (G(\U(x)) - G(\U(y)))^2 ( \tau^2(x) \wedge \tau^2(y) )  K(x,y) \d y \d x\\
&\ge \frac{1}{2} \cE_{B_{r+\rho}}^{K_s}(\tau G(\U) , \tau G(\U)) - c \rho^{-\alpha} \Vert G(\U)^2\Vert_{L^{1}(B_{r+\rho})}\\
&-\frac{1}{2} \int_{B_{r+\rho}} \int_{B_{r+\rho}} (\tau G(\U(x)) - \tau G(\U(y)))^2 |K_a(x,y)| \d y \d x.
\end{align*}
For the nonsymmetric part, we find using \eqref{K1} and \eqref{eq:K1consequence} that for every $\eps > 0$ there is $c> 0$ such that  
\begin{align*}
&\int_{B_{r+\rho}} \int_{B_{r+\rho}} (\tau G(\U(x)) - \tau G(\U(y)))^2 |K_a(x,y)| \d y \d x \\
&\le \eps \cE_{B_{r+\rho}}^{K_s}(\tau G(\U) , \tau G(\U)) + c \int_{B_{r+\rho}} \tau^2(x) G^2(\U(x)) \left(\int_{B_{r+\rho}}  \frac{|K_a(x,y)|^2}{J(x,y)} \right) \d x\\
&\le 2\eps \cE_{B_{r+\rho}}^{K_s}(\tau G(\U) , \tau G(\U)) + c \rho^{-\alpha} \Vert G(\U)^2 \Vert_{L^1(B_{r+\rho})}.
\end{align*}
Consequently,
\begin{equation*}
I_s + I_a \ge \frac{1}{4} \cE_{B_{r+\rho}}^{K_s}(\tau G(\U) , \tau G(\U)) - c \rho^{-\alpha} \Vert G(\U)^2 \Vert_{L^1(B_{r+\rho})}.
\end{equation*}

To estimate $J_s$, we use \eqref{eq:posaux2}, \eqref{eq:posaux5}, and \eqref{cutoff} to prove that for every $\eps > 0$ there exists $c > 0$ such that
\begin{align*}
J_s &\ge - \iint_{M} |G(\U(x)) - G(\U(y))| G(\U(y)) |\tau(x) - \tau(y)| ( \tau(x) \vee \tau(y) ) K_s(x,y) \d y \d x\\
&\ge - \eps \iint_M  (G(\U(x)) - G(\U(y)))^2 ( \tau^2(x) \vee \tau^2(y) ) K_s(x,y) \d y \d x - c\rho^{-\alpha} \Vert G(\U)^2\Vert_{L^{1}(B_{r+\rho})}\\
&\ge - \eps\cE_{B_{r+\rho}}^{K_s}(\tau G(\U) , \tau G(\U)) - c\rho^{-\alpha} \Vert G(\U)^2 \Vert_{L^{1}(B_{r+\rho})}.
\end{align*}

Next, we estimate $J_a$, and prove using \eqref{eq:taualgebra}, \eqref{eq:KaKs}, \eqref{eq:posaux2}, \eqref{cutoff}, and \eqref{eq:posaux5} that for every $\eps > 0$ there is $c> 0$ such that
\begin{align*}
J_a &\ge -8\iint_{M} |\U(x) - \U(y)|g(\U(y)) ( \tau^2(x) \wedge \tau^2(y) ) |K_a(x,y)| \d y \d x\\
&- 8\iint_{M} |\U(x) - \U(y)|g(\U(y)) (\tau(x) - \tau(y))^2 K_s(x,y) \d y \d x\\
&\ge - \eps \iint_M  (G(\U(x)) - G(\U(y)))^2 ( \tau^2(x) \wedge \tau^2(y) ) J(x,y) \d y \d x\\
&- c \iint_M G(\U(y)) ( \tau^2(x) \wedge \tau^2(y) ) \frac{|K_a(x,y)|^2}{J(x,y)} \d y \d x - c \rho^{-\alpha} \Vert G(\U)^2 \Vert_{L^{1}(B_{r+\rho})}\\
&\ge - 2\eps \cE_{B_{r+\rho}}^{K_s}(\tau G(\U) , \tau G(\U)) - c \rho^{-\alpha} \Vert G(\U)^2 \Vert_{L^{1}(B_{r+\rho})},
\end{align*}
where we used \eqref{K1} and \eqref{eq:K1consequence} in the last step to estimate
\begin{align*}
c &\iint_M G^2(\U(y)) ( \tau^2(x) \wedge \tau^2(y) ) \frac{|K_a(x,y)|^2}{J(x,y)} \d y \d x \le 2\eps\cE^{K_s}_{B_{r+\rho}}(\tau G(\U) , \tau G(\U)) + c \rho^{-\alpha} \Vert G(\U)^2 \Vert_{L^{1}(B_{r+\rho})},
\end{align*}
and used Lemma 2.6 in \cite{KaWe22}, \eqref{K1}, \eqref{eq:posaux4} and \eqref{cutoff} to estimate
\begin{align}
\label{eq:absorbJtau}
\begin{split}
\iint_M  &(G(\U(x)) - G(\U(y)))^2 ( \tau^2(x) \wedge \tau^2(y) ) J(x,y) \d y \d x \\
&\le c \int_{B_{r+\rho}}\int_{B_{r+\rho}} (G(\U(x)) - G(\U(y)))^2 ( \tau^2(x) \wedge \tau^2(y) ) K_s(x,y) \d y \d x\\
&\le c \cE_{B_{r+\rho}}^{K_s}(\tau G(\U) , \tau G(\U)) + c \rho^{-\alpha} \Vert G(\U)^2 \Vert_{L^1(B_{r+\rho})}.
\end{split}
\end{align}
Altogether, we obtain:
\begin{align*}
\cE_{B_{r+\rho}}(u,\tau^{2}g(\U)) &\ge \left[\frac{1}{4} - 2\eps\right]\cE_{B_{r+\rho}}^{K_s}(\tau G(\U), \tau G(\U)) - c \rho^{-\alpha} \Vert G(\U)^2 \Vert_{L^{1}(B_{r+\rho})}.
\end{align*}
The desired estimate \eqref{eq:MSstep1} now follows by choosing $\eps > 0$ small enough.

Step 2: In addition, we claim
\begin{equation}
\label{eq:MSstep2}
- \cE_{(B_{r+\rho} \times B_{r+\rho})^{c}}(u,\tau^{2} g(\U)) \le 2 \Vert g(\U) \Vert_{L^1(B_{r+\rho})} \sup_{z \in B_{r+\frac{\rho}{2}}}\left( \int_{B_{r+\rho}^c} u(y) K(z,y) \d y\right).
\end{equation}
To see this, we compute
\begin{align*}
- &\cE_{(B_{r+\rho} \times B_{r+\rho})^{c}}(u,\tau^{2} g(\U)) = -2 \int_{(B_{r+\rho}\times B_{r+\rho})^{c}} (u(x)-u(y))\tau^{2}(x)g(\U(x)) K(x,y) \d y \d x\\
&= -2 \int_{B_{r+\rho}} \hspace{-0.3cm} \tau^{2}(x)u(x)g(\U(x)) \left(\int_{B_{r+\rho}^{c}} \hspace{-0.4cm} K(x,y) \d y \right) \d x +2 \int_{B_{r+\rho}} \hspace{-0.3cm} \tau^{2}(x) g(\U(x)) \left(\int_{B_{r+\rho}^{c}} \hspace{-0.4cm} u(y) K(x,y) \d y \right) \d x\\
&\le 2 \Vert g(\U) \Vert_{L^1(B_{r+\rho})} \sup_{z \in B_{r+\frac{\rho}{2}}}\left( \int_{B_{r+\rho}^c} u(y) K(z,y) \d y\right),
\end{align*}
using that $u,K \ge 0$ and $\supp(\tau) \subset B_{r + \frac{\rho}{2}}$.

Step 3: Observe that
\begin{equation*}
\begin{split}
\cEs_{B_{r+\rho}}(u,\tau^{2}g(\U)) &= \cE(u,\tau^{2} g(\U)) - \cE_{(B_{r+\rho}\times B_{r+\rho})^{c}}(u,\tau^{2} g(\U)).
\end{split}
\end{equation*} 
Therefore, combining \eqref{eq:MSstep1}, \eqref{eq:MSstep2} yields the desired result.
\end{proof}

The following Caccioppoli-type estimate is designed for the dual equation:

\begin{lemma}
\label{lemma:MSposdual}
Assume that \eqref{K1glob} and \eqref{cutoff} hold true for some $\theta \in (\frac{d}{\alpha},\infty]$. Moreover, assume \eqref{Sob} if $\theta < \infty$. Then there exist $c_1,c_2,\gamma > 0$ such that for every $0 < \rho \le r \le 1$ and every nonnegative function $u \in V(B_{r+\rho}|\R^d) \cap L^{2\theta'}(\R^d)$, and every $q \ge 1$, it holds
\begin{align*}
\cE^{K_s}_{B_{r+\rho}}(\tau G(\U),\tau G(\U)) &\le c_1 \widehat{\cE}(u,\tau^{2}g(\U)) + c_2 q^{\gamma} \rho^{-\alpha} \Vert G(\U)^2 \Vert_{L^{1}(B_{r+\rho})}\\
&+ c_2 \Vert g(\U) \Vert_{L^1(B_{r+\rho})} \widehat{\tail}_K(u,r+\frac{\rho}{2},r+\rho),
\end{align*}
where $B_{2r} \subset \Omega$, $\tau = \tau_{r,\frac{\rho}{2}}$, and $\U = u + R^{\alpha}\Vert f \Vert_{L^{\infty}}$.
\end{lemma}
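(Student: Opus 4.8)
The plan is to mirror the structure of the proof of \autoref{lemma:MSpos}, carefully tracking the modifications forced by the dual bilinear form $\widehat{\cE}$ and the global assumption \eqref{K1glob}. Recall that $\widehat{\cE}(u,v) = \cE(v,u)$, so the natural decomposition is obtained by testing with $\phi = \tau^2 g(\U)$ and writing $\widehat{\cE}_{B_{r+\rho}}(u,\tau^2 g(\U)) = \cE_{B_{r+\rho}}(\tau^2 g(\U), u)$, which splits into a symmetric and an antisymmetric contribution exactly as in Step~1 of \autoref{lemma:MSpos}, except that the roles of the two arguments are swapped. The symmetric part $\cEs$ is genuinely symmetric, so $\cEs_{B_{r+\rho}}(\tau^2 g(\U), u) = \cEs_{B_{r+\rho}}(u, \tau^2 g(\U))$ and the estimate $I_s + J_s$ from \autoref{lemma:MSpos} goes through verbatim, using \eqref{eq:posaux1}, \eqref{eq:posaux2}, \eqref{eq:posaux4}, \eqref{eq:posaux5} and \eqref{cutoff}. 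For the antisymmetric part one instead expands $\cE^{K_a}_{B_{r+\rho}}(\tau^2 g(\U), u)$; by antisymmetry of $K_a$ this equals $2\iint_M (\tau^2(x) g(\U(x)) - \tau^2(y) g(\U(y)))(u(x) + u(y)) K_a(x,y)\,\d y\,\d x$ (over the same set $M$ where $u(x) > u(y)$), and following the algebraic identity $(\tau_1^2 g_1 - \tau_2^2 g_2)(s+t) = (\tau_1^2 g_1 - \tau_2^2 g_2)(s - t) + 2 t(\tau_1^2 g_1 - \tau_2^2 g_2) + \dots$ one again reduces to terms of the same shape as $I_a, J_a$ plus an extra term involving $g(\U(y))$ times a difference of $\tau^2$, which is controlled by \eqref{eq:taualgebra}, \eqref{eq:KaKs}, \eqref{K1glob} and \eqref{eq:K1consequence} (part (ii) of \autoref{lemma:K1-lemma}).

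The first key point of divergence from \autoref{lemma:MSpos} is the appearance of the factor $q^\gamma$ in front of $\rho^{-\alpha}\Vert G(\U)^2\Vert_{L^1}$. This is unavoidable in the dual case: when estimating the extra term coming from the swap of arguments, one produces a contribution of the form $\int \tau^2(x) g(\U(x)) u(x) (\int |K_a(x,y)|^2 / J(x,y) \d y)\,\d x$ rather than $\int \tau^2 G(\U)^2 W\,\d x$, and to reduce $g(\U)\U$ to $G(\U)^2$ one invokes \eqref{eq:posaux6}, $g(t)t \le \frac{q^2}{2q-1} G^2(t) \le q^2 G^2(t)$, which is where the polynomial loss in $q$ enters. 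One should carefully collect all such instances; I expect $\gamma$ to be a small fixed number (likely $\gamma = 1$ or $2$ after optimizing the application of \eqref{eq:K1consequence}), but the cleanest route is simply to bound everything by $c_2 q^\gamma \rho^{-\alpha}\Vert G(\U)^2\Vert_{L^1(B_{r+\rho})}$ without chasing the sharp exponent. A second point is that \eqref{K1glob} allows the inner integral to run over all of $\R^d$ rather than only $B_{2r}$, which is exactly what is needed because in the dual setting the ``drift'' term $\cE^{K_a}_{B_{r+\rho}}(\tau^2 g(\U), \cdot)$ pairs against $u$ on all of $\R^d$ through the second argument; this is why $u \in V(B_{r+\rho}|\R^d) \cap L^{2\theta'}(\R^d)$ is assumed and why part (ii) of \autoref{lemma:K1-lemma} is the relevant tool.

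For the tail contribution, Step~2 of \autoref{lemma:MSpos} must be replaced by the dual computation already carried out in Step~3 of \autoref{lemma:Caccdual}: one writes
\begin{align*}
-\widehat{\cE}_{(B_{r+\rho}\times B_{r+\rho})^c}(u,\tau^2 g(\U)) &= 2\int_{B_{r+\rho}^c}\int_{B_{r+\frac{\rho}{2}}} \tau^2(y) g(\U(y)) u(x) K(x,y)\,\d y\,\d x\\
&\quad - 2\int_{B_{r+\frac{\rho}{2}}}\int_{B_{r+\rho}^c} \tau^2(x) g(\U(x)) u(x) K(x,y)\,\d y\,\d x,
\end{align*}
where, as in \autoref{lemma:Caccdual}, the second integral is nonnegative (since $g(\U) \ge 0$, $u \ge 0$, $K \ge 0$) and may be dropped, while the first is bounded by $2\Vert g(\U)\Vert_{L^1(B_{r+\rho})} \sup_{z \in B_{r+\frac{\rho}{2}}} \int_{B_{r+\rho}^c} u(x) K(x,z)\,\d x = 2\Vert g(\U)\Vert_{L^1(B_{r+\rho})}\, \widehat{\tail}_K(u,r+\frac{\rho}{2},r+\rho)$ using the definition of $\widehat{\tail}_K$. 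Finally, Step~3 combines the pieces via $\cEs_{B_{r+\rho}}(\tau^2 g(\U), u) = \widehat{\cE}(u,\tau^2 g(\U)) - \cE^{K_a}_{B_{r+\rho}}(\tau^2 g(\U), u) - \widehat{\cE}_{(B_{r+\rho}\times B_{r+\rho})^c}(u,\tau^2 g(\U))$, and one absorbs the $\eps \cEs_{B_{r+\rho}}(\tau G(\U),\tau G(\U))$ terms into the left-hand side by choosing $\eps$ small. The main obstacle is the bookkeeping in the antisymmetric estimate: because the $\tau^2$ weights sit on the ``wrong'' argument after the swap, one gets genuinely new cross terms (not present in \autoref{lemma:MSpos}) that must be split using \eqref{eq:taualgebra} and then matched against either $\cEs(\tau G(\U),\tau G(\U))$ or the $J$-energy (absorbed via \eqref{eq:absorbJtau} and \eqref{K1glob}), and it is precisely in controlling these that the $q^\gamma$ factor is generated through repeated use of \eqref{eq:posaux33} and \eqref{eq:posaux6}; keeping the powers of $q$ finite and explicit is the delicate part of the argument.
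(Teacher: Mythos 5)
Your outline follows the same route as the paper's proof: swap arguments to write $\widehat{\cE}(u,\tau^2 g(\U)) = \cE(\tau^2 g(\U), u)$, reuse the symmetric estimate from \autoref{lemma:MSpos} verbatim, expand the antisymmetric part via an algebraic identity that isolates the ``good'' term $2\iint_M (\U(x)-\U(y))(g(\U(x))-g(\U(y)))\tau^2(x) K_a(x,y)\,\d y\,\d x$ plus two error terms, control the errors through the chain-rule inequalities and the $K_a^2/J$ bound, and handle the exterior contribution exactly as in Step~3 of \autoref{lemma:Caccdual}. That structure is sound, and your tail computation in particular is correct. However, there is a genuine gap in your account of where the factor $q^\gamma$ comes from and why the lemma is stated only for $\theta \in (\frac{d}{\alpha},\infty]$ (strict) while \autoref{lemma:MSpos} allows $\theta = \frac{d}{\alpha}$. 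You attribute the $q$-loss entirely to \eqref{eq:posaux6} (reducing $g(\U)\U$ to $G(\U)^2$) and remark that $\gamma$ should be ``a small fixed number, likely $1$ or $2$''. Neither is correct.

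The dominant $q$-dependence arises as follows. After applying \eqref{eq:posaux3} and \eqref{eq:posaux33}, the error term corresponding to $2\iint_M u(y)(g(\U(x))-g(\U(y)))\tau^2(x) K_a(x,y)$ carries an explicit prefactor $q$, and after Young's inequality with a small parameter $\eps$ one is left with $cq^2 \iint_M \tau^2(x) G(\U(x))^2 \frac{|K_a(x,y)|^2}{J(x,y)}\,\d y\,\d x$. To absorb the energy contribution produced by \autoref{lemma:K1-lemma} into the left-hand side one must choose the parameter there of size $\delta \le \eps/(c q^2)$, and it is only because \eqref{eq:quantifiedK1consequence} gives the \emph{explicit} form $C(\delta) = \delta^{d/(d-\theta\alpha)}\Vert W\Vert^{\theta\alpha/(\theta\alpha-d)}$ (available precisely for $\theta \in (\frac{d}{\alpha},\infty]$) that this produces a bound polynomial in $q$, namely of order $q^{2 + 2d/(\theta\alpha - d)}$. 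At $\theta = d/\alpha$ that exponent is undefined and no polynomial control of $C(\delta)$ in $\delta$ is available, so $\gamma$ is neither a small nor an absolute constant: it depends on $\theta$ and blows up as $\theta \searrow \frac{d}{\alpha}$. Without this the Moser iteration in \autoref{thm:LB1dual} would not close, since the products of the constants would diverge. Your argument needs to invoke \eqref{eq:quantifiedK1consequence} explicitly rather than the unquantified \eqref{eq:K1consequence} and make the choice $\delta \sim q^{-2}$ visible. A smaller point: the decisive algebraic identity is $(a+b)(\tau_1^2 g(\widetilde a) - \tau_2^2 g(\widetilde b)) = (\widetilde a - \widetilde b)(g(\widetilde a) - g(\widetilde b))\tau_1^2 + 2b(g(\widetilde a) - g(\widetilde b))\tau_1^2 + (a+b)g(\widetilde b)(\tau_1^2 - \tau_2^2)$; the weight $\tau_1^2$ (rather than $\tau_1\tau_2$) on the first two terms is what makes the leading piece pair correctly with the symmetric estimate, and this should be written out rather than left as ``$\dots$''.
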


\begin{proof}
Step 1: We claim that there exists $c > 0$ such that for some $\gamma \ge 1$:
\begin{align}
\label{eq:dualMSstep1}
\begin{split}
\widehat{\cE}_{B_{r+\rho}}(u,\tau^2 g(\U)) &\ge c_1 \cE^{K_s}_{B_{r+\rho}}(\tau G(\U) , \tau  G(\U))-c_2 q^{\gamma} \rho^{-\alpha} \Vert G(\U)^2 \Vert_{L^1(B_{r+\rho})}.
\end{split}
\end{align}
Let $M$ be as in the proof of \autoref{lemma:MSpos}. 
Moreover, we observe the following algebraic identity:
\begin{align*}
(a+b)(\tau_1^2 g(\widetilde{a}) - \tau_2^2 g(\widetilde{b})) = (\widetilde{a}-\widetilde{b})(g(\widetilde{a}) - g(\widetilde{b}))\tau_1^2 +2b(g(\widetilde{a}) - g(\widetilde{b}))\tau_1^2 + (a+b) g(\widetilde{b}) (\tau_1^2 - \tau_2^2).
\end{align*}
We use again Lemma 2.3 in \cite{KaWe22} to estimate
\begin{align*}
\widehat{\cE}_{M}^{K_a}(u,\tau^{2}g(\U)) &= 2\iint_M (\U(x) - \U(y))(g(\U(x)) - g(\U(y)))\tau^2(x)  K_a(x,y) \d y \d x\\
&+ 4 \iint_M u(y) (g(\U(x)) - g(\U(y))) \tau^2(x) K_a(x,y) \d y \d x\\
&+  4 \iint_M (u(x) + u(y)) g(\U(y)) (\tau^2(x) - \tau^2(y))  K_a(x,y) \d y \d x\\
&\ge 2\iint_M  (\U(x) - \U(y))(g(\U(x)) - g(\U(y)))\tau^2(x)  K_a(y,x) \d y \d x\\
& - 4 \iint_M \U(x) |g(\U(x)) - g(\U(y))| \tau^2(x) |K_a(x,y)| \d y \d x\\
&-  8 \iint_M g(\U(x))\U(x) |\tau^2(x) - \tau^2(y)|  |K_a(x,y)| \d y \d x\\
&= I_a + M_a + N_a,
\end{align*}
where we used that $u(x) \ge u(y)$ and $g(u(x)) \ge g(u(y))$ on $M$, as well as $u \le \U$.
As in the proof of \autoref{lemma:MSpos}, we can decompose $\cE^{K_s}_{B_{r+\rho}}(u,\tau^2 g(\U)) = I_s + J_s$. Then, using \eqref{eq:posaux1}, \eqref{eq:K1consequence}, and \eqref{cutoff}, we estimate
\begin{align*}
I_s + I_a &= 2\iint_M (\U(x) - \U(y))(g(\U(x)) - g(\U(y)))\tau^2(x) K(x,y) \d y \d x\\
& \ge \frac{1}{4}\cE^{K_s}_{B_{r+\rho}}(\tau G(\U),\tau G(\U)) - c \rho^{-\alpha} \Vert G(\U)^2 \Vert_{L^{1}(B_{r+\rho})}. 
\end{align*}

For $M_a$, we obtain using, \eqref{eq:posaux3}, \eqref{eq:posaux33}, \eqref{eq:posaux5} and \eqref{cutoff}
\begin{align*}
M_a &\ge - 4 q\iint_M \left|G(\U(x)) - G(\U(y)) \right| G(\U(x)) \tau^2(x) |K_a(x,y)| \d y \d x\\
&\ge - \eps \iint_M (G(\U(x)) - G(\U(y)) )^2 (\tau^2(y) \vee \tau^2(x))  J(x,y) \d y \d x\\
&- c q^2 \iint_M \tau^2(x)G^2(\U(x)) \frac{|K_a(x,y)|^2}{J(x,y)} \d y \d x\\
&\ge - c\eps \cE^{K_s}_{B_{r+\rho}}(\tau G(\U) , \tau G(\U)) - c q^{\gamma_1} \rho^{-\alpha} \Vert G(\U)^2 \Vert_{L^{1}(B_{r+\rho})},
\end{align*}

for some $\gamma_1 > 0$, where we used that by \eqref{K1glob} and \eqref{eq:quantifiedK1consequence} applied with some $\delta \le \frac{\eps}{c q^2}$
\begin{align}
\label{eq:applyquantifiedK1consequence}
\begin{split}
c q^2 \iint_M \tau^2(x)G^2(\U(x)) \frac{|K_a(x,y)|^2}{J(x,y)} \d y \d x &\le \eps \cE^{K_s}_{B_{r+\rho}}(\tau G(\U) , \tau G(\U))\\
&+ c q^2 (\delta^{-\gamma_2} + \delta)\rho^{-\alpha} \Vert G(\U)^2 \Vert_{L^{1}(B_{r+\rho})}
\end{split}
\end{align}
for some $\gamma_2 > 0$ and moreover, by \eqref{eq:taualgebra}, \eqref{cutoff}, and using the same argument as in \eqref{eq:absorbJtau}:
\begin{align*}
\eps &\iint_M (G(\U(x)) - G(\U(y)) )^2 (\tau^2(y) \vee \tau^2(x)) J(x,y) \d y \d x\\
&\le 2\eps \int_{B_{r+\rho}}\int_{B_{r+\rho}} (G(\U(x)) - G(\U(y)) )^2 (\tau^2(y) \wedge \tau^2(x)) J(x,y) \d y \d x + c \rho^{-\alpha}\Vert G(\U)^2 \Vert_{L^1(B_{r+\rho})}\\
&\le c\eps \cE^{K_s}_{B_{r+\rho}}(\tau G(\U) , \tau G(\U)) + c \rho^{-\alpha}\Vert G(\U)^2 \Vert_{L^1(B_{r+\rho})}.
\end{align*}
For $N_a$, we compute using \eqref{eq:posaux6}, \eqref{eq:taualgebra}, \eqref{eq:KaKs}
\begin{align*}
N_a &\ge -cq \iint_M G^2(\U(x))(\tau(x)-\tau(y))^2 K_s(x,y) \d y \d x\\
&-cq \iint_M G^2(\U(x))(\tau(x) \wedge \tau(y))|\tau(x)-\tau(y)| |K_a(x,y)| \d y \d x\\
&\ge -c q \rho^{-\alpha} \Vert G(\U)^2 \Vert_{L^{1}(B_{r+\rho})}- cq^2\int_{B_{r+\rho}} \int_{B_{r+\rho}} \tau^2(x)G^2(\U(x)) \frac{|K_a(x,y)|^2}{J(x,y)} \d y \d x\\
&\ge- c q^{\gamma_3} \rho^{-\alpha} \Vert G(\U)^2 \Vert_{L^{1}(B_{r+\rho})} - \eps \cE^{K_s}_{B_{r+\rho}}(\tau G(\U), \tau G(\U))
\end{align*}
for some $\gamma_3 > 0$, where we applied \eqref{cutoff} and used the same argument as in \eqref{eq:applyquantifiedK1consequence} to estimate the second summand in the last step.
Altogether, we have shown
\begin{align*}
\widehat{\cE}_{B_{r+\rho}}(u,\tau^2 g(\U)) &\ge \left[ \frac{1}{4} - 3\eps \right] \cE^{K_s}_{B_{r+\rho}}(\tau G(\U),\tau G(\U)) - c q^{\gamma} \rho^{-\alpha} \Vert G(\U)^2 \Vert_{L^{1}(B_{r+\rho})}.
\end{align*}
Thus, by choosing $\eps > 0$ small enough, we obtain \eqref{eq:dualMSstep1}, as desired.

Step 2: Moreover, it holds
\begin{equation}
\label{eq:dualMSstep2}
- \widehat{\cE}_{(B_{r+\rho}\times B_{r+\rho})^{c}}(u,\tau^2 g(\U)) \le  c \Vert g(\U) \Vert_{L^1(B_{r+\rho})} \sup_{z \in B_{r+\frac{\rho}{2}}}\left( \int_{B_{r+\rho}^c} u(y) K(y,z) \d y\right).
\end{equation}
The proof works similar to the proof of Step 2 in \autoref{lemma:MSpos}
\begin{align*}
- &\widehat{\cE}_{(B_{r+\rho} \times B_{r+\rho})^{c}}(u,\tau^{2} g(\U)) = -2 \int_{(B_{r+\rho}\times B_{r+\rho})^{c}} (\tau^{2}g(\U(x)) - \tau^{2}g(\U(y))) u(x) K(x,y) \d y \d x\\
&= -2 \int_{B_{r+\rho}}\hspace{-0.4cm} \tau^{2}(x)u(x)g(\U(x)) \left(\int_{B_{r+\rho}^{c}}\hspace{-0.4cm} K(x,y) \d y \right) \d x +2 \int_{B_{r+\rho}}\hspace{-0.4cm} \tau^{2}(y) g(\U(y)) \left(\int_{B_{r+\rho}^{c}}\hspace{-0.4cm} u(x) K(x,y) \d x \right) \d y\\
&\le 2 \Vert g(\U) \Vert_{L^1(B_{r+\rho})} \sup_{z \in B_{r+\frac{\rho}{2}}}\left( \int_{B_{r+\rho}^c} u(y) K(y,z) \d y\right),
\end{align*}
using that $u,K \ge 0$ and $\supp(\tau) \subset B_{r + \frac{\rho}{2}}$.
\end{proof}

\subsection{Local boundedness}

Now, we will show how to prove \autoref{thm:LB1} via the Moser iteration. Note that we get a slightly better bound for subsolutions to \eqref{PDEdual} compared to \autoref{thm:LB1} (ii).

\begin{theorem}
\label{thm:LB1dual}
Assume that \eqref{eq:kuppershort}, \eqref{cutoff} and \eqref{Sob} hold true.
\begin{itemize}
\item[(i)] Assume \eqref{K1} holds true for some $\theta \in [\frac{d}{\alpha},\infty]$. Then there exists $c > 0$ such that for every $0 < R \le 1$, and every nonnegative, weak subsolution $u$ to \eqref{PDE} in $I_{R}^{\ominus}(t_0) \times B_{2R}$
\begin{align}
\label{eq:parLBMoser}
\sup_{I_{R/8}^{\ominus} \times B_{R/2}} u \le c \left(\dashint_{I_{R/4}^{\ominus}} \dashint_{B_R} u^{2}(t,x)\d x \d t\right)^{1/2} \hspace{-0.3cm} + c\sup_{t \in I_{R/4}^{\ominus}}\tail_{K,\alpha}(u(t),R) + c R^{\alpha}\Vert f \Vert_{L^{\infty}},
\end{align}
where $B_{2R} \subset \Omega$.

\item[(ii)] Assume \eqref{K1glob} holds true for some $\theta \in (\frac{d}{\alpha},\infty]$. Then there exists $c > 0$ such that for every $0 < R \le 1$, and every nonnegative, weak subsolution $u$ to \eqref{PDEdual} in $I_{R}^{\ominus}(t_0) \times B_{2R}$
\begin{align}
\label{eq:parLBMoserdual}
\sup_{I_{R/8}^{\ominus} \times B_{R/2}} u \le c \left(\dashint_{I_{R/4}^{\ominus}} \dashint_{B_R} u^{2}(t,x)\d x \d t\right)^{1/2} \hspace{-0.3cm} + c\sup_{t \in I_{R/4}^{\ominus}}\widehat{\tail}_{K,\alpha}(u(t),R)+ c R^{\alpha}\Vert f \Vert_{L^{\infty}},
\end{align}
where $B_{2R} \subset \Omega$.
\end{itemize}
\end{theorem}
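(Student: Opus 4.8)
The plan is to carry out a Moser iteration for positive exponents based on the Caccioppoli‑type estimates of \autoref{lemma:MSpos} (for part~(i)) and \autoref{lemma:MSposdual} (for part~(ii)), running in parallel to the De Giorgi argument used for \autoref{thm:LB1}; the two parts differ only by extra, harmless polynomial factors in the iteration exponent $q$ and, in the dual case, by the function space in which $u$ is required to live. As in those lemmas I set $\U := u + R^{\alpha}\Vert f\Vert_{L^{\infty}}$. Since constants are $L$‑harmonic, $\U$ is again a nonnegative weak subsolution of the same equation, and the shift absorbs the forcing term: using $\U \ge R^{\alpha}\Vert f\Vert_{L^{\infty}}$ and \eqref{eq:posaux6}, $(f,\tau^2 g(\U)) \le R^{-\alpha}\Vert \U\, g(\U)\Vert_{L^1} \le q\,\rho^{-\alpha}\Vert G(\U)^2\Vert_{L^1}$, which merges into the leading remainder term of the Caccioppoli inequality. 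Finally, after rescaling $u$ by $N := \big(\dashint_{I_{R/4}^{\ominus}}\dashint_{B_R}u^2\big)^{1/2} + \sup_{t\in I_{R/4}^{\ominus}}\tail_{K,\alpha}(u(t),R) + R^{\alpha}\Vert f\Vert_{L^{\infty}}$, I may assume that each of these three quantities is at most $1$; proving $\sup_{I_{R/8}^{\ominus}\times B_{R/2}}u \le c$ under this normalization is then equivalent to \eqref{eq:parLBMoser} (resp.\ \eqref{eq:parLBMoserdual}).

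First I would fix $q\ge 1$ and a finite truncation level $M$, so that the test function $\chi^2(t)\,\tau^2(x)\,g(\U(t,x))$ — with $\chi$ a cutoff vanishing at the bottom of the time interval — belongs to the admissible class $H_{B_{r+\rho}}(\R^d)$ for a.e.\ $t$; this is precisely the point of the linear continuation of $g$ past $M$. Testing the weak formulation of \eqref{PDE} (resp.\ \eqref{PDEdual}), handling the time‑derivative term through the Steklov device of \autoref{lemma:steklovDG} and the primitive $\Psi(t):=\int_0^t g$ (comparable to $G^2$ up to a universal constant by the definition of $g$ and \eqref{eq:posaux6}), and invoking \autoref{lemma:MSpos} (resp.\ \autoref{lemma:MSposdual}) together with \eqref{eq:tailtotail} (resp.\ \eqref{eq:tailtotaildual}) to replace $\tail_K$ by $\tail_{K,\alpha}$, yields an energy estimate
\begin{align*}
\sup_{t\in I_{r/4}^{\ominus}}\int_{B_r}\Psi(\U(t))\,\d x &+ \int_{I_{r/4}^{\ominus}}\cEs_{B_{r+\rho}}\big(\tau G(\U(t)),\tau G(\U(t))\big)\,\d t\\
&\le c\,\sigma(r,\rho)\Vert G(\U)^2\Vert_{L^1(Q'')} + c\,\rho^{-\alpha}\big(\tfrac{r+\rho}{\rho}\big)^{d}\Vert g(\U)\Vert_{L^1(Q'')}\,T,
\end{align*}
where $Q''$ denotes the slightly larger space‑time cylinder, $T:=\sup_{t}\tail_{K,\alpha}(u(t),R)$, and $\sigma(r,\rho):=\rho^{-(\alpha\vee 1)}(r+\rho)^{(\alpha\vee 1)-\alpha}$ (in part~(ii) the same bound holds with an extra factor $q^{\gamma}$ on the right, coming from \autoref{lemma:MSposdual}). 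Applying the Sobolev inequality \eqref{Sob} to $v(t):=\tau G(\U(t))$ and the parabolic interpolation $L^{\infty}_t L^1_x\cap L^1_t L^{d/(d-\alpha)}_x\hookrightarrow L^{\kappa}_{t,x}$ with $\kappa = 1+\tfrac{\alpha}{d}$ converts this into a reverse‑H\"older inequality for $v^2$ on the inner cylinder $Q'$ in terms of $\Vert G(\U)^2\Vert_{L^1(Q'')}$ and $\Vert g(\U)\Vert_{L^1(Q'')}$. Both sides being monotone in $M$ — the left‑hand side because $G(\U)^2\nearrow \tfrac{2q-1}{q^2}\U^{2q}$ by \autoref{lemma:Gincreasing}, the right‑hand side because $g(\U)$ increases in $M$ — I can let $M\to\infty$, replacing $G(\U)$ by $\tfrac{\sqrt{2q-1}}{q}\U^q$ and $g(\U)$ by $\U^{2q-1}$.

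The resulting inequality contains the term $\rho^{-\alpha}\big(\tfrac{r+\rho}{\rho}\big)^{d}\,T\,\Vert\U^{2q-1}\Vert_{L^1(Q'')}$, which carries one power of $\U$ too few; I would fix the homogeneity by H\"older's inequality, $\Vert\U^{2q-1}\Vert_{L^1(Q'')}\le |Q''|^{1/(2q)}\Vert\U^{2q}\Vert_{L^1(Q'')}^{1-1/(2q)}$, and then Young's inequality with $q$‑independent constant, turning it into $\eta\Vert\U^{2q}\Vert_{L^1(Q'')} + C_\eta\,T^{2q}|Q''|$; the first summand is absorbed, and — crucially — after the normalization ($T\le 1$, $R\le 1$) the leftover $T^{2q}|Q''|$ is bounded by $1$ uniformly in $q$. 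Iterating along $q_i=\kappa^{i}$, radii $r_i\searrow R/2$, $\rho_i=2^{-i-1}R$, and cylinders $Q_i\searrow I_{R/8}^{\ominus}\times B_{R/2}$, and writing $Y_i:=\dashint_{Q_i}\U^{2q_i}$, the previous step takes the form $Y_{i+1}\le C\,q_i^{\gamma'}\,b^{i}\,(Y_i+1)$ for some $b>1$; this yields $Y_i+1\le \big(\prod_{j<i}C\,q_j^{\gamma'}b^{j}\big)(Y_0+1)$, and since the logarithm of the product grows only quadratically in $i$ whereas $2q_i = 2\kappa^i$ grows geometrically, $(Y_i)^{1/(2q_i)}=\Vert\U\Vert_{L^{2q_i}(Q_i)}$ remains bounded as $i\to\infty$; passing to the limit gives $\sup_{I_{R/8}^{\ominus}\times B_{R/2}}\U \le c$, and hence the claim. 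The improvement over \autoref{thm:LB1}(ii) in the dual case is inherited directly from \autoref{lemma:MSposdual}: there the lower‑order terms produced by the dual drift are already folded into the single remainder $q^{\gamma}\rho^{-\alpha}\Vert G(\U)^2\Vert_{L^1}$, so one iterates plain $L^{2q}$‑norms instead of $L^{2q\theta'}$‑norms, at the price of those polynomial $q$‑factors and of the hypothesis $\theta>\tfrac{d}{\alpha}$ — which is what makes the quantified bound \eqref{eq:quantifiedK1consequence} available in the proof of \autoref{lemma:MSposdual}.

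The step I expect to be the main obstacle is precisely the treatment of the nonlocal tail: one must arrange the shift and the normalization so that, after the H\"older--Young trade‑off, the residual $T^{2q}|Q''|$ stays under control uniformly in the iteration index $i$, and one must make sure that no worse‑than‑polynomial dependence on $q$ enters — in particular the Young constant $C_\eta$ and all constants from \autoref{lemma:MSposdual} must be tracked and shown polynomial in $q$. A secondary, essentially technical, difficulty is the rigorous justification of the time‑derivative computation for the truncated monomial $g(\U)$ via Steklov averaging (adapting \autoref{lemma:steklovDG}), together with the bookkeeping of the order of limits: one iterates first at finite $M$, majorizes the non‑monotone energy quantity by its $M=\infty$ bound, and only then lets $M\to\infty$ in the Sobolev‑interpolated inequality, whose left‑hand side is monotone by \autoref{lemma:Gincreasing}.
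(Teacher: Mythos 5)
Your proposal follows the paper's own strategy closely: the same truncated test function $\tau^2 g(\U)$ with $\U = u + R^{\alpha}\Vert f\Vert_{L^{\infty}}$, the same Caccioppoli estimates (\autoref{lemma:MSpos}, \autoref{lemma:MSposdual}), the same $M\to\infty$ limit via \autoref{lemma:Gincreasing}, and the same interpolation-plus-Sobolev reverse-H\"older step. Where you genuinely diverge is the iteration bookkeeping: the paper works with the un-normalized norms $\Vert\U\Vert_{L^{2q_i}}$ and, in the H\"older--Young split of $q\Vert\U^{2q-1}\Vert_{L^1}\tail_K$, chooses an $i$-\emph{dependent} balance parameter $\lambda_i = (c_i\rho_i)^{\alpha}$ with $c_i = 2^{-(i+1)(d+\eps)/\alpha}$, so that after taking the $1/(2q_i)$-th root the tail contribution at step $i$ carries a factor $2^{-(i+1)\eps}$ and the contributions sum geometrically. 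You instead normalize so that $T := \sup_t\tail_{K,\alpha}(u(t),R) \le 1$ and keep a fixed Young parameter $\eta$; the tail contributions then do \emph{not} decay across iterations, and what saves you is the multiplicative, superlinear ($\kappa$-power) structure of the recursion. This is a legitimate alternative route and arguably cleaner, but two of the quantitative claims you make about it are off and would trip up a direct implementation.

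First, the recursion you write, $Y_{i+1}\le C q_i^{\gamma'}b^i(Y_i+1)$, is wrong: raising the Sobolev-interpolated estimate to the power $\kappa$ and dividing by $|Q_{i+1}|$ gives, after merging constants and using $\kappa|Q_i|^{\kappa}/|Q_{i+1}|\sim 1$, a recursion of the form $Y_{i+1}\le C_i\,(Y_i + \xi_i)^{\kappa}$ with $C_i\lesssim (q_i^{\gamma}2^{ci})^{\kappa}$ and $\xi_i$ the (normalized) Young residual. The $\kappa$-power is essential; your linear form would yield $\log(Y_i+1)=O(i^2)$ and $(Y_i)^{1/(2q_i)}\to 1$ independently of $Y_0$, which is not the correct dependence. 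With the $\kappa$-power, the standard computation $\log(Y_i+\xi_i)/\kappa^i = \sum_j\kappa^{-1-j}\log(2C_j) + \log(Y_0+\xi_0)$ gives boundedness of $(Y_i)^{1/(2q_i)}$ in terms of $Y_0$ as it should. Second, the Young constant $C_\eta$ in $a^{1-\frac{1}{2q}}b^{\frac{1}{2q}}\le \eta a + C_\eta b$ is $\tfrac{1}{2q}\eta^{1-2q}$, which is \emph{exponential} in $q$, not polynomial as you assert one must ensure; accordingly the residual $\xi_i\sim \eta^{-2q_i}T^{2q_i}/q_i^{\gamma-1}$ diverges doubly exponentially in $i$. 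This does not break the argument, precisely because $\xi_i^{1/(2q_i)}\to\eta^{-1}$ remains bounded and the recursion in $W_i := Y_i^{1/(2q_i)}$ reduces to $W_{i+1}\le D_i\max(W_i,\zeta_i)$ with $\prod D_i<\infty$ and $\sup_i\zeta_i<\infty$ --- but you should verify this explicitly rather than rely on a polynomial-growth claim that fails. Once these two points are corrected the proposal is sound and yields \eqref{eq:parLBMoser} and \eqref{eq:parLBMoserdual} with the same hypotheses as the paper.
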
 

\begin{proof}
We will only demonstrate the proof of (ii). The proof of (i) goes via the same arguments, but uses \autoref{lemma:MSpos} instead of \autoref{lemma:MSposdual}. Let $0 < \rho \le r \le r+\rho \le R$ and $q \ge 1$. 
We obtain by applying \autoref{lemma:MSposdual}
\begin{align}
\label{eq:LB1Moserprep1}
\begin{split}
c&\int_{B_{r+\rho}} \tau^2(x) \partial_t u(t,x) g(\U(t,x)) \d x + \cEs_{B_{r+\rho}}(\tau G(\U),\tau G(\U))\\
&\le c\left[ (\partial_{t} u(t) , \tau^2 g(\U(t))) + \cE(u(t),\tau^2 g(\U(t)))\right]\\
&+c q^{\gamma} \rho^{-\alpha} \Vert G(\U(t))^2 \Vert_{L^{1}(B_{r+\rho})} + c \Vert g(\U(t)) \Vert_{L^1(B_{r+\rho})} \widehat{\tail}_K(u(t),r+\frac{\rho}{2},r+\rho)\\
&\le c(f(t),\tau^2 g(\U(t)))\\
&+ c q^{\gamma} \rho^{-\alpha} \Vert G(\U(t))^2 \Vert_{L^{1}(B_{r+\rho})} + c \Vert g(\U(t)) \Vert_{L^1(B_{r+\rho})} \widehat{\tail}_K(u(t),r+\frac{\rho}{2},r+\rho)\\
&\le c q^{\gamma} \rho^{-\alpha} \Vert G(\U(t))^2 \Vert_{L^{1}(B_{r+\rho})} + c \Vert g(\U(t)) \Vert_{L^1(B_{r+\rho})} \widehat{\tail}_K(u(t),r+\frac{\rho}{2},r+\rho),
\end{split}
\end{align}
where $c > 0$ is the constant from \autoref{lemma:MSposdual} and we tested the equation with $\tau^2 g(u)$, where $\tau = \tau_{r,\frac{\rho}{2}}$. Moreover, we used that by definition of $\U$:
\begin{align*}
(f(t),\tau^2 g(\U(t))) \le c \rho^{-\alpha}\Vert G(\U(t))^2 \Vert_{L^1(B_{r+\rho})}.
\end{align*}
We observe that
\begin{align*}
(\partial_t u) g(\U) = \begin{cases}
\frac{1}{2q} \partial_t (\U^{2q})&, ~~u \le M,\\
\frac{1}{2} M^{2q-2} \partial_t(\U^2)&, ~~u > M.
\end{cases}
\end{align*}
Next, we define $\chi \in C^1(\R)$ to be a function satisfying $0 \le \chi \le 1$, $\Vert \chi' \Vert_{\infty} \le 16 ((r+\rho)^{\alpha} - r^{\alpha})^{-1}$, $\chi(t_0 - ((r+\rho)/4)^{\alpha}) = 0$, $\chi \equiv 1$ in $I_{r/4}^{\ominus}(t_0)$. By multiplying \eqref{eq:LB1Moserprep1} with $\chi^2$ and integrating over $(t_0 - ((r+\rho)/4)^{\alpha},t)$ for some arbitrary $t \in I_{r/4}^{\ominus}(t_0)$, we obtain
\begin{align*}
\int_{B_{r+\rho}}&\chi^2(t) \tau^2(x) H(\U(t,x)) \d x + \int^t_{t_0 - ((r+\rho)/4)^{\alpha}}\chi^2(s) \cEs_{B_{r+\rho}}(\tau G(\U(s)),\tau G(\U(s))) \d s\\
&\le c_2 q^{\gamma} \rho^{-\alpha} \int^t_{t_0 - ((r+\rho)/4)^{\alpha}} \chi^2(s) \Vert G(\U(s))^2 \Vert_{L^{1}(B_{r+\rho})} \d s\\
&+ c_2 \int^t_{t_0 - ((r+\rho)/4)^{\alpha}}\chi(s) \vert \chi'(s)\vert \int_{B_{r+\rho}}  \tau^2(x) H(\U(s,x)) \d x \d s\\
&+ c_2 \int^t_{t_0 - ((r+\rho)/4)^{\alpha}}\chi^2(s) \Vert g(\U(s)) \Vert_{L^1(B_{r+\rho})} \widehat{\tail}_K(u(s),r+\frac{\rho}{2},r+\rho) \d s
\end{align*}
for some $c_2 > 0$, where
\begin{align*}
H(t) = \begin{cases}
\frac{1}{2q} t^{2q}&, ~~t \le M,\\
\frac{1}{2} M^{2q-2} t^2&, ~~t > M.
\end{cases}
\end{align*}
Consequently,
\begin{align*}
\sup_{t \in I_{r/4}^{\ominus}} &\int_{B_r}H(\U(t,x)) \d x + \int_{I_{r/4}^{\ominus}} \cEs_{B_{r+\rho}}(\tau G(\U(s)),\tau G(\U(s))) \d s\\
&\le c_3 q^{\gamma} \left(\rho^{-\alpha} \vee ((r+\rho)^{\alpha}-r^{\alpha})^{-1}\right) \left(\Vert H(\U)\Vert_{L^{1}(I_{(r+\rho)/4}^{\ominus} \times B_{r+\rho})} + \Vert G(\U)^2\Vert_{L^{1}(I_{(r+\rho)/4}^{\ominus} \times B_{r+\rho})}\right)\\
&+ c_3 \Vert g(\U)\Vert_{L^1(I_{(r+\rho)/4}^{\ominus} \times B_{r+\rho})} \sup_{t \in I_{(r+\rho)/4}^{\ominus}}\widehat{\tail}_K(u(t),r+\frac{\rho}{2},r+\rho)
\end{align*}
for some $c_3 > 0$. Now, we send $M \nearrow \infty$. By monotone convergence, the definition of $g,G,H$ and \eqref{lemma:Gincreasing}
\begin{align*}
\sup_{t \in I_{r/4}^{\ominus}} &\int_{B_r} \U^{2q}(t,x) \d x + \int_{I_{r/4}^{\ominus}} \cEs_{B_{r+\rho}}(\tau \U^q(s),\tau \U^q(s) ) \d s\\
&\le c_4 q^{\gamma} \left(\rho^{-\alpha} \vee ((r+\rho)^{\alpha}-r^{\alpha})^{-1}\right) \Vert \U^{2q}\Vert_{L^{1}(I_{(r+\rho)/4}^{\ominus} \times B_{r+\rho})}\\
&+ c_4 q \Vert \U^{2q-1}\Vert_{L^1(I_{(r+\rho)/4}^{\ominus} \times B_{r+\rho})} \sup_{t \in I_{(r+\rho)/4}^{\ominus}}\widehat{\tail}_K(u(t),r+\frac{\rho}{2},r+\rho)
\end{align*}
for some $c_4 > 0$. Recall that $\kappa = 1 + \frac{\alpha}{d} > 1$. By H\"older interpolation and Sobolev inequality \eqref{Sob}, we derive the following result:
\begin{align}
\label{eq:posMoserItStep}
\begin{split}
\Vert \U^{2q} \Vert_{L^{\kappa}(I_r^{\ominus} \times B_r)}  &\le \left(\sup_{t \in I_r^{\ominus}} \Vert \U^{2q}(t) \Vert_{L^1(B_r)}^{\kappa-1} \int_{I_r^{\ominus}} \Vert \U^{2q}(s) \Vert_{L^{\frac{d}{d-\alpha}}(B_r)} \d s \right)^{1/\kappa}\\
&\le c q^{\gamma} (\rho^{-\alpha} \vee ((r+\rho)^{\alpha} -r^{\alpha})^{-1}) \Vert \U^{2q} \Vert_{L^1(I_{r+\rho}^{\ominus} \times B_{r+\rho})}\\
&+  cq \Vert \U^{2q-1}\Vert_{L^1(I_{(r+\rho)/4}^{\ominus} \times B_{r+\rho})} \sup_{t \in I_{(r+\rho)/4}^{\ominus}}\widehat{\tail}_K(u(t),r+\frac{\rho}{2},r+\rho).
\end{split}
\end{align}
We will now demonstrate how to perform the Moser iteration for positive exponents for nonlocal equations. \eqref{eq:posMoserItStep} is the key estimate for the iteration scheme. The main difficulty compared to the classical local case it the treatment of the tail term.

Let us define for $\eps > 0$ to be determined later and $i \in \N$: $c_i = 2^{-(i+1)\frac{d+\eps}{\alpha}} < 1$. By H\"older's and Young's inequality we have for each $i \in \N$ the following estimate:
\begin{align*}
q&\Vert \U^{2q-1}\Vert_{L^1(I_{(r+\rho)/4}^{\ominus} \times B_{r+\rho})} \sup_{t \in I_{(r+\rho)/4}^{\ominus}}\widehat{\tail}_K(u(t),r+\frac{\rho}{2},r+\rho)\\
&\le \left(q(c_i\rho)^{-\alpha}\Vert \U^{2q}\Vert_{L^1(I_{(r+\rho)/4}^{\ominus} \times B_{r+\rho})} \right)^{\frac{2q-1}{2q}}\\
&\qquad\left(q^{\frac{1}{2q}}(c_i \rho)^{\alpha \frac{2q-1}{2q}}  (r+\rho)^{\frac{d+\alpha}{2q}} \sup_{t \in I_{(r+\rho)/4}^{\ominus}}\widehat{\tail}_K(u(t),r+\frac{\rho}{2},r+\rho) \right)\\
&\le q (c_i \rho)^{-\alpha} \Vert \U^{2q} \Vert_{L^1(I_{(r+\rho)/4}^{\ominus} \times B_{r+\rho})}\hspace{-0.1cm}  + \hspace{-0.1cm} \left(q^{\frac{1}{2q}}(c_i \rho)^{\alpha \frac{2q-1}{2q}}  (r+\rho)^{\frac{d+\alpha}{2q}} \hspace{-0.3cm}\sup_{t \in I_{(r+\rho)/4}^{\ominus}}\hspace{-0.2cm}\widehat{\tail}_K(u(t),r+\frac{\rho}{2},r+\rho) \right)^{2q}\hspace{-0.2cm}.
\end{align*}
Combining this estimate with \eqref{eq:posMoserItStep} and taking both sides to the power $\frac{1}{2q}$ yields:
\begin{align*}
\Vert \U \Vert_{L^{2q\kappa}(I_r^{\ominus} \times B_r)} &\le c^{\frac{1}{2q}} q^{\frac{\gamma}{2q}} c_i^{-\frac{\alpha}{2q}} \left(\rho^{-\frac{\alpha}{2q}} \vee ((r+\rho)^{\alpha}-r^{\alpha})^{-\frac{1}{2q}}\right)\Vert \U \Vert_{L^{2q}(I_{r+\rho}^{\ominus} \times B_{r+\rho})}\\
&+ c^{\frac{1}{2q}} q^{\frac{1}{2q}} (c_i \rho)^{\alpha\frac{2q-1}{2q}} (r+\rho)^{\frac{d+\alpha}{2q}} \sup_{t \in I_{(r+\rho)/4}^{\ominus}}\widehat{\tail}_K(u(t),r+\frac{\rho}{2},r+\rho)\\
&\le c^{\frac{1}{2q}} q^{\frac{\gamma}{2q}} c_i^{-\frac{\alpha}{2q}} \left(\rho^{-\frac{\alpha}{2q}} \vee ((r+\rho)^{\alpha}-r^{\alpha})^{-\frac{1}{2q}}\right)\\
&\left(\Vert \U \Vert_{L^{2q}(I_{r+\rho}^{\ominus} \times B_{r+\rho})} + (c_i\rho)^{\alpha} (r+\rho)^{\frac{d+\alpha}{2q}} \sup_{t \in I_{(r+\rho)/4}^{\ominus}}\widehat{\tail}_K(u(t),r+\frac{\rho}{2},r+\rho) \right).
\end{align*}

Recall that by \eqref{eq:tailtotaildual}, we have the following estimate:
\begin{align*}
\widehat{\tail}_K(u(t),r+\frac{\rho}{2},r+\rho) \le c \rho^{-\alpha}\left(\frac{r+\rho}{\rho}\right)^d \widehat{\tail}_{K,\alpha}(u(t),R).
\end{align*}

We fix $q_0 \ge 1$, $q_i = q_0 \kappa^i$, and set $\rho_i = 2^{-i-1}R$ and $r_{i+1} = r_{i} - \rho_{i+1}$, $r_0 = R$. Note that $r_i \searrow R/2$. We obtain for every $i \in \N$, using that $(\rho_i^{-\frac{\alpha}{2q_{i-1}}} \vee ((r_i+\rho_i)^{\alpha} -r_i^{\alpha})^{-\frac{1}{2q_{i-1}}}) \le c^{\frac{1}{2q_{i-1}}} R^{-\frac{\alpha}{2q_{i-1}}} 2^{\frac{i+1}{q_{i-1}}}$:
\begin{align}
\label{eq:posMoserIthelp1}
\begin{split}
\Vert \U \Vert_{L^{q_{i}}(I^{\ominus}_{r_{i}} \times B_{r_{i}})} &\le c^{\frac{1}{2q_{i-1}}}q_{i-1}^{\frac{\gamma}{2q_{i-1}}} R^{-\frac{\alpha}{2q_{i-1}}} 2^{\frac{d+\eps+2}{2 q_{i-1}} (i+1)} \\
&\left( \Vert \U \Vert_{L^{2q_{i-1}}(I^{\ominus}_{r_{i-1}} \times B_{r_{i-1}})} + 2^{-(d+\eps)(i+1)}\rho_{i}^{\alpha} R^{\frac{d+\alpha}{2q_{i-1}}} \hspace{-0.2cm} \sup_{t \in I_{(r_{i}+\rho_{i})/4}^{\ominus}} \hspace{-0.3cm} \widehat{\tail}_{K}(u(t),r_i + \frac{\rho_i}{2},r_i+\rho_i) \right)\\
&\le c^{\frac{1}{2q_{i-1}}}q_{i-1}^{\frac{\gamma}{2q_{i-1}}} R^{-\frac{\alpha}{2q_{i-1}}} 2^{\frac{d+\eps+2}{2 q_{i-1}} (i+1)} \\
&\left( \Vert \U \Vert_{L^{2q_{i-1}}(I^{\ominus}_{r_{i-1}} \times B_{r_{i-1}})} + R^{\frac{d+\alpha}{2q_{i-1}}} 2^{-(i+1) \eps} \sup_{t \in I_{R/4}^{\ominus}} \widehat{\tail}_{K,\alpha}(u(t),R) \right).
\end{split}
\end{align}
Consequently,
\begin{align*}
\sup_{I_{R/2}^{\ominus} \times B_{R/2}} \U &\le \left( \prod_{i = 1}^{\infty} c^{\frac{1}{2q_{i-1}}}q_{i-1}^{\frac{\gamma}{2q_{i-1}}} R^{-\frac{\alpha}{2q_{i-1}}} 2^{\frac{d+\eps+2}{2 q_{i-1}} (i+1)}  \right) \Vert \U \Vert_{L^{2q_0}(I_{R}^{\ominus} \times B_{R})}\\
&+ \left[\sum_{i = 1}^{\infty} \left( \prod_{j = i}^{\infty} c^{\frac{1}{2q_{j-1}}} q_{j-1}^{\frac{\gamma}{2q_{j-1}}} R^{-\frac{\alpha}{2q_{j-1}}} 2^{\frac{d+\eps+2}{2 q_{j-1}} (j+1)}  \right) R^{\frac{d+\alpha}{2q_{i-1}}} 2^{-(i+1)\eps} \right] \sup_{t \in I_{R/4}^{\ominus}} \widehat{\tail}_{K,\alpha}(u(t),R).
\end{align*}
Note that $\sum_{i = 0}^{\infty} \kappa^{-i} = \frac{d+\alpha}{\alpha}$ and also $\sum_{i = 0}^{\infty} \frac{i}{\kappa^i} =: c_3 < \infty$. Therefore,
\begin{align*}
\prod_{i = 1}^{\infty} (c q_{i-1})^{\frac{\gamma}{2q_{i-1}}} &\le (c q_0)^{\frac{\gamma}{2q_0} \sum_{i = 0}^{\infty} \kappa^{-i}} \kappa^{\frac{\gamma}{2q_0} \sum_{i = 0}^{\infty} \frac{i}{\kappa^i}} \le c(q_0,\kappa,\gamma) \hspace{-0.2cm} < \infty,\\
\prod_{i = 1}^{\infty} 2^{\frac{d+\eps+2}{2 q_{i-1}} (i+1)} &\le 2^{\frac{d+\eps+2}{2q_0} \sum_{i = 0}^{\infty} \frac{i+2}{\kappa^i}} \le 2^{\frac{(d+\eps+2)c_4}{2q_0}} < \infty, \quad \prod_{j=i}^{\infty} R^{-\frac{\alpha}{2q_{j-1}}} &\hspace{-0.2cm}= R^{-\frac{\alpha}{2q_{i-1}}\sum_{j=0}^{\infty} \kappa^{-j}} \hspace{-0.2cm} = R^{-\frac{d+\alpha}{2q_{i-1}}}. 
\end{align*}
As a consequence,
\begin{align*}
\prod_{i = 1}^{\infty} c^{\frac{1}{2q_{i-1}}}q_{i-1}^{\frac{\gamma}{2q_{i-1}}} R^{-\frac{\alpha}{2q_{i-1}}} 2^{\frac{d+\eps+2}{2 q_{i-1}} (i+1)} \hspace{-0.1cm} \le \hspace{-0.1cm} c(q_0,\kappa,d) R^{-\frac{d+\alpha}{2q_k}} 2^{\frac{d+\eps+2}{2q_k} \sum_{i=0}^{\infty} \frac{i+k+2}{\kappa^i}} \le \hspace{-0.1cm} c(q_0,\kappa,d) R^{-\frac{d+\alpha}{2q_0}} 2^{\frac{(d+\eps+2)c_5}{2q_0}},
\end{align*}
\begin{align*}
\sum_{i = 1}^{\infty} &\left( \prod_{j = i}^{\infty} c^{\frac{1}{2q_{j-1}}} q_{j-1}^{\frac{\gamma}{2q_{j-1}}} R^{-\frac{\alpha}{2q_{j-1}}} 2^{\frac{d+\eps+2}{2 q_{j-1}} (j+1)}  \right) R^{\frac{d+\alpha}{2q_{i-1}}} 2^{-(i+1)\eps} &\le c \sum_{i = 1}^{\infty}  \left( 2^{\frac{(d+\eps + 2)c_5}{2q_{i-1}}(i+1)} \right) 2^{-(i+1)\eps}\\
&\le  c \left( 2^{\frac{(d+\eps+2)c_6}{2q_0} } \right) \sum_{i = 1}^{\infty} 2^{-(i+1)\eps} \le c(d,q_0,\kappa,\eps),
\end{align*}
where we used that $\frac{i+1}{\kappa^{i-1}} \le c_6$ is bounded from above by some constant $c_6 = c_6(\kappa)$. 
Therefore, choosing $\eps = 1$ and $q_0 = 1$, we deduce that for some $c > 0$:
\begin{equation*}
\sup_{I_{R/2}^{\ominus} \times B_{R/2}} \U \le c \left(\dashint_{I_R^{\ominus}}\dashint_{B_R} \U^{2}(t,x) \d x \d t  \right)^{\frac{1}{2}} + c \sup_{t \in I_{R/4}^{\ominus}} \widehat{\tail}_{K,\alpha}(u(t),R).
\end{equation*}
As a consequence, by using the definition of $\U$, as well as triangle inequality for the $L^{2}$-norm, we deduce
\begin{align*}
\sup_{I_{R/2}^{\ominus} \times B_{R/2}} u \le c \left(\dashint_{I_R^{\ominus}}\dashint_{B_R} u^{2}(t,x) \d x \d t \right)^{\frac{1}{2}} + c \sup_{t \in I_{R/4}^{\ominus}} \widehat{\tail}_{K,\alpha}(u(t),R) + c R^{\alpha} \Vert f \Vert_{L^{\infty}}.
\end{align*}
This proves the desired result.
\end{proof}

\section{Local tail estimate}
\label{sec:LTE}

In this section, local tail estimates for supersolutions to \eqref{PDE} and \eqref{PDEdual} (see \autoref{cor:elltailest}), as well as the corresponding stationary equations \eqref{ellPDE} and \eqref{ellPDEdual}  (see \autoref{cor:partailest}) are established. The main auxiliary results are \autoref{lemma:LTE} and \autoref{lemma:LTEdual} whose proofs use similar ideas as in \autoref{lemma:Cacc} and \autoref{lemma:Caccdual}. A central ingredient in the proof are the assumptions \eqref{UJS} and \eqref{UJSdual}, which allow us to derive local tail estimates without having to assume a pointwise lower bound of the jumping kernel. They are applied in a similar way as in \cite{Sch20}, where symmetric nonlocal operators are considered. 

\begin{lemma}
\label{lemma:LTE}
Assume that \eqref{K1}, \eqref{cutoff} and \eqref{UJS} hold true for some $\theta \in [\frac{d}{\alpha},\infty]$. Moreover, assume \eqref{Sob} if $\theta < \infty$. Then there exist $c_1,c_2 > 0$ such that for every $0 < \rho \le r \le 1$, every nonnegative function $u \in V(B_{2r}|\R^d)$ and every $S > 0$ with $S \ge \sup_{B_{r+\rho}} u$, it holds
\begin{align*}
\tail_K(u,r,r+\rho) \le c_1 \frac{1}{S \rho^d}\cE(u,\tau^2(u-2S)) + c_2 \left(\frac{r+\rho}{\rho}\right)^{d}\rho^{-\alpha} S,
\end{align*}
where $B_{2r} \subset \Omega$, $\tau = \tau_{r,\rho}$.
\end{lemma}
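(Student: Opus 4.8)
The plan is to mimic the structure of the Caccioppoli estimate in \autoref{lemma:Cacc}, but now testing with $\phi = \tau^2(u - 2S)$ (which is $\le 0$ on $B_{r+\rho}$ since $u \le S$ there, so it is an admissible test function for a supersolution — up to the usual density/truncation argument). Write $w = u - 2S$, so that $w < 0$ on $B_{r+\rho}$ and $\tau^2 w \le 0$. As in the proof of \autoref{lemma:Cacc}, split
$\cE(u,\tau^2 w) = \cEs_{B_{r+\rho}}(u,\tau^2 w) + \cEa_{B_{r+\rho}}(u,\tau^2 w) + \cE_{(B_{r+\rho}\times B_{r+\rho})^c}(u,\tau^2 w)$.
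The first two terms are the ``interior'' terms and will be bounded below, up to $c\rho^{-\alpha}S^2 (r+\rho)^d/\rho^d$-type error terms, exactly by the same algebraic identities and applications of \eqref{cutoff} and \eqref{K1} (via \autoref{lemma:K1-lemma}) as in Steps 1--2 of \autoref{lemma:Cacc}; here one uses $|w| \le 2S$ and $\tau \equiv 1$ on $B_r$, $\supp\tau \subset \overline{B_{r+\rho}}$, so that $\Vert w^2\Vert_{L^1(B_{r+\rho})} \le c S^2 (r+\rho)^d$. The key new point is the \emph{far-field term}.

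For the far-field term, the sign structure is opposite to \autoref{lemma:Cacc}: since $w \le -S < 0$ on $\supp\tau$,
\[
\cE_{(B_{r+\rho}\times B_{r+\rho})^c}(u,\tau^2 w) = 2\int_{B_{r+\rho}}\int_{B_{r+\rho}^c}(u(x)-u(y))\tau^2(x)w(x)\,K(x,y)\,\d y\,\d x,
\]
and splitting $u(y) = u_+(y) - u_-(y) = u_+(y)$ (as $u \ge 0$ globally, we may assume $u_- = 0$; if not one keeps $u = u_+$ and a harmless extra term), the dominant contribution comes from $-2\int_{B_{r+\rho}}\int_{B_{r+\rho}^c} u(y)\tau^2(x)w(x) K(x,y)$. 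Since $-w(x) = 2S - u(x) \ge S$ on $B_r$ and $\tau \equiv 1$ there, this is $\ge 2S \int_{B_r}\int_{B_{r+\rho}^c} u(y)K(x,y)\,\d y\,\d x$. Now I would invoke \eqref{UJS}: for $x \in B_r$ and $y \in B_{r+\rho}^c$ the separation $|x-y|$ is comparable to (at least a fixed fraction of) $\rho$ and of $|x_0-y|$, so averaging \eqref{UJS} over a small ball and reorganizing yields, for a suitable $x^* \in B_{r+\rho/2}$,
\[
\int_{B_r}\int_{B_{r+\rho}^c} u(y)K(x,y)\,\d y\,\d x \ge c\,\rho^d \int_{B_{r+\rho}^c} u(y)K(x^*,y)\,\d y \ge c\,\rho^d\,\tail_K(u, r+\tfrac{\rho}{2}, r+\rho).
\]
(One uses that $\tau$ here is $\tau_{r,\rho}$, hence supported in $B_{r+\rho}$; the extra remark after the definition of $\tail_K$ lets one pass between radii $r$ and $r+\rho/2$.) The remaining piece $+2\int\int u(x)\tau^2(x)w(x)K(x,y)$ is bounded in absolute value by $cS^2\rho^d \cdot \sup_x\int_{B_{r+\rho/2}(x)^c}K(x,y)\d y \le cS^2\rho^{-\alpha}(r+\rho)^d$ using \eqref{cutoff} (or the sufficient condition \eqref{eq:suffcutoff}), which is absorbed into the error term.

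Putting the pieces together: the interior terms plus the far-field estimate give
\[
\cE(u,\tau^2 w) \ge c\,S\,\rho^d\,\tail_K(u,r,r+\rho) - c'\rho^{-\alpha}\Big(\tfrac{r+\rho}{\rho}\Big)^d S^2 \rho^d,
\]
after using $\tail_K(u,r,r+\rho) \le \tail_K(u,r+\tfrac{\rho}{2},r+\rho)$ (monotonicity in the first radius, remark after the $\tail_K$ definition) — wait, the monotonicity goes the other way, so one should simply work directly with $\tail_K(u,r,r+\rho)$ throughout, which is what the estimate on $\int_{B_r}\int_{B_{r+\rho}^c}$ naturally produces. Dividing by $cS\rho^d$ gives exactly
\[
\tail_K(u,r,r+\rho) \le c_1 \frac{1}{S\rho^d}\cE(u,\tau^2(u-2S)) + c_2\Big(\frac{r+\rho}{\rho}\Big)^d \rho^{-\alpha} S.
\]
The main obstacle is the careful bookkeeping in the far-field term — tracking exactly how \eqref{UJS} converts a \emph{pointwise} kernel value into an \emph{averaged} one over $B_r$ while keeping the $\rho^d$ and radius factors sharp, and making sure the sign of $w = u - 2S$ is exploited in the right direction so that $\tail_K$ appears with a positive coefficient on the correct side. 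Everything else is a routine adaptation of \autoref{lemma:Cacc} with $l = 2S$ and crude bounds $|w| \le 2S$ on $\supp\tau$.
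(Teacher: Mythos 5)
Your overall plan matches the paper's proof: test with $\phi = \tau^2(u-2S)$, decompose $\cE(u,\tau^2 w)$ into the interior symmetric part, the interior antisymmetric part (handled via \eqref{K1} and \autoref{lemma:K1-lemma}), and the far-field part, and extract $\tail_K$ from the far-field part via \eqref{UJS}. The interior terms are indeed absorbed as in a Caccioppoli argument, with a half of $\cEs_{B_{r+\rho}}(\tau w,\tau w)$ surviving and then being discarded since it is nonnegative. However, two concrete steps in your treatment of the far-field part are gapped.

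First, the error piece $2\int_{B_{r+\rho}}\int_{B_{r+\rho}^c} u(x)\tau^2(x)w(x)K(x,y)\,\d y\,\d x$ cannot be bounded by $cS^2\cdot|\text{ball}|\cdot\sup_x\int_{B_\delta(x)^c}K(x,y)\,\d y$ for any fixed $\delta>0$. Here $\tau = \tau_{r,\rho}$ is supported in $\overline{B_{r+\rho}}$ (not $B_{r+\rho/2}$), so $x$ ranges over all of $B_{r+\rho}$ and can be arbitrarily close to $y\in B_{r+\rho}^c$; the inner integral $\int_{B_{r+\rho}^c}K(x,y)\,\d y$ is then not uniformly bounded in $x$. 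The fix is the same one used in the analogous error term of \autoref{lemma:Cacc}: since $\tau(y)=0$ for $y\in B_{r+\rho}^c$, one has $\tau^2(x)=(\tau(x)-\tau(y))^2$, and then
$\int_{B_{r+\rho}}\int_{B_{r+\rho}^c}(\tau(x)-\tau(y))^2 K_s(x,y)\,\d y\,\d x \le \int_{B_{r+\rho}}\Gamma^{K_s}(\tau,\tau)(x)\,\d x \le c\,(r+\rho)^d\rho^{-\alpha}$
by \eqref{cutoff}, together with \eqref{eq:KaKs} to pass from $K$ to $K_s$. It is the decay of $\tau$ near $\partial B_{r+\rho}$, built into the carr\'e du champ bound, that saves the estimate — a pointwise sup over $x$ does not.

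Second, the \eqref{UJS} step is reversed. As written you claim $\int_{B_r}\int_{B_{r+\rho}^c}u(y)K(x,y)\,\d y\,\d x \ge c\rho^d\int_{B_{r+\rho}^c}u(y)K(x^*,y)\,\d y$ "for a suitable $x^*\in B_{r+\rho/2}$" and then compare to $\tail_K(u,r+\tfrac{\rho}{2},r+\rho)$; but that inequality is only useful if $x^*$ attains the sup, and establishing it for the sup point is exactly what \eqref{UJS} is for — it is not a free "averaging" step. Moreover, for $x^*$ near $\partial B_{r+\rho/2}$ the ball $B_{\rho/4}(x^*)$ escapes $B_r$, so the averaged-over region must be enlarged to $B_{r+\rho/4}$. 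The correct chain, and the reason $\tau=\tau_{r,\rho}$ (with the wider support) is chosen here, is: for \emph{every} $x^*\in B_r$, by \eqref{UJS} with radius $\rho/4 \le \tfrac14\wedge\tfrac{|x^*-y|}{4}$, one has $K(x^*,y)\le c\,\dashint_{B_{\rho/4}(x^*)}K(z,y)\,\d z$, and $B_{\rho/4}(x^*)\subset B_{r+\rho/4}$, hence
\[
\rho^d\,\tail_K(u,r,r+\rho)\le c\int_{B_{r+\rho/4}}\int_{B_{r+\rho}^c}u(y)K(x,y)\,\d y\,\d x.
\]
This is then tied to $\cE$ not through $\tau\equiv 1$ on $B_r$, but through $\tau^2\ge 1/16$ on the larger ball $B_{r+\rho/4}$ (which follows from $|\nabla\tau|\le\tfrac32\rho^{-1}$). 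You correctly noticed that the monotonicity of the tail in the inner radius runs the wrong way, but you did not resolve it — working directly with $\tail_K(u,r,r+\rho)$ and passing through $B_{r+\rho/4}$, as above, is what closes the argument.
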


\begin{proof}

We define $w = u - 2S$. Note that by definition, $w \in [-2S,-S]$ in $B_{r+\rho}$.
We separate the proof into several steps.

Step 1: First, we claim that for some $c > 0$ it holds
\begin{equation}
\label{eq:LTEstep1}
\cEs_{B_{r+\rho}}(\tau w, \tau w) \le \cEs_{B_{r+\rho}} (u,\tau^2 w) + c S^2 (r+\rho)^d\rho^{-\alpha}.
\end{equation}
We compute
\begin{align*}
\cEs_{B_{r+\rho}} (u,\tau^2 w) &= \int_{B_{r+\rho}}\int_{B_{r+\rho}} (w(x) - w(y))(\tau^2 w(x) - \tau^2 w(y)) K_s(x,y) \d y \d x \\
&= \cEs_{B_{r+\rho}}(\tau w,\tau w) - \int_{B_{r+\rho}}\int_{B_{r+\rho}} w(x)w(y)(\tau(x)-\tau(y))^2 K_s(x,y)\d y \d x.
\end{align*}
We estimate using \eqref{cutoff}
\begin{align*}
\int_{B_{r+\rho}}&\int_{B_{r+\rho}} w(x)w(y)(\tau(x)-\tau(y))^2 K_s(x,y)\d y \d x \le 4S^2 \cE^{K_s}_{B_{r+\rho}}(\tau,\tau) \le c_1S^2 (r+\rho)^d\rho^{-\alpha}
\end{align*}
for some $c_1 > 0$, which directly implies \eqref{eq:LTEstep1}.

Step 2: Next, we claim that there exists $c > 0$ such that
\begin{equation}
\label{eq:LTEstep2}
-\cEa_{B_{r+\rho}}(u,\tau^2 w) \le \frac{1}{2}\cEs_{B_{r+\rho}}(\tau w, \tau w)  + cS^2(r+\rho)^d \rho^{-\alpha}.
\end{equation}
For the proof, we use the same arguments as in the proof of the Caccioppoli estimate:
\begin{align*}
-\cEa_{B_{r+\rho}}(u,\tau^2 w) &= \int_{B_{r+\rho}}\int_{B_{r+\rho}} (w(y)-w(x))(\tau^2 w(x)+\tau^2 w(y)) K_a(x,y) \d y \d x\\
&= \int_{B_{r+\rho}}\int_{B_{r+\rho}} (\tau w(y)-\tau w(x)) (\tau w(y)+\tau w(x)) K_a(x,y) \d y \d x\\
& +\int_{B_{r+\rho}}\int_{B_{r+\rho}} w(x)w(y)(\tau^2(x)-\tau^2(y)) K_a(x,y) \d y \d x\\
&=: J_1 + J_2.
\end{align*}
Using H\"older's and Young's inequality, as well as \eqref{K1} and \eqref{eq:K1consequence}, we obtain that for every $\delta > 0$
\begin{align*}
J_1 &\le \delta \cE^J_{B_{r+\rho}}(\tau w, \tau w)  + c_2\int_{B_{r+\rho}}\int_{B_{r+\rho}} (\tau w(y)+\tau w(x))^2 \frac{\vert K_a(x,y)\vert^2}{J(x,y)} \d y \d x\\
&\le c \delta \cEs_{B_{r+\rho}}(\tau w, \tau w)  + 2c_2\int_{B_{r+\rho}} \tau^2 w^2(x) \left( \int_{B_{r+\rho}}\frac{\vert K_a(x,y)\vert^2}{J(x,y)} \d y\right)\d x\\
&\le 2c \delta \cEs_{B_{r+\rho}}(\tau w, \tau w) + c_3 S^2 (r+\rho)^{d} \rho^{-\alpha}
\end{align*}
for $c_2, c_3 > 0$ depending on $\delta$. Again, by H\"older's and Young's inequality, as well as \eqref{K1}, \eqref{cutoff}, and \eqref{eq:K1consequence}, we estimate
\begin{align*}
J_2 &\le \frac{1}{2} \int_{B_{r+\rho}}\int_{B_{r+\rho}} \vert w(x)\vert \vert w(y)\vert (\tau(y)-\tau(x))^2 J(x,y) \d y \d x\\
&+ \frac{1}{2}\int_{B_{r+\rho}}\int_{B_{r+\rho}} \vert w(x)\vert \vert w(y)\vert (\tau(y)+\tau(x))^2 \frac{\vert K_a(x,y)\vert^2}{J(x,y)} \d y \d x\\
&\le 2S^2 \cE^J_{B_{r+\rho}}(\tau,\tau) + 8S^2\int_{B_{r+\rho}}\left( \tau^2(x) \int_{B_{r+\rho}} \frac{\vert K_a(x,y)\vert^2}{J(x,y)} \d y \right)\d x\\
&\le c_4 S^2 \cEs_{B_{r+\rho}}(\tau,\tau) + c_4 S^2 (r+\rho)^{d} \rho^{-\alpha}\\
&\le c_5 S^2 (r+\rho)^{d} \rho^{-\alpha}
\end{align*}
for $c_4, c_5 > 0$. From here, \eqref{eq:LTEstep2} directly follows.

Step 3: We claim that there exist constants $c, c' >0$ such that
\begin{equation}
\label{eq:LTEstep3}
-\cE_{(B_{r+\rho} \times B_{r+\rho})^{c}}(u,\tau^2 w) \le c S^2(r+\rho)^d\rho^{-\alpha} - c' S \rho^d \tail_K(u,r,r+\rho).
\end{equation}
First, we rewrite the term on the left hand side of the above line
\begin{align}
\label{eq:LTEhelp2}
\begin{split}
-\cE_{(B_{r+\rho} \times B_{r+\rho})^{c}}&(u,\tau^2 w) = -2\int\int_{(B_{r+\rho} \times B_{r+\rho})^{c}}(u(x)-u(y))\tau^2 w(x) K(x,y)\d y \d x\\
&= -2\int_{B_{r+\rho}}\int_{B_{r+\rho}^{c} \cap \lbrace u(y) \ge S \rbrace}\hspace{-0.2cm} (u(y)-u(x))\tau^2(x) (2S-u(x)) K(x,y)\d y \d x\\
& +2 \int_{B_{r+\rho}}\int_{B_{r+\rho}^{c} \cap \lbrace u(y) \le S \rbrace}(u(x)-u(y))\tau^2(x) (2S-u(x)) K(x,y)\d y \d x\\
&=: I_1 + I_2.
\end{split}
\end{align}
For $I_2$ we obtain:
\begin{align*}
I_2 &\le 4S\int_{B_{r+\rho}}\int_{B_{r+\rho}^{c} \cap \lbrace u(y) \le S \rbrace}(u(x)-u(y))_+\tau^2(x) K(x,y)\d y \d x\\
&\le 8S^2 \int_{B_{r+\rho}}\int_{B_{r+\rho}^{c}} (\tau(x)-\tau(y))^2 K(x,y) \d y \d x\\
&\le 8 S^2\int_{B_{r+\rho}}\Gamma^{K_s}(\tau,\tau)(x) \d x\\
&\le c_6 S^2 (r+\rho)^d \rho^{-\alpha}
\end{align*}
for some $c_6 > 0$, where we used \eqref{eq:KaKs}, \eqref{cutoff} and that $K \ge 0$, $u \ge 0$ globally. We treat $I_1$ in the following way (see \cite{Sch20})
\begin{align*}
I_1 &\le -2S\int_{B_{r+\rho}}\int_{B_{r+\rho}^{c} \cap \lbrace u(y) \ge S \rbrace} (u(y)-S)\tau^2(x) K(x,y)\d y \d x\\
&\le -2S\int_{B_{r+\rho}}\int_{B_{r+\rho}^{c}} (u(y)-S)\tau^2(x) K(x,y)\d y \d x\\
&\le -2S\int_{B_{r + \frac{\rho}{4}}}\int_{B_{r+\rho}^{c}} u(y)\tau^2(x) K(x,y)\d y \d x + 2S^2\int_{B_{r+\rho}}\int_{B_{r+\rho}^{c}} (\tau(x)-\tau(y))^2 K(x,y)\d y \d x\\
&\le -\frac{S}{8}\int_{B_{r + \frac{\rho}{4}}}\int_{B_{r+\rho}^{c}} u(y) K(x,y)\d y \d x + c_7 S^2 (r+\rho)^d \rho^{-\alpha}
\end{align*}
for some $c_7 > 0$, where we used that $u,K \ge 0$, $u \le S$ in $B_{r+\rho}$, $\tau^2 \ge \frac{1}{16}$ in $B_{r+\frac{\rho}{4}}$, \eqref{eq:KaKs} and \eqref{cutoff}.
Finally, note that due to \eqref{UJS}
\begin{align}
\label{eq:LTEhelp3}
\begin{split}
\rho^d \tail_K(u,r,r+\rho) &= \rho^d \sup_{x \in B_{r}} \int_{B_{r+\rho}^{c}} u(y) K(x,y) \d y\\
&\le c_8 \sup_{x \in B_r} \int_{B_{r+\rho}^{c}} u(y) \left( \int_{B_{\frac{\rho}{4}}(x)} K(z,y) \d z\right) \d y\\
&\le c_8 \int_{B_{r+\rho}^{c}} u(y) \left(\int_{B_{r+\frac{\rho}{4}}} K(x,y) \d x\right) \d y\\
&= c_8 \int_{B_{r + \frac{\rho}{4}}}\int_{B_{r+\rho}^{c}} u(y) K(x,y)\d y \d x
\end{split}
\end{align}
for some $c_8 > 0$. Consequently, 
\begin{equation*}
I_1 \le -c_9 S \rho^d \tail_K(u,r,r+\rho) + c_{10} S^2 (r+\rho)^d \rho^{-\alpha},
\end{equation*}
where $c_9, c_{10} > 0$ are constants. 

Step 4: Now, we want to combine \eqref{eq:LTEstep1}, \eqref{eq:LTEstep2} and \eqref{eq:LTEstep3}.
First, we observe that
\begin{equation*}
\cEs_{B_{r+\rho}}(u,\tau^2 w) = \cE(u,\tau^2 w) - \cE_{(B_{r+\rho}\times B_{r+\rho})^{c}}(u,\tau^2 w) -\cEa_{B_{r+\rho}}(u,\tau^2 w).
\end{equation*}
Together, we obtain
\begin{align*}
\cEs_{B_{r+\rho}}(\tau w, \tau w)  \le \cE(u,\tau^2 w) + c_{11} S^2(r+\rho)^d\rho^{-\alpha} - c_{12} S \rho^{d} \tail_K(u,r,r+\rho) + \frac{1}{2}\cEs_{B_{r+\rho}}(\tau w, \tau w) 
\end{align*}
for $c_{11}, c_{12} > 0$. Since $L \ge 0$, we conclude
\begin{align*}
\tail_K(u,r,r+\rho) \le c_{13}\frac{1}{S \rho^d}\cE(u,\tau^2 w) + c_{14} S \left(\frac{r+\rho}{\rho}\right)^d \rho^{-\alpha} ,
\end{align*}
where $c_{13}, c_{14} > 0$ are constants. This yields the desired result.
\end{proof}

Next, we prove a similar estimate for the dual form:

\begin{lemma}
\label{lemma:LTEdual}
Assume that \eqref{K1glob}, \eqref{cutoff} and \eqref{UJSdual} hold true for some $\theta \in [\frac{d}{\alpha},\infty]$. Moreover, assume \eqref{Sob} if $\theta < \infty$. Then there exist $c_1,c_2 > 0$ such that for every $0 < \rho \le r \le 1$, every nonnegative function $u \in V(B_{2r}|\R^d) \cap L^{2\theta'}(\R^d)$ and every $S \ge \sup_{B_{r+\rho}} u$, it holds
\begin{align*}
\widehat{\tail}_K(u,r,r+\rho) \le c_1 \frac{1}{S \rho^d}\widehat{\cE}(u,\tau^2(u-2S)) + c_2 \left(\frac{r+\rho}{\rho}\right)^{d}\rho^{-\alpha} S,
\end{align*}
where $B_{2r} \subset \Omega$, $\tau = \tau_{r,\rho}$.
\end{lemma}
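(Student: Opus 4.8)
The plan is to follow the four-step argument in the proof of \autoref{lemma:LTE}, replacing $\cE$ by the dual form $\widehat{\cE}$ and \eqref{UJS} by \eqref{UJSdual}, in the same way that the proof of \autoref{lemma:Caccdual} adapts that of \autoref{lemma:Cacc}. Set $w := u - 2S$ and $\tau := \tau_{r,\rho}$; then $w \in [-2S,-S]$ on $B_{r+\rho}$, and the assumption $u \in V(B_{2r}|\R^d) \cap L^{2\theta'}(\R^d)$ ensures, via \autoref{lemma:welldef}(ii), that $\widehat{\cE}(u,\tau^2 w)$ and $\widehat{\tail}_K(u,r,r+\rho)$ are well-defined and finite. \emph{Step~1} is identical to Step~1 of \autoref{lemma:LTE}: since $K_s$ is symmetric one has $\cEs_{B_{r+\rho}}(u,\tau^2 w) = \cEs_{B_{r+\rho}}(\tau w,\tau w) - \int_{B_{r+\rho}}\int_{B_{r+\rho}} w(x)w(y)(\tau(x)-\tau(y))^2 K_s(x,y)\,\d y\,\d x$, and the last integral is $\le 4S^2\,\cEs_{B_{r+\rho}}(\tau,\tau) \le c S^2 (r+\rho)^d \rho^{-\alpha}$ by $|w| \le 2S$ and \eqref{cutoff}; hence $\cEs_{B_{r+\rho}}(\tau w,\tau w) \le \cEs_{B_{r+\rho}}(u,\tau^2 w) + c S^2 (r+\rho)^d \rho^{-\alpha}$.

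\emph{Step~2} is where the dual structure enters. Writing $\widehat{\cE}^{K_a}_{B_{r+\rho}}(u,\tau^2 w) = \cEa_{B_{r+\rho}}(\tau^2 w, u)$ and using $u = w + 2S$ on $B_{r+\rho}$ together with linearity of $\cEa$ in its second slot,
\[
\widehat{\cE}^{K_a}_{B_{r+\rho}}(u,\tau^2 w) = \cEa_{B_{r+\rho}}(\tau^2 w, w) + \cEa_{B_{r+\rho}}(\tau^2 w, 2S).
\]
The term $\cEa_{B_{r+\rho}}(\tau^2 w, w)$ is handled exactly like the quantities $I_2$ and $I_3$ in the proof of \autoref{lemma:Cacc}, via the identity $(\tau^2(x)w(x)-\tau^2(y)w(y))(w(x)+w(y)) = (\tau^2 w^2(x)-\tau^2 w^2(y)) + w(x)w(y)(\tau^2(x)-\tau^2(y))$ together with \eqref{eq:taualgebra}, \eqref{eq:KaKs}, \eqref{cutoff}, \eqref{K1glob} and \eqref{eq:K1consequence}; the constant term $\cEa_{B_{r+\rho}}(\tau^2 w, 2S)$ is handled exactly as the term $\cE^{K_a}_{B_{r+\rho}}(\tau^2 w_+, l)$ in Step~2 of \autoref{lemma:Caccdual}, with $l = 2S$. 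The key simplification is that, since $u \le S$ on $B_{r+\rho}$, the quantities $\Vert w^2 \Vert_{L^1(B_{r+\rho})}$, $|A(l,r+\rho)|$ and $|B_{r+\rho}|\big(|A(l,r+\rho)|/|B_{r+\rho}|\big)^{1/\theta'}$ appearing there are all $\le c(r+\rho)^d$; choosing the $\delta$'s in the applications of \eqref{eq:K1consequence} small enough, we obtain $-\widehat{\cE}^{K_a}_{B_{r+\rho}}(u,\tau^2 w) \le \tfrac12 \cEs_{B_{r+\rho}}(\tau w,\tau w) + c S^2 (r+\rho)^d \rho^{-\alpha}$. This is the reason \eqref{K1glob} (rather than \eqref{K1}) is assumed, and the reason no restriction on $\theta$ beyond $\theta \ge \frac{d}{\alpha}$ is needed here — unlike in \autoref{thm:LB1} — namely the a priori bound $u \le S$ on $B_{r+\rho}$ renders the $\theta'$-dependent term harmless.

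\emph{Step~3}: a direct computation, exactly as in Step~3 of \autoref{lemma:LTE} and using $\supp\tau \subset \overline{B_{r+\rho}}$, gives
\[
-\widehat{\cE}_{(B_{r+\rho}\times B_{r+\rho})^c}(u,\tau^2 w) = -\int_{B_{r+\rho}}\int_{B_{r+\rho}^c}\tau^2(x)w(x)u(x)K(x,y)\,\d y\,\d x + \int_{B_{r+\rho}}\int_{B_{r+\rho}^c}\tau^2(x)w(x)u(y)K(y,x)\,\d y\,\d x.
\]
The first integral is nonnegative, since $w < 0$ and $u \ge 0$ on $B_{r+\rho}$; writing $\tau^2(x) = (\tau(x)-\tau(y))^2$ for $y \in B_{r+\rho}^c$ and using $(2S-u(x))u(x) \le S^2$, \eqref{eq:KaKs} and \eqref{cutoff}, it is $\le c S^2 (r+\rho)^d \rho^{-\alpha}$. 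In the second integral $w(x) \le -S$ and the remaining factors are nonnegative, so it is $\le -S\int_{B_{r+\rho/4}}\int_{B_{r+\rho}^c}\tau^2(x)u(y)K(y,x)\,\d y\,\d x$; since $\tau^2 \ge \tfrac1{16}$ on $B_{r+\rho/4}$ and, by \eqref{UJSdual} applied with radius $\rho/4$ (just as in \eqref{eq:LTEhelp3}, but with the two arguments of $K$ interchanged), $\int_{B_{r+\rho/4}}\int_{B_{r+\rho}^c}u(y)K(y,x)\,\d y\,\d x \ge c\,\rho^d\,\widehat{\tail}_K(u,r,r+\rho)$, this second integral is $\le -c' S \rho^d \widehat{\tail}_K(u,r,r+\rho)$. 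Altogether $-\widehat{\cE}_{(B_{r+\rho}\times B_{r+\rho})^c}(u,\tau^2 w) \le c S^2 (r+\rho)^d \rho^{-\alpha} - c' S \rho^d \widehat{\tail}_K(u,r,r+\rho)$.

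\emph{Step~4} combines Steps~1--3 through $\cEs_{B_{r+\rho}}(u,\tau^2 w) = \widehat{\cE}(u,\tau^2 w) - \widehat{\cE}^{K_a}_{B_{r+\rho}}(u,\tau^2 w) - \widehat{\cE}_{(B_{r+\rho}\times B_{r+\rho})^c}(u,\tau^2 w)$: substituting, absorbing $\tfrac12\cEs_{B_{r+\rho}}(\tau w,\tau w)$ into the left-hand side, dropping the remaining nonnegative term $\cEs_{B_{r+\rho}}(\tau w,\tau w)$, and dividing by $c' S \rho^d$ yields the claim, recalling $w = u - 2S$. I expect Step~3 to be the main obstacle: one must keep the signs straight — on $B_{r+\rho}$ one has $w < 0$, so the inequalities run opposite to the Caccioppoli situation — and, crucially, it is \eqref{UJSdual}, not \eqref{UJS}, that supplies the lower bound for $\int_{B_{r+\rho/4}}\int_{B_{r+\rho}^c}u(y)K(y,x)$. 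A secondary point is to verify in Step~2 that the a priori bound $u \le S$ on $B_{r+\rho}$ really does collapse the $\theta'$-dependent contributions of \autoref{lemma:Caccdual}, so that the range $\theta \ge \frac{d}{\alpha}$ is enough.
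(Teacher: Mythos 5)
Your proposal is correct and follows essentially the same route as the paper: Step 1 and Step 4 coincide with the paper's, Step 2 uses exactly the decomposition $\widehat{\cE}^{K_a}(u,\tau^2w)=\cEa(\tau^2w,w)+\cEa(\tau^2w,2S)$ (the paper's $I_1+I_2$) together with the earlier Caccioppoli machinery, and Step 3 extracts $\widehat{\tail}_K$ via \eqref{UJSdual} with the arguments of $K$ interchanged, precisely as in \eqref{eq:LTEhelp3}. One small correction to your commentary: the $K_a$-integrals that actually occur in Steps 2--3 range only over $B_{r+\rho}\times B_{r+\rho}$, so the estimates themselves only require \eqref{K1}; the hypothesis \eqref{K1glob} is present to make $\widehat{\cE}(u,\tau^2 w)$ well-defined for $u\in V(B_{2r}|\R^d)\cap L^{2\theta'}(\R^d)$ (cf.\ \autoref{lemma:welldef}(ii)), not because of the $\theta'$-dependent terms you describe.
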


\begin{proof}

As in the proof of \autoref{lemma:LTE}, we define $w = u - 2S$ and observe that $w \in [-S,-2S]$ in $B_{r+\rho}$. 
The proof is separated into several steps.

Step 1: First, we recall from the Step 1 in the proof of \autoref{lemma:LTE} that for some $c > 0$ it holds
\begin{equation}
\label{eq:LTEdualstep1}
\cEs_{B_{r+\rho}}(\tau w, \tau w) \le \cEs_{B_{r+\rho}} (u,\tau^2 w) + c S^2 (r+\rho)^d\rho^{-\alpha}.
\end{equation}

Step 2: In analogy with Step 2 in the proof of \autoref{lemma:LTE}, we claim that for some $c > 0$:
\begin{equation}
\label{eq:LTEdualstep2}
-\widehat{\cE}^{K_a}(u,\tau^2 w) \le \frac{1}{2} \cE^{K_s}_{B_{r+\rho}}(\tau w,\tau w) + cS^2(r+\rho)^d \rho^{\alpha}.
\end{equation}
To see this, we estimate
\begin{align*}
-\widehat{\cE}^{K_a}(u,\tau^2 w) &= \int_{B_{r+\rho}}\int_{B_{r+\rho}} (\tau^2 w(x) - \tau^2 w(y))(w(x)+w(y)) K_a(x,y) \d y \d x\\
&+ 4S \int_{B_{r+\rho}}\int_{B_{r+\rho}} (\tau^2 w(x) - \tau^2 w(y)) K_a(x,y) \d y \d x\\
&:= I_1 + I_2.
\end{align*}
For $I_1$, we compute
\begin{align*}
I_1 &= \int_{B_{r+\rho}}\int_{B_{r+\rho}} (\tau^2w^2(x) - \tau^2 w^2(y)) K_a(x,y) \d y \d x\\
&+ \int_{B_{r+\rho}}\int_{B_{r+\rho}}w(x)w(y)(\tau^2(x)-\tau^2(y))K_a(x,y) \d y \d x,
\end{align*}
and from the same arguments as in the proof of Step 2 from the proof of \autoref{lemma:LTE}, we conclude
\begin{equation*}
I_1 \le \frac{1}{4}\cE^{K_s}_{B_{r+\rho}}(\tau w,\tau w) + cS^2(r+\rho)^d\rho^{-\alpha},
\end{equation*}
using \eqref{K1glob}, \eqref{cutoff}.
For $I_2$ we observe:
\begin{align*}
I_2 &= 2S \int_{B_{r+\rho}}\int_{B_{r+\rho}}(\tau w(x) - \tau w(y))(\tau(x) + \tau(y))K_a(x,y) \d y \d x\\
&+ 2S \int_{B_{r+\rho}}\int_{B_{r+\rho}} (\tau w(x) + \tau w(y))(\tau(x) - \tau(y))K_a(x,y) \d y \d x\\
&=: I_{2,1} + I_{2,2}.
\end{align*}
Now, using \eqref{K1glob}, \eqref{eq:K1consequence} and \eqref{cutoff},
\begin{align*}
I_{2,1} &\le \frac{1}{8}\cE^{K_s}_{B_{r+\rho}}(\tau w,\tau w) + cS^2 \int_{B_{r+\rho}} \tau^2(x) \left(\int_{B_{r+\rho}} \frac{\vert K_a(x,y)\vert^2}{J(x,y)} \d y\right) \d x\\
&\le \frac{1}{8}\cE^{K_s}_{B_{r+\rho}}(\tau w,\tau w) + cS^2 \cE^{K_s}_{B_{r+\rho}}(\tau,\tau) + c S^2 \rho^{-\alpha} \int_{B_{r+\rho}} \tau^2(x) \d x\\
&\le \frac{1}{8}\cE^{K_s}_{B_{r+\rho}}(\tau w,\tau w) + cS^2 (r+\rho)^d \rho^{-\alpha},
\end{align*}
and, again using \eqref{K1glob}, \eqref{eq:K1consequence} and \eqref{cutoff}:
\begin{align*}
I_{2,2} &\le cS^2 \cE^{K_s}_{B_{r+\rho}}(\tau,\tau) + \int_{B_{r+\rho}}\tau^2 w^2(x) \left(\int_{B_{r+\rho}} \frac{\vert K_a(x,y)\vert^2}{K_s(x,y)} \d y\right) \d x\\
&\le cS^2 (r+\rho)^d \rho^{-\alpha} + \frac{1}{8} \cE^{K_s}_{B_{r+\rho}}(\tau w,\tau w) + c \rho^{-\alpha} \int_{B_{r+\rho}} \tau^2 w^2(x) \d x\\
&\le \frac{1}{8} \cE^{K_s}_{B_{r+\rho}}(\tau w,\tau w) + cS^2 (r+\rho)^d \rho^{-\alpha}.
\end{align*}
Altogether, we have proved \eqref{eq:LTEdualstep2}.

Step 3: 
Moreover, we claim that for some constants $c,c' > 0$:
\begin{equation}
\label{eq:LTEdualstep3}
-\widehat{\cE}_{(B_{r+\rho} \times B_{r+\rho})^c}(u,\tau^2 w) \le cS^2 (r+\rho)^d  \rho^{-\alpha}  -c'S \rho^d \widehat{\tail}_K(u,r,r+\rho).
\end{equation}
First, we decompose
\begin{align*}
-\cE_{(B_{r+\rho} \times B_{r+\rho})^c}(\tau^2 w, u) &= -2 \int_{B_{r+\rho}} \int_{B_{r+\rho}^c} \tau^2 w(x) u(x) K(x,y) \d y \d x\\
&+ 2 \int_{B_{r+\rho}^c} \int_{B_{r+\rho}} \tau^2 w(y) u(x) K(x,y) \d y \d x\\
&=: J_1 + J_2.
\end{align*}
For $J_1$, we compute using the definition of $w$, nonnegativity of $u$ and \eqref{eq:KaKs}
\begin{align*}
J_1 &= 2 \int_{B_{r+\rho}} \int_{B_{r+\rho}^c} \tau^2(x) (2S-u(x)) u(x) K(x,y) \d y \d x\\
&\le 4S^2 \int_{B_{r+\rho}} \int_{B_{r+\rho}^c} (\tau(x) - \tau(y))^2 K_s(x,y) \d y \d x\\
&\le c S^2 (r+\rho)^2 \rho^{-\alpha}.
\end{align*}
For $J_2$, we observe, using that $\tau^2 \ge 1/16$ in $B_{r + \frac{\rho}{4}}$:
\begin{align*}
J_2 &= 2 \int_{B_{r+\rho}^c} \int_{B_{r+\rho}} \tau^2(y) (u(y) - 2S) u(x) K(x,y) \d y \d x\\
&\le -2S \int_{B_{r+\rho}} \int_{B_{r+\rho}^c} \tau^2(y) u(x) K(x,y) \d x \d y\\
&\le -\frac{S}{8} \int_{B_{r+\frac{\rho}{4}}} \int_{B_{r+\rho}^c} u(x) K(x,y) \d x \d y.
\end{align*}
Finally, using \eqref{UJSdual} and the same argument as in \eqref{eq:LTEhelp3}, we can prove that
\begin{align*}
\rho^d \widehat{\tail}_K(u,r,r+\rho) \le c \int_{B_{r+\frac{\rho}{4}}} \int_{B_{r+\rho}^c} u(x) K(x,y) \d x \d y.
\end{align*}
Altogether, we have established \eqref{eq:LTEdualstep3}, as desired.

Step 4:
Combining \eqref{eq:LTEdualstep1}, \eqref{eq:LTEdualstep2}, and \eqref{eq:LTEdualstep3}, we obtain:
\begin{align*}
\cE^{K_s}_{B_{r+\rho}}(\tau w,\tau w) &\le \cE^{K_s}_{B_{r+\rho}}(u,\tau^2 w) + cS^2 (r+\rho)^d \rho^{-\alpha}\\
&= \widehat{\cE}(u,\tau^2 w) - \widehat{\cE}^{K_a}_{B_{r+\rho}}(u,\tau^2 w) - \widehat{\cE}_{(B_{r+\rho} \times B_{r+\rho})^c}(u,\tau^2 w) + cS^2 (r+\rho)^d \rho^{-\alpha}\\
&\le \widehat{\cE}(u,\tau^2 w) + cS^2 (r+\rho)^d \rho^{-\alpha} + \frac{1}{2} \cE^{K_s}_{B_{r+\rho}}(\tau w,\tau w) - cS \rho^d \widehat{\tail}_K(u,r,r+\rho).
\end{align*}
Consequently,
\begin{equation*}
\widehat{\tail}_K(u,r,r+\rho) \le c \frac{1}{S \rho^d}\widehat{\cE}(u,\tau^2 w) + c \left(\frac{r+\rho}{\rho}\right)^{d}\rho^{-\alpha} S,
\end{equation*}
as desired.
\end{proof}

\autoref{lemma:LTE} can be used to bound $\tail_K(u,r,r+\rho)$ from above by the supremum of $u$. First, we provide such estimate for weak supersolutions to the stationary equations \eqref{ellPDE} and \eqref{ellPDEdual}, which is a direct corollary of \autoref{lemma:LTE} applied with $S = \sup_{B_{r+\rho}} u$.

\begin{corollary}
\label{cor:elltailest}
Assume that \eqref{cutoff} holds true.
\begin{itemize}
\item[(i)] Assume \eqref{K1}, \eqref{UJS} holds true for some $\theta \in [\frac{d}{\alpha},\infty]$. Moreover, assume \eqref{Sob} if $\theta < \infty$. Then there exists $c > 0$ such that for every $0 < \rho \le r \le 1$ and every nonnegative, weak supersolution $u$ to \eqref{ellPDE} in $B_{2r}$, it holds
\begin{align*}
\tail_K(u,r,r+\rho) \le c \left(\frac{r+\rho}{\rho}\right)^{d}\left(\rho^{-\alpha} \sup_{B_{r+\rho}} u + \Vert f \Vert_{L^{\infty}}\right),
\end{align*}
where $B_{2r} \subset \Omega$.
\item[(ii)] Assume \eqref{K1glob}, \eqref{UJSdual} holds true for some $\theta \in [\frac{d}{\alpha},\infty]$. Moreover, assume \eqref{Sob} if $\theta < \infty$. Then there exists $c > 0$ such that for every $0 < \rho \le r \le 1$ and every nonnegative, weak subsolution $u$ to \eqref{ellPDEdual} in $B_{2r}$, it holds
\begin{align*}
\widehat{\tail}_K(u,r,r+\rho) \le c \left(\frac{r+\rho}{\rho}\right)^{d}\left(\rho^{-\alpha} \sup_{B_{r+\rho}} u + \Vert f \Vert_{L^{\infty}}\right),
\end{align*}
where $B_{2r} \subset \Omega$.
\end{itemize}
\end{corollary}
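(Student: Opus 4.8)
The statement is essentially immediate from \autoref{lemma:LTE} and \autoref{lemma:LTEdual}, so the plan is short. For part~(i) I would apply \autoref{lemma:LTE} with $S := \sup_{B_{r+\rho}} u$; we may assume $\sup_{B_{r+\rho}} u < \infty$, since otherwise the asserted bound is trivial, and then (as $u \ge 0$) this $S$ is a nonnegative number with $S \ge \sup_{B_{r+\rho}} u$, so the lemma yields, with $\tau = \tau_{r,\rho}$,
\begin{equation*}
\tail_K(u,r,r+\rho) \le c_1 \frac{1}{S\rho^d}\,\cE\big(u,\tau^2(u-2S)\big) + c_2 \left(\frac{r+\rho}{\rho}\right)^{d}\rho^{-\alpha} S.
\end{equation*}
It then remains to dispose of the energy term, and this is where the supersolution property enters.

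Writing $w := u-2S$, on $B_{r+\rho}$ we have $0\le u\le S$, hence $-2S\le w\le -S<0$, while $\tau$ is supported in $\overline{B_{r+\rho}}$, which lies in $B_{2r}$ because $\rho\le r$; thus $\phi := \tau^2 w$ is nonpositive and, since $u\in V(B_{2r}|\R^d)$ and $\tau$ is a Lipschitz cutoff, $\phi\in H_{B_{2r}}(\R^d)$, so $\phi$ is an admissible test function in the weak formulation \eqref{eq:superharm} of \eqref{ellPDE}. This gives $\cE(u,\tau^2 w)\le (f,\tau^2 w)$, and since $|w|=2S-u\le 2S$ on $B_{r+\rho}$ and $0\le\tau\le 1$ one obtains $(f,\tau^2 w)\le \Vert f\Vert_{L^\infty}\int_{B_{r+\rho}}\tau^2|w|\,\d x\le c\,S\,(r+\rho)^d\Vert f\Vert_{L^\infty}$.

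Substituting this into the first display, the factor $S$ cancels in the first term and $\rho^{-d}(r+\rho)^d=\big((r+\rho)/\rho\big)^d$, which is exactly the bound claimed in~(i). Part~(ii) would be handled identically, with \autoref{lemma:LTEdual} in place of \autoref{lemma:LTE}: take $S := \sup_{B_{r+\rho}} u$, test the weak formulation of \eqref{ellPDEdual} with the nonpositive function $\tau^2(u-2S)$ to get $\widehat{\cE}(u,\tau^2(u-2S))\le c\,S\,(r+\rho)^d\Vert f\Vert_{L^\infty}$, and substitute into \autoref{lemma:LTEdual}. I do not expect a genuine obstacle here — the substantive estimate has already been established in \autoref{lemma:LTE} and \autoref{lemma:LTEdual} — the only points requiring a little care being the reduction to the case $\sup_{B_{r+\rho}} u<\infty$, the admissibility and (crucially) the nonpositivity of $\tau^2(u-2S)$ as a test function (this is where $\rho\le r$ and $u\in V(B_{2r}|\R^d)$, together with $u\in L^{2\theta'}(\R^d)$ in the dual case, are used), and the elementary bound on the $f$-term.
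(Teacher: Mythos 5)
Your proposal is correct and is exactly the paper's intended argument: the paper states only that the corollary is ``a direct corollary of \autoref{lemma:LTE} applied with $S = \sup_{B_{r+\rho}} u$'', and you supply the routine details (test the supersolution inequality with the nonpositive function $\tau^2(u-2S)\in H_{B_{2r}}(\R^d)$, bound $(f,\tau^2(u-2S))$ by $c\,S\,(r+\rho)^d\Vert f\Vert_{L^\infty}$, cancel the factor $S$). Two very minor points: \autoref{lemma:LTE} and \autoref{lemma:LTEdual} require $S>0$ strictly, so in the degenerate case $\sup_{B_{r+\rho}}u=0$ one should apply the lemma with an arbitrary $S>0\ge\sup_{B_{r+\rho}}u$ and then let $S\to 0^+$; and the printed statement of part~(ii) says \emph{subsolution}, but the argument you give (and the parabolic analogue \autoref{cor:partailest}(ii), as well as the sentence introducing the corollary) requires $u$ to be a \emph{supersolution} to \eqref{ellPDEdual} so that a nonpositive test function yields the upper bound $\widehat{\cE}(u,\tau^2(u-2S))\le(f,\tau^2(u-2S))$ --- a typo in the paper that your proof implicitly corrects.
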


One can also deduce an estimate for the $L^1$-parabolic tail $\int_{I_{r/2}^{\ominus}}\tail_K(u(t),r,r+\rho) \d t$ for supersolutions to \eqref{PDE}, \eqref{PDEdual} from \autoref{lemma:LTE}. 

\begin{corollary}
\label{cor:partailest}
Assume that \eqref{cutoff} holds true.
\begin{itemize}
\item[(i)] Assume \eqref{K1}, \eqref{UJS} holds true for some $\theta \in [\frac{d}{\alpha},\infty]$. Moreover, assume \eqref{Sob} if $\theta < \infty$. Then there exists $c > 0$ such that for every $0 < \rho \le r \le 1$ and every nonnegative, weak supersolution $u$ to \eqref{PDE} in $I_{r}^{\ominus}(t_0) \times B_{2r}$, it holds
\begin{align*}
\int_{I_{r/2}^{\ominus}}\tail_K(u(t),r,r+\rho) \d t \le c \left(\frac{r+\rho}{\rho}\right)^{d} \left(\left(\frac{r+\rho}{\rho}\right)^{\alpha \vee 1}\hspace{-0.3cm} \sup_{I_{(r+\rho)/2}^{\ominus} \times B_{r+\rho}} \hspace{-0.3cm} u + (r+\rho)^{\alpha}\Vert f \Vert_{L^{\infty}}\right),
\end{align*}
where $B_{2r} \subset \Omega$.
\item[(ii)] Assume \eqref{K1glob}, \eqref{UJSdual} holds true for some $\theta \in [\frac{d}{\alpha},\infty]$. Moreover, assume \eqref{Sob} if $\theta < \infty$. Then there exists $c > 0$ such that for every $0 < \rho \le r \le 1$ and every nonnegative, weak supersolution $u$ to \eqref{PDEdual} in $I_{r}^{\ominus}(t_0) \times B_{2r}$, it holds
\begin{align*}
\int_{I_{r/2}^{\ominus}}\widehat{\tail}_K(u(t),r,r+\rho) \d t \le c \left(\frac{r+\rho}{\rho}\right)^{d} \left(\left(\frac{r+\rho}{\rho}\right)^{\alpha \vee 1}\hspace{-0.3cm} \sup_{I_{(r+\rho)/2}^{\ominus} \times B_{r+\rho}} \hspace{-0.3cm} u + (r+\rho)^{\alpha}\Vert f \Vert_{L^{\infty}}\right),
\end{align*}
where $B_{2r} \subset \Omega$.
\end{itemize}
\end{corollary}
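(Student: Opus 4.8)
The plan is to derive both estimates from \autoref{lemma:LTE} and \autoref{lemma:LTEdual} by integrating in time, in the same spirit as \autoref{cor:elltailest} in the stationary case; the argument for (ii) is identical to the one for (i) after replacing $\cE$, \eqref{PDE} and \autoref{lemma:LTE} by $\widehat{\cE}$, \eqref{PDEdual} and \autoref{lemma:LTEdual}, so I only describe (i). One may assume $S := \sup_{I_{(r+\rho)/2}^{\ominus} \times B_{r+\rho}} u < \infty$, since otherwise there is nothing to prove. Then $S \ge \sup_{B_{r+\rho}} u(t)$ for a.e.\ $t \in I_{(r+\rho)/2}^{\ominus}$, so \autoref{lemma:LTE}, applied to $u(t)$ with $\tau = \tau_{r,\rho}$, gives for a.e.\ such $t$
\begin{align*}
\tail_K(u(t),r,r+\rho) \le \frac{c}{S\rho^d}\,\cE\big(u(t),\tau^2(u(t)-2S)\big) + c\big(\tfrac{r+\rho}{\rho}\big)^{d}\rho^{-\alpha}S .
\end{align*}
Integrating over $t \in I_{r/2}^{\ominus}$, the second summand contributes at most $c(\tfrac{r+\rho}{\rho})^{d}\rho^{-\alpha}S\,(r/2)^{\alpha} \le c(\tfrac{r+\rho}{\rho})^{d+\alpha}S$, which is already of the desired form, so the only task left is to bound $\int_{I_{r/2}^{\ominus}} \cE(u(t),\tau^2(u(t)-2S))\,\d t$.

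To this end I would test the weak formulation of \eqref{PDE} with the nonpositive function $\phi = \tau^2(u(t)-2S)$, which is admissible because $\supp \tau \subset \overline{B_{r+\rho}} \subset B_{2r}$ and $u(t) \le S$ on $B_{r+\rho}$; this yields $\cE(u(t),\phi) \le (f(t),\phi) - (\partial_t u(t),\phi)$. Choosing a cutoff $\chi \in C^1(\R)$ with $0 \le \chi \le 1$, $\chi \equiv 1$ on $I_{r/2}^{\ominus}$, $\chi(t_0 - ((r+\rho)/2)^{\alpha}) = 0$ and $\Vert\chi'\Vert_{\infty} \le c((r+\rho)^{\alpha}-r^{\alpha})^{-1}$, multiplying by $\chi^2$ and integrating over $I_{(r+\rho)/2}^{\ominus}$, the parabolic contribution equals $\tfrac12\tfrac{\d}{\d t}\int\tau^2(u(t)-2S)^2\,\d x$ (this is where the Steklov averages of \autoref{sec:app} enter), so after integration by parts in $t$ it produces a nonnegative boundary term at $t_0$ plus a $\chi'$-term bounded by $c\sup_t\int\tau^2(u(t)-2S)^2\,\d x \le cS^2(r+\rho)^d$, using $(u-2S)^2 \le 4S^2$ on $B_{r+\rho}$ and $\int_{I_{(r+\rho)/2}^{\ominus}}\chi|\chi'|\,\d t \le c$; the right-hand side satisfies $(f(t),\phi) \le c\Vert f\Vert_{L^{\infty}}S(r+\rho)^d$ for each $t$, hence $\le c\Vert f\Vert_{L^{\infty}}S(r+\rho)^{d+\alpha}$ after integrating. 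Altogether $\int_{I_{(r+\rho)/2}^{\ominus}} \chi^2\,\cE(u(t),\tau^2(u(t)-2S))\,\d t \le c\Vert f\Vert_{L^{\infty}}S(r+\rho)^{d+\alpha} + cS^2(r+\rho)^d$.

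Finally, to pass from this $\chi^2$-weighted integral to $\int_{I_{r/2}^{\ominus}}$ I would invoke the a priori lower bound $\cE(u(t),\tau^2(u(t)-2S)) \ge -cS^2(r+\rho)^d\rho^{-\alpha}$ for a.e.\ $t \in I_{(r+\rho)/2}^{\ominus}$, which is just \autoref{lemma:LTE} rearranged together with $\tail_K \ge 0$; since $\chi \equiv 1$ on $I_{r/2}^{\ominus}$, the two integrals differ by at most $cS^2(r+\rho)^d\rho^{-\alpha}\,|I_{(r+\rho)/2}^{\ominus}\setminus I_{r/2}^{\ominus}|$, and one checks $\rho^{-\alpha}((r+\rho)^{\alpha}-r^{\alpha}) \le c(\tfrac{r+\rho}{\rho})^{\alpha\vee 1}$. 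Dividing the accumulated bounds by $S\rho^d$ and inserting them into the time-integrated form of \autoref{lemma:LTE} produces exactly the claimed estimate. The substantive input is \autoref{lemma:LTE} itself; the only genuine work beyond it is this time-integration bookkeeping — enlarging the interval, removing the initial-time boundary term with $\chi$, absorbing the resulting annular correction via the lower bound on $\cE$, and checking that the accumulated factor is no worse than $(\tfrac{r+\rho}{\rho})^{\alpha\vee 1}$ — together with, for (ii), the observation that a weak supersolution to \eqref{PDEdual} lies in $V(B_{2r}|\R^d)\cap L^{2\theta'}(\R^d)$ for a.e.\ $t$, so that $\phi$ is admissible for $\widehat{\cE}$ and \autoref{lemma:LTEdual} is available.
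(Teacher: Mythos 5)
Your proposal is correct and is built from the same ingredients as the paper's own proof: apply \autoref{lemma:LTE} (resp.\ \autoref{lemma:LTEdual}) pointwise in $t$ with $S=\sup_{I^\ominus_{(r+\rho)/2}\times B_{r+\rho}}u$, test the supersolution with the nonpositive $\phi=\tau^2(u(t)-2S)$, multiply by a temporal cutoff $\chi^2$ that vanishes at the initial time, integrate, and track the constants. The only difference is the bookkeeping in the final step. The paper adds the $\partial_t$-term to both sides of \autoref{lemma:LTE} \emph{before} multiplying by $\chi^2$ and integrating, so that the nonnegative quantity $\tail_K(u(t),r,r+\rho)$ sits directly under $\int\chi^2(\cdot)\,\d t$ and the passage to $\int_{I^\ominus_{r/2}}$ is immediate. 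You instead first derive a bound on $\int\chi^2\cE(u(t),\tau^2(u(t)-2S))\,\d t$ and then need the extra a priori lower bound $\cE(u(t),\tau^2(u(t)-2S))\ge -cS^2(r+\rho)^d\rho^{-\alpha}$ (obtained by rearranging \autoref{lemma:LTE} and using $\tail_K\ge 0$) to control the sign of the energy on the temporal annulus $I^\ominus_{(r+\rho)/2}\setminus I^\ominus_{r/2}$. This detour is harmless — it costs only a factor $\rho^{-\alpha}((r+\rho)^\alpha-r^\alpha)\le c\,(\tfrac{r+\rho}{\rho})^{\alpha\vee 1}$, exactly as you note — but it is avoidable: if you keep $\tail_K$ under the $\chi^2$-integral from the start, the annulus correction never appears, and you invoke \autoref{lemma:LTE} only once rather than twice.
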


\begin{proof}
We only explain the proof of (i). The proof of (ii) works in the same way, but relies on \autoref{lemma:LTEdual} instead of \autoref{lemma:LTE}.
We write $S = \sup_{I^{\ominus}_{(r+\rho)/2} \times B_{r+\rho}} u$ and denote $w = u-2S$, observing that $\partial_t (w^2) = 2 w \partial_t u$. From \autoref{lemma:LTE} and the fact that $u$ is a supersolution to \eqref{PDE}, we deduce
\begin{align*}
\frac{c}{2S\rho^d}\int_{B_{r+\rho}}& \tau^2(x) \partial_t (w^2)(t,x) \d x + \tail_K(u(t),r,r+\rho)\\
&\le c\frac{1}{S\rho^d}\left[(\partial_t u(t) , \tau^2w(t)) +\cE(u(t),\tau^2 w(t))\right] + cS \left( \frac{r+\rho}{\rho} \right)^d \rho^{-\alpha}\\
&\le c\frac{1}{S\rho^d}(f(t),\tau^2w(t)) + cS \left( \frac{r+\rho}{\rho} \right)^d \rho^{-\alpha}\\
&\le c \left( \frac{r+\rho}{\rho} \right)^d \left(\Vert f \Vert_{L^{\infty}} + S  \rho^{-\alpha} \right),
\end{align*}
where $c > 0$ is the constant from \autoref{lemma:LTE} and we tested the equation with $\tau^2w$, $\tau = \tau_{r,\rho}$. Let $\chi \in C^1(\R)$ be a nonnegative function with $\chi(t_0 - ((r+\rho)/2)^{\alpha}) = 0$, $\chi \equiv 1$ in $I_{r/2}^{\ominus}$, $\Vert \chi \Vert_{\infty} \le 1$, $\Vert \chi' \Vert_{\infty} \le 8((r+\rho)^{\alpha} - r^{\alpha})^{-1}$. Multiplying with $\chi^2$ and integrating over $(t_0 - ((r+\rho)/2)^{\alpha} , t)$ for some arbitrary $t \in I_{r/2}^{\ominus}$, we obtain
\begin{align*}
\frac{c}{2S\rho^d}&\int_{B_{r+\rho}} \chi^2(t) \tau^2(x) w^2(t,x) \d x + \int_{t_0 - ((r+\rho)/2)^{\alpha}}^t \chi^2(s) \tail_K(u(s),r,r+\rho) \d s\\
&\le c_1\int_{t_0 - ((r+\rho)/2)^{\alpha}}^t \chi^2(s) S \left( \frac{r+\rho}{\rho} \right)^d \rho^{-\alpha} \d s + c_1 \left( \frac{r+\rho}{\rho} \right)^d (r+\rho)^{\alpha} \Vert f \Vert_{L^{\infty}}\\
&+ c_1\int_{t_0 - ((r+\rho)/2)^{\alpha}}^t \frac{1}{S\rho^d} \chi(s)\vert \chi'(s) \vert \int_{B_{r+\rho}} \tau^2(x) w^2(s,x) \d x \d s,
\end{align*}
where $c_1 > 0$ is a constant. Consequently, using that $w^2 \le 4S^2$
\begin{align*}
\sup_{t \in I_{r/2}^{\ominus}}\frac{c}{2S\rho^d}\int_{B_r} w^2(t,x) \d x &+ \int_{I_{r/2}^{\ominus}} \tail_K(u(s),r,r+\rho) \d s\\
&\le c_2 \left(\frac{r+\rho}{\rho}\right)^{d+(\alpha \vee 1)} \sup_{I_{(r+\rho)/2}^{\ominus} \times B_{r+\rho}} u + c_2\left( \frac{r+\rho}{\rho} \right)^d (r+\rho)^{\alpha} \Vert f \Vert_{L^{\infty}},
\end{align*}
where $c_2 > 0$ and we used that for some $c > 0$, $((r+\rho)^{\alpha} - r^{\alpha})^{-1} \le c\rho^{-(\alpha \vee 1)}(r+\rho)^{(\alpha \vee 1) - \alpha}$. This concludes the proof.
\end{proof}

\section{Harnack inequalities}
\label{sec:FHI}

The goal of this section is to complete the proofs of our main results \autoref{thm:apriorifHI} and \autoref{thm:fHI}. In \autoref{sec:improvedLB}, we give  improved versions of the local boundedness estimates from \autoref{sec:Cacc} and \autoref{sec:CaccMoser}, which do not involve tail terms. These results make use of the tail estimates obtained in \autoref{cor:elltailest} and are the key ingredient to the proof of \autoref{thm:fHI}. In \autoref{sec:proofs} we combine local boundedness estimates with the weak Harnack inequalities from \cite{KaWe22} and obtain our main results.\\
We point out that the proof of \autoref{thm:apriorifHI} does not rely on the tail estimates from \autoref{sec:LTE}. It is an open question -- even in the symmetric case -- how to derive a parabolic Harnack inequality involving only local quantities from suitable tail estimates, as one does in the stationary case. \autoref{sec:parabolicFHI} is dedicated to this issue.

\subsection{Local boundedness without tail terms}
\label{sec:improvedLB}

We obtain local $L^{\infty}-L^{p}$-estimates for solutions to \eqref{ellPDE} and \eqref{ellPDEdual} (see \autoref{thm:LB3}). In comparison with \autoref{thm:LB1} the estimates only contain purely local quantities. The underlying procedure works exactly as for symmetric forms. However, note that we need to redo the iteration in \autoref{thm:LB1} in order to prove \autoref{thm:LB2} since the quantities $\tail_K$ and $\tail_{K,\alpha}$ are in general not comparable.

The following theorem is the key result on our path towards $L^{\infty}-L^{p}$-estimates for nonnegative solutions to \eqref{ellPDE} and \eqref{ellPDEdual} since it does not involve any nonlocal quantities anymore.

\begin{theorem}
\label{thm:LB2}
Assume that \eqref{cutoff} and \eqref{Sob} hold true.
\begin{itemize}
\item[(i)] Assume that \eqref{K1} and \eqref{UJS} hold true for some $\theta \in [\frac{d}{\alpha},\infty]$. Then, for every $\delta \in (0,1]$ there exists $c > 0$ such that for every $0 < R \le 1$ and every nonnegative, weak solution $u$ to \eqref{ellPDE} in $B_{2R} \subset \Omega$, it holds
\begin{align}
\label{eq:LB2}
\sup_{B_{R/2}} u \le c \left( \dashint_{B_R} u^{2}(x) \d x \right)^{1/2} + \delta \sup_{B_{R}} u + c R^{\alpha}\Vert f \Vert_{L^{\infty}} \,.
\end{align}
\item[(ii)] Assume that \eqref{K1glob} and \eqref{UJSdual} hold true for some $\theta \in (\frac{d}{\alpha},\infty]$. Then, for every $\delta \in (0,1]$ there exists $c > 0$ such that for every $0 < R \le 1$ and every nonnegative, weak solution $u$ to \eqref{ellPDEdual} in $B_{2R}$, estimate \eqref{eq:LB2} holds true.
\end{itemize}
\end{theorem}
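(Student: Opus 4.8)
The plan is to obtain \eqref{eq:LB2} from the tail-dependent local boundedness estimate \autoref{thm:ellLB1} together with the tail estimate \autoref{cor:elltailest}, exactly as in the symmetric theory. The point is that \autoref{thm:ellLB1} already reads
\[
\sup_{B_{R/2}}u\le c\,\eta^{-\gamma}\Big(\dashint_{B_R}u^2\Big)^{1/2}+\eta\,\tail_{K,\alpha}(u,R)+R^\alpha\Vert f\Vert_{L^\infty}
\]
for every $\eta\in(0,1]$ (with $\gamma=\tfrac{d}{2\alpha}$ in case (i), $\gamma=\tfrac1{2\mu}$ in case (ii)), so the only thing missing is a bound for $\tail_{K,\alpha}(u,R)$ by local quantities.

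First I would run the same De Giorgi iteration as in the proof of \autoref{thm:ellLB1}, but on the annulus of radii between $R$ and $2R$ rather than between $R/2$ and $R$: choosing the shrinking balls so that $r_i\searrow R$ and $r_i+\rho_i\le 2R$, and keeping the tail term $\tail_K(u,r_i+\tfrac{\rho_i}{2},r_i+\rho_i)$ instead of passing to $\tail_{K,\alpha}$ via \eqref{eq:tailtotail}. This yields
\[
\sup_{B_R}u\le c\,\eta^{-\gamma}\Big(\dashint_{B_{2R}}u^2\Big)^{1/2}+\eta\sup_{B_{2R}}\big(\text{a }\tail_K\text{-term on scale }R\big)+R^\alpha\Vert f\Vert_{L^\infty}.
\]
This is the step the paper warns about ("we need to redo the iteration in \autoref{thm:LB1} in order to prove \autoref{thm:LB2} since the quantities $\tail_K$ and $\tail_{K,\alpha}$ are in general not comparable"): one cannot simply quote \autoref{thm:ellLB1} because that estimate is already phrased with $\tail_{K,\alpha}$, whereas \autoref{cor:elltailest} produces a bound on $\tail_K$, and the cheap inequality \eqref{eq:tailtotail} goes the wrong way. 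So I would redo the iteration keeping $\tail_K$ on intermediate scales throughout.

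Next, apply \autoref{cor:elltailest}(i) (resp. (ii)) with, say, $\rho=R$ and $r$ comparable to $R$, to bound the relevant $\tail_K$-term on scale $R$ by $c\big(R^{-\alpha}\sup_{B_{2R}}u+\Vert f\Vert_{L^\infty}\big)$ — note \eqref{cutoff} is assumed, and \eqref{K1},\eqref{UJS} (resp. \eqref{K1glob},\eqref{UJSdual}) are exactly the hypotheses of \autoref{thm:LB2}, and \eqref{eq:kuppershort} follows from \eqref{UJS} and \eqref{cutoff}. Substituting, and multiplying through by $R^\alpha$ where needed to match units, gives
\[
\sup_{B_R}u\le c\,\eta^{-\gamma}\Big(\dashint_{B_{2R}}u^2\Big)^{1/2}+c\,\eta\sup_{B_{2R}}u+c\,\eta\,R^\alpha\Vert f\Vert_{L^\infty}+R^\alpha\Vert f\Vert_{L^\infty}.
\]
This is an estimate for $\sup_{B_R}u$ in terms of $\sup_{B_{2R}}u$ and so is not yet closed; to close it, I would run a standard iteration/absorption argument over dyadic radii $R_j=R(2-2^{-j})$ (the "hole-filling" lemma, e.g.\ Lemma in \cite{Giu03} of the same flavour used already). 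Applying the displayed estimate on each pair $B_{R_j}\subset B_{R_{j+1}}$ with $\eta$ small and summing the geometric series in the $\sup$-terms (the weight $\eta$ can be taken of the form $\theta_0\,2^{-j}$ so that the $\sup$ contributions telescope and absorb) converts $\sup_{B_{2R}}u$ into $\delta\sup_{B_R}u$ on the right for any prescribed $\delta\in(0,1]$, at the cost of enlarging the constant $c$ in front of the $L^2$-norm and the $f$-term; rescaling back to the radii $R/2$ and $R$ in the statement then gives exactly \eqref{eq:LB2}. The main obstacle is bookkeeping in this last iteration: one must be careful that the $\dashint_{B_{R_j}}u^2$ terms, which get multiplied by $R_j^{-d/2}$-type factors when passing to absolute integrals, stay controlled by a single $\dashint_{B_R}u^2$ after summing — this works because $R_j\in[R,2R]$ so all these factors are comparable, but it has to be checked. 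The dual case (ii) is identical, replacing $\tail_K$ by $\widehat{\tail}_K$, \autoref{cor:elltailest}(i) by (ii), and using the range $\theta\in(\tfrac d\alpha,\infty]$ so that $\mu>0$ and the exponent $\gamma=\tfrac1{2\mu}$ is finite.
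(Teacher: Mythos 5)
Your core idea coincides with the paper's: rerun the De Giorgi iteration on shrinking balls between two comparable radii, retain $\tail_K(u,r_i+\tfrac{\rho_i}{2},r_i+\rho_i)$ at each step rather than converting to $\tail_{K,\alpha}$, and bound each such tail through \autoref{cor:elltailest} so that only $\sup u$ on the larger ball and $\Vert f\Vert_{L^\infty}$ appear. Up to that point the argument is correct, and you have correctly identified why \autoref{thm:ellLB1} cannot be quoted off the shelf.

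There are two issues with the remainder. First, by iterating on $r_i\in[R,2R]$ you need to apply \autoref{cor:elltailest} with first argument $r\approx 2R$, which requires $u$ to solve the equation in $B_{4R}$; the theorem only provides a solution in $B_{2R}$. The paper avoids this by iterating on $[R/2,R]$, so that $\tail_K(u,r,r+\rho)$ is only ever evaluated with $r+\rho\le R$, and the corresponding $B_{2r}\subset B_{2R}$. Your "rescale $R\to R/2$ at the end" would repair this if you started from the rescaled picture, but as written the hypothesis is not met.

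Second, the final hole-filling paragraph is unnecessary and signals a misreading of the target. Once you have
\[
\sup_{B_{R/2}}u\ \le\ c\,\eta^{-\gamma}\Big(\dashint_{B_R}u^2\Big)^{1/2}+c\,\eta\sup_{B_R}u+cR^\alpha\Vert f\Vert_{L^\infty}
\]
(on the correct scales), choosing $\eta$ with $c\eta\le\delta$ already \emph{is} estimate \eqref{eq:LB2}; the theorem asserts the unclosed form, with $\delta\sup_{B_R}u$ retained on the right and $c$ allowed to depend on $\delta$. In the paper this is achieved inside the De Giorgi iteration by picking $M$ to contain a $\delta(\sup_{B_R}u+R^\alpha\Vert f\Vert_{L^\infty})$ summand; no dyadic iteration is invoked at this stage. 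The absorption over radii $R_j\in[R/2,R]$ you describe is exactly what is done later, in the proof of \autoref{thm:LB3}, via Lemma~1.1 in \cite{GiGi82}. Running it here would also be delicate, since it would eliminate the $\sup$ term entirely rather than produce the $\delta\sup_{B_R}u$ required by the statement.
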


We present two proofs of this theorem based on the De Giorgi iteration and the Moser iteration. Both proofs rely on a combination of the iteration schemes established in \autoref{sec:Cacc} and \autoref{sec:CaccMoser}, and the tail estimate from \autoref{cor:partailest}.

\begin{proof}[Proof of \autoref{thm:LB2} (based on De Giorgi iteration)]
The proof of (i) is analogous to the proof of \autoref{thm:LB1} (i). We define $(l_i)_i, (\rho_i)_i, (r_i)_i, (w_i)_i$ in the same way. Moreover, we set $A_i = \Vert w_i \Vert_{L^1(B_{r_i})}$. Note that
\begin{equation*}
\left(\frac{r_{i}+\rho_{i}}{\rho_{i}} \right)^d = \left(\frac{(1 + \left( \frac{1}{2} \right)^i) + \left( \frac{1}{2} \right)^i}{\left( \frac{1}{2} \right)^{i+1}} \right)^d \le 2^{(i+2)d}.
\end{equation*}
Consequently, \autoref{cor:elltailest} (i) (applied with $r = r_i + \frac{\rho_i}{2}$ and $\rho = \frac{\rho_i}{2}$) yields
\begin{equation}
\label{eq:ellLB2help1}
\tail_K(u,r_{i}+\frac{\rho_{i}}{2},r_{i}+\rho_{i}) \le c_1 2^{i(d+2)}R^{-\alpha} \left( \sup_{B_R} u + R^{\alpha} \Vert f \Vert_{L^{\infty}} \right)
\end{equation}
for some $c_1 > 0$. Moreover, by following the arguments in the proof of \autoref{thm:LB1} (i), we derive the following analog of \eqref{eq:LB1help3}
\begin{align}
\label{eq:ellLB1help3}
A_i \le c_{2} \frac{1}{(l_{i} - l_{i-1})^{\frac{2}{\kappa'}}}\left( \sigma(r_i,\rho_i) +  \frac{\tail_{K}(u,r_i+\frac{\rho_i}{2},r_{i}+\rho_i) + \Vert f\Vert_{L^{\infty}}}{l_{i}-l_{i-1}} \right)A_{i-1}^{1+\frac{1}{\kappa'}}
\end{align} 
for some $c_2 > 0$, where we can choose $\kappa = \frac{d}{d-\alpha}$ by using that $u$ is a subsolution to the stationary equation \eqref{ellPDE} in \eqref{eq:LB1prep2}.
We combine \eqref{eq:ellLB2help1} and \eqref{eq:ellLB1help3} and obtain
\begin{align*}
A_{i} \le \frac{c_3}{R^{\alpha} M^{\frac{2}{\kappa'}}} 2^{\gamma i} \left(1 + \frac{\sup_{B_{R}} u + R^{\alpha} \Vert f \Vert_{L^{\infty}}}{M}\right)A_{i-1}^{1+\frac{1}{\kappa'}},
\end{align*}
where $c_3 > 0$ and $\gamma > 1$ are constants. We proceed as in the proof of \autoref{thm:LB1} (i) and choose $M:= \delta \left(\sup_{B_R} u + \Vert f \Vert_{L^{\infty}} \right) + C^{\frac{\kappa'^2}{2}}c_3^{\frac{\kappa'}{2}}\delta^{-\frac{\kappa'}{2}}R^{-\frac{\alpha \kappa'}{2}}A_0^{1/2}$, where $C:= 2^{\gamma} > 1$ and conclude
\begin{align*}
A_0 \le \left(\frac{c_3}{\delta R^{\alpha}M^{\frac{2}{\kappa'}}}\right)^{-\kappa'}C^{-\kappa'^2},
\end{align*}
and therefore we obtain from Lemma 7.1 in \cite{Giu03}
\begin{align*}
\sup_{B_{R/2}} u \le M = \delta \left( \sup_{B_R} u + R^{\alpha} \Vert f \Vert_{L^{\infty}}\right) + c_3 \delta^{-\frac{\kappa'}{2}}\left(\dashint_{B_R} u^2(x)\d x\right)^{1/2}
\end{align*}
for some $c_3 > 0$, as desired.\\
In order to prove (ii), we follow the arguments in the proof of \autoref{thm:LB1} (ii) and derive the following analog of \eqref{eq:ellLB1help3}
\begin{align*}
A_i \le c_4 \frac{R^{d\left(\frac{1}{\kappa'} - \mu \right)}}{(l_i - l_{i-1})^{2\mu}} \left( \sigma(r_i,\rho_i)\left(1 + \left( \frac{l_i}{l_i - l_{i-1}}\right)^2 \right) +  \frac{\tail_{K}(u,r_i+\frac{\rho_i}{2},r_{i}+\rho_i) + \Vert f\Vert_{L^{\infty}}}{l_{i}-l_{i-1}} \right)A_{i-1}^{1+\mu}
\end{align*}
for some $c_4 > 0$, where $\mu = \frac{1}{\kappa'} - \frac{1}{\theta}$ and $\kappa = \frac{d}{d-\alpha}$. As before, by \autoref{cor:elltailest} (ii) (applied with $r = r_{i} + \frac{\rho_{i}}{2}$ and $\rho = \frac{\rho_{i}}{2}$) we prove
\begin{align}
\label{eq:LB2help1dual}
\widehat{\tail}_K(u,r_{i}+\frac{\rho_{i}}{2},r_{i}+\rho_{i}) \le c_1 2^{i(d+2)}R^{-\alpha} \left(\sup_{B_R} u + R^{\alpha} \Vert f \Vert_{L^{\infty}} \right).
\end{align}
By combining \eqref{eq:LB2help1dual} with the previous estimate, we deduce
\begin{align*}
A_i \le \frac{c_5}{\delta R^{\mu d} M^{2\mu}} C^{\gamma i} A_{i-1}^{1+\mu}
\end{align*}
for some $c_5 > 0$ and $\gamma > 1$. From here, the desired result follows by the same arguments as in the proof of (i).
\end{proof}

\begin{proof}[Proof of \autoref{thm:LB2} (based on Moser iteration)]
We explain how to prove (ii). The proof of (i) follows exactly the same arguments. Our proof is based on the Moser iteration and works in a similar way compared to the proof of \autoref{thm:LB1dual}.
Let us define $(\rho_i)_i$, $(r_i)_i$ and $(q_i)_i$ in the same way, but set $\kappa = \frac{d}{d-\alpha}$. \\
Note that by following the arguments of the proof of \autoref{thm:LB1dual}, but using that $u$ is a subsolution to the stationary equation in \eqref{eq:posMoserItStep}, we can derive the following analog of \eqref{eq:posMoserIthelp1}:
\begin{align*}
\Vert \U \Vert_{L^{q_{i}}(B_{r_{i}})} &\le c^{\frac{1}{2q_{i-1}}}q_{i-1}^{\frac{\gamma}{2q_{i-1}}} R^{-\frac{\alpha}{2q_{i-1}}} 2^{\frac{d+\eps+2}{2 q_{i-1}} (i+1)} \\
&\left( \Vert \U \Vert_{L^{2q_{i-1}}(B_{r_{i-1}})} + 2^{-(d+\eps+\alpha)(i+1)} R^{\alpha+\frac{d}{2q_{i-1}}} \widehat{\tail}_{K}(u,r_i + \frac{\rho_i}{2},r_i+\rho_i) \right).
\end{align*}
By combining this estimate with \eqref{eq:LB2help1dual}, we obtain
\begin{align*}
\Vert \U \Vert_{L^{q_{i}}(B_{r_{i}})} &\le c^{\frac{1}{2q_{i-1}}}q_{i-1}^{\frac{\gamma}{2q_{i-1}}} R^{-\frac{\alpha}{2q_{i-1}}} 2^{\frac{d+\eps+2}{2 q_{i-1}} (i+1)} \\
&\left( \Vert \U \Vert_{L^{2q_{i-1}}(B_{r_{i-1}})} + R^{\frac{d}{2q_{i-1}}} 2^{-(i+1) (\eps +\alpha - 2)} \left(\sup_{B_R} u + R^{\alpha}\Vert f \Vert_{L^{\infty}}\right)\right).
\end{align*}
From here, the proof follows in a similar way as the proof of \autoref{thm:LB1dual}.
First, we observe that:
\begin{align*}
\sup_{B_{R/2}} \U &\le \left( \prod_{i = 1}^{\infty} c^{\frac{1}{2q_{i-1}}}q_{i-1}^{\frac{\gamma}{2q_{i-1}}} R^{-\frac{\alpha}{2q_{i-1}}} 2^{\frac{d+\eps+2}{2 q_{i-1}} (i+1)}  \right) \Vert \U \Vert_{L^{2q_0}(B_{R})}\\
&+ \left[\sum_{i = 1}^{\infty} \left( \prod_{j = i}^{\infty} c^{\frac{1}{2q_{j-1}}} q_{j-1}^{\frac{\gamma}{2q_{j-1}}} R^{-\frac{\alpha}{2q_{j-1}}} 2^{\frac{d+\eps+2}{2 q_{j-1}} (j+1)}  \right) R^{\frac{d}{2q_{i-1}}} 2^{-(i+1)(\eps +\alpha - 2)} \right] \left(\sup_{B_R} u + R^{\alpha}\Vert f \Vert_{L^{\infty}}\right).
\end{align*}
Moreover, by similar arguments as in the proof of \autoref{thm:LB1dual} 
\begin{align*}
\sum_{i = 1}^{\infty} \left( \prod_{j = i}^{\infty} c^{\frac{1}{2q_{j-1}}} q_{j-1}^{\frac{\gamma}{2q_{j-1}}} R^{-\frac{\alpha}{2q_{j-1}}} 2^{\frac{d+\eps+2}{2 q_{j-1}} (j+1)}  \right) R^{\frac{d}{2q_{i-1}}} 2^{-(i+1)(\eps +\alpha - 2)} &\le  c \sum_{i = 1}^{\infty}  \frac{ 2^{\frac{(d+\eps+2)c_5}{2q_0}(i+1) }}{ 2^{(i+1)(\eps +\alpha - 2)}},
\end{align*}
using that $\sum_{i=0}^{\infty} \kappa^{-i} = \frac{d}{\alpha}$ and $\sum_{i=0}^{\infty} \frac{i}{\kappa^i} < \infty$, where $\kappa = \frac{d}{d-\alpha}$.\\
Now, choose $\eps \ge 1$ so large that $\sum_{i = 1}^{\infty} 2^{-(i+1)\frac{\eps+\alpha-2}{2}} \le \frac{\delta}{2c}$. Then, let us choose $q_0 \ge 1$ so large that $\frac{(d+\eps+2)c_5}{2q_0} \le \frac{\eps + \alpha - 2}{2}$. In that case,
\begin{equation*}
c \sum_{i = 1}^{\infty}  \left( 2^{\frac{(d+\eps+2)c_5}{2q_0}(i+1)} \right) 2^{-(i+1)(\eps +\alpha - 2)} \le c \sum_{i = k+1}^{\infty} 2^{-(i+1)\frac{(\eps +\alpha - 2)}{2}} \le \frac{\delta}{2}.
\end{equation*}
Therefore,
\begin{equation*}
\sup_{B_{R/2}} \U \le c \left(\dashint_{B_R} \U^{2q_0}(x) \d x \right)^{\frac{1}{2q_0}} + \frac{\delta}{2} \left(\sup_{ B_R} u + R^{\alpha}\Vert f \Vert_{L^{\infty}}\right).
\end{equation*}
As a consequence, by using the definition of $\U$, as well as triangle inequality for the $L^{2q_0}$-norm, we deduce
\begin{align*}
\sup_{B_{R/2}} u \le c \left(\dashint_{B_R} u^{2q_0}(x) \d x \right)^{\frac{1}{2q_0}} + \frac{\delta}{2} \sup_{B_R} u + c R^{\alpha} \Vert f \Vert_{L^{\infty}}.
\end{align*}
It remains to prove the desired estimate \eqref{eq:LB2} in case $q_0 > 1$. This follows from Young's inequality:
\begin{align*}
\left(\dashint_{B_R} u^{2q_0}(x) \d x \right)^{\frac{1}{2q_0}} \le \sup_{B_R} u ^{\frac{2q_0-2}{2q_0}} \left(\dashint_{B_R} u^{2}(x) \d x \right)^{\frac{1}{2q_0}} \le \frac{\delta}{2c} \sup_{B_R} u + c \left(\dashint_{B_R} u^{2}(x) \d x \right)^{\frac{1}{2}}.
\end{align*}
\end{proof}

By a standard iteration argument one can deduce local boundedness of nonnegative solutions to \eqref{ellPDE} and \eqref{ellPDEdual} from \autoref{thm:LB2}.

\begin{theorem}
\label{thm:LB3}
Assume that \eqref{cutoff} and \eqref{elower} hold true.
\begin{itemize}
\item[(i)] Assume that \eqref{K1}, \eqref{UJS} hold true for some $\theta \in [\frac{d}{\alpha},\infty]$. Then there exists $c > 0$ such that for every $0 < R \le 1$, every $p \in (0,2]$ and every nonnegative, weak solution $u$ to \eqref{ellPDE} in $ B_{2R}$, it holds
\begin{align}
\label{eq:LB3}
\sup_{B_{R/4}} u \le c \left(\dashint_{B_{R/2}} u^{p}(x)\d x\right)^{1/p} + c R^{\alpha} \Vert f \Vert_{L^{\infty}},
\end{align}
where $B_{2R} \subset \Omega$.
\item[(ii)] Assume that \eqref{K1glob}, \eqref{UJSdual} hold true for some $\theta \in (\frac{d}{\alpha},\infty]$. Then there exists $c > 0$ such that for every $0 < R \le 1$, every $p \in (0,2]$ and every nonnegative, weak solution $u$ to \eqref{ellPDEdual} in $ B_{2R}$, estimate \eqref{eq:LB3} holds true.
\end{itemize}
\end{theorem}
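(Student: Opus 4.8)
The plan is to deduce \autoref{thm:LB3} from \autoref{thm:LB2} by the standard two‑step procedure. The first step is a radius iteration that removes the absorbed supremum term $\delta\sup_{B_R}u$ and produces the estimate with exponent $p=2$; the second step is an interpolation together with Young's inequality that lowers the exponent to an arbitrary $p\in(0,2)$. I will only treat case (i); case (ii) is verbatim the same, invoking \autoref{thm:LB2}~(ii) instead of \autoref{thm:LB2}~(i). Note that all assumptions needed below are available under the hypotheses of \autoref{thm:LB3}: \eqref{Sob} follows from \eqref{elower}, and \eqref{eq:kuppershort} follows from \eqref{UJS} and \eqref{cutoff}.

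First I would upgrade \autoref{thm:LB2} to an estimate between two concentric balls of nearby radii. Fix $\tfrac R4\le r_1<r_2\le\tfrac R2$ and set $\rho:=\tfrac{r_2-r_1}{2}\le 1$. Covering $\overline{B_{r_1}}$ by $\lesssim(R/(r_2-r_1))^d$ balls $B_{\rho/2}(x_j)$ with $x_j\in B_{r_1}$ (so that $B_{2\rho}(x_j)\subset B_{r_2}\subset\Omega$ and $u$ solves the equation on each of them), applying \autoref{thm:LB2}~(i) on each $B_{2\rho}(x_j)$, bounding $\dashint_{B_{\rho}(x_j)}u^2\le c(R/(r_2-r_1))^d\dashint_{B_{R/2}}u^2$, using $\rho\le R$, and taking the supremum over $j$, one obtains for every $\delta\in(0,1]$ a constant $c_\delta>0$ with
\begin{equation*}
\sup_{B_{r_1}}u\ \le\ c_\delta\Big(\tfrac{R}{r_2-r_1}\Big)^{d/2}\Big(\dashint_{B_{R/2}}u^2\Big)^{1/2}+\delta\sup_{B_{r_2}}u+c_\delta R^\alpha\|f\|_{L^\infty}.
\end{equation*}
To run a radius iteration on this inequality I need to know a priori that $r\mapsto\sup_{B_r}u$ is finite on $[\tfrac R4,\tfrac R2]$; this is where the qualitative boundedness established earlier enters, since $\sup_{B_{R/2}}u<\infty$ by \autoref{thm:ellLB1} (its tail term being finite by \autoref{lemma:tailfinite}). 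With this in hand, choosing $\delta=\tfrac12$ and invoking a well‑known iteration lemma (cf. \cite{Giu03}) with $\phi(r)=\sup_{B_r}u$ and exponent $\gamma=\tfrac d2$ yields \eqref{eq:LB3} for $p=2$.

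For general $p\in(0,2)$ I would repeat the iteration after one extra interpolation. Writing $\big(\dashint_{B_{R/2}}u^2\big)^{1/2}\le(\sup_{B_{r_2}}u)^{(2-p)/2}\big(\dashint_{B_{R/2}}u^p\big)^{1/2}$ and applying Young's inequality with exponents $\tfrac{2}{2-p},\tfrac2p$, the first term on the right of the displayed inequality (taken with $\delta=\tfrac14$) is absorbed into $\tfrac14\sup_{B_{r_2}}u$ at the cost of a term $c(R/(r_2-r_1))^{d/p}\big(\dashint_{B_{R/2}}u^p\big)^{1/p}$; the resulting inequality then carries total coefficient $\tfrac12<1$ in front of $\sup_{B_{r_2}}u$, so the same iteration lemma, now with $\gamma=\tfrac dp$, produces \eqref{eq:LB3}. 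The argument is essentially bookkeeping, and I do not expect a genuine obstacle; the only points requiring care are the a priori finiteness of $\sup_{B_r}u$ (without which the iteration lemma cannot be applied) and keeping track of the correct power of $r_2-r_1$ through the covering so that the iteration exponent $\gamma$ comes out right.
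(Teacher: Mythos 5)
Your proposal is correct and follows essentially the same route as the paper's proof: apply \autoref{thm:LB2} on a family of balls via a covering argument to get an estimate between $B_{tR}$ and $B_{sR}$, use the a priori finiteness of $\sup_{B_{R/2}}u$ (guaranteed by the earlier local boundedness results and \autoref{lemma:tailfinite}), interpolate with Young's inequality to lower the exponent to $p$, and close with the standard iteration lemma. The only cosmetic differences are that you treat $p=2$ as a separate preliminary step while the paper handles general $p$ in one pass, and you cite \cite{Giu03} where the paper cites \cite{GiGi82} for the iteration lemma.
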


\begin{proof}
We restrict ourselves to proving (i). The proof of (ii) works in the same way.
The proof works as in \cite{DKP14} (p.1828-1829).
Let us point out that this proof crucially relies on local boundedness of $u$, i.e.,
\begin{align}
\label{eq:locallybounded}
\sup_{B_{R/2}} u < \infty,
\end{align}
which follows from \autoref{thm:LB1} and \autoref{thm:LB1dual}, since $\tail_{K,\alpha}(u,R)$ and $\widehat{\tail}_{K,\alpha}(u,R)$ are finite under the assumptions of this theorem due to \autoref{lemma:tailfinite} (i) and \autoref{lemma:tailfinite} (ii), respectively.
Let $\frac{1}{4} \le t < s \le \frac{1}{2}$. We conclude from \autoref{thm:LB2} and a classical covering argument
\begin{align*}
\sup_{B_{tR}} u \le c_1 (s-t)^{-\frac{d}{2}} \left(\dashint_{B_{sR}} u^{2}(x)\d x\right)^{1/2} + c_2 R^{\alpha} \Vert f \Vert_{L^{\infty}} + c_2\delta \sup_{B_{sR}} u,
\end{align*}
where $c_1,c_2>0$ are constants. By Young's inequality (applied with $\frac{2}{p}, \frac{2}{2-p} \ge 1$),
\begin{align*}
\sup_{B_{tR}} u &\le c_1 (s-t)^{-\frac{d}{2}} \sup_{B_{sR}} u^{\frac{2-p}{2}}\left( \dashint_{B_{sR}} u^{p}(x)\d x\right)^{1/2} + c_2\delta \sup_{B_{sR}} u + c_2 R^{\alpha} \Vert f \Vert_{L^{\infty}}\\
&\le \left(c_2\delta + \frac{1}{4}\right) \sup_{B_{sR}} u + c_3 (s-t)^{-\frac{d}{p}}\left(\dashint_{B_{sR}} u^{p}(x)\d x\right)^{1/p} + c_2 R^{\alpha} \Vert f \Vert_{L^{\infty}}
\end{align*}
for some $c_3 > 0$. By choosing $\delta = \frac{1}{4c_2}$, we obtain
\begin{equation*}
\sup_{B_{tR}} u \le \frac{1}{2}\sup_{B_{sR}} u + c_4(s-t)^{-\frac{d}{p}}\left(\dashint_{B_{R/2}} u^{p}(x)\d x\right)^{1/p} + c_4 R^{\alpha} \Vert f \Vert_{L^{\infty}}
\end{equation*}
for $c_4 > 0$, and the result follows by application of Lemma 1.1 in \cite{GiGi82}, using \eqref{eq:locallybounded}.
\end{proof}

\subsection{Proofs of main results}
\label{sec:proofs}

In this section we provide the proofs of our main results \autoref{thm:apriorifHI} and \autoref{thm:fHI}. Let us recall the following theorem from \cite{KaWe22}.

\begin{theorem}(weak Harnack inequality)
\label{thm:wHI}
Assume \eqref{K2}, \eqref{cutoff}, \eqref{Poinc} and \eqref{Sob}.
\begin{itemize}
\item[(i)] Assume that \eqref{K1} holds true for some $\theta \in [\frac{d}{\alpha},\infty]$. Then there is $c > 0$ such that for every $0 < R \le 1$, and every nonnegative, weak supersolution $u$ to \eqref{PDE} in $I_R(t_0) \times B_{2R}$
\begin{equation}
\label{eq:wHI}
\inf_{(t_0 + R^{\alpha} - (R/2)^{\alpha}, t_0 + R^{\alpha}) \times B_{R/2}} u \ge c \left( \dashint_{(t_0 - R^{\alpha}, t_0 - R^{\alpha} + (R/2)^{\alpha}) \times B_{R/2}} u(t,x) \d x \d t - R^{\alpha} \Vert f \Vert_{L^{\infty}} \right),
\end{equation}
where $B_{2R} \subset \Omega$.
\item[(ii)] Assume that \eqref{K1glob} holds true for some $\theta \in (\frac{d}{\alpha},\infty]$. Then there is $c > 0$ such that for every $0 < R \le 1$, and every nonnegative, weak supersolution $u$ to \eqref{PDEdual} in $I_R(t_0) \times B_{2R}$, estimate \eqref{eq:wHI} holds true.
\end{itemize}
\end{theorem}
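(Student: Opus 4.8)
This statement is recalled from \cite{KaWe22}; the plan is to sketch the Moser iteration behind it, stressing the points where the antisymmetric part $K_a$ forces a deviation from the symmetric theory (cf.\ \cite{FeKa13}). Throughout one works with $\U := u + R^{\alpha}\Vert f \Vert_{L^{\infty}} + \eps$ for $\eps > 0$ (sending $\eps \to 0$ only at the very end), a globally nonnegative supersolution with $\U \ge R^{\alpha}\Vert f \Vert_{L^{\infty}}$, so that every contribution of $f$ is absorbed into the leading-order energy term exactly as in the proofs of \autoref{thm:LB1} and \autoref{thm:LB1dual}, and all manipulations of the time derivative are legitimized by the Steklov-averaging device of \autoref{sec:app}.

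First I would derive the energy (Caccioppoli-type) inequalities obtained by testing the weak formulation with $\phi = -\tau^2 g(\U)$, $\tau = \tau_{r,\rho/2}$, where $g(t) = t^{2q-1}$ (truncated for large $t$ as in \autoref{sec:CaccMoser} when $2q-1>0$) and $G(t) = \int_0^t g'(s)^{1/2}\,\d s$, for $q$ negative and for $q$ positive and small. As in \autoref{lemma:Cacc} and \autoref{lemma:MSpos}, the symmetric part of $\cE = \cEs + \cEa$ is handled with the algebraic inequalities of \autoref{lemma:posaux} and yields, on the left, a positive multiple of $\cEs_{B_{r+\rho}}(\tau G(\U),\tau G(\U))$ minus $c\rho^{-\alpha}\Vert G(\U)^2\Vert_{L^1(B_{r+\rho})}$; the antisymmetric part is absorbed via \eqref{eq:K1consequence} of \autoref{lemma:K1-lemma}, and what makes this absorption leave the good term genuinely positive is that, by \eqref{K2}, the kernel $K_s - \vert K_a \vert$ is coercive with respect to $\cEs$ on $B_{2r}$. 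The local tail $\tail_K$ arising from $(B_{r+\rho}\times B_{r+\rho})^{c}$ enters with a favourable sign here (since $u\ge0$ globally and $K\ge0$) and may simply be discarded. Inserting the time derivative together with a temporal cutoff $\chi$ and feeding the result into \eqref{Sob} (with $\kappa = 1 + \tfrac{\alpha}{d}$), one obtains two reverse-Hölder-type statements: an upward iteration in negative exponents giving, for every $p_0 > 0$,
\[
\Big( \dashint_{I^{+}\times B^{+}} \U^{-p_0} \Big)^{-1/p_0} \le C \inf_{I^{++}\times B^{++}} \U,
\]
and a positive-exponent iteration, valid on the range $0 < p < \kappa$ of exponents and hence in particular at $p = 1$,
\[
\dashint_{I^{--}\times B^{--}} \U \le C \Big( \dashint_{I^{-}\times B^{-}} \U^{p_0} \Big)^{1/p_0},
\]
where $I^{--}\subset I^{-}$, $B^{--}\subset B^{-}$ are past sub-cylinders and $B^{+}\subset B^{++}$, $I^{+}\subset I^{++}$ future ones.

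Next I would establish the logarithmic estimate. Testing with $\phi = -\tau^2 \U^{-1}\le 0$ and running the same absorption argument (again controlling $\cEa(\U,-\tau^2\U^{-1})$ via \autoref{lemma:K1-lemma} and using the coercivity in \eqref{K2}) gives, for a.e.\ $t$,
\[
\int_{B_{r+\rho}} \tau^2\, \partial_t w(t)\, \d x + c\, \cEs_{B_{r+\rho}}(\tau w(t),\tau w(t)) \le C \rho^{-\alpha} \vert B_{r+\rho} \vert, \qquad w := -\log \U,
\]
so that $w$ satisfies a parabolic bounded-mean-oscillation bound (uniform-in-$t$ control of $\dashint_{B_{R/2}}\vert w(t,\cdot) - a_0 \vert$ and one-sided control of $\partial_t \dashint_{B_{R/2}} w$). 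The parabolic John--Nirenberg/Bombieri--Giusti lemma, in the form used for symmetric nonlocal operators in \cite{FeKa13}, then produces $p_0 > 0$ and $b > 0$ with $\dashint_{I^{-}\times B^{-}} \U^{p_0} \le C\, b^{p_0}$ and $\dashint_{I^{+}\times B^{+}} \U^{-p_0} \le C\, b^{-p_0}$, hence
\[
\Big( \dashint_{I^{-}\times B^{-}} \U^{p_0} \Big)^{1/p_0} \Big( \dashint_{I^{+}\times B^{+}} \U^{-p_0} \Big)^{1/p_0} \le C.
\]

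Finally, chaining the three displays (positive iteration at $p=1$, then this bridge, then the negative iteration) gives $\dashint_{I^{--}\times B^{--}}\U \le C \inf_{I^{++}\times B^{++}}\U$; choosing the radii and time-intervals so that $I^{--}\times B^{--}$ and $I^{++}\times B^{++}$ are the two cylinders in \eqref{eq:wHI}, undoing $\U = u + R^{\alpha}\Vert f \Vert_{L^{\infty}} + \eps$, using $c<1$, and sending $\eps \to 0$ yields \eqref{eq:wHI}. Part (ii) follows by the same scheme with $\widehat{\cE}$ in place of $\cE$, now relying on dual-form Caccioppoli estimates of the type in \autoref{lemma:MSposdual}, which is the reason the strict inequality $\theta > \tfrac{d}{\alpha}$ is imposed there. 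I expect the main obstacle to lie in the step above: one must check that $\cEa(\U,-\tau^2\U^{-1})$ can be absorbed into the symmetric Dirichlet energy of $\log\U$ without exhausting its positivity --- this is precisely where \eqref{K2}, and not merely \eqref{K1}, is indispensable --- and that the unavoidable time gap in the parabolic Bombieri--Giusti lemma is compatible with the cylinders in the statement (this gap is the source of the separation between the two time-intervals in \eqref{eq:wHI}, see \cite{Mos64}). The remaining algebraic and iterative bookkeeping is routine once \autoref{lemma:posaux} and \autoref{lemma:K1-lemma} are available.
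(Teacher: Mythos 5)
The paper does not prove \autoref{thm:wHI} at all; it is imported verbatim from \cite{KaWe22}, so there is no internal proof to compare against. That said, your Moser-iteration outline is an accurate reconstruction of the argument in the companion paper: the paper itself signals this by remarking that \cite{KaWe22} develops the Moser scheme for negative and small positive exponents for this class of kernels, and that \eqref{K2} is needed precisely to keep $K_s - |K_a|$ coercive in the logarithmic step. Your treatment of the long-range tail (the $\int_{B^c} u(y)K(x,y)$ piece drops out by sign for supersolutions tested against $-\tau^2 g(\U)$ with $g\ge 0$, while the $\int_{B^c}K(x,y)$ piece is absorbed via \eqref{cutoff}), the role of \autoref{lemma:K1-lemma} for absorbing $\cEa$, the Bombieri--Giusti bridge from the $\log\U$ oscillation bound, and the source of the time gap are all identified correctly. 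The only caveat is one of scope: you have given a high-level sketch rather than a proof, and the genuinely delicate points you flag at the end --- that the absorption of $\cEa(\U,-\tau^2\U^{-1})$ must not exhaust the coercivity, and the matching of cylinders across the Bombieri--Giusti step --- are exactly where the companion paper spends its technical effort, so a reader should not take "routine bookkeeping" at face value there.
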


Now we prove \autoref{thm:apriorifHI} and \autoref{thm:fHI}. Both results require the weak Harnack inequality \autoref{thm:wHI}.

\begin{proof} [Proof of \autoref{thm:apriorifHI}]
We only prove (i) since the proof of (ii) follows by the same line of arguments. (i) follows from a combination of \autoref{thm:wHI} and \autoref{thm:LB1} (or  \autoref{thm:LB1dual}).
First, we deduce from \autoref{thm:LB1} (or \autoref{thm:LB1dual}) and a classical covering argument that for every $\frac{1}{4} \le t < s \le \frac{1}{2}$:
\begin{align*}
\sup_{I_{tR/2}^{\ominus} \times B_{tR}} u \le c_1 (s-t)^{-\frac{d+\alpha}{2}} \left(\dashint_{I_{sR/2}^{\ominus}} \dashint_{B_{sR}} u^{2}(t,x)\d x \d t\right)^{1/2} + \sup_{t \in I_{R/4}^{\ominus}} \tail_{K,\alpha}(u(t),R) + c_2 R^{\alpha} \Vert f \Vert_{L^{\infty}}.
\end{align*}
By a similar iteration argument as in the proof of \autoref{thm:LB3}, we deduce
\begin{align}
\label{eq:LBconsequence}
\sup_{I_{R/8}^{\ominus} \times B_{R/4}} u \le c_2 \left( \dashint_{I_{R/4}^{\ominus} \times B_{R/2}} u(t,x) \d x \d t \right) + c_2\sup_{t \in I_{R/4}^{\ominus}} \tail_{K,\alpha}(u(t),R) + c_2 R^{\alpha} \Vert f \Vert_{L^{\infty}}.
\end{align}
Next, \autoref{thm:wHI} yields
\begin{equation}
\label{whiapplied}
\inf_{(t_0 + (1 - 2^{-\alpha}) R^{\alpha}, t_0 + R^{\alpha}) \times B_{R/2}} u \ge c_1 \left( \dashint_{(t_0 - R^{\alpha}, t_0 -(1 - 2^{-\alpha})R^{\alpha}) \times B_{R/2}}\hspace{-2ex} u(t,x) \d x \d t - R^{\alpha} \Vert f \Vert_{L^{\infty}}\right)
\end{equation}
for some $c_1 > 0$. Note that $(t_0 - R^{\alpha}, t_0 -(1 - 2^{-\alpha})R^{\alpha}) = I_{R/2}^{\ominus}(t_0 -(1 - 2^{-\alpha})R^{\alpha})$.
Consequently, by \eqref{eq:LBconsequence}:
\begin{align*}
&\sup_{I_{R/8}^{\ominus}(t_0 -(1 - 2^{-\alpha})R^{\alpha}) \times B_{R/4}} u \\
&\le c_2 \left( \dashint_{I_{R/4}^{\ominus} (t_0 -(1 - 2^{-\alpha})R^{\alpha}) \times B_{R/2}} \hspace{-0.4cm}u(t,x) \d x \d t \right) + c_2 \hspace{-0.2cm}\sup_{t \in I_{R/4}^{\ominus} (t_0 -(1 - 2^{-\alpha})R^{\alpha})} \tail_{K,\alpha}(u(t),R) + c_2 R^{\alpha} \Vert f \Vert_{L^{\infty}}\\
&\le c_3 \left( \dashint_{(t_0 - R^{\alpha}, t_0 -(1 - 2^{-\alpha})R^{\alpha}) \times B_{R/2}} \hspace{-0.6cm}u(t,x) \d x \d t \right) + c_3 \hspace{-0.2cm}\sup_{t \in I_{R/4}^{\ominus} (t_0 -(1 - 2^{-\alpha})R^{\alpha})} \tail_{K,\alpha}(u(t),R) + c_3 R^{\alpha} \Vert f \Vert_{L^{\infty}}\\
&\le c_4 \inf_{(t_0 + (1 - 2^{-\alpha}) R^{\alpha}, t_0 + R^{\alpha}) \times B_{R/2}} + c_4 \hspace{-0.2cm}\sup_{t \in (t_0 - (1-2^{-\alpha}+4^{-\alpha})R^{\alpha} , t_0 - (1-2^{-\alpha})R^{\alpha})} \tail_{K,\alpha}(u(t),R) + c_4 R^{\alpha} \Vert f \Vert_{L^{\infty}}
\end{align*}
for some $c_2, c_3, c_4 > 0$. The proof is finished upon noticing that $I_{R/8}^{\ominus}(t_0 -(1 - 2^{-\alpha})R^{\alpha}) = (t_0 - (1-2^{-\alpha}+8^{-\alpha})R^{\alpha} , t_0 - (1 - 2^{-\alpha})R^{\alpha})$.
\end{proof}

\begin{proof} [Proof of \autoref{thm:fHI}]
This result follows directly by combination of \autoref{thm:LB3} and \autoref{thm:wHI}, where we apply \autoref{thm:LB3} with $p = 1$. 
\end{proof}

\subsection{Challenges in the parabolic case}
\label{sec:parabolicFHI}

Let us assume that \eqref{cutoff}, \eqref{elower}, \eqref{K1}, \eqref{K2} and \eqref{UJS} hold true for some $\theta \in [\frac{d}{\alpha},\infty]$.
The goal of this section is to discuss the validity of a parabolic version of  \autoref{thm:fHI}, i.e., to investigate the estimate
\begin{align}
\label{eq:parfhi}
\sup_{(t_0 - c_1 R^{\alpha} , t_0 - c_2 R^{\alpha}) \times B_{R/4}} u \le C \left(\inf_{(t_0 + c_2 R^{\alpha} , t_0 + R^{\alpha}) \times B_{R/2}} u +  R^{\alpha} \Vert f \Vert_{L^{\infty}}\right)
\end{align}
for some $C > 0$ and $0 < c_2 < c_1 < 1$ for nonnegative, weak solutions $u$ to \eqref{PDE} in $I_R^{\ominus} \times B_{2R}$, where $B_{2R} \subset \Omega$. In order to keep the presentation short, we will not discuss weak solutions to \eqref{PDEdual}, here.

As in the elliptic case, the general strategy to establish \eqref{eq:parfhi} would be to first prove an $L^{\infty}-L^p$-estimate of the following form (given any $p \in (0,2]$)
\begin{align}
\label{eq:LB3parabolic}
\sup_{I_{R/8}^{\ominus} \times B_{R/4}} u \le c \left(\dashint_{I_{R/4}^{\ominus}} \dashint_{B_{R/2}} u^{p}(t,x)\d x \d t\right)^{1/p} + c R^{\alpha} \Vert f \Vert_{L^{\infty}},
\end{align}
and to deduce \eqref{eq:parfhi} after combination with the weak parabolic Harnack  inequality \autoref{thm:wHI} as in the proof of \autoref{thm:apriorifHI}.\\
A natural approach in order to show \eqref{eq:LB3parabolic} would be to proceed as in the proof of \autoref{thm:LB1} but to apply \autoref{cor:partailest} in order to estimate the nonlocal tail by a local quantity. However, as \autoref{cor:partailest} only provides an estimate for $\int_{I_{r/2}^{\ominus}}\tail_K(u(t),r,r+\rho) \d t$ but not for $\sup_{I_{r/2}^{\ominus}}\tail_K(u(t),r,r+\rho)$, one needs to come up with a new idea to bridge the gap between $\sup_{I_{r/2}^{\ominus}}\tail_K(u(t),r,r+\rho)$ and $\int_{I_{r/2}^{\ominus}}\tail_K(u(t),r,r+\rho) \d t$. Note that the same issue appears in the symmetric case and has not been solved so far. There seems to be no proof of a parabolic Harnack inequality \eqref{eq:parfhi} for jumping kernels $K(x,y) \asymp |x-y|^{-d-\alpha}$ that  uses only analytic arguments. Note that via probabilistic methods, an estimate of the form \eqref{eq:parfhi} has been proved in the symmetric case in \cite{BaLe02}, \cite{ChKu03}.

Let us explain how to deduce \eqref{eq:LB3parabolic} under the condition that $u$ satisfies the following two additional assumptions:
\begin{itemize}
\item[(a)] There exists $c_0 > 0$ such that for every $R/2 \le r \le R$, $0 < \rho \le r \le r+\rho \le R$:
\begin{equation}
\label{eq:tailLB2parabolic}
\sup_{t \in I_{(r+\rho)/4}^{\ominus}} \tail_K(u(t),r+\frac{\rho}{2},r+\rho) \le c_0 \sup_{I_{(r+\frac{\rho}{2})/2}^{\ominus} \times B_{r+\rho}} u.
\end{equation}
\item[(b)] It holds $\sup_{I^{\ominus}_{R/4}(t_0)} \tail_{K,\alpha}(u(t),R) < \infty$.
\end{itemize}

\begin{remark}
\begin{itemize}
\item[(i)] Naturally, the constant $c$ in \eqref{eq:LB3parabolic} will depend on $c_0$.
\item[(ii)] \eqref{eq:tailLB2parabolic} holds true for global solutions to \eqref{PDE} in the symmetric case (see \cite{Str19a}).
\item[(iii)] It has been proposed in \cite{Kim19} to establish \eqref{eq:tailLB2parabolic} for every weak solution $u$ to \eqref{PDE} in $I \times B_{2R}$ with prescribed nonlocal parabolic boundary data $g \in L^{\infty}(I \times \R^d) \cap C(\overline{I} \times \R^d)$ with $c_0$ depending only on $g$. The proof of \cite[Lemma 5.3]{Kim19} is not complete.
\item[(iv)] Note that (b) is an additional restriction and does not naturally follow from our weak solution concept. We refer to \autoref{sec:tails} for a more detailed discussion of finiteness of tail terms.
\end{itemize}
\end{remark}

In order to establish \eqref{eq:LB3parabolic} we need to prove an analog of \eqref{eq:LB2}. As in the proof of \autoref{thm:LB1}, we derive \eqref{eq:LB1help3} and by combining it with \eqref{eq:tailLB2parabolic}, we deduce for every $\delta > 0$
\begin{align*}
A_{i} \le \frac{c_1}{R^{\alpha} M^{\frac{2}{\kappa'}}} 2^{\gamma i} \left(1 + \frac{\sup_{I_{R/2}^{\ominus} \times B_{R}} u + R^{\alpha}\Vert f \Vert_{L^{\infty}}}{M}\right)A_{i-1}^{1+\frac{1}{\kappa'}}
\end{align*}
for some $c_1 > 0$ and $\gamma > 1$. Here, $\kappa = 1 + \frac{\alpha}{d}$. By choosing $M:= \delta\left( \sup_{I_{R/2}^{\ominus} \times B_R} u + R^{\alpha}\Vert f \Vert_{L^{\infty}}\right) + C^{\frac{\kappa'^2}{2}}c_1^{\frac{\kappa'}{2}}\delta^{-\frac{\kappa'}{2}}R^{-\frac{\alpha \kappa'}{2}}A_0^{1/2}$, where $C:= 2^{\gamma} > 1$, we can deduce 
\begin{equation}
\label{eq:parabolicLB2}
\sup_{I_{R/8}^{\ominus} \times B_{R/2}} u \le \delta \left(\sup_{I_{R/2}^{\ominus} \times B_R} u + R^{\alpha}\Vert f \Vert_{L^{\infty}}\right) + c_2 \delta^{-\frac{\kappa'}{2}}\left(\dashint_{I_{R/4}^{\ominus}} \dashint_{B_R} u^{2}(t,x)\d x \d t\right)^{1/2}
\end{equation}
for some $c_2 > 0$. This estimate is a parabolic analog of \eqref{eq:LB2}. Note that \eqref{eq:parabolicLB2} can also be established via the arguments from the proof of \autoref{thm:LB1dual} using the Moser iteration.\\
Next, we intend to prove \eqref{eq:LB3parabolic} by adapting the arguments in the proof of \autoref{thm:LB3} to the parabolic setting.\\
As in the elliptic case, a standard covering argument yields for every $\frac{1}{4} \le t < s \le \frac{1}{2}$
\begin{align*}
\sup_{I_{tR/2}^{\ominus} \times B_{tR}} u \le c_3 (s-t)^{-\frac{d+\alpha}{2}} \left(\dashint_{I_{sR/2}^{\ominus}} \dashint_{B_{sR}} u^{2}(t,x)\d x \d t\right)^{1/2} + c_4 R^{\alpha} \Vert f \Vert_{L^{\infty}} + c_4\delta \sup_{I_{sR/2}^{\ominus} \times B_{sR}} u,
\end{align*}
where $c_3, c_4>0$ are constants. By Young's inequality and choosing $\delta = \frac{1}{c_4}$, we arrive at
\begin{equation*}
\sup_{I_{tR/2}^{\ominus} \times B_{tR}} u \le \frac{1}{2}\sup_{I_{sR/2}^{\ominus} \times B_{sR}} u + c_4(s-t)^{-\frac{d+\alpha}{p}}\left(\dashint_{I_{R/4}^{\ominus}} \dashint_{B_{R/2}} u^{p}(t,x)\d x \d t\right)^{1/p} + c_4 R^{\alpha} \Vert f \Vert_{L^{\infty}},
\end{equation*}
where $p \in (0,2]$ can be chosen arbitrarily.\\
Now, \eqref{eq:LB3parabolic} follows from Lemma 1.1 in \cite{GiGi82}, however, this result is only applicable if 
\begin{align}
\label{eq:parlocallybounded}
\sup_{I_{R/4}^{\ominus} \times B_{R/2}} u < \infty.
\end{align}
In order to obtain \eqref{eq:parlocallybounded}, we apply \autoref{thm:LB1} (or  \autoref{thm:LB1dual}) and use condition (b) on $u$. This concludes the proof of \eqref{eq:LB3parabolic} under the additional assumptions (a) and (b).

\section{Appendix}
\label{sec:app}

The following lemma justifies the way we deal with the weak formulation of \eqref{PDE}, or \eqref{PDEdual}, in the proof of \autoref{thm:LB1} after testing with $\phi(t,x) = \tau^2(x)(u(t,x)-k)_+$ for some $k \ge 0$ where $u$ is a subsolution to the respective equation. In fact, $\phi$ is a priori not differentiable in $t$, which prevents us from integrating by parts. The idea of the proof is to test the equation with an auxiliary function having the required smoothness properties in $t$. This can be achieved with the help of Steklov averages. For symmetric nonlocal equations, such lemmas are well known (see \cite{Str19b}, \cite{FeKa13}). We adapt the idea of the proof of \cite{FeKa13} to the nonsymmetric case. Note that Lemma A.2 in \cite{FeKa13} is not sufficient for the proof of (A.4) in \cite{FeKa13}. Our proof fixes the gap in their argument.

\begin{lemma}
\label{lemma:steklovDG}
Assume \eqref{cutoff}.
\begin{itemize}
\item[(i)] Assume that \eqref{K1} holds true for some $\theta \in [\frac{d}{\alpha},\infty]$. Moreover, assume \eqref{Sob} if $\theta < \infty$. Let $u \in V(B_{r+\rho}|\R^d)$ be a weak subsolution to \eqref{PDE}. Then, for every $[t_1,t_2] \subset I$, $0 < \rho \le r \le 1$ with $B_{r+\rho} \subset \Omega$, every $k \ge 0$, every $\chi \in C^1_c(\R)$
\begin{align*}
\chi^2(t_2)\int_{B_{r+\rho}} & \hspace{-0.4cm}(u(t_2)-k)_+^2 \tau^2 \d x - \chi^2(t_1) \int_{B_{r+\rho}}\hspace{-0.4cm} (u(t_1)-k)_+^2 \tau^2 \d x - \int_{t_1}^{t_2} \partial_t (\chi^2(t)) \int_{B_{r+\rho}} \hspace{-0.4cm} (u(t)-k)_+^2 \tau^2 \d x \d t\\
&+ \int_{t_1}^{t_2} \chi^2(t) \cE(u(t) , \tau^2(u(t)-k)_+) \d t \le \int_{t_1}^{t_2} \chi^2(t) \int_{B_{r+\rho}}\hspace{-0.2cm} f(t,x) \tau^2(x) (u(t,x) -k)_+ \d x \d t,
\end{align*}
where $\tau = \tau_{r,\frac{\rho}{2}}$.
\item[(ii)]
Assume that \eqref{K1glob} holds true for some $\theta \in [\frac{d}{\alpha},\infty]$. Moreover, assume \eqref{Sob} if $\theta < \infty$. Let $u \in V(B_{r+\rho}|\R^d)$ be a weak subsolution to \eqref{PDEdual}. Then, for every $[t_1,t_2] \subset I$, $0 < \rho \le r \le 1$ with $B_{r+\rho} \subset \Omega$, every $k \ge 0$, every $\chi \in C^1_c(\R)$
\begin{align*}
\chi^2(t_2) \int_{B_{r+\rho}} &\hspace{-0.4cm} (u(t_2)-k)_+^2 \tau^2 \d x - \chi^2(t_1) \int_{B_{r+\rho}}\hspace{-0.4cm} (u(t_1)-k)_+^2 \tau^2 \d x - \int_{t_1}^{t_2} \partial_t (\chi^2(t)) \int_{B_{r+\rho}} \hspace{-0.4cm} (u(t)-k)_+^2 \tau^2 \d x \d t\\
&+ \int_{t_1}^{t_2} \chi^2(t) \widehat{\cE}(u(t) , \tau^2(u(t)-k)_+) \d t \le \int_{t_1}^{t_2} \chi^2(t) \int_{B_{r+\rho}}\hspace{-0.2cm} f(t,x) \tau^2(x) (u(t,x) -k)_+ \d x \d t.
\end{align*}
\end{itemize}
\end{lemma}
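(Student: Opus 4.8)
The plan is to establish the Steklov-averaged version of the weak formulation, pass to the limit, and integrate the time derivative term by parts rigorously. For a function $v \in L^1_{\loc}(I;L^1_{\loc})$ and $h > 0$ small, recall the Steklov average $v_h(t,x) := \frac{1}{h}\int_t^{t+h} v(s,x)\,\d s$. The first step is to rewrite the weak formulation of \eqref{PDE}: since $u$ is a weak subsolution, for a.e. $t$ and every nonnegative $\phi \in H_{B_{r+\rho}}(\R^d)$ we have $(\partial_t u(t),\phi) + \cE(u(t),\phi) \le (f(t),\phi)$. Averaging this inequality over $[t,t+h]$ (which is legitimate because the inequality holds for a.e. $t$ and the bilinear form is linear in the first argument, so $\cE(u(t),\phi)_h = \cE(u_h(t),\phi)$), we obtain $(\partial_t u_h(t),\phi) + \cE(u_h(t),\phi) \le (f_h(t),\phi)$ for a.e. $t$, where now $\partial_t u_h(t) = \frac{u(t+h)-u(t)}{h}$ exists in the classical $L^2$-sense. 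At this stage the test function $\phi = \tau^2(u_h(t)-k)_+$ is admissible: it lies in $H_{B_{r+\rho}}(\R^d)$ by \eqref{cutoff} and the well-definedness of $\cE$ from \autoref{lemma:welldef}, and it is nonnegative.

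The second step is the standard identity for the time-derivative term: testing with $\tau^2(u_h-k)_+$ and using that $w \mapsto \frac12 w_+^2$ is the antiderivative of $w \mapsto w_+$, one has $(\partial_t u_h(t), \tau^2(u_h(t)-k)_+) = \frac{1}{2}\frac{\d}{\d t}\int_{B_{r+\rho}} (u_h(t)-k)_+^2 \tau^2\,\d x$ for a.e. $t$. Multiplying the averaged inequality by $\chi^2(t) \ge 0$, integrating over $[t_1,t_2]$, and integrating by parts in time in the first term (which is now legitimate since everything is absolutely continuous in $t$) yields
\begin{align*}
&\frac{\chi^2(t_2)}{2}\hspace{-0.1cm}\int_{B_{r+\rho}}\hspace{-0.4cm} (u_h(t_2)-k)_+^2 \tau^2 \d x - \frac{\chi^2(t_1)}{2}\hspace{-0.1cm}\int_{B_{r+\rho}}\hspace{-0.4cm} (u_h(t_1)-k)_+^2 \tau^2 \d x - \frac{1}{2}\hspace{-0.1cm}\int_{t_1}^{t_2} \hspace{-0.1cm}\partial_t(\chi^2)\hspace{-0.1cm}\int_{B_{r+\rho}}\hspace{-0.4cm}(u_h(t)-k)_+^2 \tau^2 \d x \d t\\
&\qquad + \int_{t_1}^{t_2}\chi^2(t)\,\cE(u_h(t),\tau^2(u_h(t)-k)_+)\,\d t \le \int_{t_1}^{t_2}\chi^2(t)\int_{B_{r+\rho}} f_h(t,x)\,\tau^2(x)(u_h(t,x)-k)_+ \d x \d t.
\end{align*}
(The factor $\frac12$ can be absorbed by rescaling; I keep track of it only to match the statement, which as written omits it, so I would in fact state the identity without the $\frac12$ after noting the antiderivative is $\frac12 w_+^2$ and adjusting — or simply note the lemma statement's constant is consistent once one uses $\partial_t u_h \cdot 2(u_h-k)_+$, i.e.\ tests with $2\tau^2(u_h-k)_+$.)

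The third and final step is to let $h \to 0$. By standard properties of Steklov averages, $u_h \to u$ in $L^2_{\loc}(I;V(B_{r+\rho}|\R^d))$ and $f_h \to f$ in $L^1_{\loc}(I;L^2)$; moreover $(u_h(t_i)-k)_+ \to (u(t_i)-k)_+$ in $L^2(B_{r+\rho})$ for a.e.\ $t_i$ (and, since the inequality is continuous in $t_1,t_2$ by the absolute continuity established above, for all $t_i$ after a density/limiting argument). The boundary terms and the $\partial_t(\chi^2)$ term converge directly. For the nonlocal term, continuity of $v \mapsto \cE(v,\tau^2(v-k)_+)$ along $V(B_{r+\rho}|\R^d)$-convergent sequences follows from the Caccioppoli-type bounds: split $\cE = \cEs + \cEa + \cE_{(B_{r+\rho}\times B_{r+\rho})^c}$, use that $v \mapsto \tau^2(v-k)_+$ is Lipschitz into $H_{B_{r+\rho}}(\R^d)$ (bounding its seminorm via \eqref{cutoff}), apply \eqref{eq:KaKs} and \autoref{lemma:K1-lemma} to control $\cEa$, and control the tail piece as in Step 3 of \autoref{lemma:Cacc} using \eqref{eq:kuppershort} or finiteness of the tail; then the $L^2$-in-time dominated convergence theorem applies. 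The main obstacle — and the point the excerpt flags as the gap in \cite{FeKa13} — is precisely this last convergence of the nonlocal bilinear form: one must verify that $\int_{t_1}^{t_2}\chi^2\,\cE(u_h,\tau^2(u_h-k)_+)\,\d t \to \int_{t_1}^{t_2}\chi^2\,\cE(u,\tau^2(u-k)_+)\,\d t$ despite the nonlinearity in the test function and the nonlocality, which requires the uniform-in-$h$ integrability estimates above rather than a naive pointwise passage to the limit. Part (ii) is identical, replacing $\cE$ by $\widehat{\cE}$ throughout and invoking \autoref{lemma:welldef}(ii) and \eqref{K1glob} for well-definedness and for the analog of \autoref{lemma:K1-lemma}(ii).
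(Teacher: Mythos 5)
Your overall strategy is the same as the paper's: Steklov average the weak formulation, integrate by parts in $t$, and pass to the limit $h\searrow 0$, with the crux being the convergence of the nonlocal term. You correctly flag the nonlocal term as the delicate point — the paper explicitly identifies a gap in \cite{FeKa13} at exactly this stage — but the remedy you propose does not work. You claim that ``$v \mapsto \tau^2(v-k)_+$ is Lipschitz into $H_{B_{r+\rho}}(\R^d)$,'' and build the convergence on that. This is false. The truncation $v\mapsto (v-k)_+$ is \emph{not} Lipschitz in the energy seminorm. A one-dimensional counterexample with $k=0$: take $v_2(x)=x$ and $v_1(x)=x-\eps$. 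Then $v_1-v_2\equiv -\eps$ has vanishing Gagliardo seminorm, but $(v_1)_+-(v_2)_+$ is supported near $0$, equals $-\eps$ for $x\geq \eps$, is $-x$ on $(0,\eps)$, and has $H^{\alpha/2}$-seminorm of order $\eps^{1-\alpha/2}$, which is not controlled by $\eps$. What \eqref{cutoff} and the Markov property give you is \emph{boundedness} of $v\mapsto\tau^2(v-k)_+$ in $V(B_{r+\rho}|\R^d)$, not Lipschitz dependence on $v$; the two statements are very different, and only the latter would let you conclude as you do.

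The paper's proof circumvents this by proving \emph{sequential continuity along the Steklov net} rather than Lipschitz continuity. The two key convergences are $\int|\cE(u_h-u,\tau^2(u_h-k)_+)|\d t\to 0$, where boundedness of the test function's energy norm \eqref{eq:steklovMarkov} suffices, and $\int|\cE(u,\tau^2(u_h-k)_+-\tau^2(u-k)_+)|\d t\to 0$, which is the genuinely hard part. For the latter the paper introduces weights $\xi(t,x,y),\widetilde\xi(t,x,y)\in[0,1]$ such that, with $f=\tau^2(u-k)$ (no positive part),
\[
\tau^2(x)(u(t,x)-k)_+ - \tau^2(y)(u(t,y)-k)_+ = \xi(t,x,y)\bigl[f(t,x)-f(t,y)\bigr],
\]
and analogously for $u_h,\widetilde\xi,f_h$. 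Decomposing $W-\widetilde W = \widetilde\xi\bigl[(f-f_h)(x)-(f-f_h)(y)\bigr] + (\xi-\widetilde\xi)\bigl[f(x)-f(y)\bigr]$, the first piece goes to zero because $f_h\to f$ in $L^2([t_1,t_2];V)$ and $|\widetilde\xi|\le1$, while the second goes to zero by dominated convergence since $\xi-\widetilde\xi\to 0$ pointwise a.e.\ and is bounded by $2$, with $f\in L^2([t_1,t_2];V)$ as the dominating function. Without an argument of this type (or an alternative that genuinely handles the failure of Lipschitz continuity of the truncation operator), the passage to the limit in the energy term is not justified, and your proof does not close. The remaining steps of your proposal — averaging, the time-derivative identity and integration by parts, the convergence of the boundary and $\partial_t(\chi^2)$ terms — are correct and match the paper.
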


\begin{proof}
Given $v \in L^1((0,T);X)$ for some Banach space $X$, we define its Steklov average  $v_h(t,x) = \dashint_t^{t+h} v(s,\cdot) \d s$ if $t+h \in I$ and $v_h (t,x) = 0$ otherwise. Observe that $\partial_t u_h(t,x) = \frac{1}{h}(u(t+h,x)-u(t,x)) = \dashint_t^{t+h} \partial_s u(s,x) \d s$. 
According to Lemma A.1 in \cite{FeKa13}, it holds
\begin{align}
\label{eq:steklovaux1}
\Vert v_h(t) - v(t) \Vert_{L^2} &\to 0, ~~ \text{ as } h \searrow 0~~  \text{ if } v \in C((0,T);L^2(B_{r+\rho})),\\
\label{eq:steklovaux2}
\Vert v_h - v \Vert_{L^2([t_1,t_2];X)} &\to 0, ~~ \text{ as } h \searrow 0,\\
\label{eq:steklovaux3}
\Vert v_h \Vert_{L^2([t_1,t_2];X)} &\le \Vert v \Vert_{L^2([t_1,t_2];X)}.
\end{align}
We first explain how to prove (i).
Let $t \in I$. By testing the equation for $u$ with $\phi = \tau^2(u_h(t) - k)_+$, we obtain after integrating over $(t,t+h)$ for some $h > 0$ such that $t +h \in I$ and dividing by $h$:
\begin{equation*}
\int_{B_{r+\rho}} \partial_t u_h(t,x) \phi(t,x) \d x + \cE(u_h(t),\phi(t)) \le (f(t),\phi(t)) .
\end{equation*}
Note that $t \mapsto u_h(t,x)$ is differentiable for a.e. $x \in B_{r+\rho}$, and therefore $\partial_t u_h(t,x) \phi(t,x) = \frac{1}{2} \partial_t [(u_h(t,x)-k)_+^2] \tau^2(x)$.\\
We multiply with $\chi^2(t)$ and integrate over $(t_1,t_2)$. Integration by parts yields
\begin{align*}
\int_{B_{r+\rho}} &\hspace{-0.4cm} \chi^2(t_2)(u_h(t_2)-k)_+^2 \tau^2 \d x - \int_{B_{r+\rho}}\hspace{-0.4cm} \chi^2(t_1) (u_h(t_1)-k)_+^2 \tau^2 \d x - \int_{t_1}^{t_2}\hspace{-0.2cm} \int_{B_{r+\rho}}\hspace{-0.3cm} \partial_t (\chi^2(t)) (u_h(t)-k)_+^2 \tau^2 \d x \d t\\
&+ \int_{t_1}^{t_2} \chi^2(t) \cE(u_h(t) , \tau^2(u_h(t)-k)_+) \d t \le \int_{t_1}^{t_2} \chi^2(t) \int_{B_{r+\rho}} f(t,x) \tau^2(x) (u_h(t,x) -k)_+ \d x \d t.
\end{align*}
Since $\Vert |(u_h(t)-k)_+ - (u(t)-k)_+|\tau^2 \Vert_{L^2(B_{r+\rho})} \le \Vert u_h(t) - u(t) \Vert_{L^2(B_{r+\rho})}$, it follows by \eqref{eq:steklovaux1}:
\begin{equation*}
\int_{B_{r+\rho}} (u_h(t)-k)_+^2 \tau^2 \d x \to \int_{B_{r+\rho}} (u(t) - k)^2_+ \tau^2 \d x ~~ \text{ for } t \in [t_1,t_2].
\end{equation*}
Moreover, \eqref{eq:steklovaux2} implies
\begin{align*}
\int_{t_1}^{t_2} \partial_t (\chi^2(t)) \int_{B_{r+\rho}} (u_h(t)-k)_+^2 \tau^2 \d x \d t &\to \int_{t_1}^{t_2} \int_{B_{r+\rho}} \partial_t (\chi^2(t)) (u(t)-k)_+^2 \tau^2 \d x \d t,\\
\int_{t_1}^{t_2} \chi^2(t) \int_{B_{r+\rho}}\hspace{-0.2cm} f(t,x) \tau^2(x) (u_h(t,x) -k)_+ \d x \d t &\to \int_{t_1}^{t_2} \chi^2(t) \int_{B_{r+\rho}}\hspace{-0.2cm} f(t,x) \tau^2(x) (u(t,x) -k)_+ \d x \d t,
\end{align*}
as $h \searrow 0$.
It remains to prove that 
\begin{equation}
\label{eq:steklovhelp1}
\int_{t_1}^{t_2} \chi^2(t) \cE(u_h(t),\tau^2 (u_h(t) -k)_+) \d t \to \int_{t_1}^{t_2} \chi^2(t) \cE(u(t),\tau^2 (u(t) -k)_+) \d t.
\end{equation}
In Lemma A.2 in \cite{FeKa13}, the authors established a related convergence property for symmetric energy forms. However, their proof has a gap, since Lemma A.2 does not suffice to deduce the desired result (even in the symmetric case), since if $\Phi = f(u)$ it does not hold in general that $\Phi_h = f(u_h)$.

We define $V(t,x,y) = u(t,x) - u(t,y)$, $W(t,x,y) = \tau^2(x) (u(t,x) - k)_+ - \tau^2(y) (u(t,y) - k)_+$ and $\widetilde{W}(t,x,y) = \tau^2(x) (u_h(t,x) - k)_+ - \tau^2(y) (u_h(t,y) - k)_+$.

Our goal is to show that
\begin{align}
\label{eq:steklov1}
\int_{t_1}^{t_2} \left|\cE(u_h(t) - u(t) , \tau^2(u_h(t) - k)_+)\right| \d t &\to 0,\\
\label{eq:steklov2}
\int_{t_1}^{t_2} \left|\cE(u(t) , \tau^2(u_h - k)_+ - \tau^2(u(t) - k)_+)\right| \d t &\to 0.
\end{align}

To establish \eqref{eq:steklov1}, we split 
\begin{align*}
&\int_{t_1}^{t_2} \left|\cE(u_h(t) - u(t) , \tau^2(u_h(t) - k)_+)\right| \d t \le \left\Vert \cE^{K_s}_{B_{r+\rho}}(u_h - u , \tau^2(u_h - k)_+)\right\Vert_{L^1([t_1,t_2])}\\
&+ \left\Vert  \cE^{K_a}_{B_{r+\rho}}(u_h - u , \tau^2(u_h - k)_+) \right\Vert_{L^1([t_1,t_2])} + \left\Vert  \cE_{(B_{r+\rho} \times B_{r+\rho})^c}(u_h - u , \tau^2(u_h - k)_+) \right\Vert_{L^1([t_1,t_2])}\\
&=: I_1 + I_2 + I_3,
\end{align*}
and establish the convergence of each term separately.
For $I_1$, we estimate 
\begin{align*}
I_1 &\le \int_{t_1}^{t_2} \int_{B_{r+\rho}} \int_{B_{r+\rho}} \vert V_h(t,x,y) - V(t,x,y) \vert \vert \widetilde{W}(t,x,y) \vert K_s(x,y) \d y \d x \d t\\
&\le \Vert (V_h - V) K_s ^{1/2}\Vert_{L^2([t_1,t_2] \times B_{r+\rho} \times B_{r+\rho})} \Vert \widetilde{W} K_s^{1/2} \Vert_{L^2([t_1,t_2] \times B_{r+\rho} \times B_{r+\rho})}\\
&\le \Vert u_h - u \Vert_{L^2([t_1,t_2] ; V(B_{r+\rho}|\R^d))} \left\Vert \cE^{K_s}_{B_{r+\rho}}(\tau^2 (u_h -k)_+,\tau^2 (u_h -k)_+) \right\Vert_{L^1([t_1,t_2])}^{1/2}\\
&\le \Vert u_h - u \Vert_{L^2([t_1,t_2] ; V(B_{r+\rho}|\R^d))} \Vert \tau^2 u \Vert_{L^2([t_1,t_2] ; V(B_{r+\rho}|\R^d))}\\
&\to 0,
\end{align*}
where we used \eqref{eq:steklovaux2} and \eqref{eq:steklovaux3}, that $u,\phi \in L^2([t_1,t_2]; V(B_{r+\rho}|\R^d))$ and the fact that due to Markov property of $\cE^{K_s}$ and \eqref{eq:steklovaux3}:
\begin{align}
\label{eq:steklovMarkov}
\begin{split}
\cE^{K_s}_{B_{r+\rho}}(\tau^2 (u_h-k)_+,\tau^2 (u_h-k)_+)& \le \cE^{K_s}_{B_{r+\rho}}(\tau^2 u_h,\tau^2 u_h)\\
&= \cE^{K_s}_{B_{r+\rho}}([\tau^2 u]_h,[\tau^2 u]_h) \le \Vert \tau^2 u \Vert_{V(B_{r+\rho}|\R^d)}^2.
\end{split}
\end{align}
For $I_2$,
\begin{align*}
I_2 &\le \int_{t_1}^{t_2} \int_{B_{r+\rho}} \int_{B_{r+\rho}} \vert V_h(t,x,y) - V(t,x,y) \vert  \tau^2(x)(u_h(t,x)-k)_+  \vert K_a(x,y) \vert \d y \d x \d t\\
&\le \left\Vert (V_h - V)J^{1/2} \right\Vert_{L^2([t_1,t_2] \times B_{r+\rho} \times B_{r+\rho})} \left\Vert \int_{B_{r+\frac{\rho}{2}}} \hspace{-0.3cm} (u_h(\cdot,x) -k)_+^2 \left(\int_{B_{r+\rho}}\hspace{-0.2cm} \frac{\vert K_a(x,y) \vert^2}{J(x,y)} \d y\right) \d x \right\Vert_{L^1([t_1,t_2])}^{1/2}\\
&\le c \Vert u_h - u \Vert_{L^2([t_1,t_2] ; V(B_{r+\rho}|\R^d))} \Vert u \Vert_{L^2([t_1,t_2] ; L^{2\theta'}(B_{r+\frac{\rho}{2}}))}\\
&\le c \Vert u_h - u \Vert_{L^2([t_1,t_2] ; V(B_{r+\rho}|\R^d))} \Vert u \Vert_{L^2([t_1,t_2] ; V(B_{r+\rho}|\R^d))}\\
&\to 0,
\end{align*}
where $c > 0$ might depend on $\rho$ and we used \eqref{K1}, \eqref{eq:steklovaux2}, \eqref{eq:steklovaux3}, that $u \in L^2([t_1,t_2]; V(B_{r+\rho}|\R^d))$ and \eqref{Sob}.
For $I_3$, we obtain
\begin{align*}
I_3 &\le 2 \int_{t_1}^{t_2}\int_{B_{r+\rho}} \int_{B_{r+\rho}^c} \vert V_h(t,x,y) - V(t,x,y) \vert \tau^2(x) (u_h(t,x) - k)_+ K_s(x,y) \d y \d x \d t\\
&\le 2\Vert (V_h - V) K_s ^{1/2}\Vert_{L^2([t_1,t_2] \times B_{r+\rho} \times B_{r+\rho}^c)} \left\Vert \int_{B_{r+\frac{\rho}{2}}} (u_h(\cdot,x) - k)^2_+ \Gamma^{K_s}(\tau,\tau)(x) \d x \right\Vert_{L^1([t_1,t_2])}^{1/2} \\
&\le c \rho^{-\alpha/2} \Vert u_h - u \Vert_{L^2([t_1,t_2] ; V(B_{r+\rho}|\R^d))} \Vert u \Vert_{L^2([t_1,t_2] \times B_{r+\rho})} \\
&\to 0,
\end{align*}
where we used \eqref{eq:KaKs}, \eqref{cutoff}, \eqref{eq:steklovaux2}, \eqref{eq:steklovaux3} and that $u \in L^2([t_1,t_2]; V(B_{r+\rho}|\R^d))$.

It remains to prove \eqref{eq:steklov2}. Again, we split
\begin{align*}
&\int_{t_1}^{t_2} \left|\cE(u(t) , \tau^2(u_h - k)_+ - \tau^2(u(t) - k)_+)\right| \d t  \le \left\Vert \cE^{K_s}_{B_{r+\rho}}(u, \tau^2(u_h - k)_+ - \tau^2(u - k)_+)\right\Vert_{L^1([t_1,t_2])}\\
&\qquad\qquad\qquad\qquad\qquad\qquad+ \left\Vert  \cE^{K_a}_{B_{r+\rho}}(u , \tau^2(u_h - k)_+ - \tau^2(u - k)_+) \right\Vert_{L^1([t_1,t_2])}\\
&\qquad\qquad\qquad\qquad\qquad\qquad+ \left\Vert  \cE_{(B_{r+\rho} \times B_{r+\rho})^c}(u , \tau^2(u_h - k)_+ - \tau^2(u - k)_+) \right\Vert_{L^1([t_1,t_2])}\\
&\qquad\qquad\qquad\qquad\qquad\qquad=: J_1 + J_2 + J_3,
\end{align*}

Convergence of $J_1$ can be proved as follows. First, by H\"older's inequality,
\begin{align*}
J_1 &\le \Vert u \Vert_{L^2([t_1,t_2]; V(B_{r+\rho}|\R^d))} \left\Vert \int_{B_{r+\rho}} \int_{B_{r+\rho}}  |W(\cdot,x,y) - \widetilde{W}(\cdot,x,y)|^2 K_s(x,y)  \d y \d x \right\Vert_{L^1([t_1,t_2])}^{1/2}.
\end{align*}
Since $u \in L^2([t_1,t_2]; V(B_{r+\rho}|\R^d))$, it suffices to prove that the second factor converges to zero, in order to conclude that $J_1 \to 0$. For this, we claim that there exist $\xi(t,x,y), \widetilde{\xi}(t,x,y) \in [0,1]$ such that
\begin{align*}
\tau^2(x)(u(t,x) - k)_+ - \tau^2(y)(u(t,y) - k)_+ &= \xi(t,x,y)[f(t,x) - f(t,y)],\\
\tau^2(x)(u_h(t,x) - k)_+ - \tau^2(y)(u_h(t,y) - k)_+ &= \widetilde{\xi}(t,x,y)[f_h(t,x) - f_h(t,y)],
\end{align*}
where we define $f(t,x) = \tau^2(x)(u(t,x) - k)$. In fact, it is easy to see that\begin{align*}
\xi(t,x,y) &= 
\begin{cases}
1,&  u(t,x),u(t,y) > k,\\
0,&  u(t,x),u(t,y) \le k,\\
\frac{f(t,x)}{f(t,x)-f(t,y)},&  u(t,x) > k \ge u(t,y),\\
\frac{f(t,y)}{f(t,y)-f(t,x)},&  u(t,y) > k \ge u(t,x),\\
\end{cases},\\
\widetilde{\xi}(t,x,y) &= 
\begin{cases}
1,&  u_h(t,x),u_h(t,y) > k,\\
0,&  u_h(t,x),u_h(t,y) \le k,\\
\frac{f_h(t,x)}{f_h(t,x)-f_h(t,y)},&  u_h(t,x) > k \ge u_h(t,y),\\
\frac{f_h(t,y)}{f_h(t,y)-f_h(t,x)},&  u_h(t,y) > k \ge u_h(t,x)\\
\end{cases}
\end{align*}
have the desired properties. We estimate
\begin{align*}
&\left\Vert \int_{B_{r+\rho}} \int_{B_{r+\rho}}  |W(\cdot,x,y) - \widetilde{W}(\cdot,x,y)|^2 K_s(x,y)  \d y \d x \right\Vert_{L^1([t_1,t_2])}^{1/2}\\
&\le 2\left\Vert  \int_{B_{r+\rho}} \int_{B_{r+\rho}} \hspace{-0.2cm} |\widetilde{\xi}(\cdot,x,y)|^2 [(f_h(t,x) - f(t,x)) - (f_h(t,y) - f(t,y))]^2   K_s(x,y) \d y \d x\right\Vert_{L^1([t_1,t_2])}^{1/2}\\
&+ 2\left\Vert  \int_{B_{r+\rho}} \int_{B_{r+\rho}} \hspace{-0.2cm} |\widetilde{\xi}(\cdot,x,y) - \xi(\cdot,x,y)|^2 [f(t,x) - f(t,y)]^2   K_s(x,y) \d y \d x\right\Vert_{L^1([t_1,t_2])}^{1/2}\\
&\le J_{1,1} + J_{1,2}.
\end{align*}
For $J_{1,1}$, note that
\begin{align*}
J_{1,1} \le 2 \Vert f_h - f \Vert_{L^2([t_1,t_2];V(B_{r+\rho}|\R^d))} \to 0,
\end{align*}
where we used that $|\widetilde{\xi}| \le 1$, $f \in L^2([t_1,t_2]; V(B_{r+\rho}|\R^d))$ and \eqref{eq:steklovaux2}.
For $J_{1,2}$. we observe that $\vert \widetilde{\xi}(t,x,y) - \xi(t,x,y) \vert \to 0$, as $h \searrow 0$ for a.e. $t,x,y$. Since $f \in L^2([t_1,t_2]; V(B_{r+\rho}|\R^d))$, it follows from dominated convergence that also $J_{1,2} \to 0$.\\
For $J_2$, we estimate
\begin{align*}
J_2 &\le \int_{t_1}^{t_2} \int_{B_{r+\rho}} \int_{B_{r+\rho}} \vert V(t,x,y) \vert  \tau^2(x) \vert (u_h(t,x)-k)_+ - (u(t,x)-k)_+ \vert \vert K_a(x,y) \vert \d y \d x \d t\\
&\le \Vert u \Vert_{L^2([t_1,t_2]; V(B_{r+\rho}|\R^d))} \left\Vert  \int_{B_{r+\rho}} \hspace{-0.5cm} \left| (u_h(\cdot,x) -k)_+ - (u(\cdot,x) -k)_+\right|^2 \left(\int_{B_{r+\rho}} \hspace{-0.4cm} \frac{\vert K_a(x,y) \vert^2}{J(x,y)} \d y\right) \d x \right\Vert_{L^1([t_1,t_2])}^{1/2}\\
&\le c \Vert u \Vert_{L^2([t_1,t_2]; V(B_{r+\rho}|\R^d))}  \Vert u_h - u \Vert_{L^2([t_1,t_2]; L^{2\theta'}(B_{r+\rho}))}\\
&\to 0,
\end{align*}
where we used \eqref{K1}, $\vert (u_h(t,x)-k)_+ - (u(t,x)-k)_+ \vert \le \vert u_h(t,x) - u(t,x) \vert$, and $u \in L^2([t_1,t_2]; V(B_{r+\rho}|\R^d))$, \eqref{Sob} and \eqref{eq:steklovaux2}. To prove convergence of $J_3$, we proceed as follows
\begin{align*}
J_3 &\le 2\int_{t_1}^{t_2}\int_{B_{r+\rho}} \int_{B_{r+\rho}^c} \vert V(t,x,y) \vert \tau^2(x) |(u_h(t,x) - k)_+ - (u(t,x) - k)_+| K_s(x,y) \d y \d x \d t\\
&\le 2\Vert u \Vert_{L^2([t_1,t_2]; V(B_{r+\rho}|\R^d))} \left\Vert \int_{B_{r+\frac{\rho}{2}}} |(u_h(\cdot,x) - k)_+ - u(\cdot,x) - k)|^2 \Gamma^{K_s}(\tau,\tau)(x) \d x \right\Vert_{L^1([t_1,t_2])}^{1/2}\\
&\le c \rho^{-\alpha/2}\Vert u \Vert_{L^2([t_1,t_2]; V(B_{r+\rho}|\R^d))} \Vert u_h - u \Vert_{L^2([t_1,t_2] \times B_{r+\rho})}\\
&\to 0,
\end{align*}

where we used \eqref{cutoff} and \eqref{eq:steklovaux2} and $u \in L^2([t_1,t_2]; V(B_{r+\rho}|\R^d))$.\\
Altogether, this proves \eqref{eq:steklovhelp1} and we deduce the desired result.
Let us now prove (ii). In analogy to the proof of (i), it is only left to show
\begin{align}
\label{eq:steklovhelp1dual}
\int_{t_1}^{t_2} \widehat{\cE}(u_h(t),\tau^2 (u_h(t) -k)_+) \d t &\to \int_{t_1}^{t_2} \widehat{\cE}(u(t),\tau^2 (u(t) -k)_+) \d t.
\end{align}
We will establish \eqref{eq:steklovhelp1dual} by proving the following two properties:
\begin{align}
\label{eq:steklov1dual}
\int_{t_1}^{t_2} \left|\cE(u_h(t) - u(t) , \tau^2(u_h(t) - k)_+)\right| \d t &\to 0,\\
\label{eq:steklov2dual}
\int_{t_1}^{t_2} \left|\cE(u(t) , \tau^2(u_h - k)_+ - \tau^2(u(t) - k)_+)\right| \d t &\to 0.
\end{align}
Let us first prove \eqref{eq:steklov1dual}. In analogy to the proof of \eqref{eq:steklov1}, we split 
\begin{align*}
&\int_{t_1}^{t_2} \left|\widehat{\cE}(u_h(t) - u(t) , \tau^2(u_h(t) - k)_+)\right| \d t \le \left\Vert \cE^{K_s}_{B_{r+\rho}}(u_h - u , \tau^2(u_h - k)_+)\right\Vert_{L^1([t_1,t_2])}\\
&+ \left\Vert  \widehat{\cE}^{K_a}_{B_{r+\rho}}(u_h - u , \tau^2(u_h - k)_+) \right\Vert_{L^1([t_1,t_2])} + \left\Vert  \widehat{\cE}_{(B_{r+\rho} \times B_{r+\rho})^c}(u_h - u , \tau^2(u_h - k)_+) \right\Vert_{L^1([t_1,t_2])}\\
&=: \widehat{I_1} + \widehat{I_2} + \widehat{I_3}.
\end{align*}
\pagebreak[3]
In (i), we already showed that $\widehat{I_1} \to 0$. Let us estimate $\widehat{I_2}$ as follows
\begin{align*}
\widehat{I_2} &\le \left\Vert (u_h-u) \widetilde{W} |K_a|\right\Vert_{L^1([t_1,t_2])}\\
&\le \left\Vert \cE^{K_s}_{B_{r+\rho}}(\tau^2 (u_h-k)_+,\tau^2 (u_h-k)_+) \right\Vert_{L^1([t_1,t_2])}^{1/2} \Vert u_h - u \Vert_{L^2([t_1,t_2]; L^{2\theta'}(B_{r+\frac{\rho}{2}}))}\\
&\le c\left\Vert \tau^2 u \right\Vert_{L^2([t_1,t_2]; V(B_{r+\rho}|\R^d))} \Vert u_h - u \Vert_{L^2([t_1,t_2]; V(B_{r+\rho}|\R^d))}\\
&\to 0,
\end{align*}
where we used \eqref{K1glob}, \eqref{eq:steklovaux2}, \eqref{eq:steklovMarkov} and \eqref{Sob}.
Moreover, $\widehat{I_3}$ can be treated as follows:
\begin{align*}
\widehat{I_3} &\le \left\Vert\cE^{K_s}_{(B_{r+\rho} \times B_{r+\rho})^c}(u_h - u , \tau^2(u_h - k)_+) \right\Vert_{L^1([t_1,t_2])}\\
&+\left\Vert  \int_{B_{r+\frac{\rho}{2}}} \int_{B_{r+\rho}^c}  (u_h(\cdot,x) - u(\cdot,x)) \tau^2(x) (u_h(\cdot,x) -k)_+ |K_a(x,y)| \d y \d x \right\Vert_{L^1([t_1,t_2])}\\
&+ \left\Vert  \int_{B_{r+\rho}^c} \int_{B_{r+\frac{\rho}{2}}} (u_h(\cdot,x) - u(\cdot,x)) \tau^2(y) (u_h(\cdot,y) -k)_+  |K_a(x,y)| \d y \d x \right\Vert_{L^1([t_1,t_2])}\\
&=: \widehat{I_{3,1}} + \widehat{I_{3,2}} + \widehat{I_{3,3}}.
\end{align*}
The proof of convergence for $\widehat{I_{3,1}}$ goes exactly like for $I_3$.
For $\widehat{I_{3,2}}$, we estimate using the assumptions \eqref{K1glob} and \eqref{cutoff}
\begin{align*}
\widehat{I_{3,2}} + \widehat{I_{3,3}} &\le \left\Vert \int_{\R^d} |u_h(\cdot,x) - u(\cdot,x)|^2 \left(\int_{\R^d} \frac{|K_a(x,y)|^2}{J(x,y)} \d y\right) \d x  \right\Vert_{L^1([t_1,t_2])}^{1/2}\\
&~~~~\left\Vert \int_{B_{r+\frac{\rho}{2}}} (u_h(\cdot,x)-k)_+^2 \Gamma^{J}(\tau,\tau)(x) \d x  \right\Vert_{L^1([t_1,t_2])}^{1/2}\\
&\le c\rho^{-\alpha/2}\Vert u_h-u \Vert_{L^2([t_1,t_2];L^{2\theta'}(\R^d))} \Vert (u_h-k)_+ \Vert_{L^2([t_1,t_2] \times B_{r+\rho})}\\
&\le c\rho^{-\alpha/2}\Vert u_h-u \Vert_{L^2([t_1,t_2];L^{2\theta'}(\R^d))} \Vert u \Vert_{L^2([t_1,t_2] \times B_{r+\rho})}\\
&\to 0,
\end{align*}
where we used \eqref{eq:steklovaux2} and \eqref{eq:steklovaux3} and $u \in L^2([t_1,t_2]; L^{2\theta'}(\R^d))$.
We have established \eqref{eq:steklov1dual}. To prove \eqref{eq:steklov2dual}, let us 
again split
\begin{align*}
&\int_{t_1}^{t_2} \left|\widehat{\cE}(u(t) , \tau^2(u_h - k)_+ - \tau^2(u(t) - k)_+)\right| \d t  \le \left\Vert \cE^{K_s}_{B_{r+\rho}}(u, \tau^2(u_h - k)_+ - \tau^2(u - k)_+)\right\Vert_{L^1([t_1,t_2])}\\
&\qquad\qquad\qquad\qquad\qquad\qquad\qquad + \left\Vert \widehat{\cE}^{K_a}_{B_{r+\rho}}(u , \tau^2(u_h - k)_+ - \tau^2(u - k)_+) \right\Vert_{L^1([t_1,t_2])} \hspace{-0.3cm}\\
& \qquad\qquad\qquad\qquad\qquad\qquad\qquad + \left\Vert \widehat{\cE}_{(B_{r+\rho} \times B_{r+\rho})^c}(u , \tau^2(u_h - k)_+ - \tau^2(u - k)_+) \right\Vert_{L^1([t_1,t_2])}\\
& \qquad\qquad\qquad\qquad\qquad\qquad\qquad=: \widehat{J_1} + \widehat{J_2} + \widehat{J_3}.
\end{align*}
Note that $\widehat{J_1} = J_1 \to 0$. For $\widehat{J_2}$, we estimate
\begin{align*}
\widehat{J_2} &\le \Vert u \Vert_{L^{2}([t_1,t_2];L^{2\theta'}(B_{r+\rho}))} \left\Vert \int_{B_{r+\rho}} \int_{B_{r+\rho}}  |W(\cdot,x,y) - \widetilde{W}(\cdot,x,y)|^2 K_s(x,y)  \d y \d x \right\Vert_{L^1([t_1,t_2])}^{1/2},
\end{align*}
where we used \eqref{K1glob} and that $u \in L^{2}([t_1,t_2];L^{2\theta'}(B_{r+\rho}))$. We conclude that $\widehat{J_2} \to 0$ since the second factor converges to  zero, as we proved already when dealing with $J_1$. \\
To estimate $\widehat{J_3}$, we proceed as follows:
\begin{align*}
\widehat{J_3} &\le \left\Vert \cE^{K_s
}_{(B_{r+\rho} \times B_{r+\rho})^c}(u , \tau^2(u_h - k)_+ - \tau^2(u - k)_+) \right\Vert_{L^1([t_1,t_2])}\\
&+ \left\Vert \int_{B_{r+\frac{\rho}{2}}} \int_{B_{r+\rho}^c} \tau^2(x)|(u_h - k)_+(x) - (u - k)_+(x)| u(x) |K_a(x,y)| \d y \d x\right\Vert_{L^1([t_1,t_2])}\\
&+ \left\Vert \int_{B_{r+\rho}^c} \int_{B_{r+\frac{\rho}{2}}} \tau^2(y)|(u_h - k)_+(y) - (u - k)_+(y)| u(x) |K_a(x,y)| \d y \d x\right\Vert_{L^1([t_1,t_2])}\\
&= \widehat{J_{3,1}} + \widehat{J_{3,2}} + \widehat{J_{3,3}}.
\end{align*}
Note that $\widehat{J_{3,1}} \to 0$ follows very similar to the proof of $J_3 \to 0$.
$\widehat{J_{3,2}}$ and $\widehat{J_{3,3}}$ are estimated as follows, using similar arguments as in the estimate of $\widehat{I_{3,2}}$ and $\widehat{I_{3,3}}$
\begin{align*}
\widehat{J_{3,2}} + \widehat{J_{3,3}} &\le \left\Vert (u_h-k)_+ - (u-k)_+ \right\Vert_{L^2([t_1,t_2] ; L^{2\theta'}(\R^d))} \left\Vert \int_{B_{r+\frac{\rho}{2}}} u^2(x) \Gamma^{K_s}(\tau,\tau)(x) \d x \right\Vert_{L^1([t_1,t_2])}^{1/2} \\
&\le c \rho^{-\alpha/2} \Vert u_h-u\Vert_{L^2([t_1,t_2];L^{2\theta'}(\R^d))}  \Vert u \Vert_{L^2([t_1,t_2] \times B_{r+\rho})}\\
&\to 0,
\end{align*}
where we used \eqref{cutoff}, \eqref{K1glob}, as well as \eqref{eq:steklovaux2} and that $u \in L^{2}([t_1,t_2];L^{2\theta'}(B_{r+\rho}))$.
This proves (ii).
\end{proof}

\begin{remark}
We point out that the above proof can be extended to more general test functions $\phi$ of the form $\phi = \pm\tau^2 g(u)$, where $g : [0,\infty) \to [0,\infty)$. This way, it would be possible to generalize the notion of a weak solution to \eqref{PDE}, or to \eqref{PDEdual}, in $I \times \Omega$, in the sense that the assumption $\partial_t u \in L^1_{loc}(I,L^2(\Omega))$, where $\partial_t u$ is the weak $L^2(\Omega)$-derivative of $u$, can be replaced by $u \in C(I;L^2(\Omega))$.
\end{remark}


\end{document}